\DeclarePairedDelimiter\floor{\lfloor}{\rfloor}
\newcommand{\norm}[1]{\left\lVert#1\right\rVert}
\newcommand{\abs}[1]{\left |#1\right |}
\theoremstyle{definition}
\newtheorem{theorem}{Theorem}
\newtheorem{corollary}{Corollary}
\newtheorem{definition}{Definition}
\newtheorem{lemma}{Lemma}
\newtheorem{proposition}{Proposition}
\newtheorem{remark}{Remark}
\newcommand*\diff{\mathop{}\!\mathrm{d}}
\newcommand*\conj[1]{\overline{#1}}
\DeclareFontFamily{U}{mathx}{}
\DeclareFontShape{U}{mathx}{m}{n}{ <-> mathx10 }{}
\DeclareSymbolFont{mathx}{U}{mathx}{m}{n}
\DeclareMathAccent{\widecheck}{0}{mathx}{"71}
\begin{document}
%\graphicspath{ {./Figures/} }

\title[Synchrosqueezed chirplet transforms]{Disentangling modes with crossover instantaneous frequencies by synchrosqueezed chirplet transforms, from theory to application}  
%{Separate modes with crossover frequency through synchrosqueezed chirplet transforms}

\author{Ziyu Chen}
\address{Department of Mathematics, Duke University, Durham, NC, USA}
\email{ziyu@math.duke.edu}

\author{Hau-Tieng Wu}
\address{Department of Mathematics and Department of Statistical Science, Duke University, Durham, NC, USA. Mathematics Division, National Center for Theoretical Sciences, Taipei, Taiwan}
\email{hauwu@math.duke.edu}

\maketitle

\begin{abstract}
Analysis of signals with oscillatory modes with crossover instantaneous frequencies is a challenging problem in time series analysis. One way to handle this problem is lifting the 2-dimensional time-frequency representation to a 3-dimensional representation, called time-frequency-chirp rate (TFC) representation, by adding one extra chirp rate parameter so that crossover frequencies are disentangled in higher dimension. The chirplet transform is an algorithm for this lifting idea, which leads to a TFC representation. However, in practice, we found that it has a strong ``blurring'' effect in the chirp rate axis, which limits its application in real-world data. Moreover, to our knowledge, we have limited mathematical understanding of the chirplet transform in the literature. Motivated by the need for the real-world data analysis, in this paper, we propose the synchrosqueezed chirplet transform (SCT) that enhances the TFC representation given by the chirplet transform. The resulting concentrated TFC representation has high contrast so that one can better distinguish different modes with crossover instantaneous frequencies. 
The basic idea is to use the phase information in the chirplet transform to determine a reassignment rule that sharpens the TFC representation determined by the chirplet transform. 
We also analyze the chirplet transform and provide theoretical guarantees of SCT.
\end{abstract}

\section{Introduction}\label{Sect:Introduction}

Time series (or signals) are ubiquitous in almost all scientific fields. Often, the observed/recorded signal is composed of multiple components. 
Due to the nonstationary dynamics of our natural system, if the component is oscillatory, it usually oscillates with time-varying amplitudes, frequencies and even oscillatory patterns. On the other hand, if the component is stochastic, it could be a nonstationary random process with time-varying statistical properties. These time-varying quantities encode rich information about the dynamics of the underlying system of interest. By properly extracting and parsing these time-varying quantities, we could peek into nature and decode the underlying dynamics. In this paper, we focus on analyzing signals with oscillatory components and extracting the dynamics.

To properly extract and parse the time-varying quantities of each oscillatory component from the input signal is however not easy. For example, due to the nonstationarity, the traditional Fourier transform cannot properly capture such information, since the Fourier transform is a global operator. An intuitive and successful solution to handle the nonstationarity is ``divide-and-conquer''. By dividing the signal into pieces, and assuming that each piece is stationary, we could apply Fourier transform over each piece to capture the nonstationarity. 
This intuitive idea has led to a rich field called time-frequency (TF) analysis, where the ``spectrum'' at each moment is evaluated so that the time-varying frequency and amplitude could be evaluated. It includes various algorithms, ranging from linear-type, polynomial-type to nonlinear-type algorithms. For example, the short-time Fourier transform (STFT), the continuous wavelet transform (CWT) and the S transform are linear-type algorithms. The Wigner-Ville, Cohen's class and affine class are bilinear-type algorithms \cite{Flandrin:1999}. The polynomial Wigner-Ville distributions are polynomial-type algorithms \cite{boashash1994polynomial, boashash1998polynomial, barkat1999instantaneous}. The reassignment method (RM) \cite{auger1995improving}, the synchrosqueezing transform (SST) \cite{daubechies2011synchrosqueezed} and its generalizations \cite{oberlin2017second}, the Blaschke decomposition \cite{Nahon:2000Thesis}, and the empirical mode decomposition (EMD) \cite{huang1998empirical} and its variations are nonlinear-type algorithms. We refer readers with interest to a recent review of the topic \cite{wu2020current}. In short, these algorithms convert an input signal into a TF representation and/or oscillatory components. From the output, we could more efficiently visualize or extract different time-varying properties of the input signal. 
While the linear-type algorithms have been widely applied, one well-known property shared by these algorithms is the uncertainty principle. This uncertainty principle leads to a blurred TF representation so that the information of interest, like the ridges \cite{carmona1997characterization}, is of low contrast. This  may cause troubles in some applications. This problem can be handled by nonlinear-type approaches; for example, RM \cite{auger1995improving} and SST \cite{daubechies2011synchrosqueezed} were invented for the sake of sharpening the TF representation so that its contrast is enhanced. Even more, signals with chirp components can be further enhanced by higher order SST \cite{Oberlin_Meignen_Perrier:2015}.

While we have advanced the analysis tools from various fronts in the past 20 years, there are still several challenges. In this paper, we focus on the specific crossover frequency challenge. Recall that most of the above algorithms are based on the assumption that different oscillatory components have ``well-separated'' time-varying frequencies in the TF plane. Mathematically, this well-separation condition is defined in the following way. Suppose a signal is a superposition of multiple \textit{intrinsic-mode-type} (IMT) functions \cite{daubechies2011synchrosqueezed}:
\[
f(t) = \sum_{k=1}^K f_k(t) = \sum_{k=1}^K A_k(t)e^{2\pi i \phi_k(t)},
\]
where $A_k(t)>0$ and $\phi_k'(t)>0$ model the time-varying amplitude (or called amplitude modulation (AM)) and the time-varying frequency (or called instantaneous frequency (IF)) of the $k$-th IMT component $f_k(t)$, with some regularity assumptions. Suppose for any $k\neq j$, we have $\abs{\phi_k'(t)-\phi_j'(t)}\geq \Delta$ for some constant $\Delta>0$ for all $t$, the signal $f$ satisfies the well-separation condition. This is the minimal assumption we need for most TF analysis tools mentioned above. When this sell-separation condition is broken, for example, when the time-varying frequencies of two oscillatory components are too close, or even crossover, the algorithm fails. 
Mathematically, when $\phi'_k(t)=\phi'_j(t)$ for some $t$, we say that the $k$-th and $j$-th components have {\em crossover IFs}.

See Figure \ref{fig:intro0} for a synthetic signal as an example, where $f(x) = f_1(x) + f_2(x)$, where $f_1(x) = e^{2\pi i\cdot 4x^2}$, and $f_2(x) = e^{2\pi i [-\pi x^2 + (24+6\pi)x]}$, $x\in [1,5]$, with such a crossover frequency phenomenon. 
The signal is sampled at 100Hz. Note that in this simple case, the chirp rates are not time-varying, and their IFs have a crossing point at $t_0 = 3$ and $\xi_0 = 24$. Figure \ref{fig:intro0} gives the time-frequency representations determined by STFT, RM, SST, and  2nd-order SST with the window $g_0(x) = e^{-\pi x^2}$,
Clearly, the spectrogram (the magnitude of the STFT of the signal) and the TF representation determined by SST are blurred as expected, and the TF representation is sharpened after applying RM and the 2nd-order SST. However, during the period that two IFs crossover, all approaches encounter troubles, and we need a new approach to handle it.

Recently, some methods have been proposed to handle signals with crossover frequencies. In \cite{li2020if}, an IF estimator based on the linear least square fitting and the Viterbi algorithm was introduced. A quasi maximum likelihood - random samples consensus algorithm was proposed for the IF estimation of overlapping signals in the TF plane \cite{djurovic2018qml}. The chirplet transform (CT) was introduced in \cite{mann1992time,mann1992chirplets,mann1995chirplet}, which gives rise to a three dimensional parameter space, i.e., time, frequency (scale) and chirp rate that includes the TF plane (of the STFT) or the time-scale plane (of the CWT) as a subspace. Its discretization was later studied in \cite{xia2000discrete} with several generalizations \cite{bossmann2015asymmetric,tu2017instantaneous} and applications \cite{wang2003manoeuvring,cui2005time,dugnol2008chirplet,hartono2019gear,ghosh2020automated}, which is a non-exhaustive list. Besides its common application to estimate chirp rates, with the extra chirp rate dimension, it is possible to apply it to ``see through'' the crossover IFs.  
Such potential was confirmed in two recent papers. In \cite{zhu2020frequency}, CT was employed to estimate IFs and chirp rates of a signal that can separate crossing IFs in the time-frequency-chirp rate (TFC) space. The CT was also employed to retrieve modes with crossover IFs from a signal with mathematical theorems in \cite{chui2021time, li2021chirplet}. 
We shall mention two relevant papers that also depend on chirplet. In \cite{candes2008detecting}, the authors propose a multiscale chirplet method to ``detect waves'', where the chirplet refers to atoms that oscillate with linearly varying frequency (fixed chirp rate) and exist in short (dyadic) time intervals. In \cite{candes2002multiscale}, the authors propose a tight frame of ``chirplet'' as in \cite{candes2008detecting} for the reconstruction of chirps; however, they do not deal with the problem of decomposing a multi-component signal or of estimating the instantaneous frequency and chirp rate.

While CT has been proposed to analyze signals with crossover IFs, there are several interesting open problems. First, the TFC representation determined by CT with a window $g$ is also impacted by the uncertainty principle; that is, the TFC representation is blurred. See the synthetic example in Figure \ref{fig:intro0}. It seems that the ``blurring'' of the TFC representation is not improved compared with the spectrogram, particularly after projecting it onto the TF domain. In particular, while at time $t_0=3$ and frequency $\xi_0=24$, the TFC representation on the chirp rate axis gives a separation of two components. However, the magnitude at the chirp rate $0$ is nontrivial, which blurs the available information.
It is thus beneficial if we could enhance the TFC representation for some specific applications. This blurring effect also raised our curiosity about the theoretical properties of CT. For example, if the input signal is $L^1$, is the TFC representation in the chirp rate axis continuous and decays to $0$, and what is the decay rate? If it decays fast, does CT map a $L^2$ function to a $L^2$ function? However, to our knowledge, except the closely related results of the oscillatory integration \cite{stein1993harmonic}, the theoretical property of CT is less studied in the literature.
In this paper, we seek to answer these questions, at least partially, and provide a practical algorithm to separate components with crossover IFs from a multicomponent signal. We propose to combine the ``squeezing'' idea with CT, and  synchrosqueezed chirplet transform (SCT) that gives a sharper TFC representation of a signal, even if the signal has crossover IFs, from which one can separate different components. In addition to providing several theoretical properties of CT and a theoretical guarantee of SCT, we provide a series of numerical experiments with real data illustration. We shall mention that a seemingly similar squeezing idea has been shown in \cite{zhu2019multiple}. However, despite the nomination ``squeeze'', the squeezing part was only applied to the frequency axis that is based on the chirp rate information and only the TF representation was considered, and only limited theoretical analysis of the squeezing step is provided.
Last but not the least, the Matlab code used in this paper is made public for the reproducibility purpose.

\begin{figure}[!htbp]
	\centering
	\includegraphics[width=0.325\textwidth]{./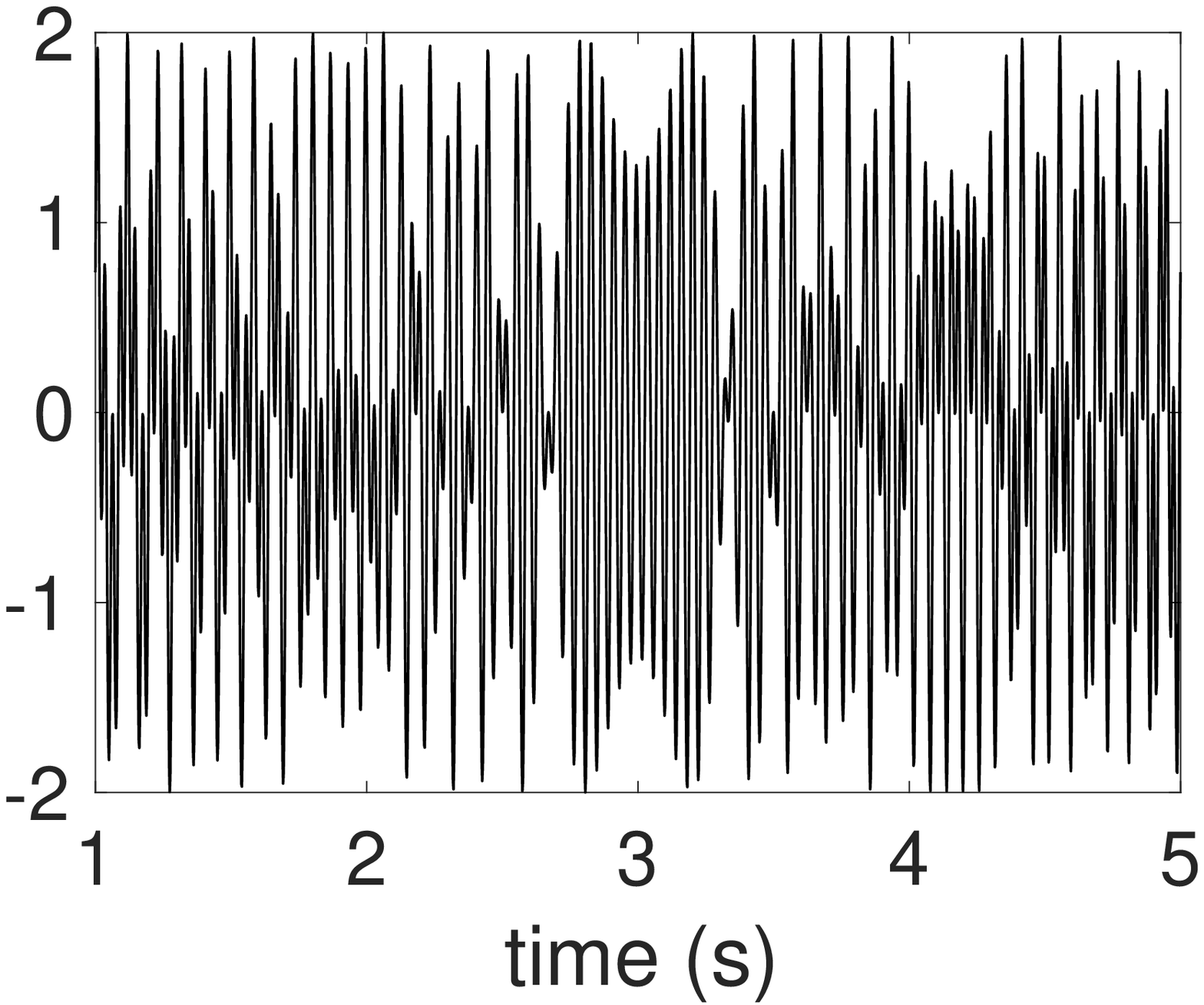}
	\includegraphics[width=0.325\textwidth]{./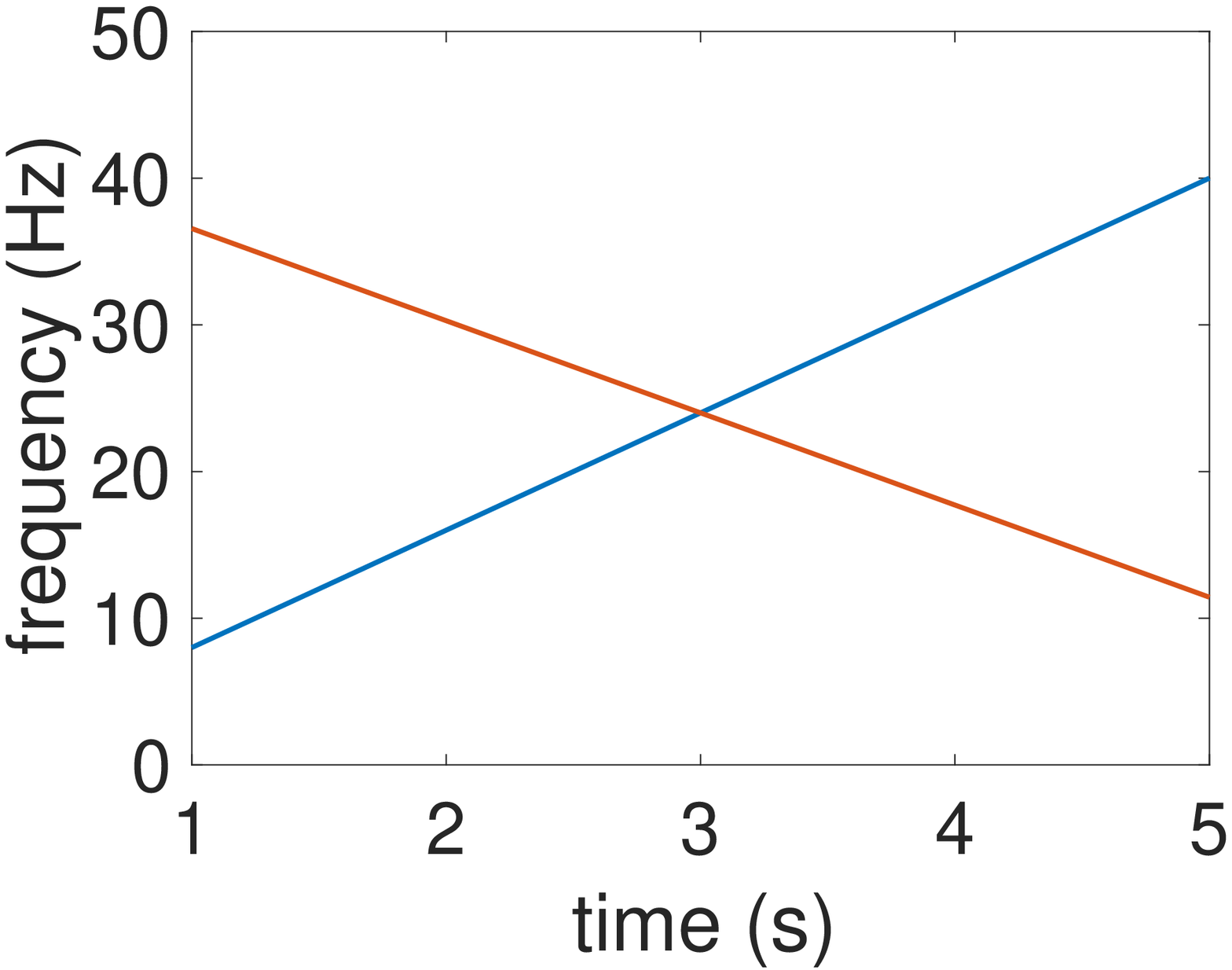}
	\includegraphics[width=0.325\textwidth]{./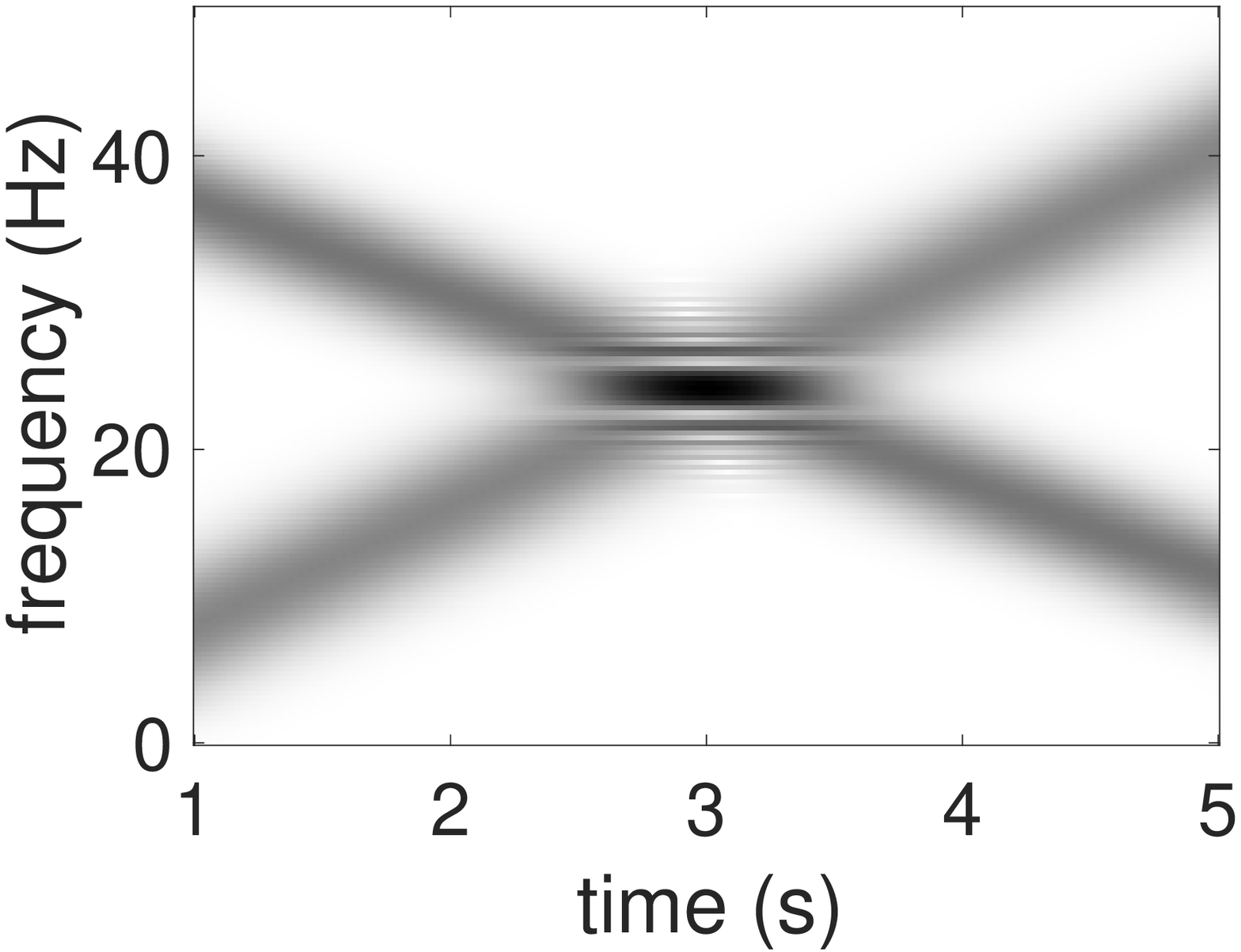}
	\includegraphics[width=0.325\textwidth]{./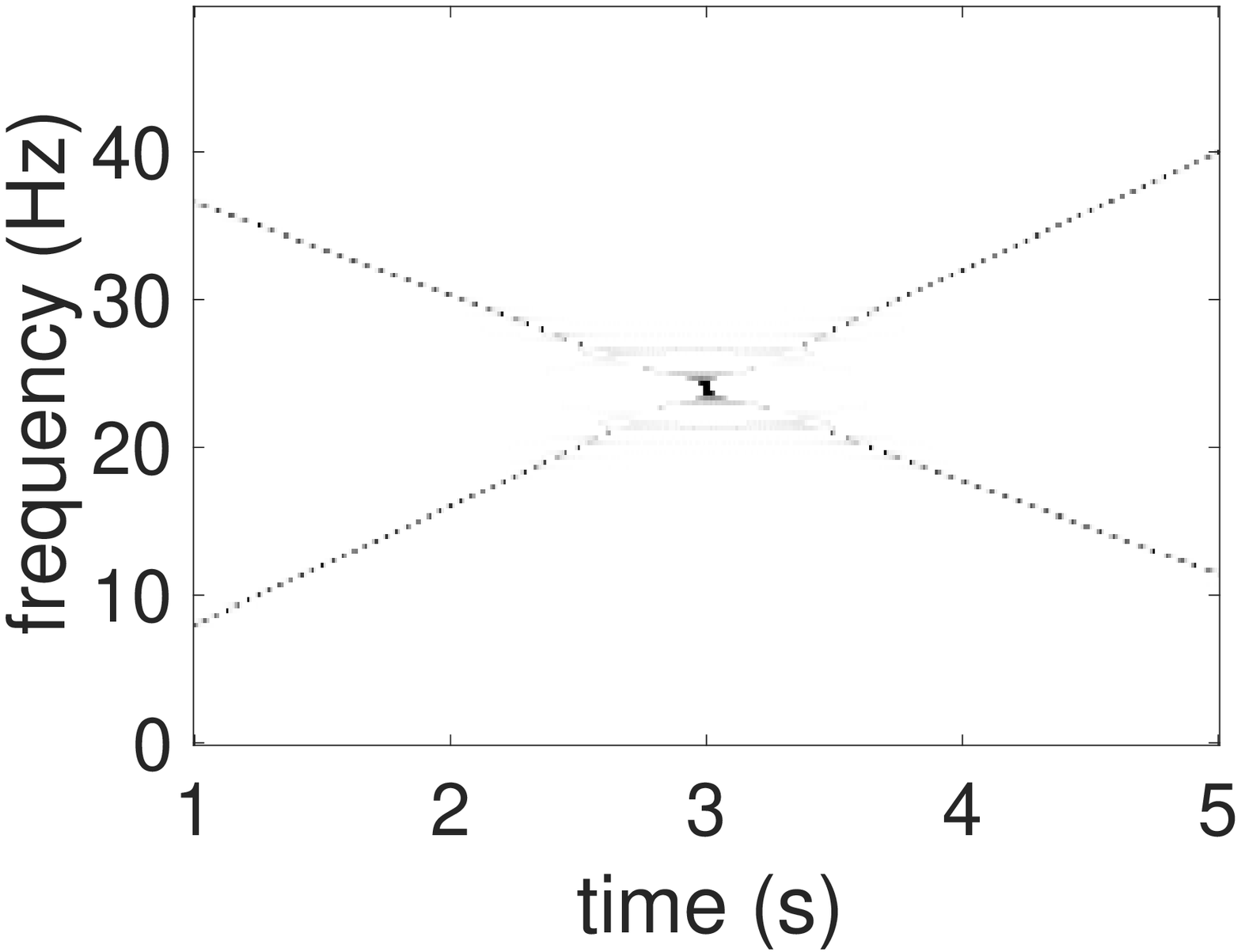}
	\includegraphics[width=0.325\textwidth]{./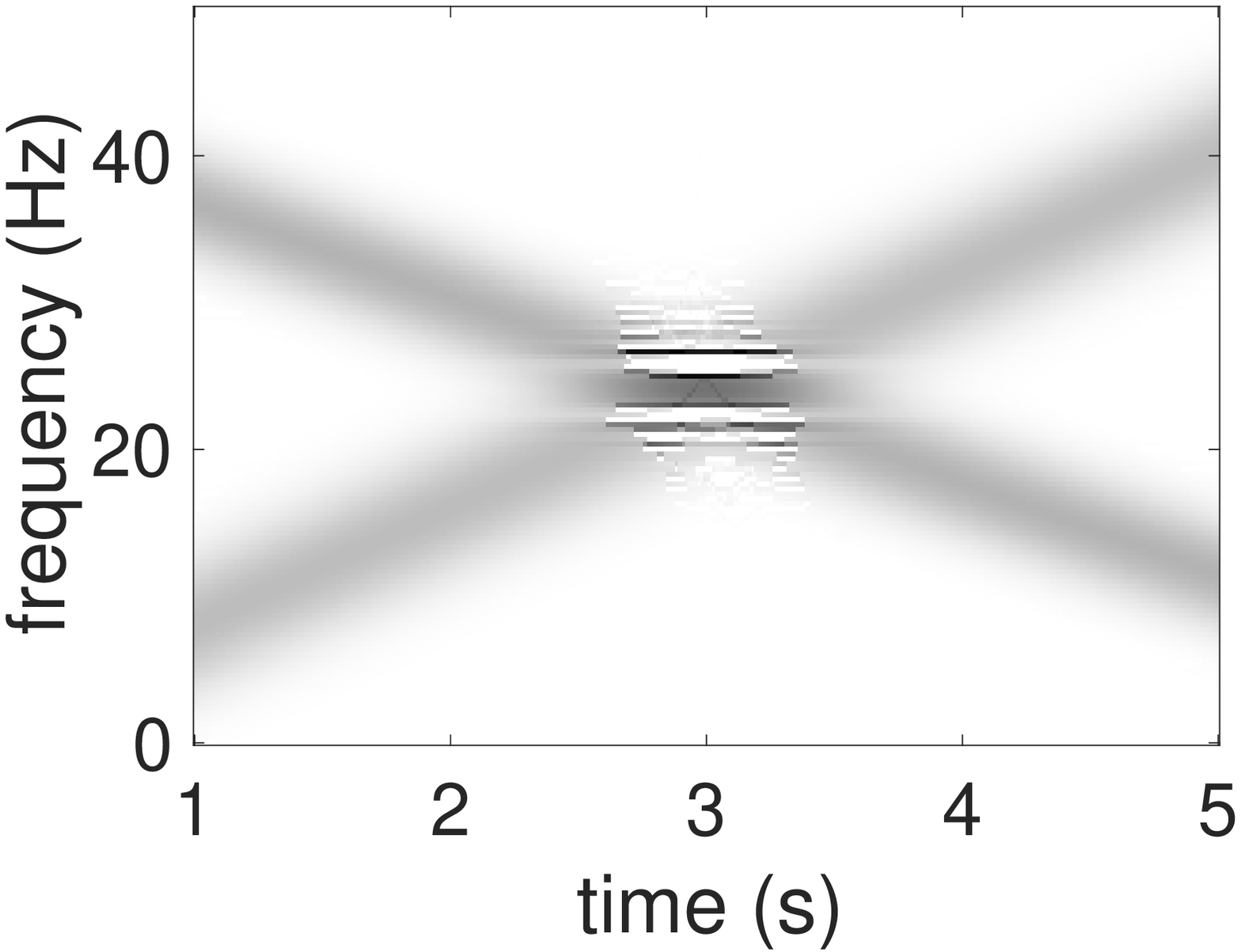}
	\includegraphics[width=0.325\textwidth]{./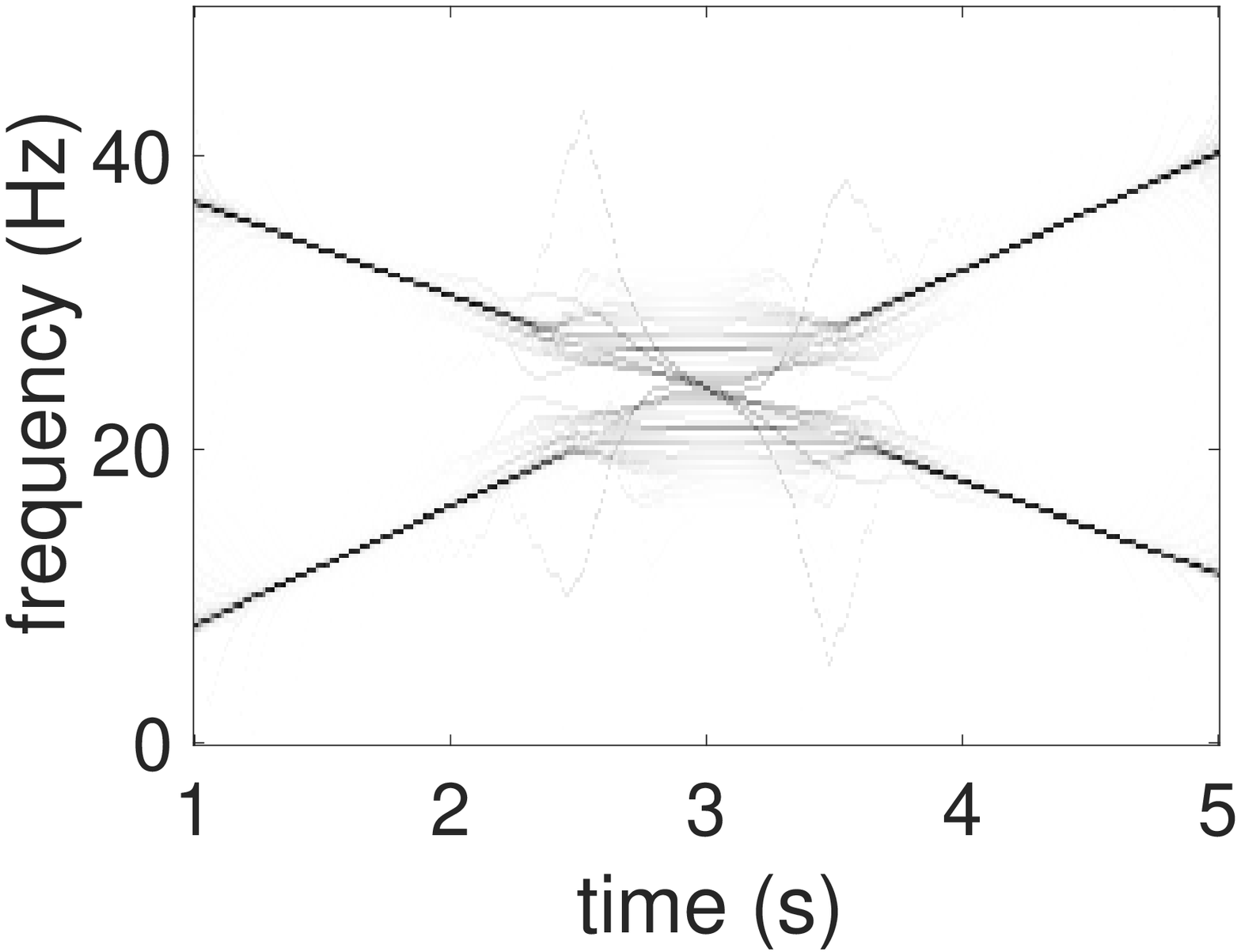}
	\includegraphics[width=0.325\textwidth]{./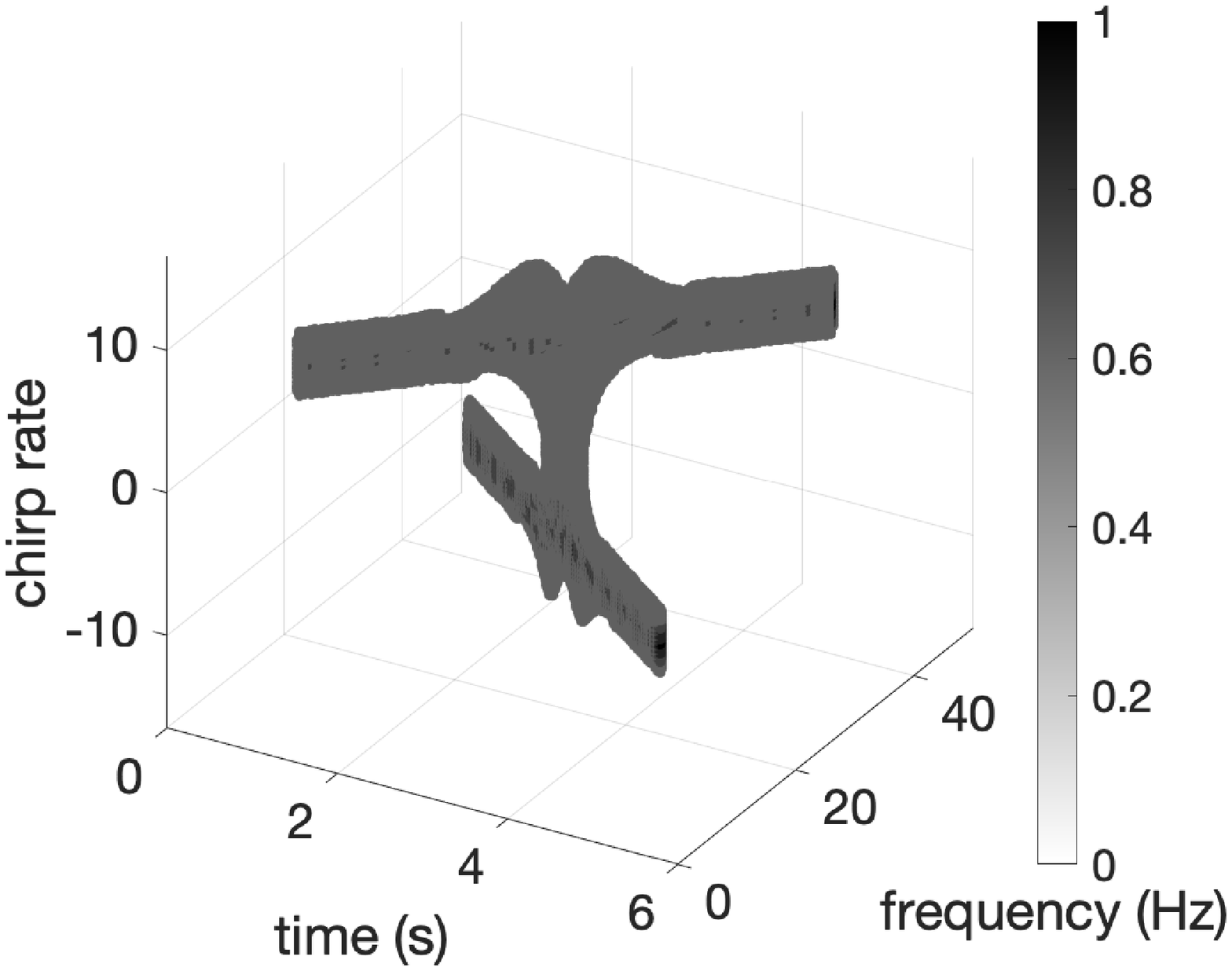}
	\includegraphics[width=0.325\textwidth]{./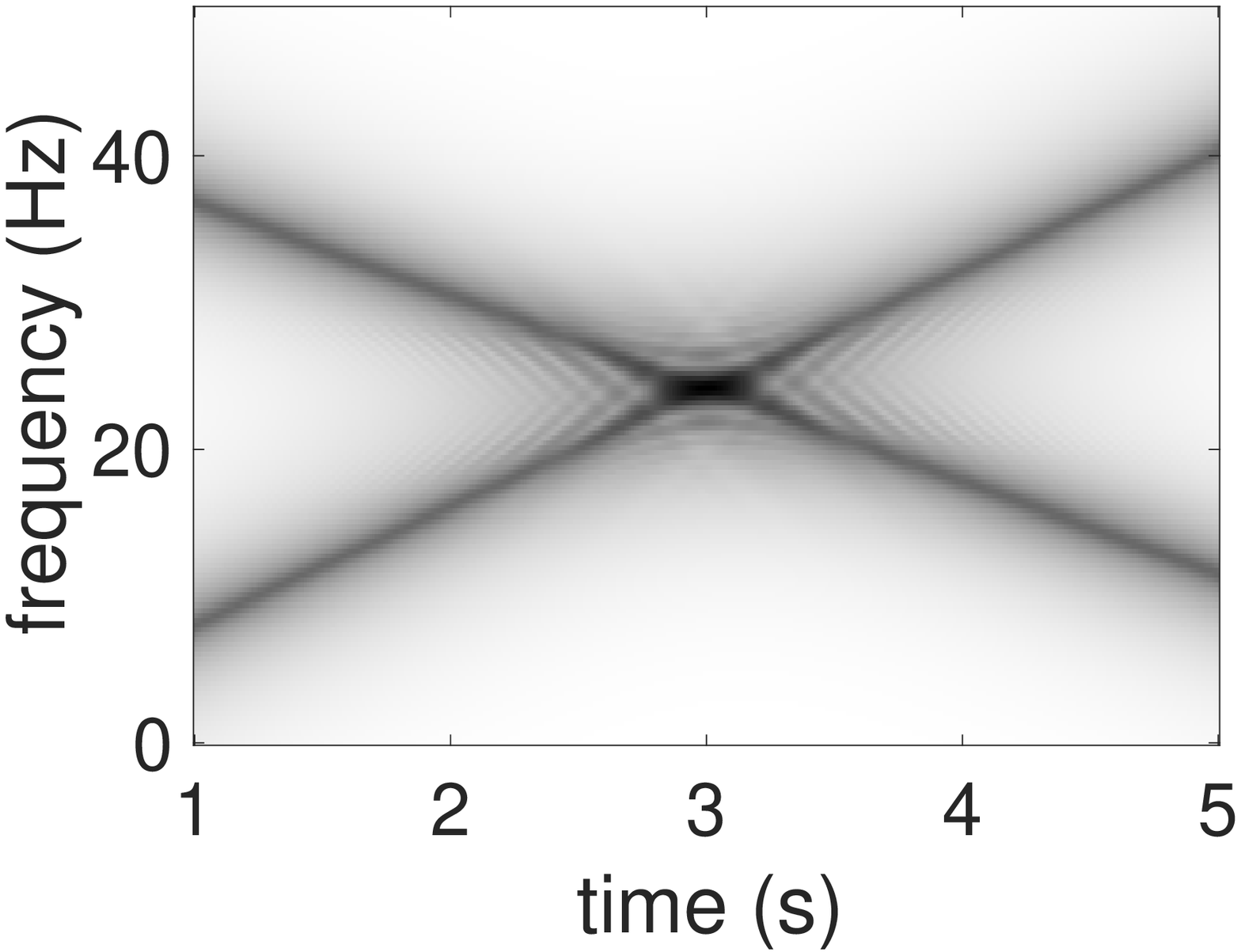}
	\includegraphics[width=.325\textwidth]{./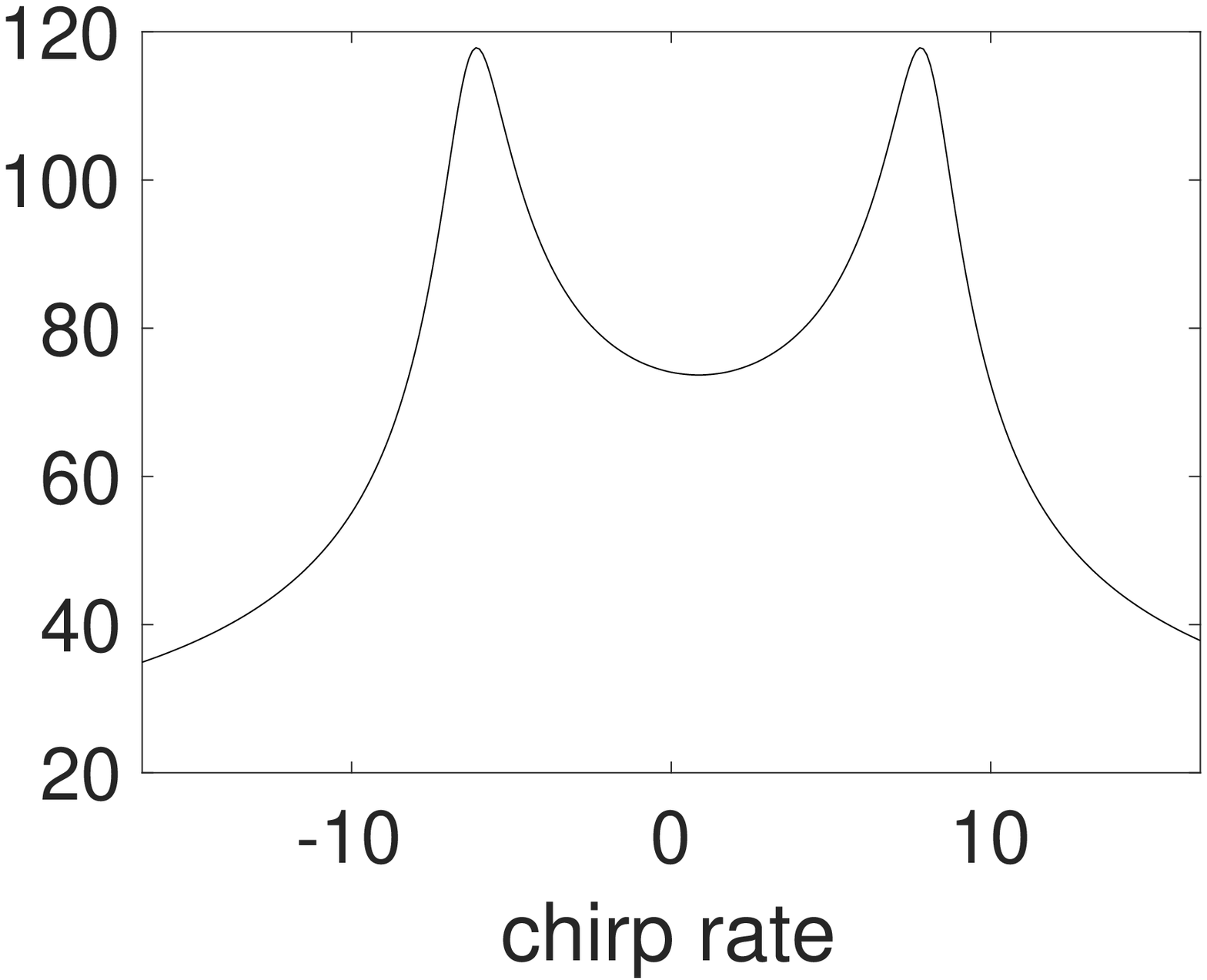}
	\caption{Top row, from left to right: the plot of $\Re(f_1+f_2)$, the plot of the IFs of $f_1$ and $f_2$, and the spectrogram. 
		Note that their IFs have a crossing point at $(t_0,\xi_0) = (3, 24)$. All the plots are generated with the kernel $g_0= e^{-\pi x^2}$.
		Second row, from left to right: the reassignment method, SST, and the 2nd-order SST.
		Bottom row, from left to right:  the 3-dim visualization of the time-frequency-chirp rate representation $\abs{T_f^{(g_0)}(t,\xi,\lambda)}$, the projection of $\abs{T_f^{(g_0)}(t,\xi,\lambda)}$ onto the time-frequency plane, and the plot of $\abs{T_f^{(g_0)}(t_0,\xi_0,\lambda)}$.
		For each time-frequency representation, we set all entries with the magnitude above the 99.99\% percentile of all entries to be the magnitude of the 99.99\% percentile.
	}
	\label{fig:intro0}
\end{figure}

The rest of the paper is organized as follows. In Section \ref{Sect:Chirptransform}, we introduce the chirp transform and study its properties. Based on the theoretical results, SCT and its theoretical support are provided in Section \ref{section:SCT}. We then provide numerical illustration of SCT in Section \ref{Sect:Numerical}. The discussion and future work are given in Section \ref{secttion:discussion future}.

\section{Chirplet transform and its squeezing}\label{Sect:Chirptransform}

\subsection{Chirplet transform}
We start our exploration by introducing the definition of CT.
Denote $\mathcal{S}(\mathbb{R})$ to be the Schwartz space and $\mathcal{S}'(\mathbb{R})$ to be the space of tempered distributions.

\begin{definition}[Chirplet transforms (CT) \cite{mann1995chirplet}]\label{Definition CT}
	The chirplet transform of a tempered distribution $f\in \mathcal{S}'(\mathbb{R})$ associated with a window function $g\in \mathcal{S}(\mathbb{R})$ at $(t,\xi,\lambda)$ is defined as
		\[
		T_{f}^{(g)}(t,\xi,\lambda) :=  \langle f(\cdot),\, g(\cdot-t)e^{2\pi i \xi(\cdot-t)}e^{\pi i \lambda (\cdot-t)^2}\rangle,
		\]
	where $\langle\cdot,\cdot\rangle$ indicates the evaluation of a tempered distribution at a Schwartz function, $t \in \mathbb{R}$ indicates {\em time}, $\xi\in \mathbb{R}$ indicates {\em frequency} and $\lambda \in \mathbb{R}$ indicates the {\em chirp rate}.
\end{definition}

In general, since $g$ is Schwartz, $T_{f}^{(g)}(t,\xi,\lambda)$ is well defined at each $(t,\xi,\lambda)$, and it is a complex function on the TFC domain. Note that when $f$ is a function, 
$T_{f}^{(g)}(t,\xi,\lambda)$ has an integration form $\int_{\mathbb{R}} f(x)\conj{g(x-t)}e^{-2\pi i \xi(x-t)}e^{-\pi i \lambda (x-t)^2}\diff{x}$.

\begin{definition}[Time-frequency-chirp rate representation and time-frequency representation determined by the chirplet transform]
	Following Definition \ref{Definition CT}, we call $|T_{f}^{(g)}(t,\xi,\lambda)|$ the {\em TFC representation} of a signal $f$ with window $g$. 
	We define the {\em TF representation determined by CT} as the projection of the TFC representation onto the TF domain, denoted as $\mathfrak T_{f}^{(g)}(t,\xi)$, which is defined as 
	\[
	\mathfrak T_{f}^{(g)}(t,\xi):=\int_{-\infty}^\infty \abs{T_{f}^{(g)}(t,\xi,\lambda)}\diff{\lambda}\,.
	\] 
\end{definition}
From the dictionary learning perspective, $T_{f}^{(g)}(t,\xi,\lambda)$ comes from fitting an atom with frequency $\xi$ and chirp rate $\lambda$ when the signal is centered at $t$ and truncated by the Gaussian window. 

In the bottom-right subfigure of Figure \ref{fig:intro0}, although we can observe two peaks that correspond to the chirp rates of $f_1$ and $f_2$, those peaks are not sharp in the chirp rate direction. An immediate question we would like to ask is the uncertainty principle associated with CT. We illustrate this point by evaluating CT of a pure linear chirp $f(x) = e^{2\pi i \xi_0 x +\pi i \lambda_0 x^2}$, where $\xi_0,\lambda_0\in \mathbb{R}$, and show that we cannot simultaneously sharply localize a signal in both the time domain and chirp rate domain.
By directly expanding the CT with the window $g(x) = e^{-\pi\alpha x^2}$, where $\alpha>0$, we have
\begin{align}
	T_f^{(g)}(t,\xi,\lambda) &= \int e^{2\pi i \xi_0 x +\pi i \lambda_0 x^2} e^{-\pi\alpha (x-t)^2} e^{-2\pi i \xi (x-t)} e^{-\pi i \lambda(x-t)^2}\diff{x}\nonumber\\
	&=e^{2\pi i \xi_0 t + \pi i \lambda_0 t^2} \int e^{-\pi(\alpha-i\lambda_0 + i\lambda)x^2} e^{-2\pi i (-\xi_0 -\lambda_0 t + \xi)x}\diff{x}\nonumber\\
	&= e^{2\pi i \xi_0 t + \pi i \lambda_0 t^2}
	\frac{1}{\sqrt{\alpha + i(\lambda-\lambda_0)}}e^{\frac{-\pi(
			\xi - \xi_0 -\lambda_0 t)^2}{\alpha+i(\lambda-\lambda_0)}},\label{CT of linear chirp expansion}
\end{align}
whose magnitude is 
\[
\abs{T_f^{(g)}(t,\xi,\lambda)} = 
\frac{1}{\sqrt[4]{\alpha^2 + (\lambda - \lambda_0)^2}} e^{\frac{-\pi \alpha(\xi-\xi_0-\lambda_0 t)^2}{\alpha^2 + (\lambda-\lambda_0)^2}}.
\]
The decay of $\abs{T_f^{(g)}(t,\xi,\lambda)}$ is related to the window $g(x) = e^{-\pi\alpha x^2}$ in the following Lemma.
	\begin{lemma}\label{Proposition slow decay of CT}
		Take $f(x) = e^{2\pi i \xi_0 x +\pi i \lambda_0 x^2}$. For a window $g(x) = e^{-\pi\alpha x^2}$ with fixed $\alpha>0$ and $t\in \mathbb{R}$, when $\xi$ is sufficiently close to $\xi_0+\lambda_0t$, we have
		$$\frac{\sqrt{\alpha}}{(\abs{\lambda-\lambda_0}^2+\alpha^2)^{\frac{1}{4}}}\max_{\lambda} \abs{T_f^{(g)}(t,\xi,\lambda)}\leq\abs{T_f^{(g)}(t,\xi,\lambda)}\leq \frac{e^{\frac{1}{4}}\sqrt{\alpha}}{\sqrt{\abs{\lambda-\lambda_0}}}\max_{\lambda} \abs{T_f^{(g)}(t,\xi,\lambda)}.$$
	\end{lemma}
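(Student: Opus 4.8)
The plan is to work entirely from the closed-form magnitude obtained in \eqref{CT of linear chirp expansion}, namely
\[
\abs{T_f^{(g)}(t,\xi,\lambda)}=\frac{1}{\bigl(\alpha^2+(\lambda-\lambda_0)^2\bigr)^{1/4}}\;e^{-\pi\alpha(\xi-\xi_0-\lambda_0 t)^2/\bigl(\alpha^2+(\lambda-\lambda_0)^2\bigr)}\,.
\]
To keep the algebra transparent I would set $u:=\xi-\xi_0-\lambda_0 t$ and $v:=\lambda-\lambda_0$, and then $w:=\alpha^2+v^2\in[\alpha^2,\infty)$, so that the magnitude becomes a scalar function $h(w):=w^{-1/4}e^{-\pi\alpha u^2/w}$. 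The whole lemma is then a statement comparing $h(w)$ at a generic $w$ with its maximum over $w\in[\alpha^2,\infty)$.

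The first key step is to identify $\max_\lambda\abs{T_f^{(g)}(t,\xi,\lambda)}$ in the regime where $\xi$ is close to $\xi_0+\lambda_0 t$. Differentiating, $(\log h)'(w)=\bigl(\pi\alpha u^2-w/4\bigr)/w^2$, so $h$ is non-increasing on $[\alpha^2,\infty)$ exactly when $4\pi\alpha u^2\le\alpha^2$, i.e.\ when $\abs{\xi-\xi_0-\lambda_0 t}\le\sqrt{\alpha/(4\pi)}$; this inequality is the quantitative content of ``$\xi$ sufficiently close to $\xi_0+\lambda_0 t$''. Under it, the maximum over $\lambda$ is attained at $v=0$, that is at $\lambda=\lambda_0$, with value $h(\alpha^2)=\alpha^{-1/2}e^{-\pi u^2/\alpha}$.

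For the lower bound, since $w=\alpha^2+v^2\ge\alpha^2$ we get $e^{-\pi\alpha u^2/w}\ge e^{-\pi u^2/\alpha}$, hence $\abs{T_f^{(g)}(t,\xi,\lambda)}\ge(\alpha^2+v^2)^{-1/4}e^{-\pi u^2/\alpha}$; writing $e^{-\pi u^2/\alpha}=\sqrt{\alpha}\,\max_\lambda\abs{T_f^{(g)}}$ yields the left inequality. For the upper bound I would combine two elementary estimates: $(\alpha^2+v^2)^{1/4}\ge\abs{v}^{1/2}$, and $e^{-\pi\alpha u^2/w}\le 1\le e^{1/4}e^{-\pi u^2/\alpha}$, where the last step uses $\pi u^2/\alpha\le 1/4$, again from the closeness hypothesis. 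Multiplying these and substituting $e^{-\pi u^2/\alpha}=\sqrt{\alpha}\,\max_\lambda\abs{T_f^{(g)}}$ gives the right inequality (which is vacuous at $\lambda=\lambda_0$, where the left inequality is an equality).

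The only genuinely delicate point is the first step: pinning down the window- and $\xi$-dependent regime in which the maximizer over $\lambda$ is $\lambda_0$ rather than the interior critical value $w=4\pi\alpha u^2$, and making the threshold $\abs{\xi-\xi_0-\lambda_0 t}\le\sqrt{\alpha/(4\pi)}$ explicit. Once that regime is fixed, the remaining inequalities are routine monotonicity/bookkeeping with the scalar function $h$ and the substitutions $u,v,w$, so I do not expect any further obstacle.
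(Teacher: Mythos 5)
Your proposal is correct, and it reproduces exactly the constants in the statement. The first half of your argument — reducing to the scalar function $h(w)=w^{-1/4}e^{-\pi\alpha u^2/w}$ on $w\in[\alpha^2,\infty)$, computing $(\log h)'$, and isolating the threshold $4\pi\alpha u^2\le\alpha^2$ (equivalently $\pi u^2/\alpha\le 1/4$) under which the maximizer over $\lambda$ sits at $\lambda=\lambda_0$ with value $\alpha^{-1/2}e^{-\pi u^2/\alpha}$ — is the same monotonicity analysis the paper carries out, with the identical quantitative meaning of ``$\xi$ sufficiently close to $\xi_0+\lambda_0 t$''. Where you genuinely diverge is in how the two-sided bounds are then extracted. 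The paper parametrizes by the decay ratio $\rho>1$, writes $\lambda-\lambda_0=k\alpha$, and studies the level-set equation $\ln(k^2+1)+\tfrac{4\pi C}{(k^2+1)\alpha}=\ln(\rho^4)+\tfrac{4\pi C}{\alpha}$, sandwiching its root between $\abs{k}=\sqrt{\rho^4-1}$ and $\abs{k}=\sqrt{e}\,\rho^2$ and then inverting in $\rho$; this is an implicit, somewhat roundabout route. You instead bound the two factors separately: $e^{-\pi\alpha u^2/w}\ge e^{-\pi u^2/\alpha}$ from $w\ge\alpha^2$ gives the lower bound, and $(\alpha^2+v^2)^{1/4}\ge\abs{v}^{1/2}$ together with $1\le e^{1/4}e^{-\pi u^2/\alpha}$ (the second use of the closeness hypothesis) gives the upper bound. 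Your route is shorter, avoids the auxiliary level-set computation entirely, and makes visible that the condition $\pi u^2/\alpha\le 1/4$ is used twice — once to locate the maximizer and once to absorb the factor $e^{1/4}$; the paper's route, in exchange, exhibits explicitly the band $\sqrt{\rho^4-1}\,\alpha\le\abs{\lambda-\lambda_0}<\sqrt{e}\,\rho^2\alpha$ on which the magnitude drops to $\rho^{-1}$ of its peak, which is the quantity of independent interest for the ``slow decay'' discussion following the lemma. Your remark that the upper bound is vacuous at $\lambda=\lambda_0$ is also accurate and worth keeping.
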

	
	The proof of Lemma \ref{Proposition slow decay of CT} can be found in \ref{proof:lemma1}. This lemma indicates that for the linear chirp, the CT in the chirp rate axis decays slowly. To be more specific, the window $g(x)$ becomes more localized in time as $\alpha$ gets larger; larger $\alpha$ indicates the magnitude of the chirplet transform decays slower in the chirp rate direction near $\lambda = \lambda_0$ as can be seen in the lower bound. It also explains the blurring of the TFC representation shown in Figure \ref{fig:intro0}.

\subsection{Some properties of chirp transform}
Before introducing our algorithm we discuss some properties of the general chirp transform.
\begin{definition}[Chirp transforms]
	The chirp transform of a function $f\in L^1(\mathbb{R})$ is defined as
	\begin{equation}\label{def:chirp transform}
		Tf(\lambda) = \int_{\mathbb{R}} f(x)e^{-\pi i \lambda x^2}\diff{x},
	\end{equation}
	where $\lambda\in\mathbb{R}$.
\end{definition}
Note that replacing $f(x)$ by $f(x+t)g(x)e^{-2\pi i \xi x}$ in \eqref{def:chirp transform} yields the chirplet transform of $f$ with window $g$, where $g\in\mathcal{S}$, and the results below can be translated to that case.
We start by asking a basic question -- since the chirp transform is similar to the Fourier transform, may we obtain those basic properties of the Fourier transform for the chirp transform? For example, is the chirp transform of a $L^1$ function continuous and decay like that stated in the Riemann-Lebegue theorem? Does the chirp transform preserve the $L^2$ norm like the Plancherel theorem? Or is it bounded from $L^p$ to $L^p$ for some $p$? Note that this problem is closely related to the polynomial Carleson operator and Elias Stein's conjecture. See, for example \cite{lie2020polynomial}, for more details. 
To study these properties, we introduce some function spaces that we will use in the following propositions. We say a function $f(x) \in W_0^{1,1}(\mathbb{R})$, if $\displaystyle\lim_{\abs{x}\to \infty}f(x) =0$, and both $f(x)$ and $f'(x)$ in the weak sense are in $L^1(\mathbb{R})$; a function $f(x) \in C_{0}^{2n}(\mathbb{R})$, if $f$ is $2n$ times continuously differentiable and  $\lim\limits_{\abs{x}\to\infty}f^{(k)}(x) = 0$ for $k = 1,\dots, 2n$. Also, denote $C^\infty_c(\mathbb{R})$ to be the set of smooth functions with compact support.

We start with investigating if the magnitude of the chirp transform has some decay property in terms of chirp rate. Note that the following analysis of the decay rate is a special case of the oscillatory integral analysis widely known in the harmonic analysis community \cite{stein1993harmonic}, and we provide details for the sake of self-containedness.
First we have
\begin{lemma}\label{lemma decay of chirp transform}
	For $-\infty\leq a < b \leq\infty$, we have
	\[
	\left| \int_a^b e^{-\pi i \lambda x^2} \diff{x}\right| \leq \frac{2\sqrt{6}}{\sqrt{\pi}}\lambda^{-\frac{1}{2}}\,.
	\]
\end{lemma}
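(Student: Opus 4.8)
The plan is to prove this as a van der Corput–type estimate for the quadratic phase $\phi(x)=-\pi\lambda x^2$. First I would observe that it suffices to take $\lambda>0$ (the statement presupposes this through the factor $\lambda^{-1/2}$; the case $\lambda<0$ follows by complex conjugation) and then rescale: the substitution $u=\sqrt\lambda\,x$ gives $\int_a^b e^{-\pi i\lambda x^2}\diff x = \lambda^{-1/2}\int_{a\sqrt\lambda}^{b\sqrt\lambda} e^{-\pi i u^2}\diff u$, so the whole claim reduces to the single uniform bound $\bigl|\int_A^B e^{-\pi i u^2}\diff u\bigr|\le \frac{2\sqrt 6}{\sqrt\pi}$ for all $-\infty\le A<B\le\infty$. (One could equally keep $\lambda$ throughout; the arithmetic is identical.)

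Next I would split $[A,B]$ at the stationary point $u=0$ using a free parameter $\delta>0$. The part lying in $(-\delta,\delta)$ is estimated trivially by its length, contributing at most $2\delta$. The remaining part lies in $\{|u|\ge\delta\}$ and is a union of at most two intervals $[c,d]$ on which $u$ keeps a fixed sign and $|u|\ge\delta$; on each such interval I would integrate by parts via the identity
\[
e^{-\pi i u^2} \;=\; \frac{\diff}{\diff u}\!\left(\frac{e^{-\pi i u^2}}{-2\pi i u}\right) \;+\; \frac{i\,e^{-\pi i u^2}}{2\pi u^2}\,.
\]
The boundary terms are controlled by $\frac{1}{2\pi|c|}+\frac{1}{2\pi|d|}$ (the term at $d$ vanishing when $d=\pm\infty$), and $\bigl|\int_c^d \frac{i\,e^{-\pi i u^2}}{2\pi u^2}\diff u\bigr|\le \int_c^d \frac{1}{2\pi u^2}\diff u\le \frac{1}{2\pi|c|}$, so each interval contributes at most $\frac{3}{2\pi\delta}$ and the two together at most $\frac{3}{\pi\delta}$.

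Combining, $\bigl|\int_A^B e^{-\pi i u^2}\diff u\bigr|\le 2\delta+\frac{3}{\pi\delta}$ for every $\delta>0$; minimizing the right-hand side (the optimum is $\delta=\sqrt{3/(2\pi)}$) yields exactly $\frac{2\sqrt 6}{\sqrt\pi}$, and undoing the rescaling restores the factor $\lambda^{-1/2}$, giving the stated inequality.

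The computations are entirely routine, so there is no real obstacle; the only thing needing care is the bookkeeping in the splitting step — one must check all configurations of $[a,b]$ relative to $0$ and to $\pm\delta$ (an interval entirely on one side of $0$, one straddling $0$, half-infinite intervals, etc.), confirm that the improper oscillatory integral over an unbounded interval is well defined so that the estimate is meaningful, and verify that the integration-by-parts boundary terms indeed vanish at $\pm\infty$ because $1/u\to0$. None of this is delicate.
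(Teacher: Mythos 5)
Your proof is correct and follows essentially the same route as the paper's: split the integral at $\pm\delta$ around the stationary point, integrate by parts on the tails to get $\frac{3}{2\pi\delta}$ each (after your rescaling to $\lambda=1$), bound the middle by $2\delta$, and optimize $\delta$. The preliminary substitution $u=\sqrt{\lambda}\,x$ is a purely cosmetic normalization, as you yourself note, so there is nothing substantively different to compare.
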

\begin{proof}
	We only show the case where $0\in (a,b)$, the argument for $0\in \mathbb{R}\backslash(a,b)$ is similar.
	Choose $\delta>0$ and split the integral into 
	\[
	\int_{a}^{-\delta}e^{-\pi i \lambda x^2} \diff{x} + \int_{-\delta}^{\delta}e^{-\pi i \lambda x^2} \diff{x} + \int_{\delta}^{b}e^{-\pi i \lambda x^2} \diff{x}
	\]
	For the first integral,integration by parts gives
	\begin{align*}
		\abs{\int_a^{-\delta}e^{-\pi i \lambda x^2}\diff{x}} &= \abs{\int_a^{-\delta}\frac{1}{-2\pi i \lambda x}\frac{\diff{e^{-\pi i \lambda x^2}}}{\diff{x}}\diff{x}}\\
		&\leq \abs{\frac{e^{-\pi i \lambda x^2}}{2\pi\lambda x}\bigg|_{x = a}^{-\delta}} + \abs{\int_{a}^{-\delta}e^{-\pi i \lambda x^2}\frac{\diff{}}{\diff{x}}(\frac{1}{2\pi i \lambda x})\diff{x}}\\
		&\leq \frac{1}{\pi\lambda\delta} + \frac{1}{2\pi\lambda} \int_{a}^{-\delta}\abs{\frac{\diff{}}{\diff{x}}(\frac{1}{x})}\diff{x}\\
		&= \frac{1}{\pi\lambda\delta} + \frac{1}{2\pi\lambda}\abs{\int_{a}^{-\delta}\frac{\diff{}}{\diff{x}}(\frac{1}{x})\diff{x}} \quad (\text{by the mononoticity of $\frac{1}{x}$ on $(a,-\delta)$})\\
		&\leq \frac{1}{\pi\lambda\delta} + \frac{1}{2\pi\lambda\delta} = \frac{3}{2\pi\lambda\delta}.
	\end{align*}
	By the same argument, we also have 
	\[
	\abs{\int_{\delta}^{b}e^{-\pi i \lambda x^2}\diff{x}} \leq \frac{3}{2\pi\lambda\delta},
	\]
	combined with $\abs{\int_{-\delta}^{\delta}e^{-\pi i \lambda x^2}\diff{x}} \leq 2\delta$, we have 
	\[
	\abs{\int_{a}^{b}e^{-\pi i \lambda x^2}\diff{x}} \leq \frac{3}{\pi\lambda\delta} + 2\delta.
	\]
	Take $\delta = \lambda^{-\frac{1}{2}}\sqrt{\frac{3}{2\pi}}$ that minimizes the right hand side, and we have $$\abs{\int_{a}^{b}e^{-\pi i \lambda x^2}\diff{x}}\leq \frac{2\sqrt{6}}{\sqrt{\pi}}\lambda^{-\frac{1}{2}}.$$
\end{proof}

\begin{proposition}\label{prop CT decay}
	Suppose $f(x) \in W_0^{1,1}(\mathbb{R})$, then $\abs{Tf(\lambda)} \leq \frac{2\sqrt{6}}{\sqrt{\pi}}\norm{f'}_1\lambda^{-\frac{1}{2}}$.
\end{proposition}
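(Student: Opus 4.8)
The plan is to reduce the decay estimate for $Tf(\lambda)$ to the oscillatory integral bound already obtained in Lemma~\ref{lemma decay of chirp transform}. The natural device is integration by parts: write
\[
Tf(\lambda)=\int_{\mathbb R} f(x)\,e^{-\pi i\lambda x^2}\diff x
=\int_{\mathbb R} f(x)\,\frac{\diff}{\diff x}\!\left(\int_0^x e^{-\pi i\lambda s^2}\diff s\right)\diff x,
\]
and then integrate by parts, moving the derivative onto $f$. Denote $E(x):=\int_0^x e^{-\pi i\lambda s^2}\diff s$. Lemma~\ref{lemma decay of chirp transform} (applied on the interval with endpoints $0$ and $x$, and also in the limiting forms $a=-\infty$ or $b=+\infty$) gives the uniform bound $\abs{E(x)}\le \frac{2\sqrt 6}{\sqrt\pi}\lambda^{-1/2}$ for every $x$, with the same constant. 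The boundary term $f(x)E(x)$ vanishes as $\abs{x}\to\infty$ because $f(x)\to 0$ by the hypothesis $f\in W_0^{1,1}$ while $E(x)$ stays bounded; hence
\[
Tf(\lambda)=-\int_{\mathbb R} f'(x)\,E(x)\diff x,
\]
where $f'$ is the weak derivative, which lies in $L^1$ by assumption.

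From here the estimate is immediate: taking absolute values and using the uniform bound on $\abs{E}$,
\[
\abs{Tf(\lambda)}\le \int_{\mathbb R}\abs{f'(x)}\,\abs{E(x)}\diff x
\le \frac{2\sqrt 6}{\sqrt\pi}\,\lambda^{-1/2}\int_{\mathbb R}\abs{f'(x)}\diff x
=\frac{2\sqrt 6}{\sqrt\pi}\,\norm{f'}_1\,\lambda^{-1/2},
\]
which is exactly the claimed inequality.

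The one genuinely delicate point is justifying the integration by parts at the level of weak derivatives and over the whole line. Concretely, one should first carry out the argument on a bounded interval $[-R,R]$ with the antiderivative $E$, where integration by parts for a $W^{1,1}$ function against the $C^1$ (indeed smooth) function $E$ is standard, producing boundary terms $f(R)E(R)-f(-R)E(-R)$; then let $R\to\infty$, using $f(\pm R)\to 0$ and $\abs{E(\pm R)}\le \frac{2\sqrt6}{\sqrt\pi}\lambda^{-1/2}$ to kill the boundary terms, and dominated convergence (with dominating function $\frac{2\sqrt6}{\sqrt\pi}\lambda^{-1/2}\abs{f'}$) to pass the interior integral to the limit. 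The main obstacle is thus purely a matter of careful limiting/regularity bookkeeping rather than any new estimate; everything quantitative is inherited from Lemma~\ref{lemma decay of chirp transform}. (If one prefers to avoid even this bookkeeping, one can first prove the bound for $f\in C_c^\infty(\mathbb R)$, where all steps are classical, and then extend to $W_0^{1,1}$ by density, noting that both sides of the inequality are continuous in the $W^{1,1}$ norm of $f$.)
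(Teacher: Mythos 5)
Your proposal is correct and follows essentially the same route as the paper: integrate by parts against a uniformly bounded antiderivative of $e^{-\pi i\lambda x^2}$ (the paper uses $\int_{-\infty}^x$, you use $\int_0^x$ -- both are controlled by Lemma~\ref{lemma decay of chirp transform}), then pull the constant $\frac{2\sqrt6}{\sqrt\pi}\lambda^{-1/2}$ out of the integral of $\abs{f'}$. The paper handles the passage from smooth compactly supported functions to $W_0^{1,1}$ by density, which is the alternative you yourself note at the end; your direct truncation-and-limit bookkeeping is an equally valid way to justify the same step.
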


\begin{proof}
	When $f\in C^\infty_c(\mathbb{R})\subset W_0^{1,1}(\mathbb{R})$, by the change of variable, we have
	\[
	Tf(\lambda) = \int f(x)\frac{\mathrm{d}}{\diff{x}} \left(\int_{-\infty}^{x}e^{-\pi i \lambda t^2}\diff{t}\right) \diff{x}
	= -\int f'(x)\left(\int_{-\infty}^{x}e^{-\pi i \lambda t^2}\diff{t}\right) \diff{x}\,,
	\]
	so by Lemma \ref{lemma decay of chirp transform}, we have
	\[
	\left|Tf(\lambda)\right|\leq \int \left|f'(x)\right| \left| \int_{-\infty}^x e^{-\pi i \lambda t^2} \diff{t}\right| \diff{x}\leq  \frac{2\sqrt{6}}{\sqrt{\pi}}\lambda^{-\frac{1}{2}}\int\abs{f'(x)}\diff{x}.
	\]
	Since $C^\infty_c(\mathbb{R})$ is dense in $W_0^{1,1}(\mathbb{R})$, we get the proof by a direct extension argument.
\end{proof}

We have several properties of the chirp transforms that are parallel to some properties of the Fourier transforms.  
\begin{proposition}\label{prop unif cont}
	Let $f\in L^1(\mathbb{R})$, then $Tf(\lambda)$ is uniformly continuous.
\end{proposition}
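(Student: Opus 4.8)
The plan is to bound the modulus of continuity of $Tf$ directly and show that it can be chosen independently of the base point, exactly as in the classical proof that the Fourier transform of an $L^1$ function is uniformly continuous. Fix $\lambda,\lambda'\in\mathbb{R}$. Using $e^{-\pi i\lambda x^2}-e^{-\pi i\lambda' x^2}=e^{-\pi i\lambda' x^2}\bigl(e^{-\pi i(\lambda-\lambda')x^2}-1\bigr)$ and $\abs{e^{-\pi i\lambda' x^2}}=1$, one gets
\[
\abs{Tf(\lambda)-Tf(\lambda')}\le \int_{\mathbb{R}}\abs{f(x)}\,\bigl|e^{-\pi i(\lambda-\lambda')x^2}-1\bigr|\diff{x}.
\]
The elementary estimate $\abs{e^{i\theta}-1}\le\min\{2,\abs{\theta}\}$ yields $\bigl|e^{-\pi i(\lambda-\lambda')x^2}-1\bigr|\le\min\{2,\pi\abs{\lambda-\lambda'}x^2\}$, so the integrand is dominated by $2\abs{f(x)}\in L^1$. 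The only point that needs care is that the quadratic factor $x^2$ is unbounded, so this pointwise smallness cannot be turned into a uniform $\delta$ by applying the bound on all of $\mathbb{R}$ at once.

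To produce a uniform $\delta$, I split the integral at a radius $R$. Given $\varepsilon>0$, since $f\in L^1(\mathbb{R})$ I may choose $R$ so that $\int_{\abs{x}>R}\abs{f(x)}\diff{x}<\varepsilon/4$; then the contribution of $\{\abs{x}>R\}$ to the right-hand side above is at most $2\cdot(\varepsilon/4)=\varepsilon/2$, independently of $\lambda$ and $\lambda'$. On $\{\abs{x}\le R\}$ I instead use $\bigl|e^{-\pi i(\lambda-\lambda')x^2}-1\bigr|\le\pi\abs{\lambda-\lambda'}x^2\le\pi R^2\abs{\lambda-\lambda'}$, so that part is at most $\pi R^2\norm{f}_1\abs{\lambda-\lambda'}$. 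Taking $\delta=\varepsilon/\bigl(2\pi R^2\norm{f}_1\bigr)$ (the case $\norm{f}_1=0$ being trivial) makes this $<\varepsilon/2$ whenever $\abs{\lambda-\lambda'}<\delta$. Hence $\abs{Tf(\lambda)-Tf(\lambda')}<\varepsilon$ for all $\lambda,\lambda'$ with $\abs{\lambda-\lambda'}<\delta$, which is precisely uniform continuity.

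An equivalent route, matching the structure of the preceding proofs (first handle a dense/nice subclass, then extend), is this: for $f_R:=f\,\mathbf{1}_{[-R,R]}$ one has $\norm{Tf-Tf_R}_\infty\le\norm{f-f_R}_1\to 0$ as $R\to\infty$, while each $Tf_R$ is globally Lipschitz in $\lambda$ — indeed $\abs{Tf_R(\lambda)-Tf_R(\lambda')}\le\pi R^2\norm{f}_1\abs{\lambda-\lambda'}$ by the same quadratic bound — hence uniformly continuous; and a uniform limit of uniformly continuous functions is uniformly continuous. I do not expect a genuine obstacle here; the only thing to be vigilant about is isolating the tail $\{\abs{x}>R\}$ before invoking the $x^2$ estimate, since that estimate degrades at infinity and is what distinguishes this from a one-line dominated-convergence argument.
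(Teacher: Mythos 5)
Your argument is correct and is essentially identical to the paper's proof: both split the integral at a radius $r$ (your $R$), use the bound $\abs{e^{-\pi i h x^2}-1}\le \pi\abs{h}x^2$ on the bounded region and the $L^1$ tail estimate $2\int_{\abs{x}>r}\abs{f}$ outside, and then choose $r$ and $\delta$ in the same order. The alternative truncation-and-uniform-limit route you sketch is a valid repackaging of the same idea but is not needed.
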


\begin{proof}
	Take $\epsilon>0$. For any $h,r>0$, we have the following control:
	\begin{align*}
		\abs{Tf(\lambda+h)-Tf(\lambda)} &= \abs{\int f(x)(e^{-\pi i (\lambda+h)x^2}-e^{-\pi i \lambda x^2})\diff{x}}\\
		&\leq \int\abs{f(x)}\abs{e^{-\pi i h x^2}-1}\diff{x}\\
		&= \int_{\abs{x}\leq r}\abs{f(x)}\abs{e^{-\pi i h x^2}-1}\diff{x} + \int_{\abs{x}>r}\abs{f(x)}\abs{e^{-\pi i h x^2}-1}\diff{x}\\
		&\leq \int_{\abs{x}\leq r} \abs{f(x)}\pi\abs{h}x^2\diff{x} + 2\int_{\abs{x}>r}\abs{f(x)}\diff{x}.
	\end{align*}
	We only need to choose $h$ and $r$ so that $\int_{\abs{x}>r}\abs{f(x)}\diff{x}<\frac{\epsilon}{4}$, and $\pi\abs{h}r^2 < \frac{\epsilon}{2\norm{f}_1}$.
\end{proof}

By Propositions \ref{prop CT decay} and \ref{prop unif cont}, we know that the chirp transform, and hence CT, has a similar property as the Riemann-Lebesgue Theorem of the Fourier transform.

\begin{proposition}
	For $f\in L^1(\mathbb{R})$, if $x^2f(x)\in L^1(\mathbb{R})$, then $Tf(\lambda)$ is differentiable and 
	\[
	\frac{\diff{Tf(\lambda)}}{\diff{\lambda}} = -\pi i T(x^2 f)(\lambda).
	\]
\end{proposition}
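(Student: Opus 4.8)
The plan is to follow the classical template for differentiation under the integral sign, exactly as one does for the Fourier transform. Formally, $\frac{\partial}{\partial\lambda}\left(f(x)e^{-\pi i\lambda x^2}\right) = -\pi i x^2 f(x)e^{-\pi i\lambda x^2}$, and the right-hand side is dominated in absolute value by $\pi x^2|f(x)|$, which by hypothesis lies in $L^1(\mathbb{R})$ independently of $\lambda$. So the strategy is to justify exchanging $\frac{d}{d\lambda}$ with $\int_{\mathbb{R}}$ using a dominated-convergence argument.

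Concretely, I would fix $\lambda\in\mathbb{R}$ and take an arbitrary sequence $h_n\to 0$ with $h_n\neq 0$. Form the difference quotient
\[
\frac{Tf(\lambda+h_n)-Tf(\lambda)}{h_n} = \int_{\mathbb{R}} f(x)\,\frac{e^{-\pi i(\lambda+h_n)x^2}-e^{-\pi i\lambda x^2}}{h_n}\diff{x} = \int_{\mathbb{R}} f(x)e^{-\pi i\lambda x^2}\,\frac{e^{-\pi i h_n x^2}-1}{h_n}\diff{x}.
\]
The pointwise limit of the integrand as $n\to\infty$ is $-\pi i x^2 f(x)e^{-\pi i\lambda x^2}$. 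For the dominating function, I would use the elementary bound $|e^{i\theta}-1|\leq|\theta|$ for real $\theta$, which gives $\left|\frac{e^{-\pi i h_n x^2}-1}{h_n}\right|\leq \pi x^2$, so the integrand is bounded by $\pi x^2|f(x)|\in L^1(\mathbb{R})$ uniformly in $n$. The dominated convergence theorem then yields
\[
\lim_{n\to\infty}\frac{Tf(\lambda+h_n)-Tf(\lambda)}{h_n} = -\pi i\int_{\mathbb{R}} x^2 f(x)e^{-\pi i\lambda x^2}\diff{x} = -\pi i\,T(x^2 f)(\lambda),
\]
and since the sequence $h_n\to 0$ was arbitrary, the limit of the difference quotient exists and equals $-\pi i\,T(x^2 f)(\lambda)$, i.e.\ $Tf$ is differentiable with the claimed derivative. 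One should also note $x^2 f(x)\in L^1$ ensures $T(x^2 f)(\lambda)$ is well defined.

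There is essentially no obstacle here; this is a routine exercise. The only point requiring the slightest care is producing a single $L^1$ dominating function valid for all $h_n$, and that is handled cleanly by $|e^{i\theta}-1|\leq|\theta|$. If one wants to be fully rigorous about ``differentiable'' meaning the two-sided limit exists, the sequential formulation above covers it; alternatively one could invoke the standard Leibniz rule for differentiating parameter-dependent integrals, citing the hypotheses (measurability in $x$, differentiability in $\lambda$, and the uniform $L^1$ bound on the $\lambda$-derivative) as verified above.
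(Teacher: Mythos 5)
Your argument is correct and is exactly the dominated convergence argument the paper invokes (the paper simply states that the proof follows from the dominated convergence theorem without writing out the details). Your use of $|e^{i\theta}-1|\le|\theta|$ to obtain the uniform dominating function $\pi x^2|f(x)|$ is the right way to fill in those details.
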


\begin{proof}
	The proof is straightforward by applying the dominated convergence theorem.
\end{proof}

Unfortunately, we do not have a norm-preserving theorem for the chirp transforms similar to the Plancherel theorem in the Fourier transforms partially due to the slow decay rate in $\lambda$. On the contrary, we show that the chirp transforms do not satisfy ``most'' of the weak type inequalities. Before we state the proposition, we recall the definition of the weak type inequalities. 
	Let $(X, \nu_1)$ and $(Y, \nu_2)$ be measure spaces, and let $T$ be an operator from $L^p(X, \nu_1)$ into the space of measurable functions from $Y$ to $\mathbb{C}$. We say that $T$ is of weak-type $(p,q)$, $p, q<\infty$ if for $\lambda\geq 0$ we have
	\[
	\nu_2(\{y\in Y: |Tf(y)|>\lambda\})\leq\left(\frac{C\norm{f}_p}{\lambda}\right)^q.
	\] 
	This is a condition weaker than that if $T$ is bounded from $L^p(X, \nu_1)$ to $L^q(Y, \nu_2)$. In our case of CT, $X=Y=\mathbb{R}$ and $\nu_1,\nu_2$ are the Lebesgue measure on $\mathbb{R}$.

\begin{proposition}
	The chirp transform operator $T$ is not weak-type $(p,q)$, if $\displaystyle \frac{2q}{p}-2q+4 \neq 0$.
\end{proposition}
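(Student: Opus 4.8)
The plan is to run a dilation (scaling) argument. The chirp transform behaves predictably under rescaling of its input, while an $L^p\to L^{q,\infty}$ weak-type bound is quantitatively stable under such rescalings; matching the two scalings forces the exponents onto the single ``critical line'' $\tfrac{2q}{p}-2q+4=0$. So if that relation fails, a fixed test function together with a one-parameter family of its dilates will violate the weak-type inequality.

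Concretely, I would first fix a convenient test function, the Gaussian $f(x)=e^{-\pi x^2}\in\mathcal S(\mathbb R)\subset L^1\cap L^p$, for which \eqref{CT of linear chirp expansion} (with $\xi_0=\lambda_0=t=\xi=0$ and $\alpha=1$), or a one-line direct integration, gives $Tf(\lambda)=(1+i\lambda)^{-1/2}$, hence $\abs{Tf(\lambda)}=(1+\lambda^2)^{-1/4}$. Thus for, say, $t_0=\tfrac12$ the level set $E:=\{\lambda:\abs{Tf(\lambda)}>t_0\}$ has $m:=\nu_2(E)\in(0,\infty)$. Next, for $a>0$ put $f_a(x):=f(ax)$; then $\norm{f_a}_p=a^{-1/p}\norm{f}_p$, and the change of variables $x\mapsto x/a$ in \eqref{def:chirp transform} gives the scaling identity $Tf_a(\lambda)=a^{-1}\,Tf(a^{-2}\lambda)$.

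Then I would assume, toward a contradiction, that $T$ is of weak-type $(p,q)$ on its natural (dense) domain with constant $C$, and feed $g=f_a$ into the inequality $\nu_2(\{\abs{Tg}>s\})\le(C\norm{g}_p/s)^q$. Using the scaling identity, $\{\lambda:\abs{Tf_a(\lambda)}>s\}=\{\lambda:\abs{Tf(a^{-2}\lambda)}>as\}$, and the substitution $\mu=a^{-2}\lambda$ turns its measure into $a^2\,\nu_2(\{\abs{Tf}>as\})$. Combining, then writing $t=as$ and rearranging, one reaches
\[
\nu_2(\{\abs{Tf}>t\})\ \le\ C^q\,\norm{f}_p^q\,a^{\gamma}\,t^{-q},\qquad \gamma:=q-\tfrac{q}{p}-2 ,
\]
valid for all $a>0$ and $t>0$; a quick check gives $-2\gamma=\tfrac{2q}{p}-2q+4$, so the hypothesis $\tfrac{2q}{p}-2q+4\neq0$ is exactly $\gamma\neq0$. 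Fixing $t=t_0$ makes the left-hand side the fixed positive number $m$; letting $a\to0^+$ when $\gamma>0$, or $a\to\infty$ when $\gamma<0$, sends $a^\gamma\to0$ and forces $m\le0$, the desired contradiction. Hence $T$ is not of weak-type $(p,q)$ off the critical line.

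I expect the only real obstacle to be the non-triviality of $Tf$, i.e.\ checking that $\abs{Tf}$ exceeds a positive threshold on a set of positive measure; for the Gaussian this is immediate from the closed form, so essentially no work is needed. (For a ``soft'' version with an arbitrary nonnegative nonzero $f\in L^1\cap L^p$, one would note that after the substitution $u=x^2$ the chirp transform becomes, up to a harmless rescaling of $\lambda$, the Fourier transform of a nonzero $L^1$ function supported in $[0,\infty)$, hence the boundary value of a function holomorphic in a half-plane, which cannot vanish on a set of positive measure; this also pinpoints $Tf\equiv0$ as happening exactly for odd $f$.) A secondary, purely bookkeeping point is to fix the dense domain on which ``weak-type $(p,q)$'' is asserted (e.g.\ $\mathcal S$ or $L^1\cap L^p$); since the Gaussian lies in all of them, the choice is immaterial for the counterexample.
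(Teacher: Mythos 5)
Your proof is correct and is essentially the same argument as the paper's: both test the weak-type inequality against the one-parameter family of Gaussian dilates and exploit the resulting power of the dilation parameter, whose exponent vanishes exactly on the line $\frac{2q}{p}-2q+4=0$. Your formulation via the scaling identity $Tf_a(\lambda)=a^{-1}Tf(a^{-2}\lambda)$ at a fixed threshold is a cleaner packaging of the paper's explicit computation with the normalized family $f_\alpha(x)=\alpha^{1/p}e^{-\pi\alpha^2x^2/p}$ and the $\alpha$-dependent threshold $\eta$, but the underlying mechanism is identical.
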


\begin{proof}
	Fix $p>1$. We show that for any $C>0$, there exists $f\in L^p(\mathbb{R})$ and $\eta>0$, such that 
	\[
	m(\{ \lambda: \abs{Tf(\lambda)}>\eta\}) > \frac{C\norm{f}_p^q}{\eta^q},
	\]
	where $\frac{2q}{p}-2q+4 \neq 0$ and $m$ is the Lebesgue measure.
	Let $f_{\alpha}(x) = \alpha^{\frac{1}{p}}e^{-\frac{\pi\alpha^2 x^2}{p}}$, $\alpha>0$. One can verify that $\norm{f_{\alpha}}_p = 1$ for any $\alpha>0$. It suffices to show that for any $C>0$, we can always find $\alpha, \eta>0$, such that
	\begin{equation}
		m(\{ \lambda: \abs{Tf_{\alpha}(\lambda)}>\eta\}) > \frac{C\norm{f_{\alpha}}_p^q}{\eta^q} = \frac{C}{\eta^q}\,.\label{proof:prop5 eq1}
	\end{equation}
	It is easy to obtain $Tf_{\alpha}(\lambda) = \frac{\alpha^{\frac{1}{p}}}{\sqrt{\frac{\alpha^2}{p}+i\lambda}}$, so $\abs{Tf_{\alpha}(\lambda)} = \frac{\alpha^{\frac{1}{p}}}{\sqrt[4]{\frac{\alpha^4}{p^2} + \lambda^2}}$. By solving the inequality $\frac{\alpha^{\frac{1}{p}}}{\sqrt[4]{\frac{\alpha^4}{p^2} + \lambda^2}} > \eta$, we have 
	\begin{equation}
		m(\{ \lambda: \abs{Tf_{\alpha}(\lambda)}>\eta\}) = 2\sqrt{\frac{\alpha^{\frac{4}{p}}}{\eta^4}-\frac{\alpha^4}{p^2}}\,, \label{proof:prop5 eq2}
	\end{equation}
	for $\frac{\alpha^{\frac{4}{p}}}{\eta^4}-\frac{\alpha^4}{p^2}>0$; otherwise the measure is zero. Note that the condition $\frac{\alpha^{\frac{4}{p}}}{\eta^4}-\frac{\alpha^4}{p^2}>0$ gives $\eta<\alpha^{\frac{1}{p}-1}p^{\frac{1}{2}}$. Select $\eta = \frac{1}{2}\alpha^{\frac{1}{p}-1}p^{\frac{1}{2}}$. We now determine $\alpha$. With \eqref{proof:prop5 eq2} and the selected $\eta$, \eqref{proof:prop5 eq1} becomes
	\[
	2\sqrt{\frac{\alpha^{\frac{4}{p}}}{(\frac{1}{2}\alpha^{\frac{1}{p}-1}p^{\frac{1}{2}})^4}-\frac{\alpha^4}{p^2}} > \frac{C}{(\frac{1}{2}\alpha^{\frac{1}{p}-1}p^{\frac{1}{2}})^q}.
	\]
	Simplifying this inequality gives us
	\[
	\alpha^{\frac{2q}{p}-2q+4}>\frac{2^{2q-2}}{15}C^2p^{2-q}.
	\]
	Therefore, if $\frac{2q}{p}-2q+4>0$, we can set $\alpha$ to be sufficiently large, so that the left hand side is larger than the right hand side; if $\frac{2q}{p}-2q+4<0$, we can set $\alpha$ to be sufficiently small. We thus finish the claim.
\end{proof}

\begin{corollary}
	The chirp transform operator $T$ is not weak-type $(2,2)$.
\end{corollary}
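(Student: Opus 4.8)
The plan is to obtain this as an immediate specialization of the preceding Proposition. That result asserts that $T$ fails to be weak-type $(p,q)$ whenever $\frac{2q}{p}-2q+4\neq 0$, so the only task is to check that the pair $(p,q)=(2,2)$ satisfies this inequality. Substituting $p=q=2$ gives
\[
\frac{2\cdot 2}{2}-2\cdot 2+4 = 2-4+4 = 2 \neq 0,
\]
so the hypothesis of the Proposition is met and the conclusion follows verbatim.

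Concretely, I would simply invoke the explicit family $f_\alpha(x)=\alpha^{1/2}e^{-\pi\alpha^2 x^2/2}$ (the $p=2$ case of the family used in the proof of the Proposition), for which $\norm{f_\alpha}_2=1$ and $\abs{Tf_\alpha(\lambda)}=\alpha^{1/2}(\tfrac{\alpha^4}{4}+\lambda^2)^{-1/4}$, and note that the level-set measure computation there, with the choice $\eta=\tfrac12\alpha^{-1/2}\sqrt2$, reduces the desired violation of the weak-type $(2,2)$ bound to $\alpha^{2}>\tfrac{4}{15}C^2$, which holds for $\alpha$ large. There is no real obstacle here: the corollary is a one-line consequence of the Proposition, and the only thing to verify is the arithmetic that the exponent $\frac{2q}{p}-2q+4$ is nonzero at $(2,2)$, which rules out exactly the borderline case the Proposition leaves open.
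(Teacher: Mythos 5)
Your proposal is correct and is exactly what the paper intends: the corollary is the immediate specialization of the preceding Proposition to $(p,q)=(2,2)$, where $\frac{2q}{p}-2q+4=2\neq 0$. Your explicit check with $f_\alpha(x)=\alpha^{1/2}e^{-\pi\alpha^2x^2/2}$, the level-set computation, and the reduction to $\alpha^2>\frac{4}{15}C^2$ all match the paper's general argument evaluated at $p=q=2$.
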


We next show that the decay rate of $Tf(\lambda)$ is related to the vanishing order of $f$ at the origin. Note that this is intuitively correct. Unlike $e^{i\xi_0x}$ for some $\xi_0>0$,  $e^{i\lambda x^2}$ for some $\lambda\neq 0$ does not oscillate around $0$. This is the main resource of the slow decay rate of $Tf(\lambda)$. Thus, if we want to have a faster decay rate of $Tf(\lambda)$, we shall force the function $f$ to be zero around $0$. This intuition is made clear in the following proposition.
\begin{proposition}\label{prop2}
	Suppose $f(x)\in C_{0}^{2n}(\mathbb{R})$, $n\geq 1$ and $f\in L^1(\mathbb{R})$. If we assume that $f(0) = f''(0) = \cdots = f^{(2n-2)}(0) = 0$ and  $f^{(2n)}(0)\neq 0$, then 
	\[
	\abs{Tf(\lambda)}\leq C\lambda^{-\frac{2n+1}{3}},
	\]
	for some contant $C>0$ and $C$ depends on $n$ and $f$.
\end{proposition}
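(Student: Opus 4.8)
\emph{Proof plan.} The idea is to localize near $x=0$, where the vanishing hypotheses on $f$ are used directly, and to control the complement by a non-stationary-phase (iterated integration by parts) argument, optimizing the localization radius at the end; note that the target exponent $\tfrac{2n+1}{3}$ is well below the sharp stationary-phase rate $\tfrac{2n+1}{2}$, so only soft estimates are needed. The first move is to reduce to $f$ even: since $e^{-\pi i\lambda x^2}$ is even, the odd part of $f$ contributes nothing to the chirp transform, so replacing $f$ by $\tfrac12\big(f(x)+f(-x)\big)$ does not change $Tf$, stays in $C_0^{2n}(\mathbb R)\cap L^1(\mathbb R)$, and upgrades the hypothesis to $f^{(k)}(0)=0$ for \emph{all} $k=0,1,\dots,2n-1$ (odd orders now vanish by evenness, even orders by assumption) while $f^{(2n)}(0)\neq 0$. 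Hence assume henceforth $f$ even with $f^{(k)}(0)=0$ for $0\le k\le 2n-1$.

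With this reduction, Taylor's theorem with Lagrange remainder gives the \emph{global} bounds $\abs{f^{(j)}(x)}\le M_j\abs{x}^{2n-j}$ for $0\le j\le 2n-1$ and all $x\in\mathbb R$, where $M_j$ depends only on $\norm{f^{(2n)}}_\infty$; this supremum is finite because $f^{(2n)}$ is continuous and tends to $0$ at infinity, and by the same token $f$ and $f',\dots,f^{(2n)}$ are all bounded (a continuous $L^1$ function whose derivative tends to $0$ cannot be large on a unit interval far out). These two facts --- polynomial vanishing near $0$ and boundedness of $f$ and its derivatives --- are exactly what make the hypothesis $C_0^{2n}\cap L^1$ usable \emph{without} assuming $f'\in L^1(\mathbb R)$.

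Now fix $\delta\in(0,1]$ (to be chosen) and split $Tf(\lambda)=\int_{\abs{x}\le\delta}f(x)e^{-\pi i\lambda x^2}\diff{x}+\int_{\abs{x}>\delta}f(x)e^{-\pi i\lambda x^2}\diff{x}=:I_1+I_2$. The near piece is immediate from $j=0$: $\abs{I_1}\le M_0\int_{\abs{x}\le\delta}\abs{x}^{2n}\diff{x}=\tfrac{2M_0}{2n+1}\delta^{2n+1}$. For $I_2$ I would integrate by parts $m:=2n$ times on $\int_\delta^\infty f(x)e^{-\pi i\lambda x^2}\diff{x}$ (and symmetrically on $\int_{-\infty}^{-\delta}$), each time writing $e^{-\pi i\lambda x^2}=\tfrac{1}{-2\pi i\lambda x}\tfrac{\diff{}}{\diff{x}}e^{-\pi i\lambda x^2}$; that $f\in C^{2n}$ is precisely what licenses $2n$ such steps. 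This produces boundary terms at $x=\delta$ plus a remainder $\int_\delta^\infty h_m(x)e^{-\pi i\lambda x^2}\diff{x}$, where $h_0=f$ and $h_k=\tfrac{1}{2\pi i\lambda}\tfrac{\diff{}}{\diff{x}}\big(h_{k-1}(x)/x\big)$; the boundary contributions at $+\infty$ vanish because each $h_k$ is a bounded quantity divided by a positive power of $x$. An easy induction gives $\abs{h_k(x)}\le\tfrac{C_k}{\lambda^k}\sum_{j=0}^k\abs{f^{(j)}(x)}\abs{x}^{-(2k-j)}$. Using $\abs{f^{(j)}(\delta)}\le M_j\delta^{2n-j}$, the $k$-th boundary term is then $O\big(\delta^{2n-2k+1}\lambda^{-k}\big)$; and for the remainder one bounds $\abs{f^{(j)}(x)}$ by $M_j\abs{x}^{2n-j}$ on $[\delta,1]$ and by $\norm{f^{(j)}}_\infty$ on $[1,\infty)$ (invoking $f\in L^1$ for the $j=0$ tail), the integral converging at $+\infty$ precisely because $m=2n\ge 2$, which yields $\int_\delta^\infty\abs{h_m}=O(\delta^{1-2n}\lambda^{-2n})$.

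Collecting everything, for $\lambda\ge 1$,
\[
\abs{Tf(\lambda)}\lesssim \delta^{2n+1}+\sum_{k=1}^{2n}\delta^{2n-2k+1}\lambda^{-k}+\delta^{1-2n}\lambda^{-2n},
\]
and the choice $\delta=\lambda^{-1/3}$ makes the first term equal to $\lambda^{-(2n+1)/3}$, each boundary term $O(\lambda^{-(2n+k+1)/3})$, and the remainder $O(\lambda^{-(4n+1)/3})$, all dominated by $\lambda^{-(2n+1)/3}$; for $\lambda\le 1$ the estimate is trivial since $\abs{Tf(\lambda)}\le\norm{f}_1$. The assembled $C$ depends only on $n$, $\norm{f}_1$, and $\norm{f^{(k)}}_\infty$ for $k\le 2n$, hence on $n$ and $f$. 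The step I expect to be the genuine obstacle is the far-field bookkeeping: one must integrate by parts \emph{enough} times --- at least twice, and $2n$ works uniformly and is exactly what the $C^{2n}$ hypothesis affords --- so that the weights $\abs{x}^{-(2m-j)}$ are integrable at infinity against the merely bounded (not $L^1$) derivatives $f^{(j)}$, which is what substitutes for the $f'\in L^1$ assumption of Proposition~\ref{prop CT decay}, and one must then check that all boundary terms and the near-origin term can be simultaneously balanced by the single scaling $\delta\sim\lambda^{-1/3}$. The preliminary even-part reduction, which converts the crude $\abs{f(x)}\lesssim\abs{x}$ near $0$ into $\abs{f(x)}\lesssim\abs{x}^{2n}$, is the other point that has to be noticed for the argument to get off the ground.
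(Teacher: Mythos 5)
Your proof is correct and follows essentially the same route as the paper's: split at radius $\delta\sim\lambda^{-1/3}$, bound the near piece by $\delta^{2n+1}$ using the $2n$-fold vanishing at the origin, and control the far piece by $2n$ iterated integrations by parts against the chirp phase. Your preliminary symmetrization of $f$ (which removes the odd-order Taylor coefficients at the outset) is a clean substitute for the paper's device of expanding $f$ in its Taylor polynomial and observing that the odd powers integrate to zero against the even cutoff, and your sharp cutoff with explicit boundary-term bookkeeping replaces the paper's smooth cutoff $\psi(x/\epsilon)$; both variants balance to the same exponent $\lambda^{-(2n+1)/3}$.
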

\begin{proof}
	Fix $\lambda>0$. Let $D$ be a differential operator such that $Df := -\frac{1}{2\pi i\lambda x}\frac{\diff{f}}{\diff{x}}$, and $\tilde{D}$ such that $\tilde{D}f := \frac{1}{2\pi i\lambda}\frac{\diff{}}{\diff{x}}(\frac{f}{x})$. First, if $f$ satisfies the above conditions and is zero in $(-\epsilon,\epsilon)$ for some $\epsilon >0$, since $D^{k}(e^{-\pi i \lambda x^2}) = e^{-\pi i \lambda x^2}$ for any $k$ and $x\neq 0$, we have  
	\begin{align*}
		Tf(\lambda) 
		&= \int_{\mathbb{R}\backslash (-\epsilon,\epsilon)} f(x)e^{-\pi i \lambda x^2}\diff{x}\\
		&= \int_{\mathbb{R}\backslash (-\epsilon,\epsilon)} f(x)D^{2n}(e^{-\pi i \lambda x^2})\diff{x}= \int_{\mathbb{R}\backslash (-\epsilon,\epsilon)}\tilde{D}^{2n}(f)e^{-\pi i \lambda x^2}\diff{x}.
	\end{align*}
	One can easily show by induction on $n$ together with the boundedness of the derivatives of $f$ that $\abs{Tf(\lambda)}\leq C\lambda^{-2n}\epsilon^{-(4n-1)}$ for some $C>0$ that depends on $n$ and $f$. 
	
	Then we prove the statement of the proposition. Suppose for some $\epsilon>0$, we can write $f$ in its Taylor expansion:
	\begin{align*}
		f(x) &= f(0) + f'(0)x +  \cdots + \frac{f^{(2n-1)}(0)}{(2n-1)!}x^{2n-1} + \frac{f^{(2n)}(\xi(x))}{(2n)!}x^{2n}\\
		&= f'(0)x+ \cdots + \frac{f^{(2n-3)}(0)}{(2n-3)!}x^{2n-3}+ \frac{f^{(2n-1)}(0)}{(2n-1)!}x^{2n-1} + \frac{f^{(2n)}(\xi(x))}{(2n)!}x^{2n}
	\end{align*}
	for $\abs{x}<2\epsilon$, and $\xi(x)$ is some number between $0$ and $x$.
	Suppose $\psi(x)\in C_c^{\infty}(\mathbb{R})$ with $\psi(x) = 1$ for $\abs{x}\leq 1$, and $\psi(x) = 0$ for $\abs{x}\geq 2$, and we also assume $\psi$ is even. Then we can decompose $f$ into $f(x) = f(x)\psi(\frac{x}{\epsilon})+ f(x)(1-\psi(\frac{x}{\epsilon}))$. From the first part of the proof, we know that $\abs{T((1-\psi(\frac{x}{\epsilon}))f)(\lambda)}\leq C\lambda^{-2n}\epsilon^{-(4n-1)}$. On the other hand, $T(x^{k}\psi(\frac{x}{\epsilon}))(\lambda)= 0$ for odd $k$ by symmetry. What remains is controlling $\frac{f^{(2n)}(\xi(x))}{(2n)!}x^{2n}\psi(\frac{x}{\epsilon})$. By the boundedness of $f^{(2n)}(x)$, we have
	\begin{align*}
		\abs{T\left(\frac{f^{(2n)}(\xi(x))}{(2n)!}x^{2n}\psi\left(\frac{x}{\epsilon}\right)\right)(\lambda)}
		&\leq C\int_{\abs{x}\leq 2\epsilon} x^{2n}\diff{x}\leq C\epsilon^{2n+1}\,,
	\end{align*}
	where $C>0$ depends on $n$ and $f$.
	Combining these estimations, we have
	\begin{align*}
		\abs{Tf(\lambda)}\leq C\left(\lambda^{-2n}\epsilon^{-(4n-1)}+\epsilon^{2n+1}\right)\,.
	\end{align*}
	If we select $\epsilon = \lambda^{-\frac{1}{3}}$, we have
	\begin{align*}
		\abs{Tf(\lambda)}\leq C \lambda^{-\frac{2n+1}{3}}
	\end{align*}
\end{proof}

\section{Proposed algorithm -- Synchrosqueezed chirplet transform}\label{section:SCT}

The above result leads us to consider if it is possible to sharpen the TFC representation by some idea similar to the reassignment technique, like synchrosqueezing \cite{daubechies2011synchrosqueezed}, used to handle the uncertainty principle of the linear-type TF analysis algorithms. Indeed, we ask if it is possible to manipulate the phase information in CT so that the TFC representation can be sharpened. Below we introduce this idea with a linear chirp $f(x)= e^{2\pi i \xi_0 x +\pi i \lambda_0 x^2}$, where $\lambda_0$ is allowed to be large.
Note that the idea of the original SST might not be suitable for our purpose due to the non-trivial chirp rate. Instead, we need to consider the higher order phase information, like that considered in the second order SST \cite{oberlin2017second, behera2018theoretical}, to handle the possibly non-trivial chirp rate. 
We take any real-valued window function $g(x)$ in the Schwartz space. Since 
\begin{align*}
	T_f^{(g)}(t,\xi,\lambda) &= \int e^{2\pi i \xi_0 x +\pi i \lambda_0 x^2} g(x-t) e^{-2\pi i \xi (x-t)} e^{-\pi i \lambda(x-t)^2}\diff{x}\\
	&=e^{2\pi i \xi_0 t + \pi i \lambda_0 t^2} \int g(x) e^{-\pi i(\lambda - \lambda_0)x^2} e^{-2\pi i (\xi -\xi_0 -\lambda_0 t )x}\diff{x}\,
\end{align*}
by a direct expansion we have 
\begin{equation}
	\partial_t T_f^{(g)}(t,\xi,\lambda) = (2\pi i \xi_0 + 2\pi i \lambda_0 t)T_f^{(g)}(t,\xi,\lambda) + 2\pi i \lambda_0 T_f^{(xg)}(t,\xi,\lambda)
	\label{partial_t}
\end{equation}
and
\begin{equation}\label{partialxi}
	\partial_\xi T_f^{(g)}(t,\xi,\lambda) = -2\pi i T_f^{(xg)}(t,\xi,\lambda).
\end{equation}
Then from (\ref{partial_t}) and (\ref{partialxi}) we have
\begin{align*}
	\partial_t\left(\frac{\partial_t T_f^{(g)}(t,\xi,\lambda)}{T_f^{(g)}(t,\xi,\lambda)}\right) &= 2\pi i \lambda_0 \left(1+ \partial_t\left(\frac{T_f^{(xg)}(t,\xi,\lambda)}{T_f^{(g)}(t,\xi,\lambda)}\right)\right)\\
	&= \lambda_0 \left(2\pi i - \partial_t\left(\frac{\partial_{\xi}T_f^{(g)}(t,\xi,\lambda)}{T_f^{(g)}(t,\xi,\lambda)}\right)\right),
\end{align*}
so
\begin{equation*}
	\lambda_0 = \frac{\partial_t\left(\frac{\partial_t T_f^{(g)}(t,\xi,\lambda)}{T_f^{(g)}(t,\xi,\lambda)}\right)}{2\pi i - \partial_t\left(\frac{\partial_{\xi}T_f^{(g)}(t,\xi,\lambda)}{T_f^{(g)}(t,\xi,\lambda)}\right)}. 
	\label{chirprate}
\end{equation*}
In addition, from (\ref{partial_t}) we can solve for the frequency,
\begin{equation*}
	\xi_0 + \lambda_0 t = \frac{\partial_t T_f^{(g)}(t,\xi,\lambda) - 2\pi i \lambda_0 T_f^{(xg)}(t,\xi,\lambda)}{2\pi i T_f^{(g)}(t,\xi,\lambda)}.
	\label{frequency}
\end{equation*}
The above equalities motivate us to propose the reassignment operators:
	\[
	\mu_{f}^{(g)}(t,\xi,\lambda):=\Re{\left(\frac{\partial_t\left(\frac{\partial_t T_f^{(g)}(t,\xi,\lambda)}{T_f^{(g)}(t,\xi,\lambda)}\right)}{2\pi i - \partial_t\left(\frac{\partial_{\xi}T_f^{(g)}(t,\xi,\lambda)}{T_f^{(g)}(t,\xi,\lambda)}\right)}\right)},
	\]
	\begin{align*}
		\omega_f^{(g)}(t,\xi,\lambda):=&\, \Re{\Bigg(\frac{\partial_t T_f^{(g)}(t,\xi,\lambda) - 2\pi i \mu_{f}^{(g)}(t,\xi,\lambda) T_f^{(xg)}(t,\xi,\lambda)}{2\pi i T_f^{(g)}(t,\xi,\lambda)}\Bigg)}\\
		= &\, \Re{\Bigg(\frac{\partial_t T_f^{(g)}(t,\xi,\lambda) - \mu_{f}^{(g)}(t,\xi,\lambda) \partial_{\xi}T_f^{(g)}(t,\xi,\lambda)}{2\pi i T_f^{(g)}(t,\xi,\lambda)}\Bigg)},
	\end{align*}
	where $\Re{(z)}$ denotes the real part of a complex number $z$. We can calculate the partial derivatives, and the expression for $\mu_{f}^{(g)}$ at $(t,\xi,\lambda)$ becomes
	\[
	\mu_{f}^{(g)} =  \Re{\left( \frac{T_{f}^{(g)}\partial_{tt}^2T_{f}^{(g)}-(\partial_t T_f^{(g)})^2}{2\pi i \big(T_{f}^{(g)}\big)^2 +2\pi i \big(T_{f}^{(g)}\partial_t T_{f}^{(xg)}- T_{f}^{(xg)}\partial_t T_{f}^{(g)}\big)}\right)}.
	\]
The above calculations lead us to the following definition.

\begin{definition}[Synchrosqueezed chirplet transform]
	Take $f\in \mathcal{S}'(\mathbb{R})$ and $g\in \mathcal{S}(\mathbb{R})$ as the window. For a small $\alpha>0$,
	define \textit{the synchrosqueezed chirplet transform} (SCT) with {\em resolution $\alpha$} as
	\begin{equation*}
		S_f^{(g,\alpha)}(t,\xi,
		\lambda) := \iint_{\mathbb{R}^2} T_f^{(g)}(t,\eta, \gamma)h_{\alpha}\left(\xi-\omega_{f}^{(g)}(t,\eta,\gamma)\right)h_{\alpha}\left(\lambda-\mu_f^{(g)}(t,\eta,\gamma)\right)\diff{\eta}\diff{\gamma},
	\end{equation*}
	where $h_{\alpha}$ is an ``approximate $\delta$-function'' (i.e. $h$ is smooth and decays fast with $\int h(x)\diff{x} = 1$, so that $h_\alpha(t):= \frac{1}{\alpha}h(\frac{t}{\alpha})$ tends weakly to the delta measure $\delta$ as $\alpha\to 0$).
	Similar to $\mathfrak T_{f}^{(g)}(t,\xi)$, we define the TF representation associated with SCT via projecting the TFC representation onto the TF domain:
	\begin{equation*}
		\mathfrak S_f^{(g,\alpha)}(t,\xi) =\int_{-\infty}^\infty |S_f^{(g,\alpha)}(t,\xi,
		\lambda)| \diff{\lambda}\,.
	\end{equation*}
\end{definition}

A main practical concern of SCT is its numerical stability, since differentiations in the calculation of $\mu_{f}^{(g)}$ and $\omega_{f}^{(g)}$ are numerically unstable. This concern is handled by noting that $\omega_f^{(g)}$ and $\mu_{f}^{(g)}$ can be calculated directly with different window functions without any numerical differentiation. Indeed, note that by a direct expansion, we have at $(t,\xi,\lambda)$
\begin{equation*}
	\partial_t T_f^{(g)} = -T_f^{(g')} + 2\pi i \xi T_f^{(g)} + 2\pi i \lambda T_f^{(xg)},
\end{equation*}
and
\begin{align*}
	\partial_{tt}^2 T_f^{(g)} = &T_f^{(g'')} -4\pi i \xi T_f^{(g')} -2\pi i \lambda T_f^{(xg')} + (2\pi i \xi)^2 T_f^{(g)} \\
	& + 2(2\pi i \xi)(2\pi i \lambda)T_f^{(xg)}- 2\pi i\lambda T_f^{(g+xg')} + (2\pi i \lambda)^2T_f^{(x^2g)}.
\end{align*}
Therefore, at $(t,\xi,\lambda)$ we have
\begin{align*}
	&\mu_{f}^{(g)} = \Re\left(\frac{T_{f}^{(g)}\partial_{tt}^2T_{f}^{(g)}-(\partial_t T_f^{(g)})^2}{2\pi i (T_{f}^{(g)})^2 +2\pi i (T_{f}^{(g)}\partial_t T_{f}^{(xg)}- T_{f}^{(xg)}\partial_t T_{f}^{(g)})}\right)
	= \Re\Big(\frac{M_1}{M_2}\Big),
\end{align*}
where 
\begin{align}
	M_1:=&T_{f}^{(g)}T_{f}^{(g'')}-4\pi i \lambda T_{f}^{(g)}T_{f}^{(xg')} -2\pi i \lambda (T_{f}^{(g)})^2+(2\pi i \lambda)^2T_{f}^{(g)}T_{f}^{(x^2g)}\nonumber\\
	&-(T_{f}^{(g')})^2 - (2\pi i \lambda)^2(T_{f}^{(xg)})^2+4\pi i \lambda T_{f}^{(g')}T_{f}^{(xg)}\nonumber
\end{align} 
and $$M_2:=-2\pi i T_{f}^{(g)}T_{f}^{(xg')}+(2\pi i)^2\lambda T_{f}^{(g)} T_{f}^{(x^2g)}+2\pi i T_{f}^{(xg)}T_{f}^{(g')}-(2\pi i)^2\lambda (T_{f}^{(xg)})^2$$
and
\begin{flalign*}
	\omega_f^{(g)}&= \Re\left(\frac{\partial_t T_f^{(g)} - 2\pi i \mu_{f}^{(g)} T_f^{(xg)}}{2\pi i T_f^{(g)}}\right)
	= \Re\left(\xi + \frac{-T_f^{(g')}+2\pi i \lambda T_f^{(xg)} - 2\pi i \mu_{f}^{(g)} T_f^{(xg)} }{2\pi i T_f^{(g)}}\right)
\end{flalign*}
at $(t,\xi,\lambda)$.\\

\noindent The numerical implementation of SCT will be detailed in Section \ref{sec:numericalSCT}.

\begin{remark}
	We shall compare SCT with SST and its 2nd-order variation. Recall that the Short-time Fourier transform (STFT) of a tempered distribution $f\in \mathcal{S}'(\mathbb{R})$ associated with a window function $g\in \mathcal{S}(\mathbb{R})$ at $(t,\xi)$ is defined as
	\[
	W_{f}^{(g)}(t,\xi) :=  \langle f(\cdot),\, g(\cdot-t)e^{2\pi i \xi(\cdot-t)}\rangle,
	\]
	where $\langle\cdot,\cdot\rangle$ indicates the evaluation of a tempered distribution at a Schwartz function, $t \in \mathbb{R}$ indicates {\em time}, $\xi\in \mathbb{R}$ indicates {\em frequency}.
	With the same $h_{\alpha}$ as in the definition of SCT, the STFT-based SST of $f\in \mathcal{S}'(\mathbb{R})$ with $g\in \mathcal{S}(\mathbb{R})$
	and {\em resolution $\alpha$} is then defined as \cite{thakur2011synchrosqueezing}
	\begin{equation}
		S_f^{1,(g,\alpha)}(t,\xi) := \int_{\mathbb{R}} W_f^{(g)}(t,\eta)h_{\alpha}(\xi-\omega_{f}^{1,(g)}(t,\eta))\diff{\eta},
	\end{equation}
	where 
	\begin{equation*}
		\omega_{f}^{1,(g)}(t,\xi) = \Re\left(\frac{\partial_t W_f^{(g)}(t,\xi)}{2\pi i W_f^{(g)}(t,\xi)}\right);
	\end{equation*}
	the STFT-based 2nd-order SST of $f\in \mathcal{S}'(\mathbb{R})$ with $g\in \mathcal{S}(\mathbb{R})$
	and {\em resolution $\alpha$} is defined as \cite{behera2018theoretical}
	\begin{equation}
		S_f^{2,(g,\alpha)}(t,\xi) := \int_{\mathbb{R}} W_f^{(g)}(t,\eta)h_{\alpha}(\xi-\omega_{f}^{2,(g)}(t,\eta))\diff{\eta},
	\end{equation}
	where
	\begin{equation*}
		\omega_{f}^{2,(g)}(t,\xi) = \omega_{f}^{1,(g)}(t,\xi) + \tilde{q}_f(t,\xi)(t-\tilde{t}_f(t,\xi)),
	\end{equation*}
	and $\tilde{t}_f(t,\xi)$ and $\tilde{q}_f(t,\xi)$ are defined as 
	\begin{equation*}
		\tilde{t}_f(t,\xi) := \Re\left(t- \frac{\partial_{\xi}W_f^{(g)}(t,\xi)}{2\pi i W_f^{(g)}(t,\xi)}\right)\,,\ \ 
		\tilde{q}_f(t,\xi) :=\Re\left(\frac{\partial_t\left(\frac{\partial_t W_f^{(g)}(t,\xi)}{W_f^{(g)}(t,\xi)}\right)}{2\pi i - \partial_t\left(\frac{\partial_{\xi}W_f^{(g)}(t,\xi)}{W_f^{(g)}(t,\xi)}\right)}\right).
	\end{equation*}
	In short, the SST reassigns the TF content based on the first-order Taylor expansion of the phase function, and the 2nd-order SST approximates the phase function by the second-order Taylor expansion, and we can view the 2nd-order SST as a special case of SCT in which the chirp rate is fixed and set to be zero.
\end{remark}

At the first glance, the behavior of $S_f^{(g,\alpha)}(t,\xi,
\lambda)$ might not be clear even if $f$ is a function. To better understand its properties, and design algorithms to solve the challenge of crossover IFs, we consider the following model.

\begin{definition}[\textit{Intrinsic chirp type function}]
	Fix $\epsilon>0$. A function $f: \mathbb{R} \to \mathbb{C}$ is said to be of the $\epsilon$-intrinsic chirp type ($\epsilon$-ICT) if $f(x) = A(x)e^{2\pi i\phi(x)}$ with $A$ and $\phi$ having the following properties:
	\begin{equation*}
		A \in C^1(\mathbb{R})\cap L^{\infty}(\mathbb{R}),\quad \phi\in C^3(\mathbb{R}),
	\end{equation*}
	\begin{equation*}
		\inf_{x\in\mathbb{R}}\phi'(x)>0, \quad \sup_{x\in\mathbb{R}}\phi'(x)<\infty,
	\end{equation*}
	\begin{equation*}
		A(x)>0,\quad\abs{A'(x)},\abs{A''(x)},\abs{\phi'''(x)}\leq\epsilon\phi'(x),\quad \forall x\in\mathbb{R}.
	\end{equation*}
\end{definition}

By definition, an $\epsilon$-ICT function is an oscillatory function that is locally close to a linear chirp function, where the closeness is quantified by $\epsilon$. In addition to the chirp rate variation, the AM variations are both controlled by the IF.

\begin{definition}[\textit{Superposition of well-separated $\epsilon$-ICT components}]
	A function $f:\mathbb{R}\to \mathbb{C}$ is said to be in the space $\mathcal{A}_{\epsilon,\Delta}$ of superpositions of well-separated $\epsilon$-ICT functions, and with separation $\Delta>0$, if there exists a finite $K$, such that
	\[
	f(x) = \sum_{k=1}^K f_k(x) = \sum_{k=1}^K A_k(x) e^{2\pi i \phi_k(x)},
	\]
	where each $f_k$ is an $\epsilon$-ICT function, and their respective phase functions $\phi_k$ satisfy
	\begin{equation*}
		\abs{\phi_k'(t)-\phi_l'(t)}+\abs{\phi_k''(t)-\phi_k''(t)}\geq 2\Delta.
	\end{equation*}
\end{definition}

To describe the main property of SCT, for a Schwartz function $g$, we define
\[
\widecheck{g}(\xi,\lambda):= \int_{\mathbb{R}} g(x)e^{-2\pi i \xi x}e^{-\pi i \lambda x^2}\diff{x}\,.
\]
Now we state the main theorem of SCT.
\begin{theorem}\label{maintheorem}
	Suppose that $f\in\mathcal{A}_{\epsilon,\Delta}$, and pick a window function $g\in\mathcal{S}(\mathbb{R})$ that satisfies $\abs{\widecheck{(x^ng)}(\xi,\lambda)}\leq \frac{G_n}{\sqrt{\abs{\xi}+\abs{\lambda}}}$ for some $G_n>0$, $n=0,1,2$. Let $\tilde{\epsilon} = \epsilon^{\frac{1}{6}}$. Then, provided $\sqrt{\Delta} = \epsilon^{-1}$ and $\epsilon$ (and thus also $\tilde{\epsilon}$) is sufficiently small, the following hold:
	\begin{itemize}%[leftmargin=*]
		\item If $(t,\xi,\lambda)\not\in Z_k$ for any $k\in\{1,\dots,K\}$, where
		\begin{equation}
			Z_k:= \{(t,\xi,\lambda):\abs{\xi-\phi_k'(t)}+\abs{\lambda-\phi_k''(t)}< \Delta \}\,,\label{definition Zk}
		\end{equation}
		then $\abs{T_f^{(g)}(t,\xi,\lambda)}\leq \tilde{\epsilon}$.
		\item For each tuple $(t,\xi,\lambda)\in Z_k$ such that $\abs{T_f^{(g)}(t,\xi,\lambda)}> \tilde{\epsilon}$ and $2\pi\abs{1+\partial_t\left(\frac{T_f^{(xg)}(t,\xi,\lambda)}{T_f^{(g)}(t,\xi,\lambda)}\right) }>\tilde{\epsilon}$, we have 
		\begin{equation}
			\abs{\omega_f^{(g)}(t,\xi,\lambda)-\phi_k'(t)}\leq \tilde{\epsilon}\ \mbox{ and }\ \abs{\mu_f^{(g)}(t,\xi,\lambda)-\phi_k''(t)}\leq \tilde{\epsilon}\,.\label{definition omega mu approximation}
		\end{equation}
	\end{itemize}
\end{theorem}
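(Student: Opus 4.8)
The plan is to reduce, pointwise in $(t,\xi,\lambda)$, every chirplet-type quantity feeding $\mu_f^{(g)}$ and $\omega_f^{(g)}$ to the corresponding quantity of a single pure linear chirp. For each $k$ I would introduce the surrogate
\[
\ell_k(x):=A_k(t)\,e^{2\pi i\left[\phi_k(t)+\phi_k'(t)(x-t)+\frac12\phi_k''(t)(x-t)^2\right]},
\]
which, after pulling out the constant amplitude $A_k(t)$ and a constant phase, is exactly $e^{2\pi i\xi_0x+\pi i\lambda_0x^2}$ with $\xi_0=\phi_k'(t)-\phi_k''(t)t$ and $\lambda_0=\phi_k''(t)$. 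The computation leading to the definition of SCT shows that for this chirp the reassignment operators evaluated at that same $(t,\xi,\lambda)$ return \emph{exactly} $\mu=\lambda_0=\phi_k''(t)$ and $\omega=\xi_0+\lambda_0t=\phi_k'(t)$; equivalently, writing $\mathcal N:=T_f^{(g)}\partial_{tt}^2T_f^{(g)}-(\partial_tT_f^{(g)})^2$ and $\mathcal D:=2\pi i(T_f^{(g)})^2\bigl(1+\partial_t(T_f^{(xg)}/T_f^{(g)})\bigr)$ for the numerator and denominator of the $\mu$-formula, one has $\mathcal N_{\ell_k}=\phi_k''(t)\,\mathcal D_{\ell_k}$ identically. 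So it suffices to control $f$-minus-$\ell_k$ errors and to keep the denominators away from $0$.

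\textbf{Local linearization and separation.} The first ingredient is a linearization estimate: from $\abs{A_k'},\abs{A_k''},\abs{\phi_k'''}\le\epsilon\phi_k'$, $\sup_x\phi_k'(x)<\infty$, and $\abs{e^{2\pi i\theta}-1}\le2\pi\abs\theta$ one gets $\abs{f_k(x)-\ell_k(x)}\le C\epsilon(\abs{x-t}+\abs{x-t}^3)$ with $C$ comparable to $A_k(t)$. Using the direct-expansion identities for $\partial_tT_f^{(g)}$, $\partial_{tt}^2T_f^{(g)}$ and $\partial_tT_f^{(xg)}$ recorded in the paper, each quantity entering $\mu_f^{(g)},\omega_f^{(g)}$ is a chirplet transform $\int f(x)\,\widetilde w(x-t)e^{-2\pi i\xi(x-t)}e^{-\pi i\lambda(x-t)^2}\diff x$ against an explicit Schwartz window $\widetilde w$ built from $g,xg,x^2g,g',xg',g''$ whose coefficients are polynomials in $(\xi,\lambda)$ of degree $\le2$; hence the $f_k$- and $\ell_k$-versions differ by at most $C\epsilon(1+\abs\xi+\abs\lambda)^2$. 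For the off-ridge components one instead uses the closed form $T_{\ell_l}^{(h)}(t,\xi,\lambda)=A_l(t)e^{2\pi i\phi_l(t)}\widecheck h(\xi-\phi_l'(t),\lambda-\phi_l''(t))$ together with the identity $\widecheck{g'}(\eta,\gamma)=2\pi i\eta\,\widecheck g(\eta,\gamma)+2\pi i\gamma\,\widecheck{(xg)}(\eta,\gamma)$, which rewrites the surrogate $t$-derivatives so that the $\xi,\lambda$-growing coefficients are replaced by $\phi_l'(t),\phi_l''(t)$; the hypothesis $\abs{\widecheck{(x^ng)}(\eta,\gamma)}\le G_n/\sqrt{\abs\eta+\abs\gamma}$ then makes every such quantity $O\bigl(1/\sqrt{\abs{\xi-\phi_l'(t)}+\abs{\lambda-\phi_l''(t)}}\bigr)+O(\epsilon)$.

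\textbf{Deducing the two bullets.} If $(t,\xi,\lambda)\notin Z_k$ for every $k$, it lies at distance $\ge\Delta=\epsilon^{-2}$ from each ridge, so $\abs{T_f^{(g)}}\le\sum_{k=1}^K\abs{T_{f_k}^{(g)}}\le CK(\Delta^{-1/2}+\epsilon)=O(\epsilon)\le\epsilon^{1/6}=\tilde\epsilon$ for $\epsilon$ small. If $(t,\xi,\lambda)\in Z_k$, the separation $\abs{\phi_k'-\phi_l'}+\abs{\phi_k''-\phi_l''}\ge2\Delta$ and the triangle inequality place it outside every $Z_l$, $l\ne k$, so $T_f^{(h)}=T_{f_k}^{(h)}+O(\epsilon)$ for all windows in play, and combined with the linearization estimate $T_f^{(h)}$ and its relevant $t$-derivatives agree with the $\ell_k$-versions up to $C\epsilon(1+\abs\xi+\abs\lambda)^2$. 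The two non-degeneracy hypotheses give $\abs{\mathcal D_f}>\tilde\epsilon^3$, while $\abs{T_f^{(g)}}>\tilde\epsilon$ together with $\abs{\widecheck g}\le G_0/\sqrt{\,\cdot\,}$ and $\sqrt\Delta=\epsilon^{-1}$ confines $\abs{\xi-\phi_k'(t)}+\abs{\lambda-\phi_k''(t)}$ to a ball of radius $O(\tilde\epsilon^{-2})$, turning $C\epsilon(1+\abs\xi+\abs\lambda)^2$ into a controlled power of $\epsilon$. Writing $\mathcal N_f-\phi_k''(t)\mathcal D_f=(\mathcal N_f-\mathcal N_{\ell_k})-\phi_k''(t)(\mathcal D_f-\mathcal D_{\ell_k})$ and dividing by $\mathcal D_f$ then gives $\abs{\mu_f^{(g)}-\phi_k''(t)}\le\tilde\epsilon$; the identical scheme for $\omega_f^{(g)}$, whose denominator $2\pi iT_f^{(g)}$ is bounded below by $2\pi\tilde\epsilon$, gives $\abs{\omega_f^{(g)}-\phi_k'(t)}\le\tilde\epsilon$.

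\textbf{Main obstacle.} The technical heart is the error bookkeeping in this last step. Because $\partial_tT$ and $\partial_{tt}^2T$ carry windows with factors $\xi,\lambda$ and $\xi^2,\lambda^2$, the surrogate $\mathcal N$ and $\mathcal D$ are themselves cancelling sums of terms of size $(\abs\xi+\abs\lambda)^2$ that collapse to $O(1+\abs{\phi_k''(t)}^2)$ only because the cancellation is exact for a linear chirp, whereas the ICT-vs-chirp and cross-mode perturbations do not cancel. One must therefore show that the surviving $O(\epsilon(\abs\xi+\abs\lambda)^2)$ error, after division by $\abs{\mathcal D_f}>\tilde\epsilon^3$, is still $\le\tilde\epsilon$ — likely by exploiting oscillatory cancellation inside the error integral rather than merely absolute bounds on $\widetilde w$. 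This is precisely where the coupling $\sqrt\Delta=\epsilon^{-1}$ and the choice $\tilde\epsilon=\epsilon^{1/6}$ are calibrated to balance all the exponents, and carrying this out while tracking the dependence of the constants on $\sup\phi'$ (and, for fixed $t$, on $\phi_k''(t)$) is the crux of the argument.
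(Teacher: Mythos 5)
Your treatment of the first bullet is essentially the paper's: decompose $f_k$ into the amplitude-frozen part, the local linear chirp $\ell_k$, and the third-order phase remainder, bound the first and third by $O(\epsilon)$ using $\abs{A_k'},\abs{\phi_k'''}\le\epsilon\phi_k'$, and bound the chirp part by $G_n/\sqrt{\Delta}=G_n\epsilon$ off the ridge; that part is fine. The gap is in the second bullet, and it is exactly the one you flag as the ``main obstacle'' and then leave unresolved. Your scheme compares the numerator and denominator $\mathcal N_f,\mathcal D_f$ of the reassignment operators with their surrogate versions $\mathcal N_{\ell_k},\mathcal D_{\ell_k}$ through the direct-expansion identities $\partial_tT^{(g)}=-T^{(g')}+2\pi i\xi T^{(g)}+2\pi i\lambda T^{(xg)}$ (and its second-order analogue), whose coefficients are polynomials of degree $\le 2$ in $(\xi,\lambda)$; the resulting numerator error is $O(\epsilon(1+\abs{\xi}+\abs{\lambda})^2)$. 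The confinement you extract from $\abs{T_f^{(g)}}>\tilde\epsilon$ gives only $\abs{\xi-\phi_k'(t)}+\abs{\lambda-\phi_k''(t)}\lesssim\tilde\epsilon^{-2}$, and for the Gaussian window this is sharp in $\lambda$ since $\abs{\widecheck g}\sim(1+\lambda^2)^{-1/4}$. Hence the numerator error is only $O(\epsilon\,\tilde\epsilon^{-4})=O(\epsilon^{1/3})$, and after dividing by $\abs{\mathcal D_f}>\tilde\epsilon^{3}=\epsilon^{1/2}$ you obtain a bound of order $\epsilon^{-1/6}$, which diverges instead of being $\le\tilde\epsilon$. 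The exponents do not balance, and the ``oscillatory cancellation inside the error integral'' you invoke to repair this is not supplied; as written the argument does not close.

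The paper sidesteps the problem by never introducing $(\xi,\lambda)$-dependent coefficients. Instead of perturbing off the surrogate, Lemmas \ref{lemma:3} and \ref{lemma:4} show directly that the two brackets
\begin{equation*}
\partial_tT_f^{(g)}-2\pi i\bigl(\phi_k'(t)T_f^{(g)}+\phi_k''(t)T_f^{(xg)}\bigr)
\quad\text{and}\quad
\partial_{tt}^2T_f^{(g)}-2\pi i\bigl(\phi_k''(t)T_f^{(g)}+\phi_k'(t)\partial_tT_f^{(g)}+\phi_k''(t)\partial_tT_f^{(xg)}\bigr)
\end{equation*}
are $O(\epsilon)$ \emph{uniformly in} $(\xi,\lambda)$: one differentiates under the integral and Taylor-expands $\phi_k'(x)$ (resp.\ $A_k$, $\phi_k''$) about $x=t$ inside the integrand, so the constants involve only the window moments $I_n$ and $\norm{\phi_k'}_{L^\infty},\norm{A_k}_{L^\infty}$. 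The quantity $\mathcal N_f-\phi_k''(t)\mathcal D_f$ is then an exact algebraic combination of these two brackets multiplied by $\partial_tT_f^{(g)}$ and $T_f^{(g)}$, both uniformly bounded, which yields $\abs{\mu_f^{(g)}-\phi_k''(t)}\le\epsilon\cdot\tilde\epsilon^{-3}\cdot O(1)=\tilde\epsilon^{3}\cdot O(1)\le\tilde\epsilon$ for $\epsilon$ small, and similarly for $\omega_f^{(g)}$. To salvage your surrogate-based formulation you would need to reorganize the error terms so that the factors $\xi$ and $\lambda$ multiplying them are replaced by $\phi_k'(t)$ and $\phi_k''(t)$ before taking absolute values — which is, in effect, precisely what the paper's expand-inside-the-integrand strategy accomplishes.
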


The proof of Theorem \ref{maintheorem} relies on a number of standard estimates like those in \cite{behera2018theoretical}, which is postponed to \ref{section:supp}.
	\begin{remark}
		The condition for the separation parameter, $\sqrt{\Delta} = \epsilon^{-1}$, indicates that in order to allow a small error bound $\tilde{\epsilon}$, $\Delta$ should be large so that the bound of $\abs{T_{f_{k,2}}^{(x^ng)}(t,\xi,\lambda)}$ in the proof of Lemma \ref{lemma:2} in \ref{section:supp} is small. This is reasonable %due to the unavoidable tail of $\abs{\widecheck{(x^ng)}(\xi,\lambda)}$, i.e., 
		since if we want $\abs{\widecheck{(x^ng)}(\xi,\lambda)}$ to be small, $(\xi,\lambda)$ should be away from the origin. Note that a similar quantitative requirement for $\Delta$ to be large was mentioned in Remark 9 in \cite{behera2018theoretical} for the second-order SST. We provide a specific quantitative relation here.
		
		An example of a window function $g$ that satisfies the window condition in Theorem \ref{maintheorem} is $g(x) = e^{-\pi x^2}$. To see why, %We provide an explanation below: \noindent 
		first, note that we have $\widecheck{g}(\xi,\lambda) = \int_{\mathbb{R}} e^{-\pi x^2}e^{-2\pi i \xi x}e^{-\pi i \lambda x^2}\diff{x} = \frac{1}{\sqrt{1+i\lambda}}e^{-\frac{\pi\xi^2}{1+i\lambda}}$, so $\abs{\widecheck{g}(\xi,\lambda)} = (1+\lambda^2)^{-\frac{1}{4}}e^{-\frac{\pi\xi^2}{1+\lambda^2}}$. It suffices to show that $(1+\lambda^2)^{-\frac{1}{4}}e^{-\frac{\pi\xi^2}{1+\lambda^2}}\leq\frac{G_0}{\sqrt{\abs{\xi}+\abs{\lambda}}}$ for some constant $G_0>0$, or equivalently $\frac{\abs{\xi}+\abs{\lambda}}{\sqrt{1+\lambda^2}}e^{-\frac{2\pi\xi^2}{1+\lambda^2}}\leq G_0^2$. We could easily see the boundedness of $\frac{\abs{\xi}}{\sqrt{1+\lambda^2}}e^{-\frac{2\pi\xi^2}{1+\lambda^2}}$ by setting $y = \frac{\abs{\xi}}{\sqrt{1+\lambda^2}}$, so that $\frac{\abs{\xi}}{\sqrt{1+\lambda^2}}e^{-\frac{2\pi\xi^2}{1+\lambda^2}} = ye^{-2\pi y^2}$. On the other hand, $\frac{\abs{\lambda}}{\sqrt{1+\lambda^2}}e^{-\frac{2\pi\xi^2}{1+\lambda^2}}\leq \frac{\abs{\lambda}}{\sqrt{1+\lambda^2}}$ is clearly also bounded. We thus get the claim for $n=0$.
		Secondly, 
		\begin{align*}
			\widecheck{(xg)}(\xi,\lambda) &\,= \int_{\mathbb{R}} xe^{-\pi x^2}e^{-2\pi i \xi x}e^{-\pi i \lambda x^2}\diff{x} \\
			&\,= \frac{1}{-2\pi i}\partial_{\xi}\left(\frac{1}{\sqrt{1+i\lambda}}e^{-\frac{\pi\xi^2}{1+i\lambda}}\right)=  \frac{1}{-2\pi i} \frac{-2\pi\xi}{(1+i\lambda)^{\frac{3}{2}}}e^{-\frac{\pi\xi^2}{1+i\lambda}},
		\end{align*} 
		so $\abs{\widecheck{(xg)}(\xi,\lambda)} = \frac{\abs{\xi}}{(1+\lambda^2)^{\frac{3}{4}}}e^{-\frac{\pi\xi^2}{1+\lambda^2}}$. We need to show that $\frac{\abs{\xi}}{(1+\lambda^2)^{\frac{3}{4}}}e^{-\frac{\pi\xi^2}{1+\lambda^2}}\leq\frac{G_1}{\sqrt{\abs{\xi}+\abs{\lambda}}}$ for some constant $G_1>0$, or $\frac{\abs{\xi}^3+\abs{\lambda}\xi^2}{(1+\lambda^2)^{\frac{3}{2}}}e^{-\frac{2\pi\xi^2}{1+\lambda^2}}\leq G_1^2$. Let $y = \frac{\abs{\xi}}{\sqrt{1+\lambda^2}}$, then $\frac{\abs{\xi}^3}{(1+\lambda^2)^{\frac{3}{2}}}e^{-\frac{2\pi\xi^2}{1+\lambda^2}} = y^3e^{-2\pi y^2}$ is bounded. Also, we have $\frac{\abs{\lambda}\xi^2}{(1+\lambda^2)^{\frac{3}{2}}}e^{-\frac{2\pi\xi^2}{1+\lambda^2}} = \frac{\abs{\lambda}}{\sqrt{1+\lambda^2}}y^2e^{-2\pi y^2}$, in which both $\frac{\abs{\lambda}}{\sqrt{1+\lambda^2}}$ and $y^2e^{-2\pi y^2}$ are bounded. We thus get the case when $n=1$.
		Finally, 
		\begin{align*}
			\widecheck{(x^2g)}(\xi,\lambda) &= \int_{\mathbb{R}} x^2e^{-\pi x^2}e^{-2\pi i \xi x}e^{-\pi i \lambda x^2}\diff{x}\\
			&= \left(\frac{1}{-2\pi i}\right)^2\partial^2_{\xi\xi}\left(\frac{1}{\sqrt{1+i\lambda}}e^{-\frac{\pi\xi^2}{1+i\lambda}}\right)\\
			&=  \frac{-1}{4\pi^2} \frac{1}{\sqrt{1+i\lambda}}\left[\frac{4\pi^2\xi^2}{(1+i\lambda)^2}-\frac{2\pi\xi}{1+i\lambda}\right]e^{-\frac{\pi\xi^2}{1+i\lambda}},\end{align*} 
		so $\abs{\widecheck{(x^2g)}(\xi,\lambda)} \leq \frac{1}{4\pi^2}\frac{1}{(1+\lambda^2)^{\frac{1}{4}}}\left[ \frac{4\pi^2\xi^2}{1+\lambda^2} + \frac{2\pi\abs{\xi}}{\sqrt{1+\lambda^2}}\right]e^{-\frac{\pi\xi^2}{1+\lambda^2}}$. We just showed that $\frac{1}{4\pi^2}\frac{1}{(1+\lambda^2)^{\frac{1}{4}}} \frac{2\pi\abs{\xi}}{\sqrt{1+\lambda^2}}e^{-\frac{\pi\xi^2}{1+\lambda^2}}\leq \frac{\tilde{G}_{2,1}}{\sqrt{\abs{\xi}+\abs{\lambda}}}$ for some $\tilde{G}_{2,1}>0$ and it suffices to show that $\frac{1}{4\pi^2}\frac{1}{(1+\lambda^2)^{\frac{1}{4}}} \frac{4\pi^2\xi^2}{1+\lambda^2}e^{-\frac{\pi\xi^2}{1+\lambda^2}}\leq \frac{\tilde{G}_{2,2}}{\sqrt{\abs{\xi}+\abs{\lambda}}}$ for some $\tilde{G}_{2,2}>0$. This is equivalent to $\frac{\abs{\xi}^5+\abs{\lambda}\xi^4}{(1+\lambda^2)^\frac{5}{2}}e^{-\frac{2\pi\xi^2}{1+\lambda^2}}\leq \tilde{G}_{2,2}^2$. Again, let $y = \frac{\abs{\xi}}{\sqrt{1+\lambda^2}}$, then $\frac{\abs{\xi}^5}{(1+\lambda^2)^\frac{5}{2}}e^{-\frac{2\pi\xi^2}{1+\lambda^2}} = y^5 e^{-2\pi y^2}$ is bounded. On the other hand, $\frac{\abs{\lambda}\xi^4}{(1+\lambda^2)^\frac{5}{2}}e^{-\frac{2\pi\xi^2}{1+\lambda^2}} = \frac{\abs{\lambda}}{\sqrt{1+\lambda^2}}y^4 e^{-2\pi y^2}$ is also bounded by some absolute constant. We thus finish the claim.
	\end{remark}

\subsection{Window effect on SCT}

The slow decay of CT shown in Lemma \ref{Proposition slow decay of CT} motivates us to consider SCT. It is a natural question to ask if it is possible to ``speed up'' the decay rate by considering a different window? As is considered in the widely applied multitaper technique \cite{Percival:1993} and its generalization \cite{DaWaWu2016}, we know that different windows, particularly the Hermite windows that are widely used in practice, may provide information of a signal from different aspects. Inspired by the Hermite windows and Proposition \ref{prop2}, we consider windows that have higher vanishing order at the origin.
The first result is the following corollary that comes from Proposition \ref{prop2}.

\begin{corollary}\label{corollary of prop2}
	Suppose $f(x)\in C^{2n}(\mathbb{R})\cap \mathcal{S}'(\mathbb{R})$, $n\geq 1$ and $f\in L^1(\mathbb{R})$. If $g(x) = x^{2n}e^{-\pi\alpha x^2}$, for any $t$ and $\xi$, we have  
	\[
	\abs{T_{f}^{(g)}(t,\xi,\lambda)}\leq C\lambda^{-\frac{2n+1}{3}},
	\]
	for some constant $C>0$, where $C$ depends on $n$ and $f$.
\end{corollary}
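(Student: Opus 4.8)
The plan is to reduce Corollary~\ref{corollary of prop2} directly to Proposition~\ref{prop2}. Recall from the remark following \eqref{def:chirp transform} that a chirplet transform is just a chirp transform of a modulated, windowed translate of $f$: substituting $u = x-t$ in the integral form of $T_f^{(g)}$ gives
\[
T_f^{(g)}(t,\xi,\lambda) = \int_{\mathbb{R}} f(u+t)\,\conj{g(u)}\,e^{-2\pi i \xi u}\,e^{-\pi i \lambda u^2}\diff{u} = (TF_{t,\xi})(\lambda),
\]
where $F_{t,\xi}(u) := f(u+t)\,u^{2n}e^{-\pi\alpha u^2}\,e^{-2\pi i \xi u}$ (using that $g(u) = u^{2n}e^{-\pi\alpha u^2}$ is real-valued). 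So it suffices to check that $F_{t,\xi}$ satisfies the hypotheses of Proposition~\ref{prop2} with the same $n$, and then read off $\abs{T_f^{(g)}(t,\xi,\lambda)} = \abs{(TF_{t,\xi})(\lambda)} \leq C\lambda^{-\frac{2n+1}{3}}$.

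Next I would verify the hypotheses. Write $F_{t,\xi} = h\cdot w$ with $h(u) := f(u+t)e^{-2\pi i\xi u} \in C^{2n}(\mathbb{R})$ and $w(u) := u^{2n}e^{-\pi\alpha u^2} \in \mathcal{S}(\mathbb{R})$. Then: (i) $F_{t,\xi}\in C^{2n}(\mathbb{R})$ as a product of $C^{2n}$ functions; (ii) $F_{t,\xi}\in L^1(\mathbb{R})$ since $\abs{F_{t,\xi}(u)}\leq\norm{w}_\infty\abs{f(u+t)}$ and $f\in L^1$; (iii) $F_{t,\xi}^{(k)}(u)\to 0$ as $\abs{u}\to\infty$ for $k=1,\dots,2n$, because by the Leibniz rule every term of $F_{t,\xi}^{(k)}$ carries a factor $w^{(m)}(u)$, which is a polynomial times $e^{-\pi\alpha u^2}$, and (given the mild control on the growth of $f$ and its derivatives afforded by $f\in\mathcal{S}'(\mathbb{R})$) this Gaussian factor forces the product to $0$; (iv) the vanishing at the origin: since $w$ vanishes to order exactly $2n$ at $0$ (its Taylor series begins with $u^{2n}$, so $w(0)=\dots=w^{(2n-1)}(0)=0$ and $w^{(2n)}(0)=(2n)!$), the Leibniz rule gives $F_{t,\xi}(0) = F_{t,\xi}'(0) = \dots = F_{t,\xi}^{(2n-1)}(0) = 0$ — in particular $F_{t,\xi}(0) = F_{t,\xi}''(0) = \dots = F_{t,\xi}^{(2n-2)}(0) = 0$ — while $F_{t,\xi}^{(2n)}(0) = h(0)w^{(2n)}(0) = (2n)!\,f(t)$.

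Finally, I would apply Proposition~\ref{prop2} to $F_{t,\xi}$. If $f(t)\neq 0$ this is immediate, since then $F_{t,\xi}^{(2n)}(0)\neq 0$. If $f(t) = 0$, one observes that the proof of Proposition~\ref{prop2} never actually uses the hypothesis $f^{(2n)}(0)\neq 0$ (it enters only to make the claimed order sharp): the Taylor-expansion-plus-cutoff argument there goes through verbatim using only $F_{t,\xi}\in C_{0}^{2n}(\mathbb{R})\cap L^1(\mathbb{R})$ together with $F_{t,\xi}(0) = F_{t,\xi}''(0) = \dots = F_{t,\xi}^{(2n-2)}(0) = 0$. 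Either way one obtains $\abs{T_f^{(g)}(t,\xi,\lambda)}\leq C\lambda^{-\frac{2n+1}{3}}$, with $C$ depending on $n$ and $f$ (the dependence on $t$ disappears since translation preserves the $L^\infty$ norms of $f$ and its derivatives that enter $C$; the fixed parameters $\xi,\alpha$ may also enter $C$).

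The main obstacle is the bookkeeping in steps (iii)–(iv), and in particular turning the heuristic ``$f\in\mathcal{S}'(\mathbb{R})$ has at most polynomial growth'' into a genuine pointwise bound strong enough to force $F_{t,\xi}^{(k)}(u)\to 0$ at infinity; if one is unwilling to assume slightly more on $f$ (e.g.\ that $f$ and its derivatives up to order $2n$ grow at most polynomially), this is the step that requires care, whereas the reduction to Proposition~\ref{prop2} and the vanishing-order computation at the origin are routine.
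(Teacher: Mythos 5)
Your reduction is exactly the paper's intended argument: the paper gives no separate proof of this corollary beyond the remark that replacing $f(x)$ by $f(x+t)g(x)e^{-2\pi i\xi x}$ in the chirp transform yields the chirplet transform, so that Proposition \ref{prop2} applies to your $F_{t,\xi}$, and your verification of its hypotheses — including the correct observation that the condition $f^{(2n)}(0)\neq 0$ is never actually used in that proof, which settles the case $f(t)=0$ — supplies precisely the details the paper omits. The one caveat you flag, namely that $f\in C^{2n}(\mathbb{R})\cap L^1(\mathbb{R})\cap\mathcal{S}'(\mathbb{R})$ does not by itself force $F_{t,\xi}^{(k)}(u)\to 0$ as $\abs{u}\to\infty$ (the derivatives of $f$ may outgrow the Gaussian), is a real imprecision, but it sits in the corollary's stated hypotheses rather than in your argument and is repaired by the implicit assumption that $f$ and its derivatives up to order $2n$ grow at most polynomially.
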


This result shows that the higher order of the kernel vanishing at $0$, the faster decay the CT is in the chirp rate axis. This is intuitively correct since at time $0$, the chirp component $e^{i\pi \lambda t^2}$ does not oscillate at all near $0$. Thus, if the kernel does not vanish near or at $0$, the integration against $e^{i\pi \lambda t^2}$ does not vanish. See Proposition \ref{prop2} for details.
Now, we provide an analysis of how the windows that vanish at the origin affect the concentration of the SCT of a two-component signal  in the chirp rate direction, particularly at the time that the crossover IF happens.

\begin{proposition}\label{vanishorder}
	Suppose $f(x) = f_1(x) + f_2(x)$, where $f_1(x) = e^{2\pi i \xi_1 x + \pi i \lambda_1 x^2}$ and $f_2(x) = e^{2\pi i \xi_2 x + \pi i \lambda_2 x^2}$, and $\xi_1, \xi_2, \lambda_1, \lambda_2 \in \mathbb{R}$, $\lambda_1 \neq \lambda_2$. We also assume that at time $t_0$, the IFs of $f_1$ and $f_2$ intersect. Set $\bar{\xi}:=\xi_1+\lambda_1 t_0 = \xi_2 +\lambda_2 t_0$. The window function is $g(x) = x^{n}e^{-\pi\alpha x^2}$, where $\alpha>0$ and $n$ is a nonnegative integer. Then there exists a constant $c_{\alpha}$ that depends only on $\alpha$, such that if $\lambda$ is closer to $\lambda_1$, i.e.  $\abs{\lambda-\lambda_1}<\abs{\lambda-\lambda_2}$, we have
	\begin{equation*}
		\abs{\mu_f^{(g)}(t_0,\bar{\xi},\lambda)-\lambda_1}\leq
		\begin{cases}
			c_{\alpha}\left(\frac{\alpha^2+(\lambda-\lambda_1)^2}{\alpha^2+(\lambda-\lambda_2)^2}\right)^{\frac{n+1}{4}}, & \text{if}\ n\ \text{is even}; \\
			c_{\alpha}\left(\frac{\alpha^2+(\lambda-\lambda_1)^2}{\alpha^2+(\lambda-\lambda_2)^2}\right)^{\frac{n+2}{4}}, & \text{if}\ n\ \text{is odd}.
		\end{cases}
	\end{equation*}
\end{proposition}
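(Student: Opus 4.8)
The plan is to normalize the problem by the covariance properties of $\mu_f^{(g)}$, compute the relevant chirplet transforms at the crossover point in closed form, and then exploit linearity of CT together with the fact that the reassignment rule recovers the chirp rate exactly for a single linear chirp.

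\textbf{Normalization.} From the integral form of CT one reads off $T_{f(\cdot+s)}^{(g)}(t,\xi,\lambda)=T_f^{(g)}(t+s,\xi,\lambda)$ and $T_{e^{2\pi i\xi_0(\cdot)}f}^{(g)}(t,\xi,\lambda)=e^{2\pi i\xi_0 t}\,T_f^{(g)}(t,\xi-\xi_0,\lambda)$, and likewise for the windows $xg$, $g'$, etc. Since $\mu_f^{(g)}$ is assembled from $T_f^{(g)}$, $T_f^{(xg)}$ and their $\partial_t$/$\partial_\xi$ derivatives only through ratios such as $\partial_t\!\big(\partial_t T_f^{(g)}/T_f^{(g)}\big)$ and $\partial_t\!\big(\partial_\xi T_f^{(g)}/T_f^{(g)}\big)$, the constant-in-$t$ phase $e^{2\pi i\xi_0 t}$ cancels, and one gets $\mu_{f(\cdot+s)}^{(g)}(t,\xi,\lambda)=\mu_f^{(g)}(t+s,\xi,\lambda)$ and $\mu_{e^{2\pi i\xi_0(\cdot)}f}^{(g)}(t,\xi,\lambda)=\mu_f^{(g)}(t,\xi-\xi_0,\lambda)$. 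Applying this with $s=t_0$ and then with $\xi_0=\bar\xi$ reduces the statement to estimating $\mu_G^{(g)}(0,0,\lambda)$, where $G(x)=a_1e^{\pi i\lambda_1 x^2}+a_2e^{\pi i\lambda_2 x^2}$ with $|a_1|=|a_2|=1$; the quantity $r:=\frac{\alpha^2+(\lambda-\lambda_1)^2}{\alpha^2+(\lambda-\lambda_2)^2}$ and the hypothesis $r<1$ are unchanged.

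\textbf{Closed forms at the crossover point.} Put $c_k:=\alpha+i(\lambda-\lambda_k)$ and $\rho_k:=|c_k|^2=\alpha^2+(\lambda-\lambda_k)^2\ge\alpha^2$. For $G_k(x)=a_ke^{\pi i\lambda_k x^2}$, direct Gaussian integration gives, for any integer $m\ge0$, $T_{G_k}^{(x^m e^{-\pi\alpha x^2})}(0,0,\lambda)=a_k\int x^m e^{-\pi c_k x^2}\diff x$, which vanishes for odd $m$ and equals $a_k b_m c_k^{-(m+1)/2}$ for even $m$, where $b_m=(m-1)!!/(2\pi)^{m/2}$; in particular its modulus is $b_m\rho_k^{-(m+1)/4}$. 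Using the identity $\partial_t T_f^{(w)}=-T_f^{(w')}+2\pi i\xi T_f^{(w)}+2\pi i\lambda T_f^{(xw)}$ (and its iterate) at $\xi=0$, I express $\partial_t T_{G_k}^{(g)}$, $\partial_t T_{G_k}^{(xg)}$ and $\partial_{tt}^2 T_{G_k}^{(g)}$ at $(0,0,\lambda)$ as finite combinations of such moments; since $g=x^ne^{-\pi\alpha x^2}$ and the windows it generates ($g'$, $g''$, $xg$, $xg'$, $x^2g$) each have a definite parity, only the moments of matching parity survive. By linearity, $T_G^{(w)}=T_{G_1}^{(w)}+T_{G_2}^{(w)}$, and the computation that motivates the definition of $\mu_f^{(g)}$ shows that for the single linear chirp $G_k$ the numerator of the defining ratio of $\mu$ equals $\lambda_k$ times its denominator wherever $T_{G_k}^{(g)}\neq0$.

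\textbf{The two parity cases.} If $n$ is even, at $(0,0,\lambda)$ one has $T_{G_k}^{(g)}\neq0$, $T_{G_k}^{(xg)}=0$ and $\partial_t T_{G_k}^{(g)}=0$ for each $k$ (these vanishings persist after summing over $k$), so $\mu_G^{(g)}(0,0,\lambda)=\Re\big(\partial_{tt}^2 T_G^{(g)}/(2\pi i(T_G^{(g)}+\partial_t T_G^{(xg)}))\big)$. Writing $P_k:=T_{G_k}^{(g)}+\partial_t T_{G_k}^{(xg)}$ and invoking the single-chirp identity in the form $\partial_{tt}^2 T_{G_k}^{(g)}=2\pi i\lambda_k P_k$, this collapses to $\mu_G^{(g)}(0,0,\lambda)=\Re\big((\lambda_1P_1+\lambda_2P_2)/(P_1+P_2)\big)=\lambda_1+(\lambda_2-\lambda_1)\Re\big(P_2/(P_1+P_2)\big)$. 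From the closed forms, $P_k=a_k c_k^{-(n+1)/2}\big(B_n+2\pi i\lambda_k b_{n+2}/c_k\big)$ with $B_n:=2\pi b_{n+2}-nb_n$; using $|c_k|\ge\alpha$ one bounds $|P_k|$ above and below by constant multiples of $\rho_k^{-(n+1)/4}$, and since $r<1$ gives $\rho_1<\rho_2$ and hence $|P_1|>|P_2|$, one gets $|\mu_G^{(g)}(0,0,\lambda)-\lambda_1|\le|\lambda_2-\lambda_1|\,|P_2|/(|P_1|-|P_2|)\le c_\alpha\,r^{(n+1)/4}$. If $n$ is odd, then $g$ is odd so $T_{G_k}^{(g)}(0,0,\lambda)=0$, hence $T_G^{(g)}(0,0,\lambda)=0$, while $T_{G_k}^{(xg)}(0,0,\lambda)=a_k b_{n+1}c_k^{-(n+2)/2}\neq0$; with $T_G^{(g)}=0$ the defining ratio of $\mu$ reduces to $\mu_G^{(g)}(0,0,\lambda)=\Re\big(\partial_t T_G^{(g)}/(2\pi i T_G^{(xg)})\big)$. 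A short computation gives $\partial_t T_{G_k}^{(g)}(0,0,\lambda)=a_k c_k^{-(n+2)/2}\big(C_n c_k+2\pi i b_{n+1}\lambda_k\big)$ with $C_n:=2\pi b_{n+1}-nb_{n-1}$, and the elementary identity $n!!=n(n-2)!!$ shows $C_n=0$; writing $Q_k:=a_k c_k^{-(n+2)/2}$ we then have $\partial_t T_G^{(g)}=2\pi i b_{n+1}\sum_k\lambda_k Q_k$ and $T_G^{(xg)}=b_{n+1}\sum_k Q_k$, whence again $\mu_G^{(g)}(0,0,\lambda)=\lambda_1+(\lambda_2-\lambda_1)\Re\big(Q_2/(Q_1+Q_2)\big)$. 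Since $|Q_k|=\rho_k^{-(n+2)/4}$ and $\rho_1<\rho_2$, this yields $|\mu_G^{(g)}(0,0,\lambda)-\lambda_1|\le c_\alpha\,r^{(n+2)/4}$; undoing the normalization completes the proof.

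\textbf{Main obstacle.} The delicate step is the odd-$n$ case. At the crossover point $T_{G_k}^{(g)}$ vanishes, so the single-chirp identity $\mu_{G_k}^{(g)}=\lambda_k$ is \emph{not} available there, and the argument closes only because of the exact cancellation $C_n=2\pi b_{n+1}-nb_{n-1}=0$; without it a term proportional to $(\lambda-\lambda_1)$ would survive and the bound would fail. This cancellation is also what forces the effective window to shift from $g=x^ne^{-\pi\alpha x^2}$ to $xg=x^{n+1}e^{-\pi\alpha x^2}$, giving the exponent $(n+2)/4$ rather than $(n+1)/4$. A secondary technical point is making the estimates $|P_k|\asymp\rho_k^{-(n+1)/4}$ (resp.\ $|Q_k|\asymp\rho_k^{-(n+2)/4}$) and the lower bounds on $|P_1+P_2|$ (resp.\ $|Q_1+Q_2|$) quantitative and uniform — this is exactly where the separation hypothesis $|\lambda-\lambda_1|<|\lambda-\lambda_2|$ enters — with the degenerate locus where $P_1+P_2=0$ (or $\partial_t T_G^{(g)}=0$), on which $\mu$ is in any case undefined, set aside.
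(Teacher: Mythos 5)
Your proof is correct in substance and follows the same basic strategy as the paper's: evaluate the chirplet transforms of the two linear chirps in closed form at the crossover point $(t_0,\bar\xi)$, use parity to kill the odd Gaussian moments, exploit the single-chirp identity (numerator of the defining ratio of $\mu$ equals $\lambda_k$ times its denominator) to write $\mu_f^{(g)}-\lambda_1$ as $(\lambda_2-\lambda_1)$ times a ratio whose size is governed by $\bigl(\rho_1/\rho_2\bigr)^{\text{exponent}}$ with $\rho_k=\alpha^2+(\lambda-\lambda_k)^2$. The differences are worth noting. First, you normalize to $(t_0,\bar\xi)=(0,0)$ by translation/modulation covariance and then compute raw Gaussian moments, whereas the paper stays at $(t_0,\bar\xi)$ and generates $T_{f_j}^{(x^n g)}$ by $\xi$-differentiation of the Gaussian case, tracking the parity of the resulting Hermite-like polynomials $P_{n,j}$; these are equivalent bookkeeping choices. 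Second, and more substantially, the paper carries out only the even case ($g=x^{2n}e^{-\pi\alpha x^2}$) and dismisses the odd case as "similar," while you treat it explicitly and correctly identify that it is \emph{not} structurally the same: for odd $n$ one has $T_f^{(g)}(t_0,\bar\xi,\lambda)=0$, so the single-chirp identity is unavailable and the reassignment ratio degenerates to $\Re\bigl(\partial_t T_f^{(g)}/(2\pi i\,T_f^{(xg)})\bigr)$; the bound then hinges on the exact cancellation $2\pi b_{n+1}=n\,b_{n-1}$, which is what produces the improved exponent $(n+2)/4$. That is a genuine addition to what the paper writes down. One caveat applies equally to your argument and to the paper's: the final step requires a lower bound on the denominator ($|P_1+P_2|$, resp. $|Q_1+Q_2|$, resp. the paper's $(2\pi i-F_1-\tilde Z_1)+\frac{B_2}{B_1}(2\pi i-F_2-\tilde Z_2)$) that is uniform in $\lambda$; near $\rho_1=\rho_2$ phase cancellation can make this quantity small, and neither you nor the paper establishes such a bound with a constant depending only on $\alpha$ (the paper simply asserts the relevant factor is "uniformly bounded," and your claim that $|P_1|>|P_2|$ follows from $\rho_1<\rho_2$ needs the two-sided constants to match, which they need not). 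You flag this honestly as the remaining technical point, so I would not count it against you relative to the published argument, but be aware that as stated the constant $c_\alpha$ in the proposition is not fully justified in either version.
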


\begin{proof}
	We have already computed that for $j = 1,2$
	\begin{equation*}
		T_{f_j}^{(e^{-\pi\alpha x^2})}(t,\xi,\lambda) = e^{2\pi i \xi_j t + \pi i \lambda_j t^2}\frac{1}{\sqrt{\alpha+i(\lambda-\lambda_j)}}e^{\frac{-\pi(\xi-\xi_j-\lambda_j t)^2}{\alpha+i(\lambda-\lambda_j)}},
	\end{equation*}
	and we know from (\ref{partialxi}) that 
	\begin{equation*}
		\partial_\xi T_{f_j}^{(e^{-\pi\alpha x^2})}(t,\xi,\lambda) = -2\pi i T_{f_j}^{(xe^{-\pi\alpha x^2})}(t,\xi,\lambda).
	\end{equation*}
	Therefore,
	\begin{align*}
		T_{f_j}^{(g)}(t,\xi,\lambda) &= \left(\frac{-1}{2\pi i}\right)^n e^{2\pi i \xi_j t + \pi i \lambda_j t^2}\frac{1}{\sqrt{\alpha+i(\lambda-\lambda_j)}}\partial_\xi^n\left(e^{\frac{-\pi(\xi-\xi_j-\lambda_j t)^2}{\alpha+i(\lambda-\lambda_j)}}\right)\\
		&= \left(\frac{-1}{2\pi i}\right)^n e^{2\pi i \xi_j t + \pi i \lambda_j t^2}\frac{1}{\sqrt{\alpha+i(\lambda-\lambda_j)}}e^{\frac{-\pi(\xi-\xi_j-\lambda_j t)^2}{\alpha+i(\lambda-\lambda_j)}}P_{n,j}(\xi-\xi_j-\lambda_j t),
	\end{align*}
	where $P_{n,i}(x)$ is a Hermite-like polynomial, and contains only even powers of $x$ if $n$ is even; or contains only odd powers of $x$ if $n$ is odd. We proceed with $g(x) = x^{2n}e^{-\pi\alpha x^2}$ for $n\geq 0$, and omit the computation for $g(x) = x^{2n+1}e^{-\pi\alpha x^2}$ which would be similar. Let us compute the reassignment rule $\mu_f^{(g)}$ at $(t_0,\bar{\xi},\lambda)$. First we have for $j = 1, 2$
	\begin{align*}
		\partial_t T_{f_j}^{(g)}(t,\xi,\lambda) &= (2\pi i \xi_j + 2\pi i \lambda_j t + \frac{2\pi\lambda_j(\xi-\xi_j-\lambda_j t)}{\alpha+i(\lambda-\lambda_j)})T_{f_j}^{(g)}(t,\xi,\lambda)\\
		&\quad+ T_{f_j}^{(g)}(t,\xi,\lambda)\frac{\partial_t P_{2n,j}(\xi-\xi_j-\lambda_j t)}{P_{2n,j}(\xi-\xi_j-\lambda_j t)}.
	\end{align*}
	To simplify the calculation, we denote
	\[
	B_1(t,\xi,\lambda) := T_{f_1}^{(g)}(t,\xi,\lambda), \quad B_2(t,\xi,\lambda) := T_{f_2}^{(g)}(t,\xi,\lambda);
	\]
	\[
	C_j(t,\xi,\lambda) := 2\pi i \xi_j + 2\pi i\lambda_j t+ \frac{2\pi\lambda_j(\xi-\xi_j-\lambda_j t)}{\alpha+i(\lambda-\lambda_j)}, \quad j = 1,2;
	\]
	\[
	D_j(t,\xi,\lambda) :=\partial_t C_j = 2\pi i \lambda_j - \frac{2\pi\lambda_j^2}{\alpha+i(\lambda-\lambda_j)}, \quad j = 1,2;
	\]
	\[
	E_j(t,\xi,\lambda) := \frac{-2\pi(\xi-\xi_j-\lambda_j t)}{\alpha+i(\lambda-\lambda_j)},\quad j = 1,2;
	\]
	\[
	F_j(t,\xi,\lambda) := \partial_t E_j = \frac{2\pi\lambda_j}{\alpha+i(\lambda-\lambda_j)},\quad j = 1,2;
	\]
	\[
	Q_{j}(t,\xi,\lambda) := \frac{\partial_t P_{2n,j}(\xi-\xi_j-\lambda_j t)}{P_{2n,j}(\xi-\xi_j-\lambda_j t)},\quad j=1,2;
	\]
	\[
	\tilde{Q}_{j}(t,\xi,\lambda) := \frac{\partial_{\xi} P_{2n,j}(\xi-\xi_j-\lambda_j t)}{P_{2n,j}(\xi-\xi_j-\lambda_j t)},\quad j=1,2;
	\]
	\[
	Z_{j}(t,\xi,\lambda) := \partial_t Q_j,\quad j=1,2;
	\ \ 
	\tilde{Z}_{j}(t,\xi,\lambda) := \partial_t \tilde{Q}_j,\quad j=1,2.
	\]
	With these notations, we have at $(t,\xi,\lambda)$
	\begin{equation*}
		\partial_t T_{f}^{(g)} = B_1C_1 + B_1Q_1 + B_2C_2 + B_2Q_2,
	\end{equation*}
	and
	\begin{align*}
		&T_{f}^{(g)}\partial_{tt}T_{f}^{(g)} - (\partial_t T_{f}^{(g)})^2\\
		=&\, (B_1C_1^2+2B_1C_1Q_1 + B_1Q_1^2+ B_1D_1 + B_1Z_1 + B_2C_2^2+2B_2C_2Q_2 + B_2Q_2^2 + B_2D_2+B_2Z_2)(B_1+B_2)\\
		&-(B_1C_1 + B_1Q_1 + B_2C_2 + B_2Q_2)^2.
	\end{align*}
	At $(t_0,\bar{\xi},\lambda)$, since $\partial_t P_{2n,i}(\xi-\xi_i-\lambda_i t)$ is a polynomial with only odd powers of $\xi-\xi_i-\lambda_i t$, so $Q_i(t_0,\bar{\xi},\lambda)=0$ for any $\lambda\in\mathbb{R}$ and $i = 1,2$; also $C_1(t_0,\bar{\xi},\lambda) - C_2(t_0,\bar{\xi},\lambda) = 0$. So at $(t_0,\bar{\xi},\lambda)$,
	\begin{flalign*}
		&T_{f}^{(g)}\partial_{tt}T_{f}^{(g)} - (\partial_t T_{f}^{(g)})^2 &\\
		=\,& (B_1C_1^2+ B_1D_1 + B_1Z_1 + B_2C_2^2 + B_2D_2+B_2Z_2)(B_1+B_2)-(B_1C_1 + B_2C_2)^2&\\
		=\,&(B_1D_1 + B_1Z_1 + B_2D_2 + B_2Z_2)(B_1+B_2) + B_1B_2(C_1-C_2)^2&\\
		=\,& (B_1+B_2)(B_1D_1 + B_1Z_1 + B_2D_2 + B_2Z_2).
	\end{flalign*}
	Then we have for $j=1,2$
	\begin{align*}
		\partial_{\xi} T_{f_j}^{(g)}(t,\xi,\lambda) = \left(\frac{-2\pi(\xi-\xi_j-\lambda_j t)}{\alpha+i(\lambda-\lambda_j)}\right)T_{f_j}^{(g)}(t,\xi,\lambda) + T_{f_j}^{(g)}(t,\xi,\lambda)\frac{\partial_{\xi} P_{2n,j}(\xi-\xi_j-\lambda_j t)}{P_{2n,j}(\xi-\xi_j-\lambda_j t)}.
	\end{align*}
	So at any $(t,\xi,\lambda)$,
	\[
	\partial_{\xi} T_{f}^{(g)} = B_1E_1 + B_1 \tilde{Q}_1 + B_2E_2 + B_2\tilde{Q}_2,
	\]
	and 
	\begin{align*}
		&\partial_{t\xi}^2 T_{f}^{(g)}= \partial_{t}\partial_{\xi}T_{f}^{(g)}\\
		=&\, (B_1C_1 + B_1Q_1)E_1 + B_1F_1 + (B_1C_1 + B_1Q_1)\tilde{Q}_1 + B_1 \tilde{Z}_1 + (B_2C_2 + B_2Q_2)E_2+ B_2F_2\\
		&\qquad + (B_2C_2 + B_2Q_2)\tilde{Q}_2 + B_2\tilde{Z}_2.
	\end{align*}
	Similar to $Q_i$, at $(t_0,\bar{\xi}, \lambda)$, we have $\tilde{Q}_i(t_0,\bar{\xi},\lambda)=0$ for any $\lambda\in\mathbb{R}$ and $i = 1,2$. Moreover, we have $E_i(t_0,\bar{\xi},\lambda)=0$, and hence $\partial_{\xi}T_f^{(g)}(t_0,\bar{\xi},\lambda)=0$. It is easy to check that $Z_i(t_0,\bar{\xi},\lambda) = -\lambda_i \tilde{Z}_i(t_0,\bar{\xi},\lambda)$. Therefore, we have at $(t_0,\bar{\xi},\lambda)$
	\begin{align*}
		T_{f}^{(g)}\partial_{t\xi}^2T_{f}^{(g)}- \partial_{\xi}T_{f}^{(g)}\partial_t T_{f}^{(g)}= (B_1+B_2)(B_1F_1+B_1\tilde{Z}_1 +B_2F_2 + B_2\tilde{Z}_2).
	\end{align*}
	As a result, at $(t_0,\bar{\xi},\lambda)$, we have
	\begin{align*}
		\abs{\mu_f^{(g)}-\lambda_1}
		&\leq\abs{\frac{\partial_t\left(\frac{\partial_t T_f^{(g)}}{T_f^{(g)}}\right)}{2\pi i - \partial_t\left(\frac{\partial_{\xi}T_f^{(g)}}{T_f^{(g)}}\right)}-\lambda_1}\\
		&= \abs{\frac{T_{f}^{(g)}\partial_{tt}T_{f}^{(g)} - (\partial_t T_{f}^{(g)})^2}{2\pi i (T_{f}^{(g)})^2-(T_{f}^{(g)}\partial_{t\xi}^2T_{f}^{(g)}- \partial_{\xi}T_{f}^{(g)}\partial_t T_{f}^{(g)})}-\lambda_1}\\
		& = \abs{\frac{B_1D_1+B_1Z_1+B_2D_2+B_2Z_2}{2\pi i (B_1+B_2)-(B_1F_1+B_1\tilde{Z}_1 +B_2F_2 + B_2\tilde{Z}_2)}-\lambda_1}\\
		&= \abs{\frac{D_1+Z_1+\frac{B_2}{B_1}(D_2+Z_2)}{(2\pi i - F_1 -\tilde{Z}_1) + \frac{B_2}{B_1}(2\pi i - F_2 -\tilde{Z}_2)}-\frac{D_1+Z_1}{2\pi i -F_1-\tilde{Z}_1}}\\
		&= \abs{\frac{B_2}{B_1}}\abs{\frac{(D_2+Z_2)(2\pi i - F_1-\tilde{Z}_1)-(D_1+Z_1)(2\pi i - F_2-\tilde{Z}_2)}{(2\pi i -F_1-\tilde{Z}_1)\big[(2\pi i - F_1 -\tilde{Z}_1) + \frac{B_2}{B_1}(2\pi i - F_2 -\tilde{Z}_2)\big]}}
	\end{align*}
	since $\frac{D_1+Z_1}{2\pi i -F_1-\tilde{Z}_1} = \lambda_1$.
	One can verify that the constant term in $P_{2n,j}(\xi-\xi_j-\lambda_j t)$ is $\frac{c_{2n}}{(\alpha+i(\lambda-\lambda_j))^n}$ for some $c_{2n}\in\mathbb{R}$ and $j=1,2$. Therefore at $(t_0,\bar{\xi},\lambda)$, for the first term, we have
	\begin{align*}
		\abs{\frac{B_2}{B_1}} = \left(\frac{\alpha^2+(\lambda-\lambda_1)^2}{\alpha^2+(\lambda-\lambda_2)^2}\right)^{\frac{1}{4}+\frac{n}{2}}
	\end{align*}
	and the second term $\abs{\frac{(D_2+Z_2)(2\pi i - F_1-\tilde{Z}_1)-(D_1+Z_1)(2\pi i - F_2-\tilde{Z}_2)}{(2\pi i -F_1-\tilde{Z}_1)\left[(2\pi i - F_1 -\tilde{Z}_1) + \frac{B}{A}(2\pi i - F_2 -\tilde{Z}_2)\right]}}$ is uniformly bounded in $2n$ for fixed $\alpha$, since only magnitudes of $Z_i$ and $\tilde{Z}_i$ grow with $2n$, and in the numerator $-Z_2\tilde{Z}_1 + Z_1\tilde{Z}_2 = 0$.
	Hence
	$\abs{\mu_f^{(g)}(t_0,\bar{\xi},\lambda)-\lambda_1}\leq c_{\alpha}\left(\frac{\alpha^2+(\lambda-\lambda_1)^2}{\alpha^2+(\lambda-\lambda_2)^2}\right)^{\frac{1}{4}+\frac{n}{2}}$, where $c_{\alpha}$ only depends on $\alpha$.
\end{proof}

\subsection{Reconstruction}\label{reconstruction section}

A common mission in TF analysis is to reconstruct the constituent components from the recorded signal. In the SST setup, provided the window function $g\in\mathcal{S}(\mathbb{R})$ does not vanish at the origin, the reconstruction $\tilde{f}_k(x)$ of the $k$-th component $f_k(x)$ of the original signal $f(x) = \sum_{i=1}^K f_i(x)$ is 
\[
\tilde{f}(x) = \frac{1}{g(0)}\int_{\abs{\xi-\phi_k'(x)}\leq \delta}S_{f}^{1,(g,\alpha)}(t,\xi)\diff{\xi}
\]
for some small $\delta>0$. Replacing $S_{f}^{1,(g,\alpha)}(t,\xi)$ by $S_{f}^{2,(g,\alpha)}(t,\xi)$ gives us a reconstruction by the 2nd-order SST. Since the synthesis formula for the SST and the 2nd-order SST are based on the inverse Fourier transform, it fails at the time when the IFs of two components cross. 

In \cite{li2021chirplet}, a reconstruction algorithm based on the CT was proposed to handle this situation. We briefly summarize the ad hoc argument and the algorithm here. First, each component $f_k(x) = A_k(x)e^{2\pi i \phi_k(x)}$ is approximated locally by a linear chirp: $f_k(t+x)\approx f_k(t)e^{2\pi i \phi_k'(t)x + \pi i \phi_k''(t)x^2}$, and the CT of $f_k(x)$ with window $g$ is thus approximately 
\[
T_{f_k}^{(g)}(t,\xi,\lambda) \approx f_k(t)\widecheck{g}(\xi-\phi_k'(t),\lambda-\phi_k''(t))\,.
\] 
Hence, 
\[
T_{f}^{(g)}(t,\xi,\lambda) \approx \sum_{k=1}^K f_k(t)\widecheck{g}(\xi-\phi_k'(t),\lambda-\phi_k''(t))\,. 
\]
Now, if $\phi_k'(t)$ and $\phi_k''(t)$ can be well approximated by some algorithms; for example, by $\omega_k(t)$ and $\mu_k(t)$ via \eqref{definition omega mu approximation} by SCT in our case, then 
\[
T_{f}^{(g)}(t,\xi,\lambda) \approx \sum_{k=1}^K \widecheck{g}(\xi-\omega_k(t),\lambda-\mu_k(t))f_k(t)\,. 
\]
By setting $(\xi,\lambda) = (\omega_l(t),\mu_l(t))$ for $l=1,\dots,K$, we obtain a linear system:
\[
\mathbf{\hat{X}}_t = \mathbf{A}_t \mathbf{X}_t,
\]
where
\[
\mathbf{\hat{X}}_t =
\begin{bmatrix}
	T_{f}^{(g)}(t,\omega_1(t),\mu_1(t))\\
	T_{f}^{(g)}(t,\omega_2(t),\mu_2(t))\\
	\vdots\\
	T_{f}^{(g)}(t,\omega_K(t),\mu_K(t))
\end{bmatrix},\ 
\mathbf{A}_t =
\begin{bmatrix}
	a_{1,1} & a_{1,2} & \cdots & a_{1,K}\\
	a_{2,1} & a_{2,2} & \cdots & a_{2,K}\\
	\vdots & \vdots & \ddots & \vdots\\
	a_{K,1} & a_{K,2} & \cdots & a_{K,K}
\end{bmatrix},\ 
\mathbf{X}_t =
\begin{bmatrix}
	f_1(t)\\
	f_2(t)\\
	\vdots\\
	f_K(t)
\end{bmatrix}
\]
and $a_{i,j} = \widecheck{g}(\omega_i(t)-\omega_j(t),\mu_i(t)-\mu_j(t))$, which can be obtained via an explicit expression of $\widecheck{g}(\xi,\lambda)$ for some windows. For example, if $g(x) = e^{-\pi \alpha x^2}$, then
$\widecheck{g}(\xi,\lambda) = \frac{1}{\sqrt{\alpha+i\lambda}}e^{\frac{-\pi \xi^2}{\alpha+i\lambda}}$.
The reconstruction of $f_k$ at time $t$ is thus achieved by
\begin{equation}\label{reconstruction formula final}
	e_k^\top \mathbf{X}_t = e_k^\top\mathbf{A}_t^{-1} \mathbf{\hat{X}}_t\,,
\end{equation}
where $e_k$ is a unit vector with $1$ in the $k$-th entry. 

The key to the success of this algorithm is having an accurate estimate of $\phi_k'(t)$ and $\phi_k''(t)$ for the construction of $\mathbf{A}_t$ and $\mathbf{\hat{X}}_t$; in other words, an accurate knowledge of ``ridges'' associated with each component in the TFC domain. As is indicated in Lemma \ref{Proposition slow decay of CT}, the slow decay rate of CT, and hence the low contrast of the ridges and their background, might increase the difficulty of ridge detection. To this end, we propose the following solution based on the SCT.

First, we evaluate the SCT of a given signal $f$ with window $g\in\mathcal{S}(\mathbb{R})$. Second, we extract ridges by the following algorithm. Suppose we know $K\geq 2$, and we want to extract $K\geq 2$ ridges by means of a fast implementation \cite{shen2020scalability} of the (multiway) spectral clustering \cite{von2007tutorial}. Note that $K$ could be determined from the background knowledge. Without the background knowledge, an adaptive estimate of $K$ from the given data is out of the scope of this paper. 
\begin{enumerate}
	\item Select entries in the 3-dim matrix that represent the discretization of $S_f^{(g)}(t,\xi,\lambda)$ whose modules are greater than $q$ quantile of the module over all entries of the 3-dim matrix. Here we set $q=0.9995$ or higher, and denote those entries as $\{x_i\}_{i=1}^n\subset \mathbb{R}^3$.
	
	\item Form the affinity matrix $W$ for the selected points with Gaussian kernel $g(x_i,x_j) = \exp(-\|x_i-x_j\|_2^2/(2\sigma^2))$, and the associated degree matrix $D$. Here we set $\sigma$ to be about the 15\% percentile of all pairwise distances of $\{x_i\}_{i=1}^n$. 
	
	\item Compute the top $2(K-1)$ eigenvectors of the $D^{-1}W$, denoted as $u_1,\ldots,u_{2(K-1)}$. Embed $x_i$ from $\mathbb{R}^3$ to $(u_1(i),u_2(i),\ldots,u_{2(K-1)}(i))^\top \in \mathbb{R}^{2(K-1)}$. In practice, when $n$ is large, we suggest applying the Roseland algorithm \cite{shen2020scalability}. 
	
	\item Run $k$-means on the embedding to split them into $K$ clusters (each refers to one component of the signal). For each cluster, we track back to their original samples in $\mathbb{R}^3$ and set those entries as the ridge of the associated component.
\end{enumerate}
The ridges then represent the estimate $\phi_k'(t)$ and $\phi_k''(t)$ according to Theorem \ref{maintheorem}. We could then plug these estimates into \eqref{reconstruction formula final}. See \cite{ding2021impact} for an argument for the choice of the bandwidth $\sigma$.

\subsection{Numerical implementation of SCT}\label{sec:numericalSCT}

The numerical implementation of SCT is by a direct discretization, which we detail now. The Matlab codes of CT and SCT and those used to generate the figures below in Section \ref{Sect:Numerical} are available in \url{https://github.com/ziyuchen7/Synchrosqueezed-chirplet-transforms}. 

Below we index our vectors and matrices beginning with 1. Suppose the continuous signal $f$ is uniformly sampled over a discrete set
of time points with the sampling interval $\Delta_t > 0$ second. The sampling rate is thus $f_s = \Delta_t^{-1}$. We denote the discretization of $f$ as a column vector $\mathbf{f}\in\mathbb{R}^N$, where $\mathbf{f}(n) = f(n\Delta_t)$, where $1 \leq n\leq N$; that is, we ``record'' the signal for $N\Delta_t$ seconds. 
Choose a discrete window function $\mathbf{h}\in\mathbb{R}^{2K+1}$, where $K\in \mathbb{N}$. For example, $\mathbf{h}$ can be a discretization of a Gaussian, so that $\mathbf{h}(K+1)$ is the center of the Gaussian. 
Write $\mathbf{h}', \mathbf{h}''\in\mathbb{R}^{2K+1}$ for the discretization of the first and second derivative of the window function. Write $\mathbf{th}, \mathbf{th}', \mathbf{t^2h}$ for the discretization of the entry-wise product of $\mathbf{t}$ or $\mathbf{t^2}$ with $\mathbf{h}$ or $\mathbf{h}'$, where $\mathbf{t},\mathbf{t^2}\in\mathbb{R}^{2K+1}$, $\mathbf{t}(n) = n-K-1$ and $\mathbf{t^2}(n) = (n-K-1)^2$, $n = 1,2,\dots, 2K+1$.
Choose a resolution $\alpha>0$, set $M = \floor{\frac{0.5}{\alpha}}$ so that $M+1$ is the number of bins in the (positive) frequency axis and $2M$ is the number of bins in the chirp rate axis of our targeting TFC representation. Then, the CT of $\mathbf{f}$ with window $\mathbf{h}$ would be a matrix $\mathbf{T_f^h}\in\mathbb{C}^{2M\times(M+1)\times N}$, whose entries are
\[
\mathbf{T_f^h}(l,m,n) = \sum_{k=1}^{2K+1}\mathbf{f}(n+k-K-1)\mathbf{h}(k)e^{\frac{-2\pi i (k-1) (m-1)}{2M}}e^{\frac{-\pi i l(k-1)^2 }{4M^2}},
\]
where we define $\mathbf{f}(n)=0$ when $n<1$ or $n>N$, $n=1,2,\dots,N$ is the time index, $m=1,2,\dots,M+1$ is the frequency index and $l = -(M-1),\dots, M$ is the chirp rate index. The CT of $\mathbf{f}$ with windows $\mathbf{h}', \mathbf{h}'', \mathbf{th}, \mathbf{th}', \mathbf{t^2h}$, denoted by $\mathbf{T_f^{h'}}, \mathbf{T_f^{h''}}, \mathbf{T_f^{th}},\mathbf{T_f^{th'}}, \mathbf{T_f^{t^2h}}$ can be defined likewise. Let 
\begin{align*}
	&\mathbf{\lambda_{f}}(l,m,n)\\
	&= \frac{1}{2\pi}(\mathbf{T_f^{h}}(l,m,n)\mathbf{T_f^{h''}}(l,m,n)-4\pi i \frac{l}{4M^2} \mathbf{T_f^{h}}(l,m,n)\mathbf{T_f^{th'}}(l,m,n) -2\pi i \frac{l}{4M^2} (\mathbf{T_f^{h}}(l,m,n))^2\\
	&\quad+(2\pi i \frac{l}{4M^2})^2\mathbf{T_f^{h}}(l,m,n)\mathbf{T_f^{t^2h}}(l,m,n)-(\mathbf{T_f^{h'}}(l,m,n))^2 - (2\pi i \frac{l}{4M^2})^2(\mathbf{T_f^{th}}(l,m,n))^2\\
	&\quad +4\pi i \frac{l}{4M^2} \mathbf{T_f^{h'}}(l,m,n)\mathbf{T_f^{th}}(l,m,n))(-\mathbf{T_f^{h}}(l,m,n)\mathbf{T_f^{th'}}(l,m,n)\\
	&\quad +2\pi i\frac{l}{4M^2} \mathbf{T_f^{h}}(l,m,n) \mathbf{T_f^{t^2h}}(l,m,n)+ \mathbf{T_f^{th}}(l,m,n)\mathbf{T_f^{h'}}(l,m,n)\\
	&\quad-2\pi i\frac{l}{4M^2} (\mathbf{T_f^{th}}(l,m,n))^2)^{-1}.
\end{align*}
We choose a threshold $\nu>0$ and calculate the reassignment operator: 
\begin{align*}
	\mathbf{\mu_{f}}(l,m,n) = 4M^2 \text{Im}(\mathbf{\lambda_{f}}(l,m,n)),
\end{align*}
and
\begin{flalign*}
	\mathbf{\omega_f}(l,m,n)
	&= m - (2M)\text{Im}\left( \frac{-\mathbf{T_f^{h'}}(l,m,n)}{2\pi \mathbf{T_f^{h}}(l,m,n)} - \frac{(\frac{il}{4M^2} - \mathbf{\lambda_{f}}(l,m,n)) \mathbf{T_f^{th}}(l,m,n) }{\mathbf{T_f^{h}}(l,m,n)}\right),
\end{flalign*}
if $\abs{\mathbf{T_f^{h}}(l,m,n)}>\nu$; otherwise set $\mathbf{\mu_{f}}(l,m,n) = -\infty$ and $\mathbf{\omega_f}(l,m,n) = -\infty$. The SCT of $\mathbf{f}$, a matrix $\mathbf{S_f}(l,m,n)\in\mathbb{C}^{2M\times(M+1)\times N}$ is finally given by 
\[
\mathbf{S_f}(l,m,n) = \sum_{r,j: \mathbf{\mu_{f}}(r,j,n) + M = l, \mathbf{\omega_{f}}(r,j,n) = m} \mathbf{T_f^{h}}(r,j,n).
\]
The reconstruction of each component follows immediately.

\section{Numerical results}\label{Sect:Numerical}

In this section, we denote $g_k(x) = x^ke^{-\pi x^2}$ for $k\geq 0$, unless defined otherwise.
To plot the 3-dim TFC representation determined by CT or SCT, we apply the following scheme. Take $q$ to be the $99.99$\% percentile of all entries of the associated discretized TFC representation. Then, plot all entries with magnitudes between different ranges by different gray scales, where we consider ranges $(l-1)q/10$ to $lq/10$, where $l=1,\ldots,10$.

\subsection{Standard linear chirps}
We first look at the synthetic example shown in Figure \ref{fig:intro0}, where the IFs of two ICT components cross at $t_0 = 3$ and $\xi_0 = 24$.  
In Figure \ref{fig:1}, and the TF representation determined by SCT with the standard Gaussian window $g_0(x)$. Compared with the TF representation determined by other methods and CT shown in Figure \ref{fig:intro0}, the TFC representation determined by SCT around the crossover time is clearer. In particular, at time $t_0=3$ when the IFs crossover happens at frequency $\xi_0=24$, the TFC representation provided by SCT is sharper compared with that provided by CT. Specifically, the magnitude at the chirp rate $0$ is close to zero. This shows the impact of the squeezing step. 

\begin{figure}[!htbp]\centering
	\includegraphics[width=0.325\textwidth]{./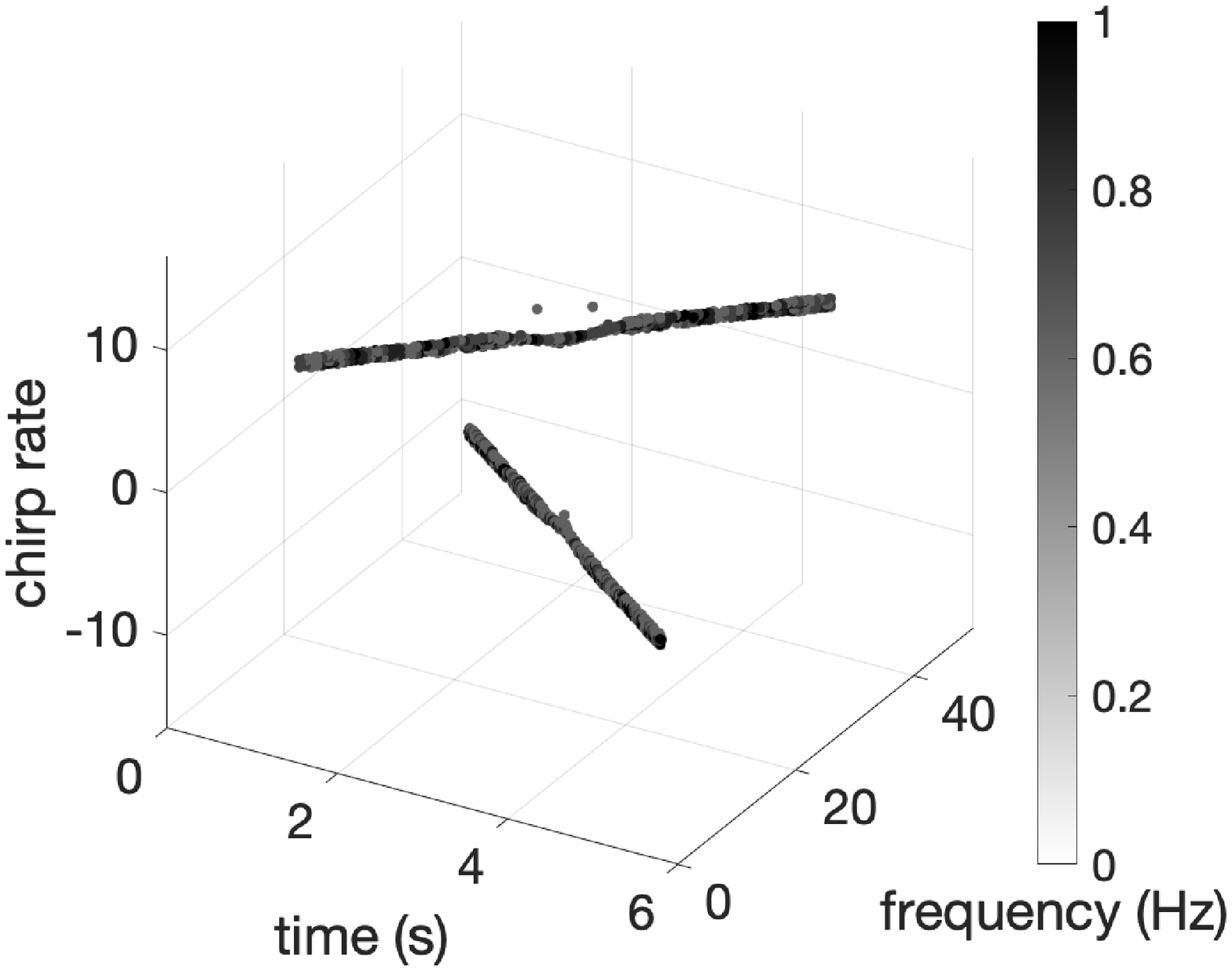}
	\includegraphics[width=0.325\textwidth]{./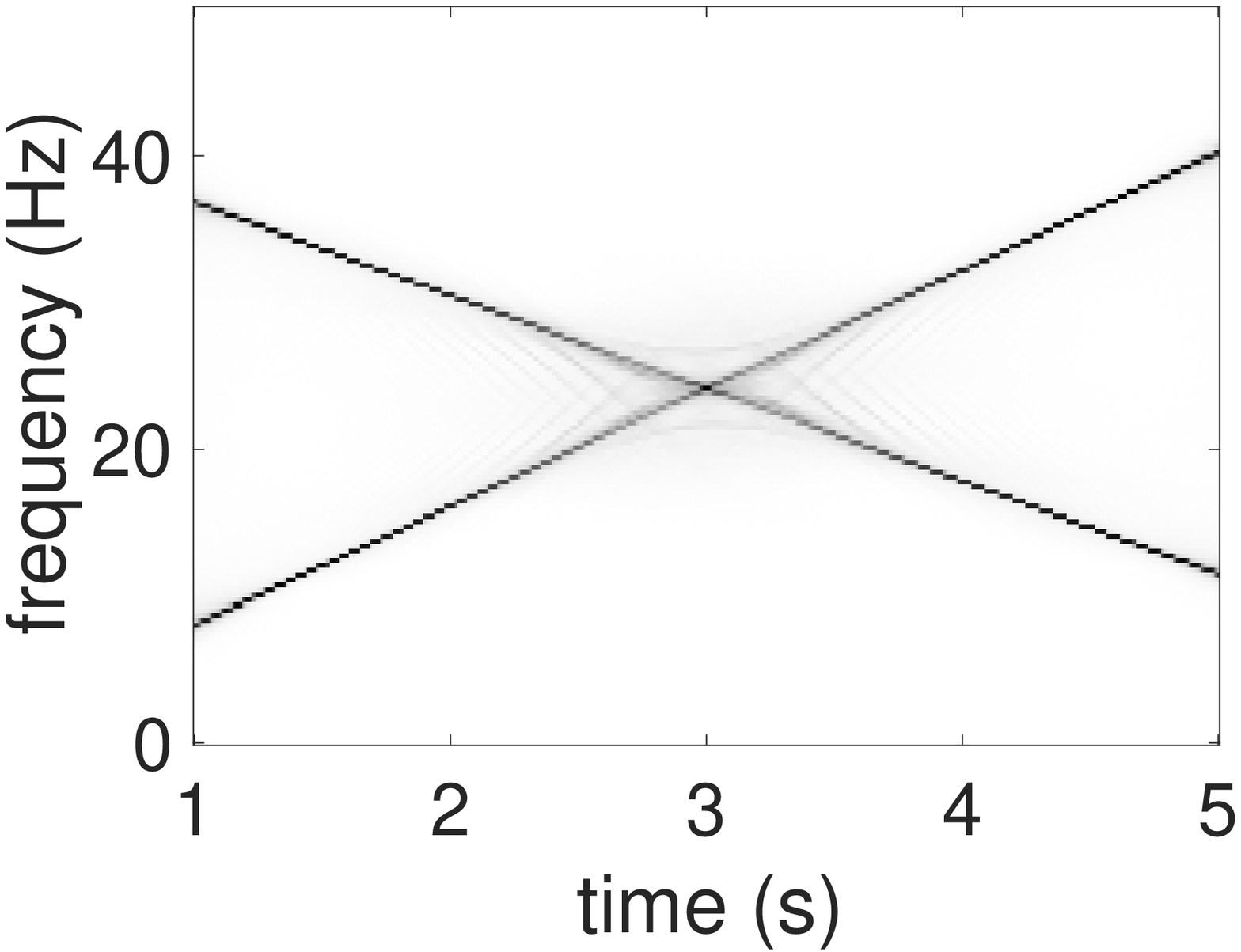}
	\includegraphics[width=.325\textwidth]{./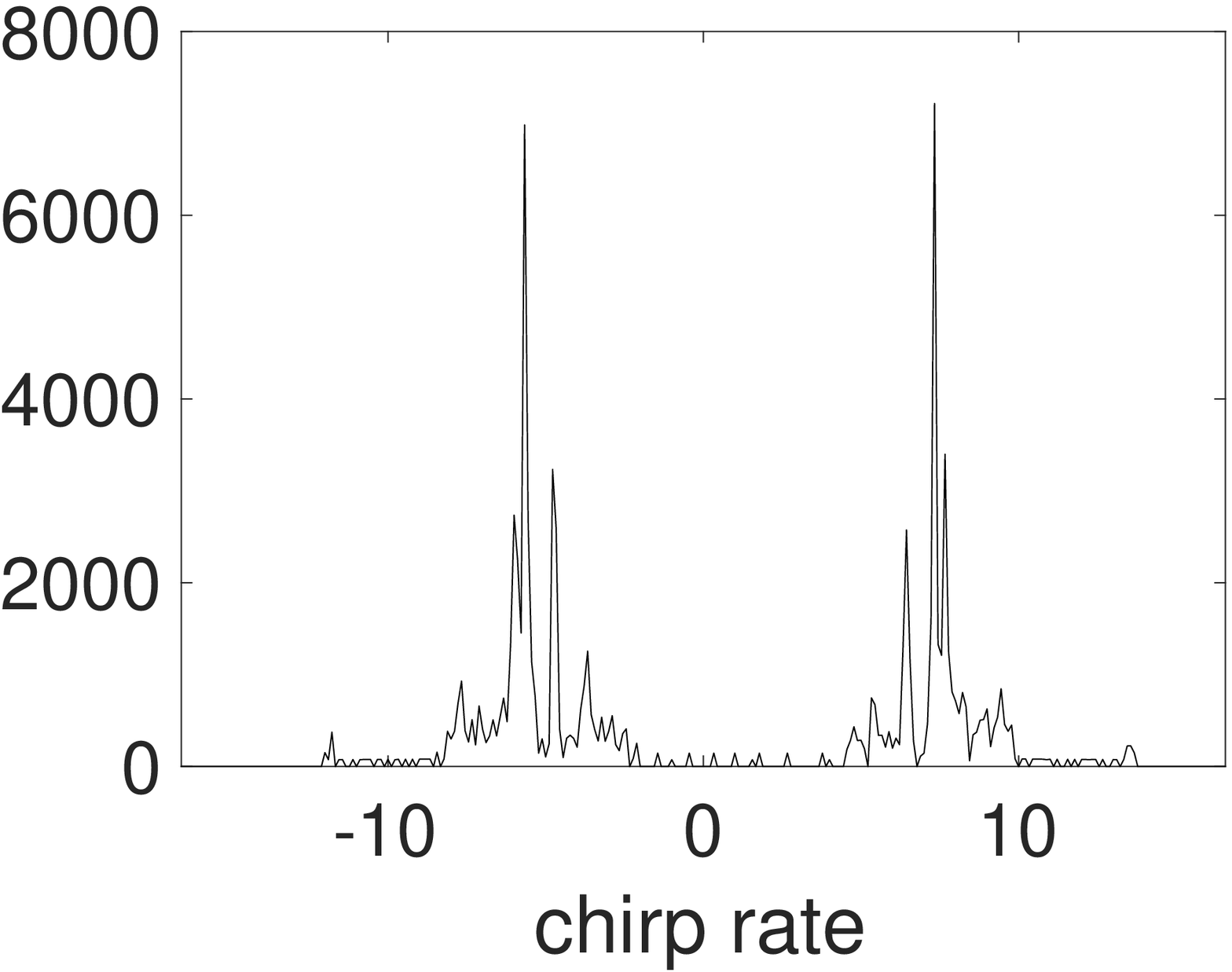}
	\caption{An illustration of SCT with the standard Gaussian window. From left to right: the 3-dim visualization of $\abs{S_f^{(g_0)}(t,\xi,\lambda)}$, the projection of $\abs{S_f^{(g_0)}(t,\xi,\lambda)}$ onto the time-frequency plane, and the plot of $\abs{S_f^{(g_0)}(t_0,\xi_0,\lambda)}$, where $t_0 = 3$ and $\xi_0 = 24$.
		For each time-frequency representation, we set all entries with the magnitude above the 99.99\% percentile of all entries to be the magnitude of the 99.99\% percentile.}
	\label{fig:1}
\end{figure}

Next, we explore the impact of different windows. Figure \ref{fig:2} shows CT and SCT of $f$ with $g_0$ and $g_2$ at different times. We observe that while both CT and SCT provide information about both components at the non-crossing time, $t_1 = 2$s, the SCT with $g_0$ clearly gives a sharper representation of the frequency and chirp rate information. At the crossing time, $t_0 = 3$s, we see a dramatic difference. On the one hand, we cannot easily distinguish the chirp rates of the two components at the crossing frequency 24 Hz when CT is applied. On the other hand, we can get the wanted information when SCT with $g_0$ is applied, and a sharper representation of the chirp rate information when $g_2$ is used as the window. 
We further show the results with $g_2$ as the window in Figure \ref{fig:3}. Compared with CT with $g_0$ shown in Figure \ref{fig:intro0}, we see that CT with $g_2$ gives a sharper TFC representation and hence the TF representation, which reflects the theoretical results shown in Proposition \ref{prop2}. On the other hand, SCT gives us a more concentrated representation of the chirp rate information compared with CT. Moreover, we see that the chirp rate locations of the peaks by SCT with $g_0$ are at $-5.67$ and 7.33, and the peak locations in the SCT with window $g_2$ are at $-6.33$ and 8, and we know that the true chirp rates of $f_1$ and $f_2$ are $-2\pi$ ($\approx -6.28$) and 8. Clearly, the SCT with $g_2$ gives a better estimate of IF and chirp rate.
Moreover, 
two ridges extracted from $|S_f^{(g_2)}(t,\xi,\lambda)|$ by our proposed algorithm in Section \ref{reconstruction section} is shown in Figure \ref{fig:3}.
Furthermore, we compare SCT with different windows, including $g_3(x) = x^2e^{-5\pi x^2}$, $g_4(x) =  x^4e^{-5\pi x^2}$ and $g_5(x) =  x^6e^{-5\pi x^2}$ at the frequency-crossover time. The results are shown in Figure \ref{fig:4}. It shows that $g_4$ seems to give the most concentrated TFC representation in the chirp rate direction in both CT and SCT. Note that while this finding seems to be against Proposition \ref{vanishorder}, which states that the window obtained by multiplying the Gaussian $e^{-\pi x^2}$ with higher power $x^n$ gives us a better TFC representation of SCT in the chirp rate direction, we mention that the analysis is asymptotic, and it is not necessarily guaranteed that increasing $n$ to a finite number with $\alpha$ fixed (as $g_4$ and $g_5$ here) will give us better concentration, particularly under the numerical precision. This result suggests the value of an extensive exploration of the window effect on CT and SCT. However, it is out of the scope of this paper, and more detailed analysis will be reported in our future work.

\begin{figure}[!htbp]\centering
	\includegraphics[width=.32\textwidth]{./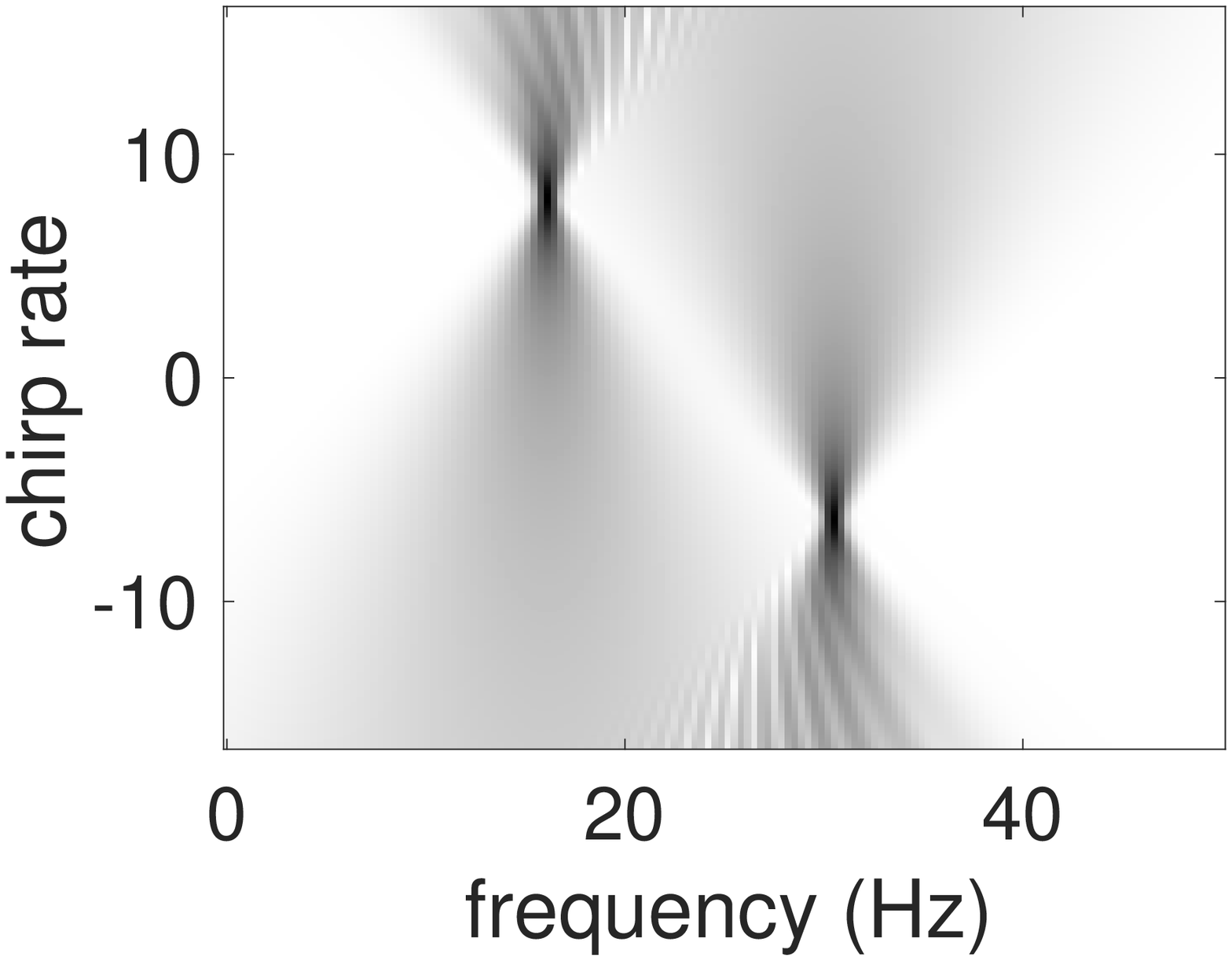}
	\includegraphics[width=.32\textwidth]{./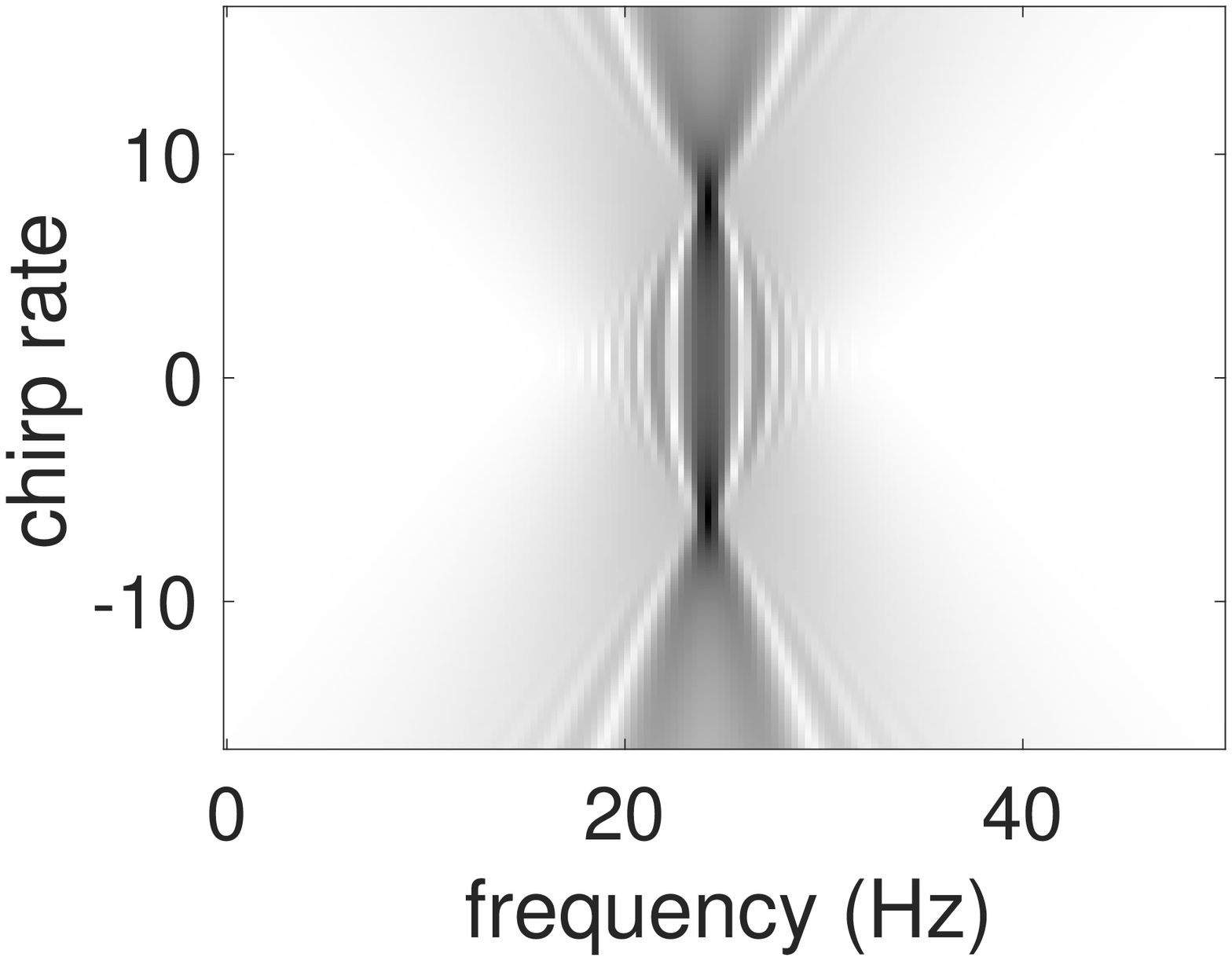}
	\includegraphics[width=.32\textwidth]{./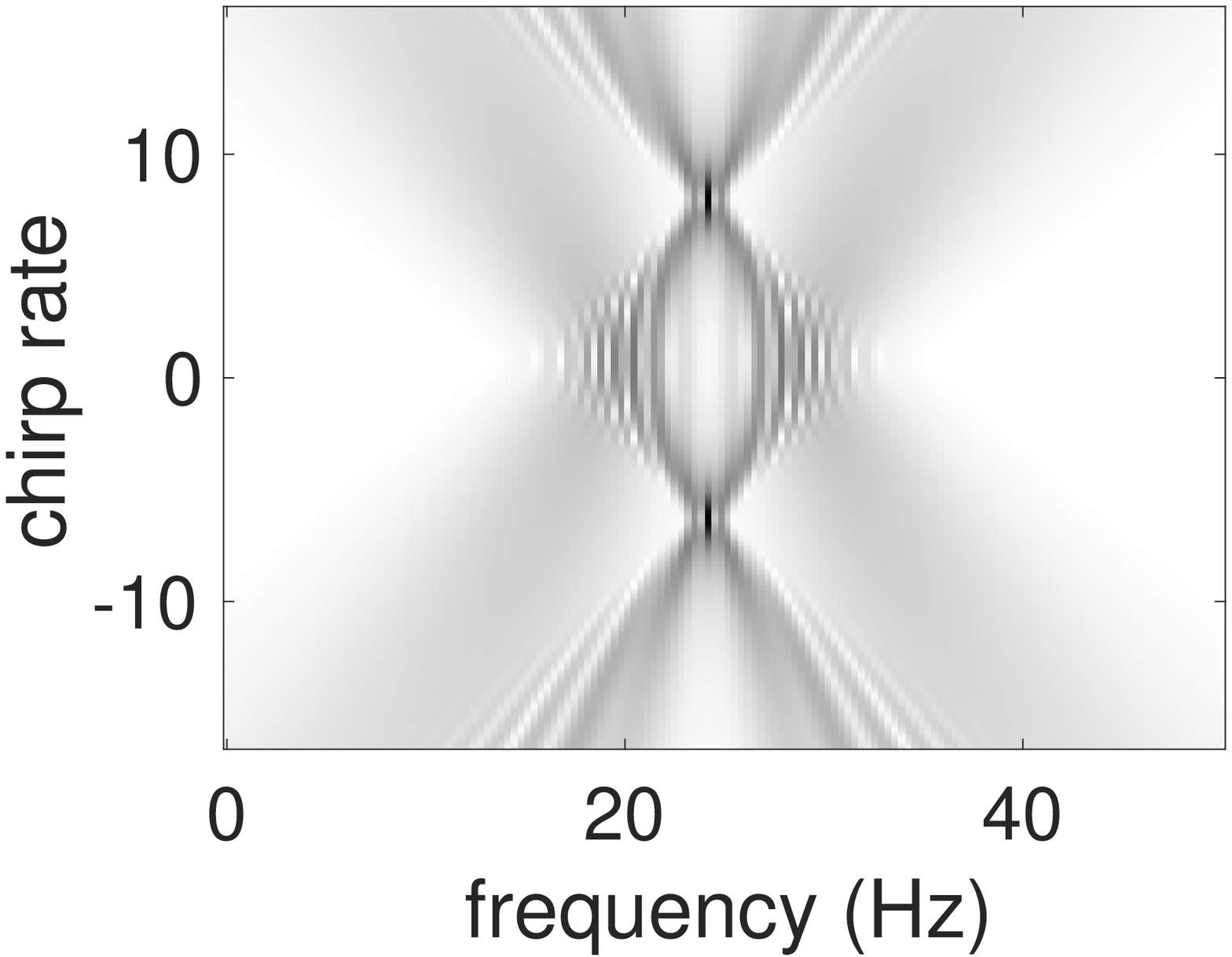}\\
	\includegraphics[width=.32\textwidth]{./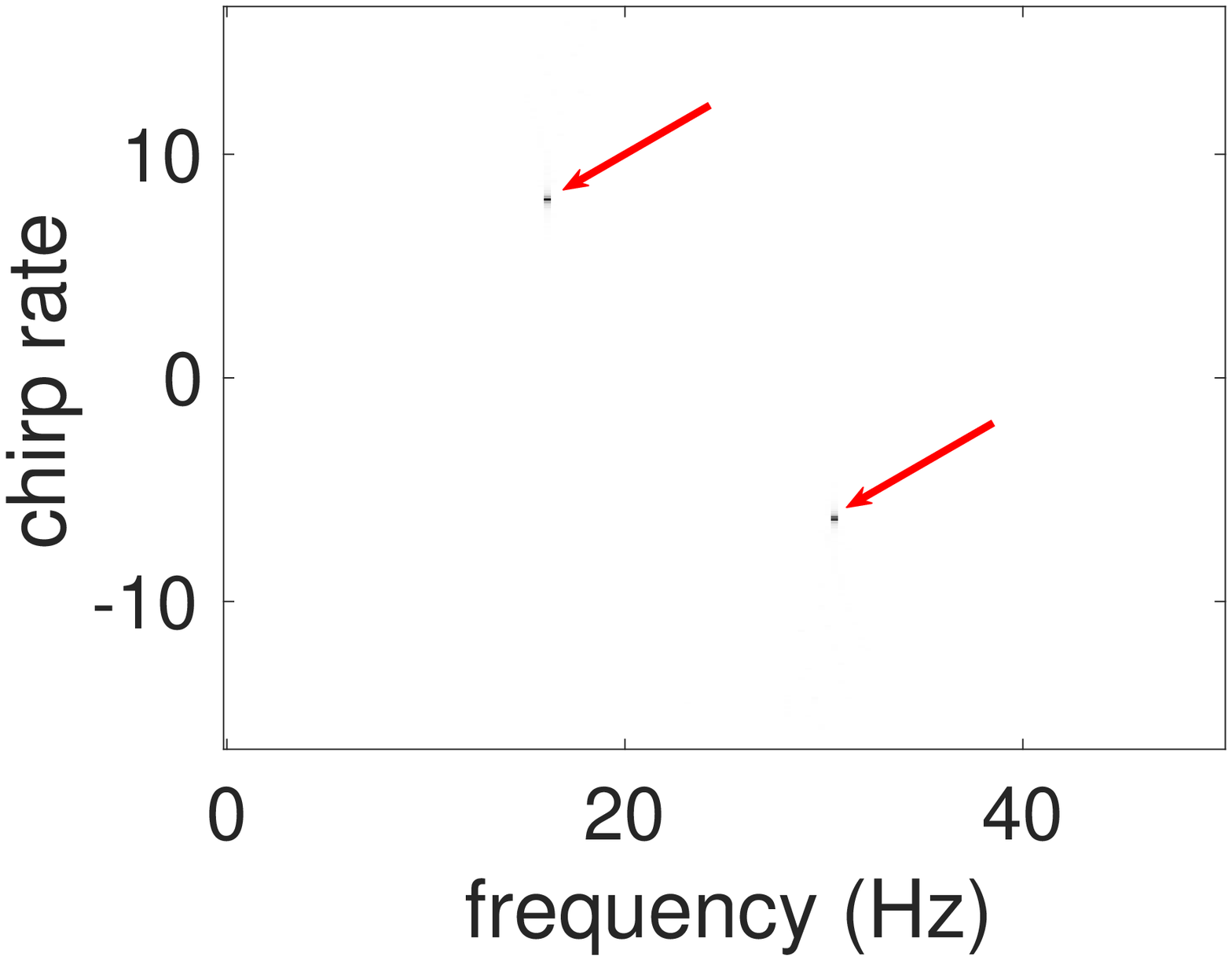}
	\includegraphics[width=.32\textwidth]{./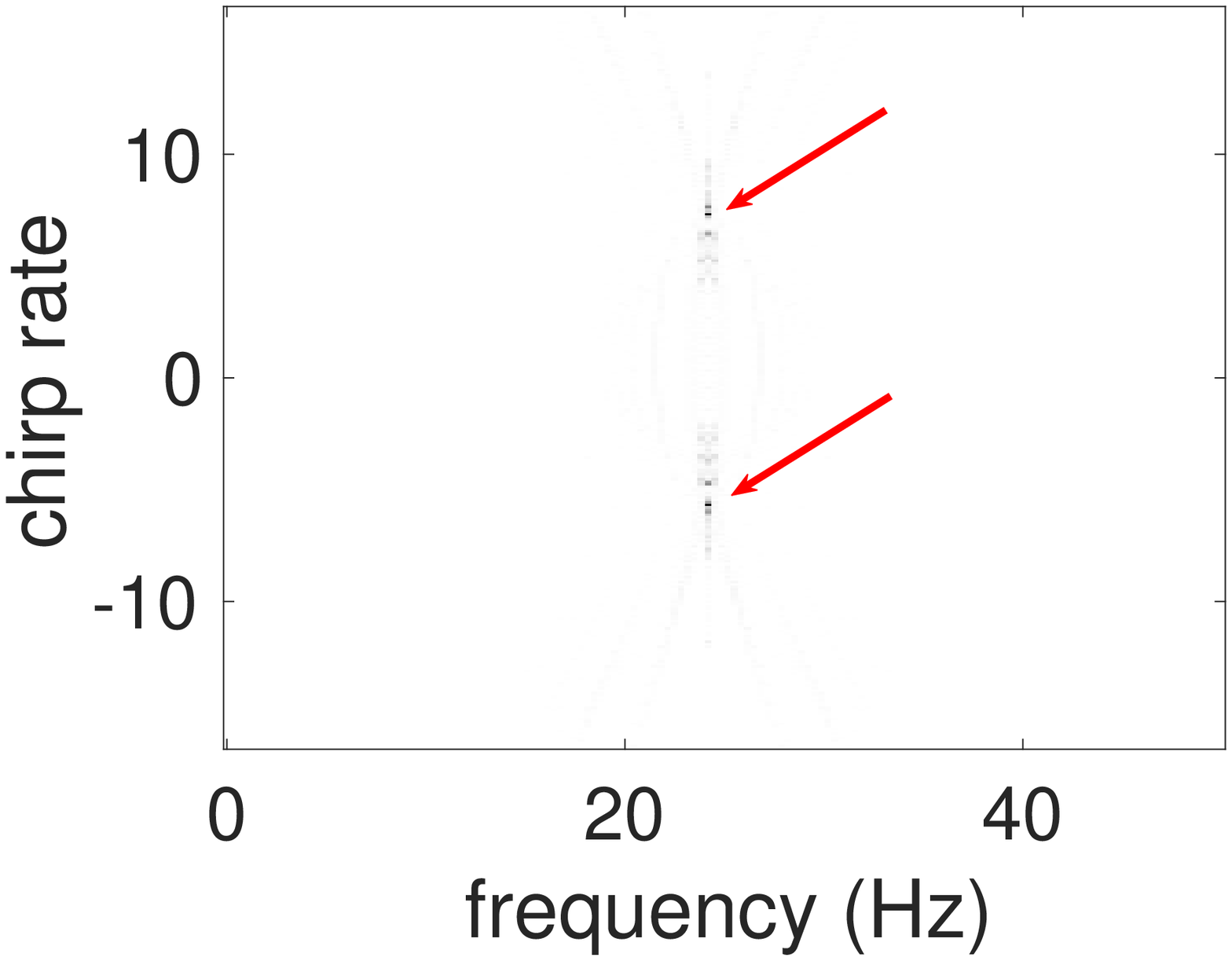}
	\includegraphics[width=.32\textwidth]{./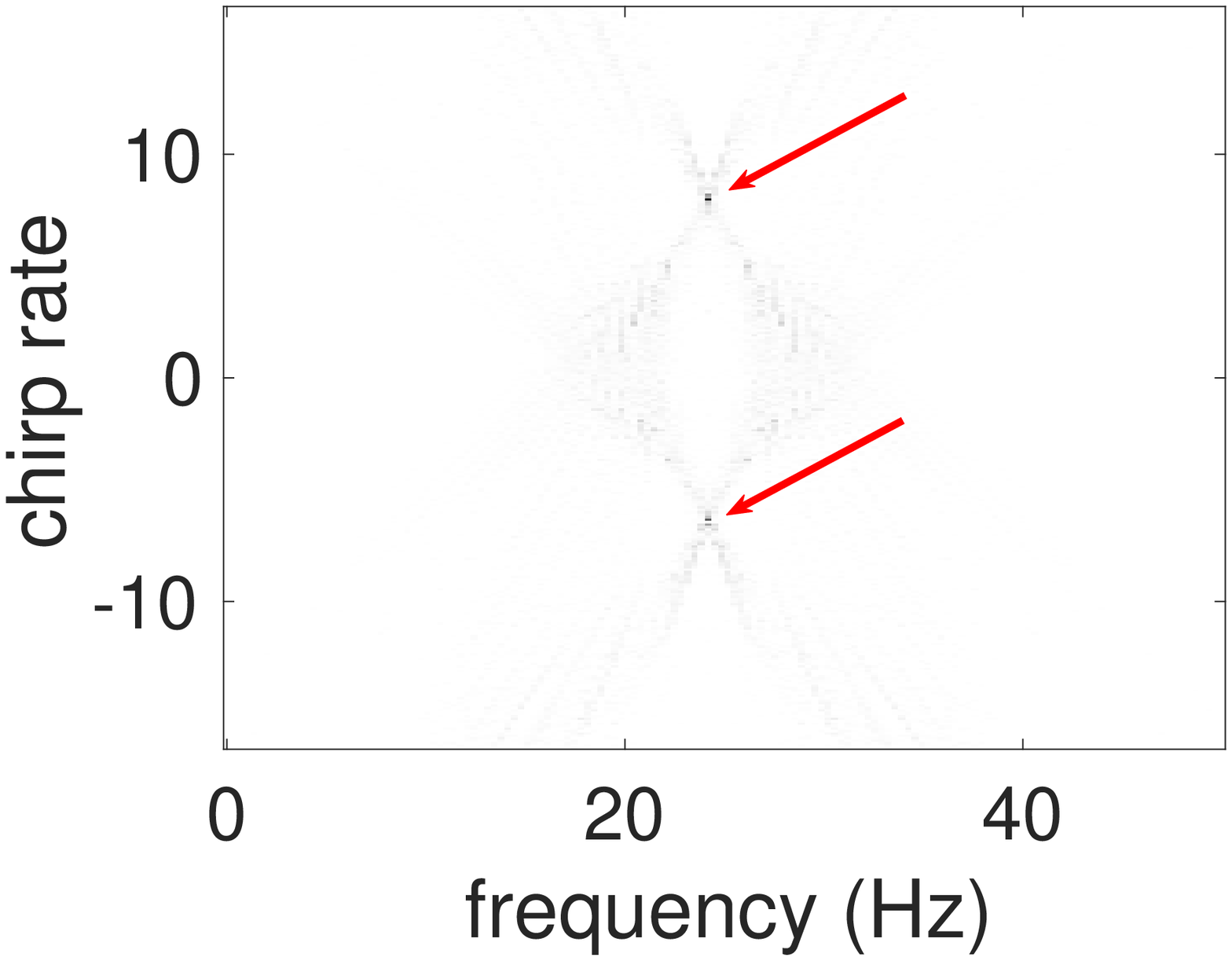}
	\caption{An illustration of the impact of windows. Consider $g_0(x) = e^{-\pi x^2}$ and $g_2(x) = x^2e^{-\pi x^2}$. Top row, from left to right: $\abs{T_f^{(g_0)}(t_1,\xi,\lambda)}$, $\abs{T_f^{(g_0)}(t_0,\xi,\lambda)}$, and $\abs{T_f^{(g_2)}(t_0,\xi,\lambda)}$. 
		Bottom row, from left to right: $\abs{S_f^{(g_0)}(t_1,\xi,\lambda)}$, $\abs{S_f^{(g_0)}(t_0,\xi,\lambda)}$, and $\abs{S_f^{(g_2)}(t_0,\xi,\lambda)}$. To enhance the visualization, the red arrows are superimposed to indicate the squeezed ``spots''.}
	\label{fig:2}
\end{figure}

\begin{figure}[!htb]\centering
	\includegraphics[width=.325\textwidth]{./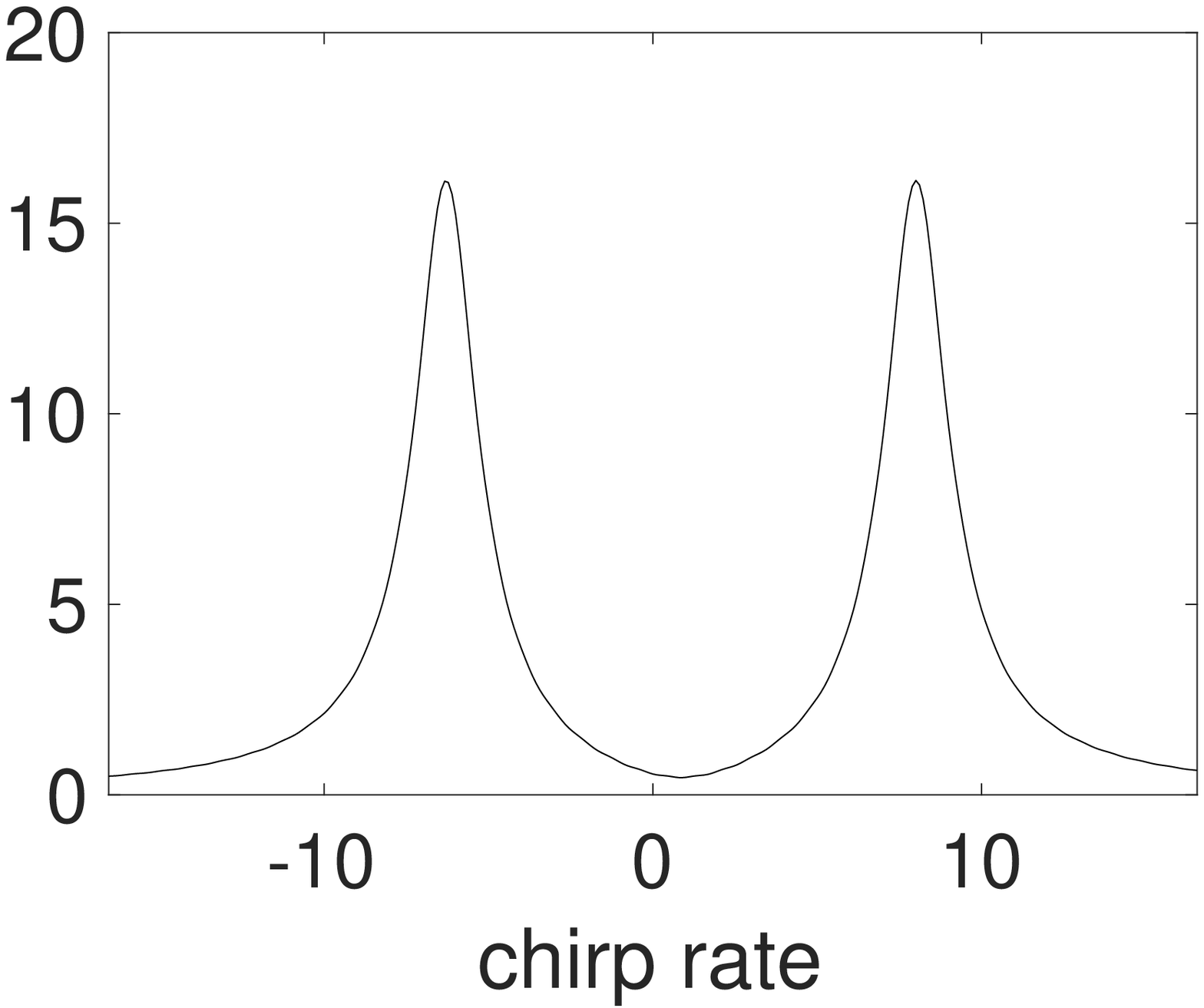}
	\includegraphics[width=.325\textwidth]{./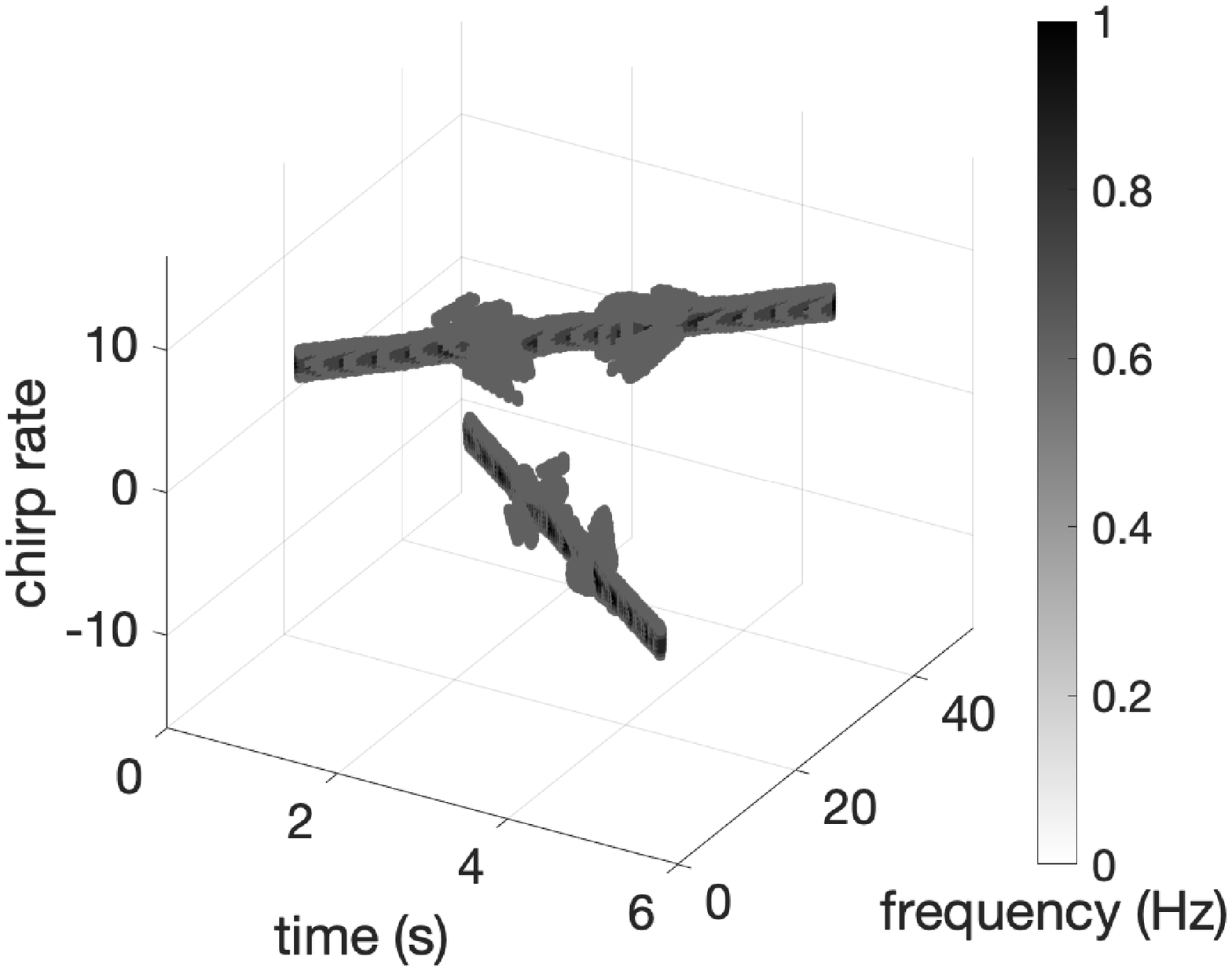}\\
	\includegraphics[width=.325\textwidth]{./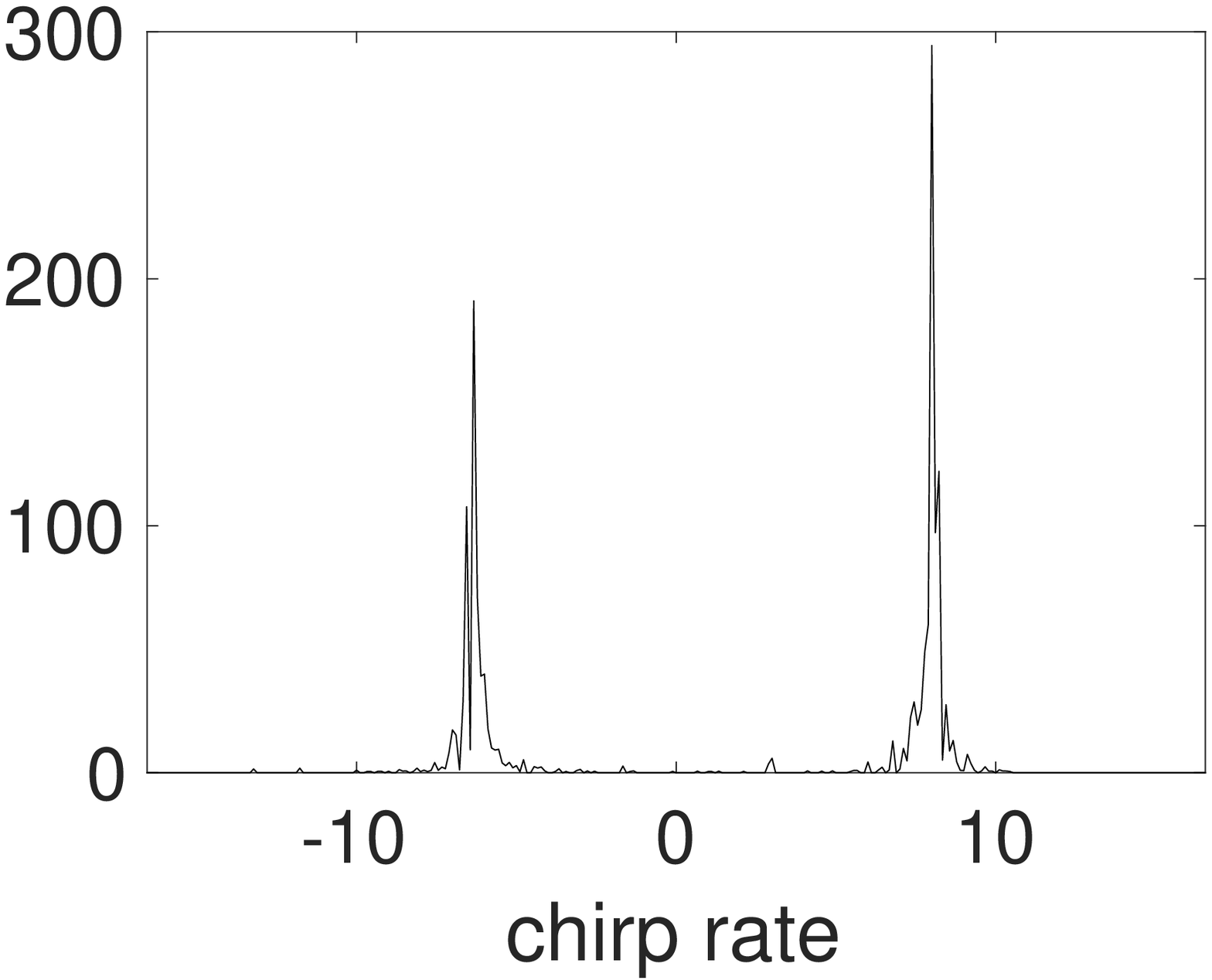}
	\includegraphics[width=.325\textwidth]{./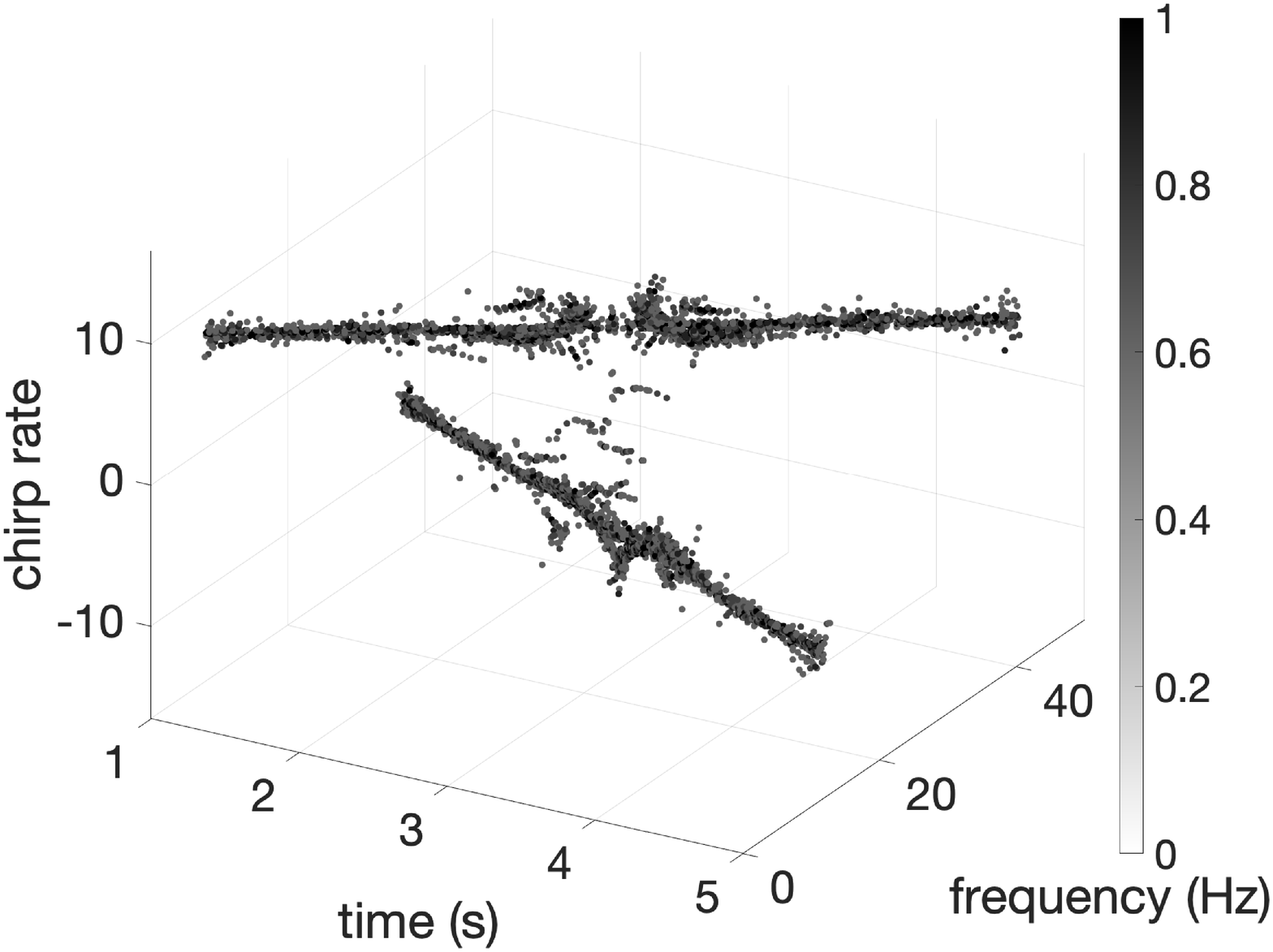}
	\includegraphics[width=.325\textwidth]{./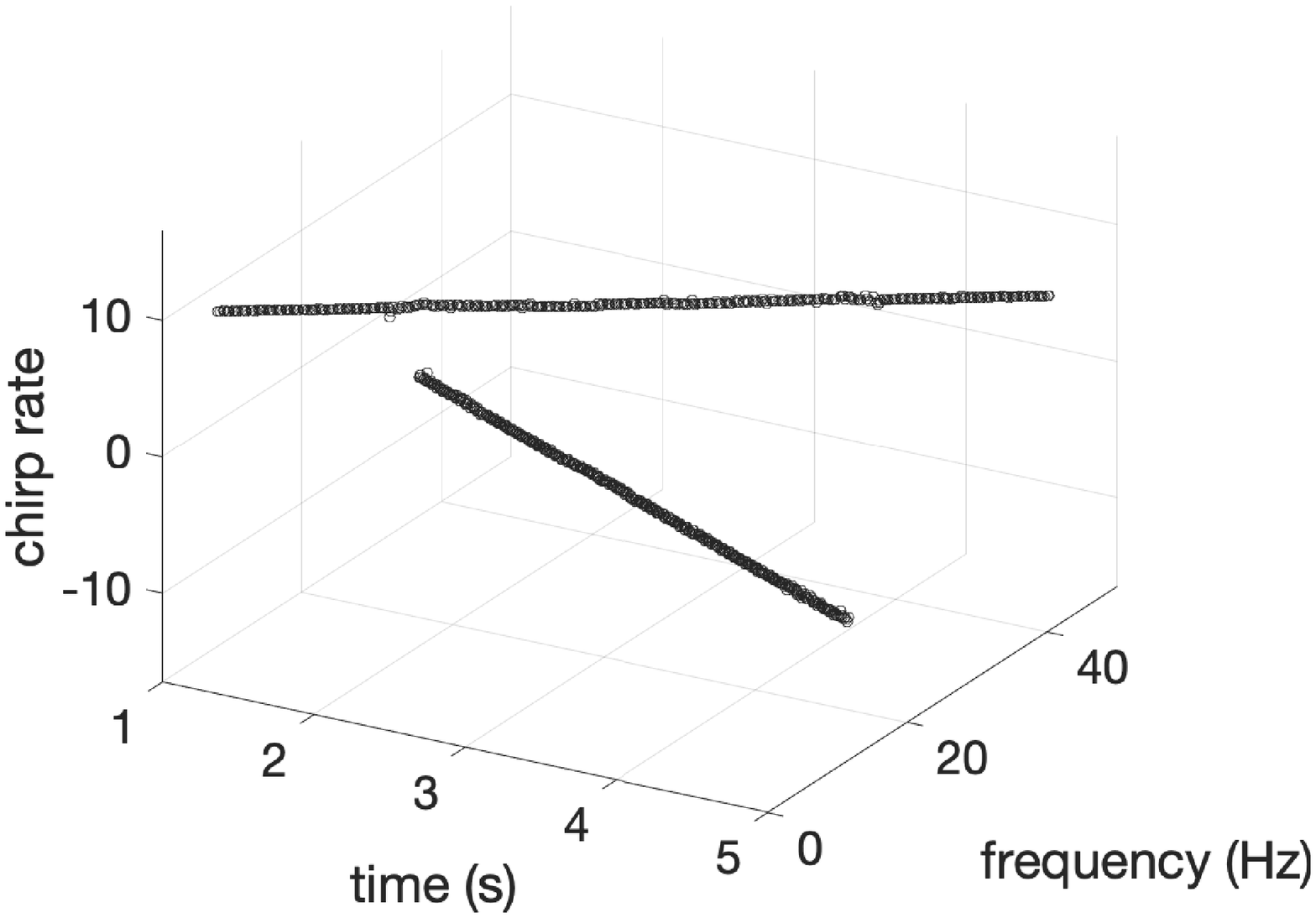}
	\caption{Top row, from left to right: $\abs{T_f^{(g_2)}(t_0,\xi_0,\lambda)}$, and the 3-dim visualization of $\abs{T_f^{(g_2)}(t,\xi,\lambda)}$. Bottom row, from left to right: $\abs{S_f^{(g_0)}(t_0,\xi_0,\lambda)}$, the 3-dim visualization of $\abs{S_f^{(g_2)}(t,\xi,\lambda)}$, and the ridges extracted from $\abs{S_f^{(g_2)}(t,\xi,\lambda)}$. }
	\label{fig:3}
\end{figure}

\begin{figure}[!htbp]
	\centering
	\includegraphics[width=.32\textwidth]{./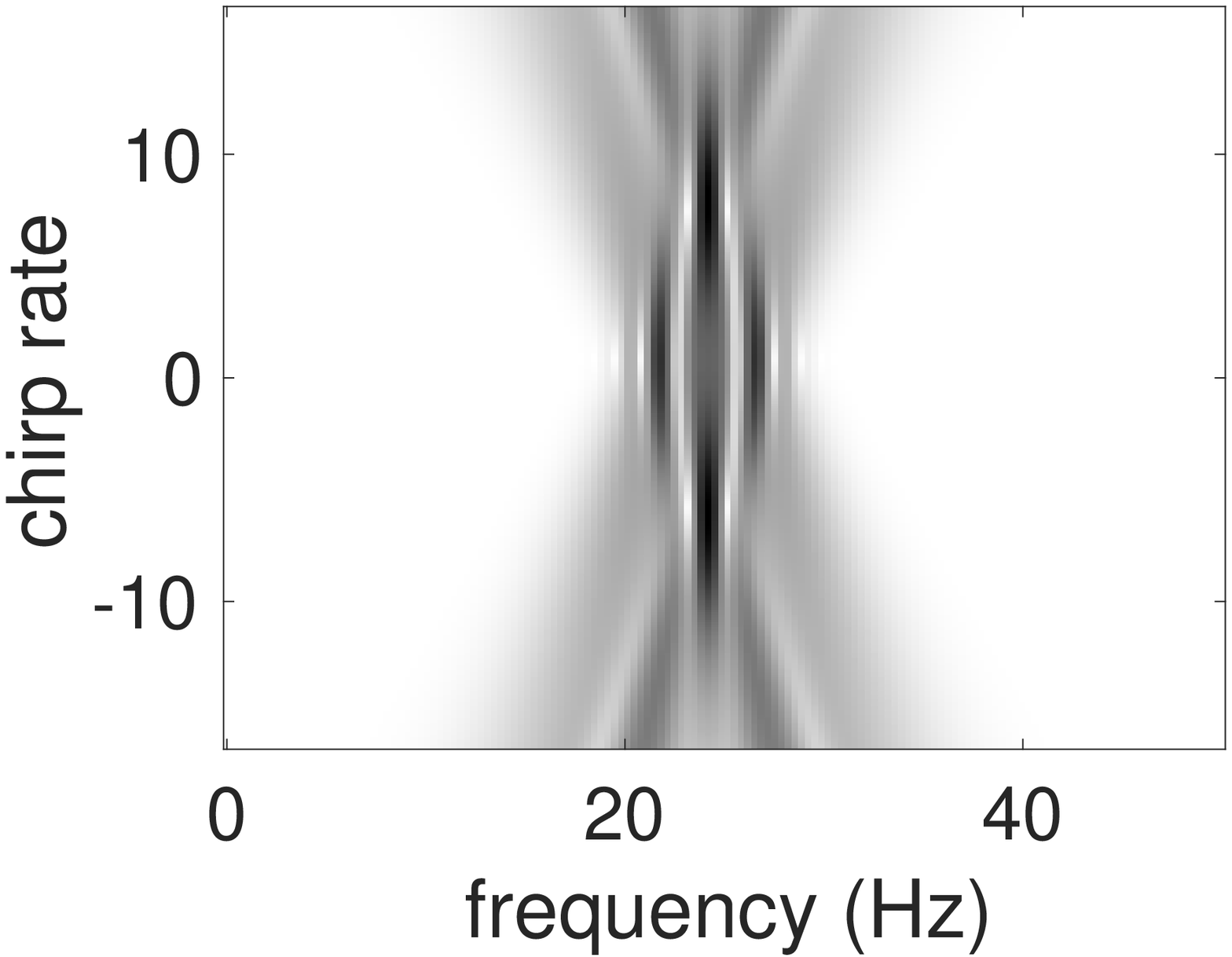}
	\includegraphics[width=.32\textwidth]{./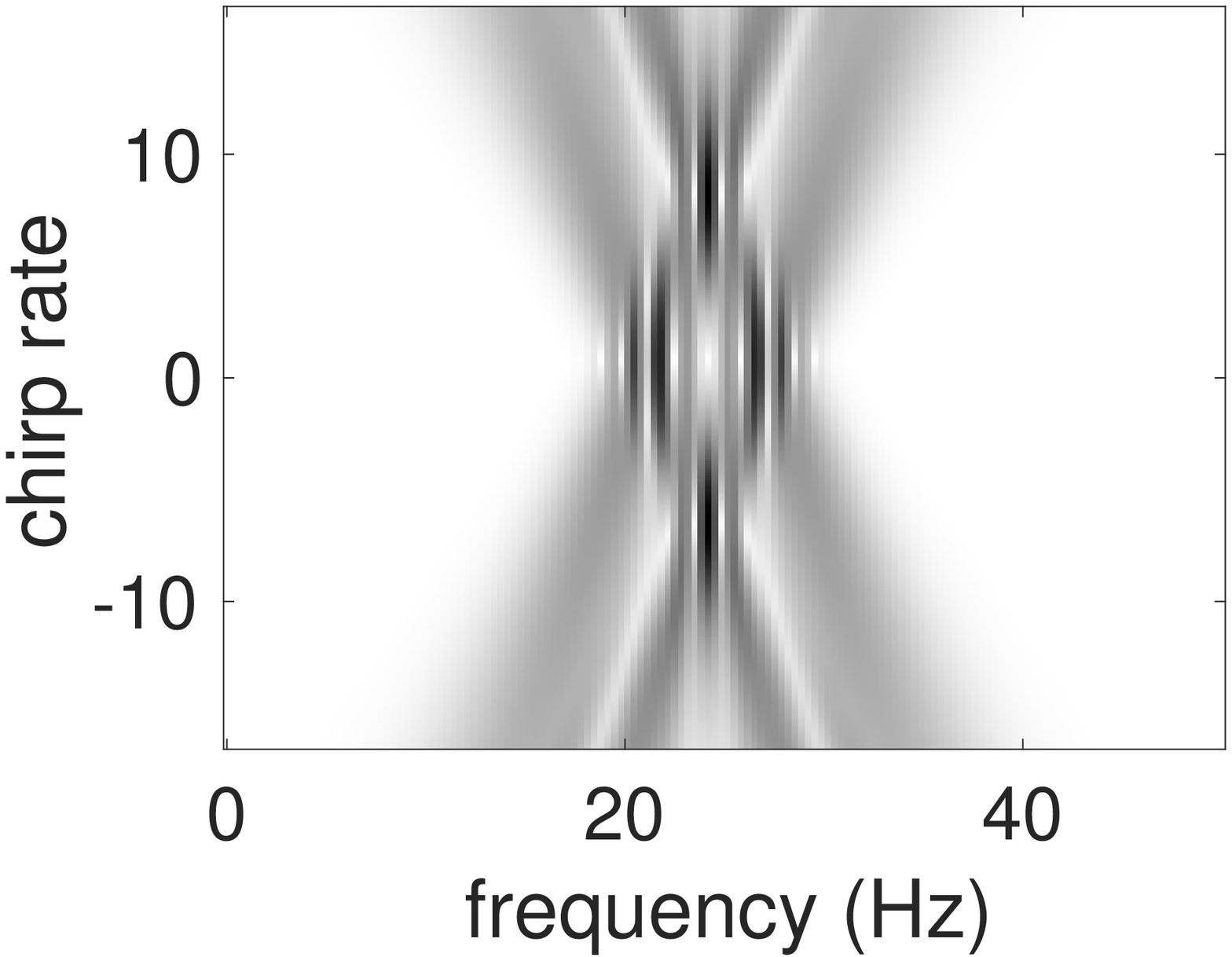}
	\includegraphics[width=.32\textwidth]{./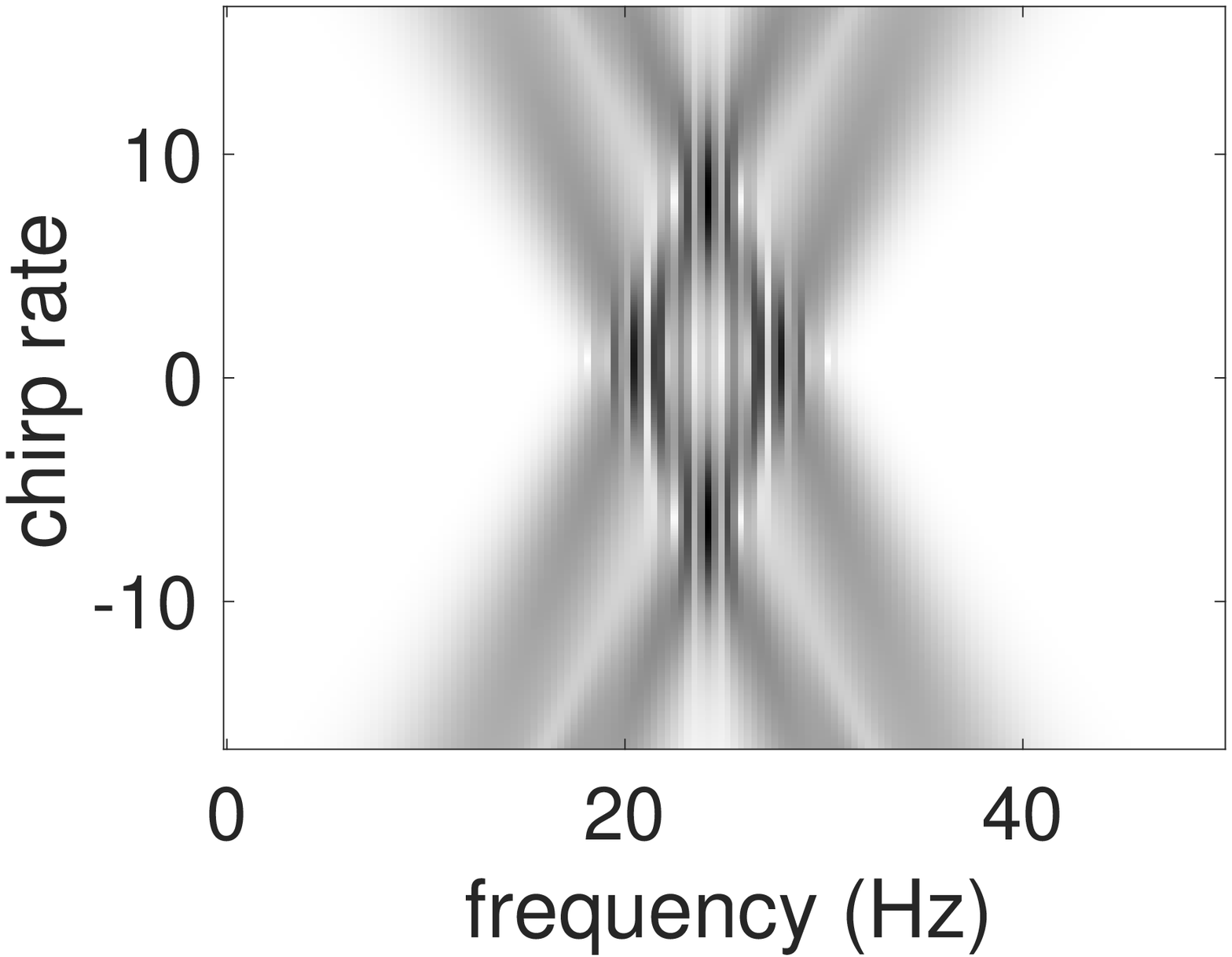}\\
	\includegraphics[width=.32\textwidth]{./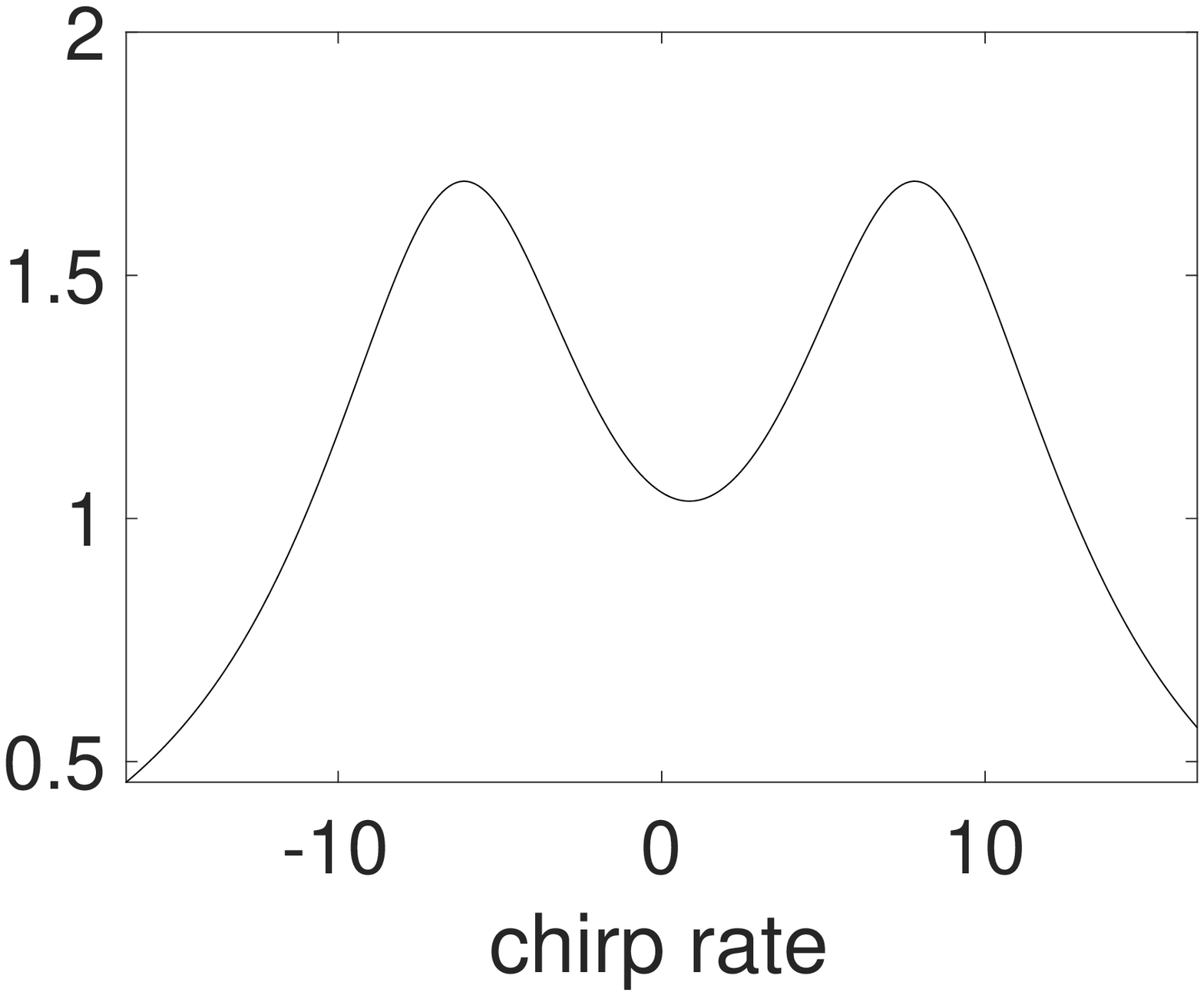}
	\includegraphics[width=.32\textwidth]{./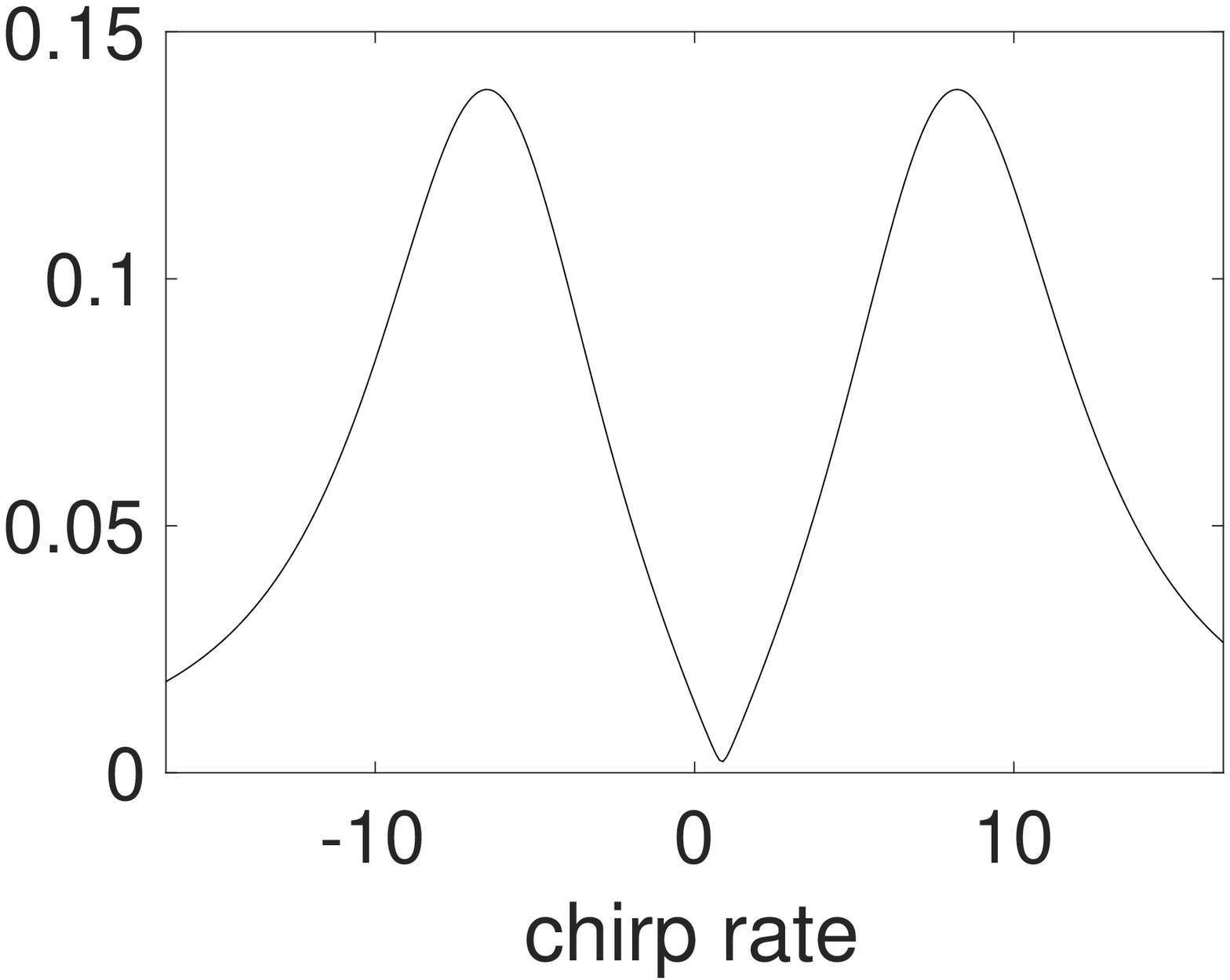}
	\includegraphics[width=.32\textwidth]{./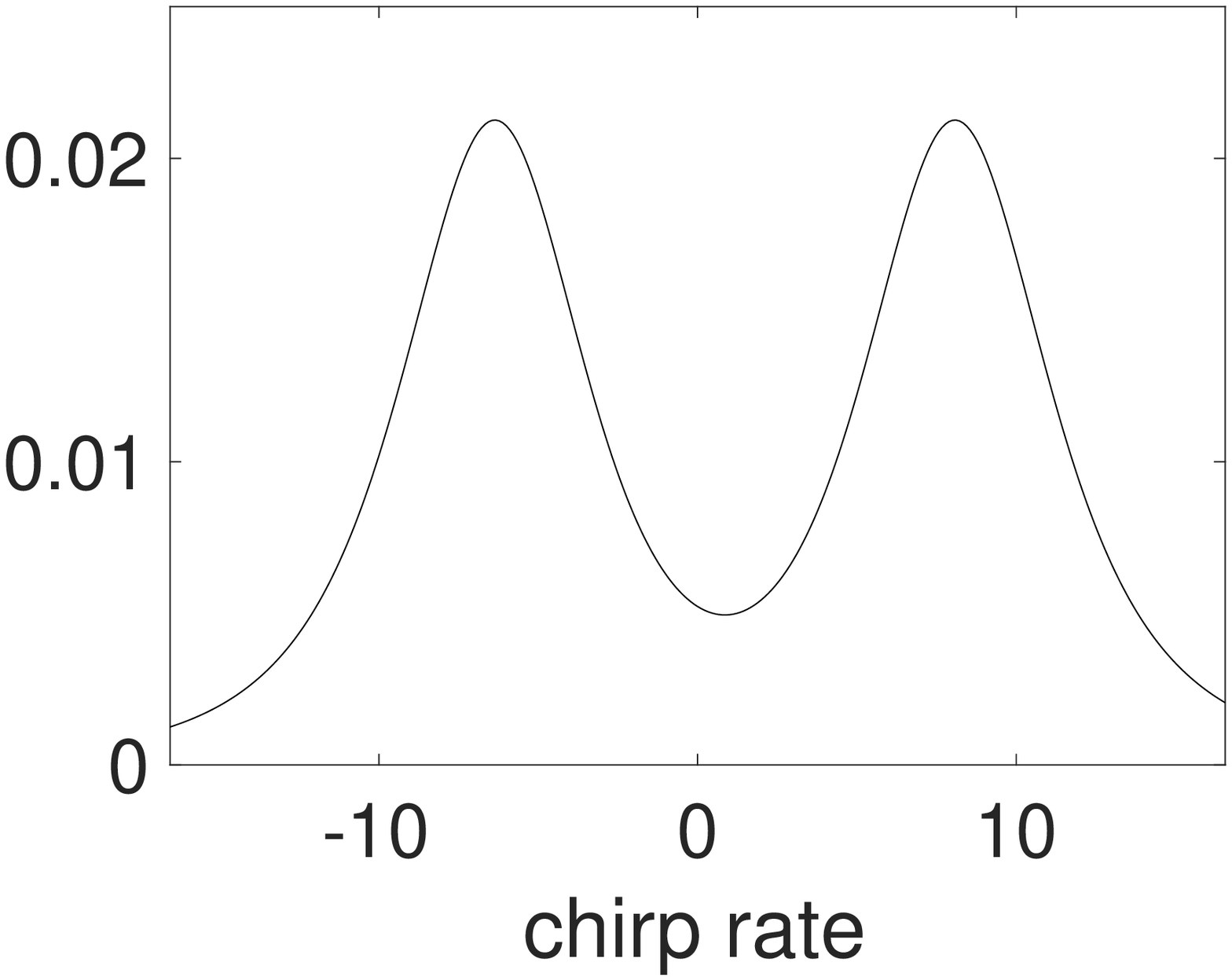}\\
	\includegraphics[width=.32\textwidth]{./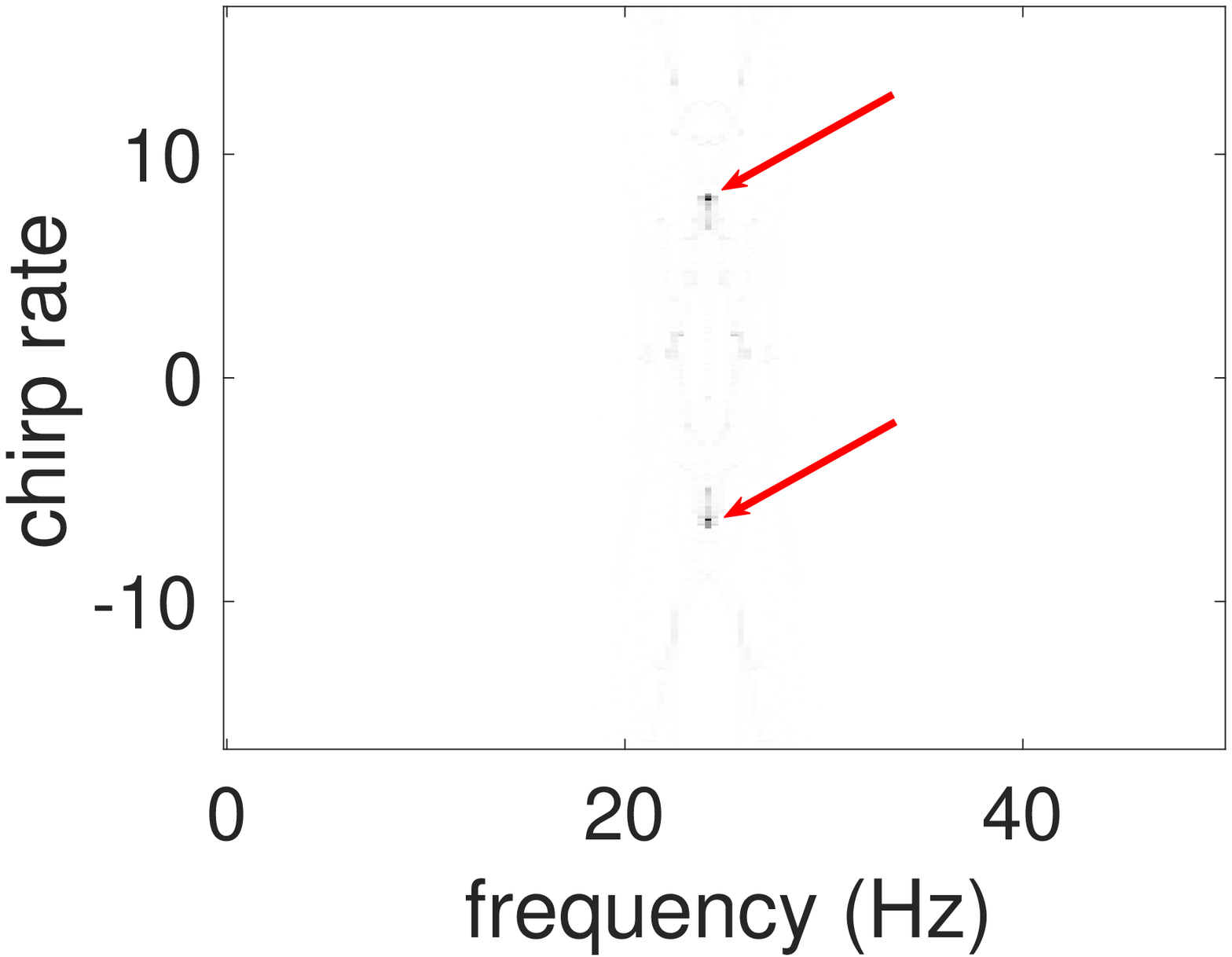}
	\includegraphics[width=.32\textwidth]{./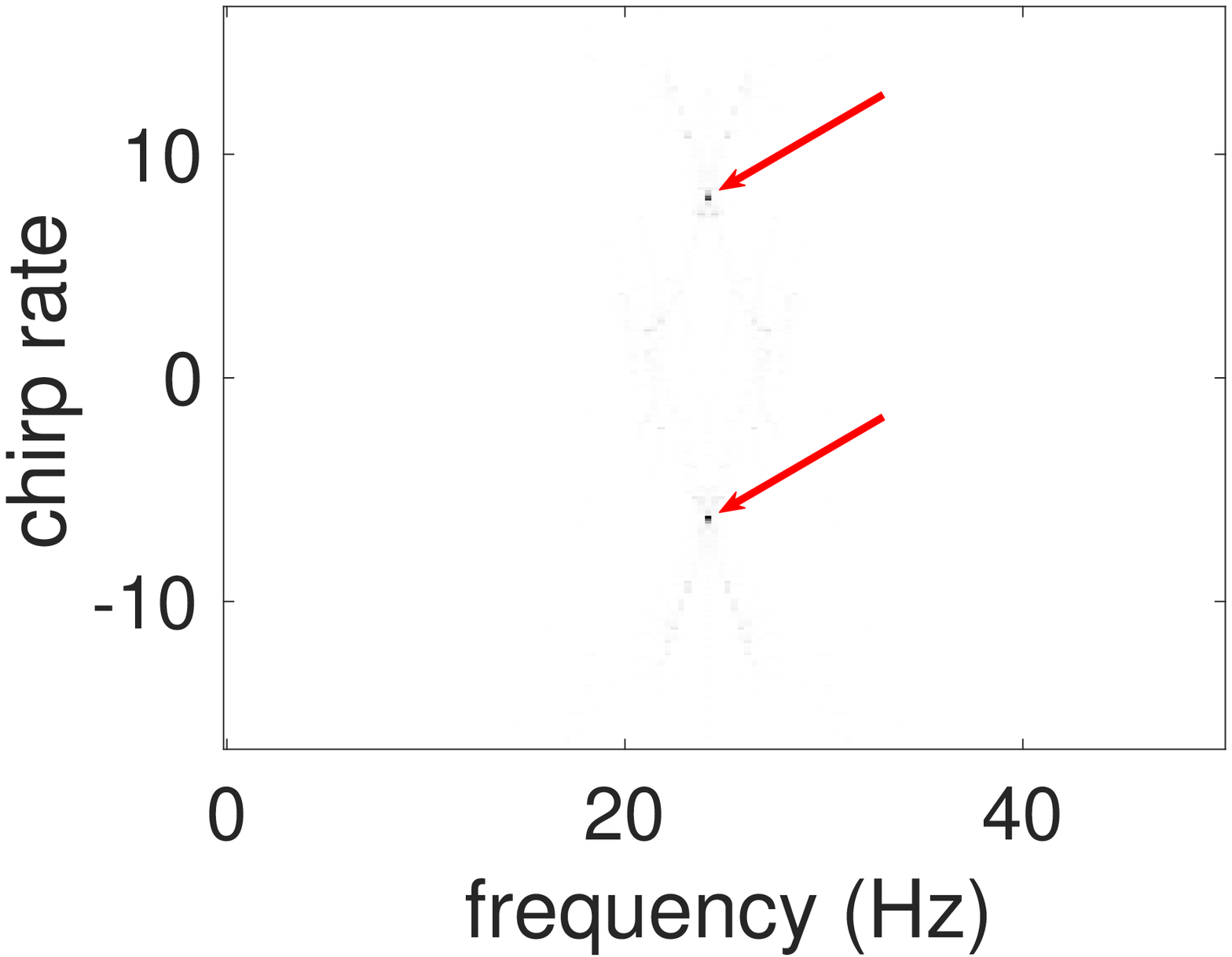}
	\includegraphics[width=.32\textwidth]{./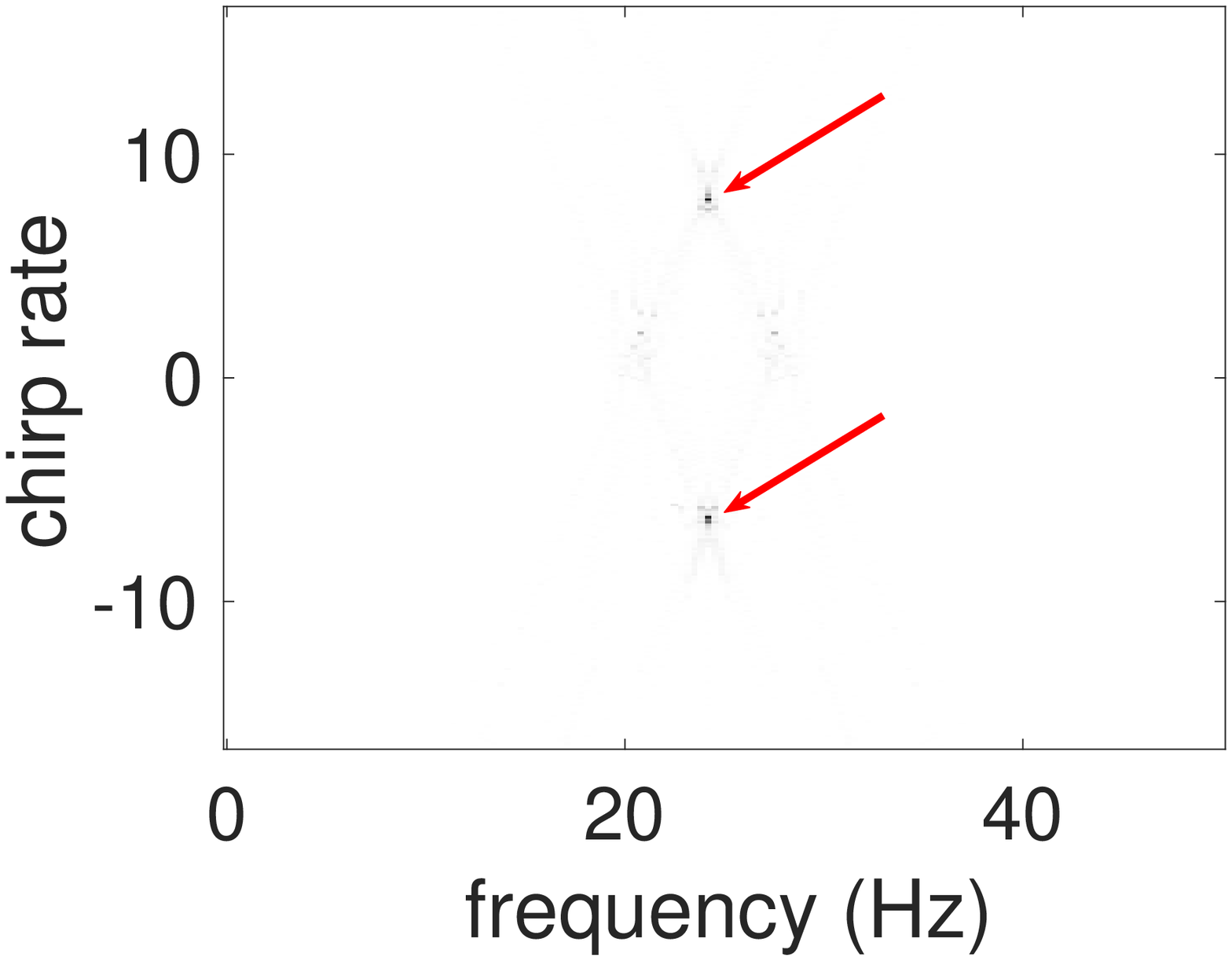}\\
	\includegraphics[width=.32\textwidth]{./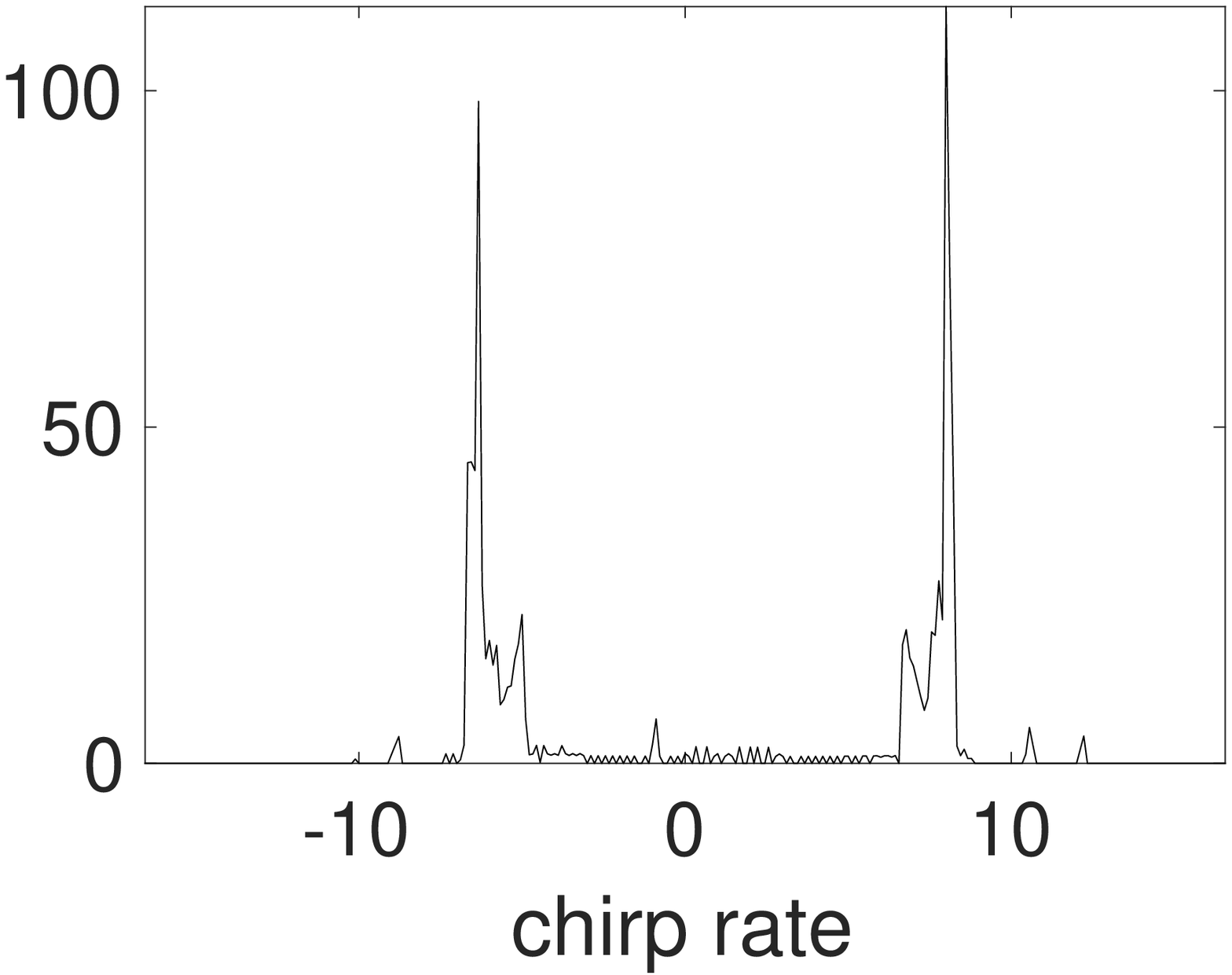}
	\includegraphics[width=.32\textwidth]{./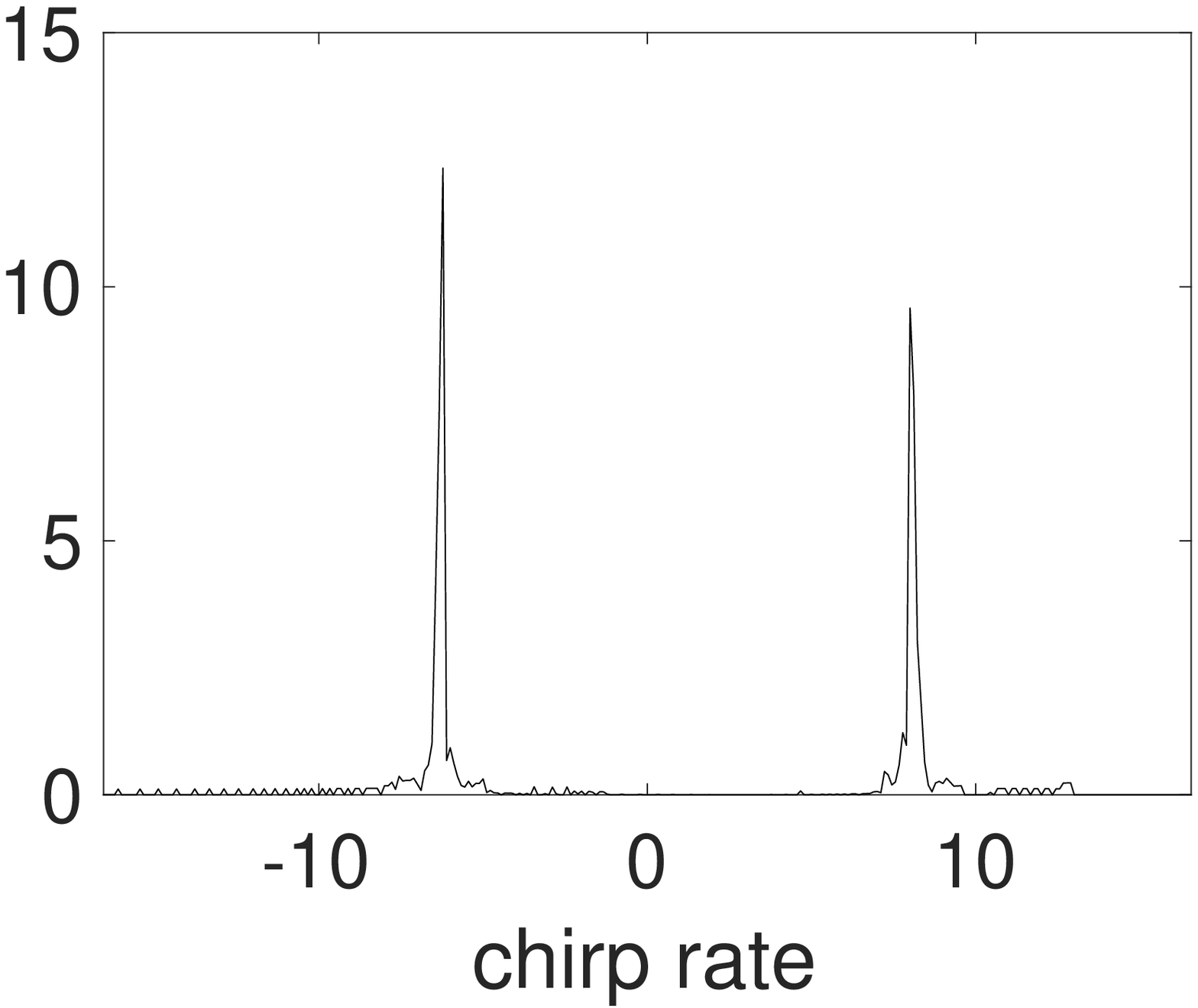}
	\includegraphics[width=.32\textwidth]{./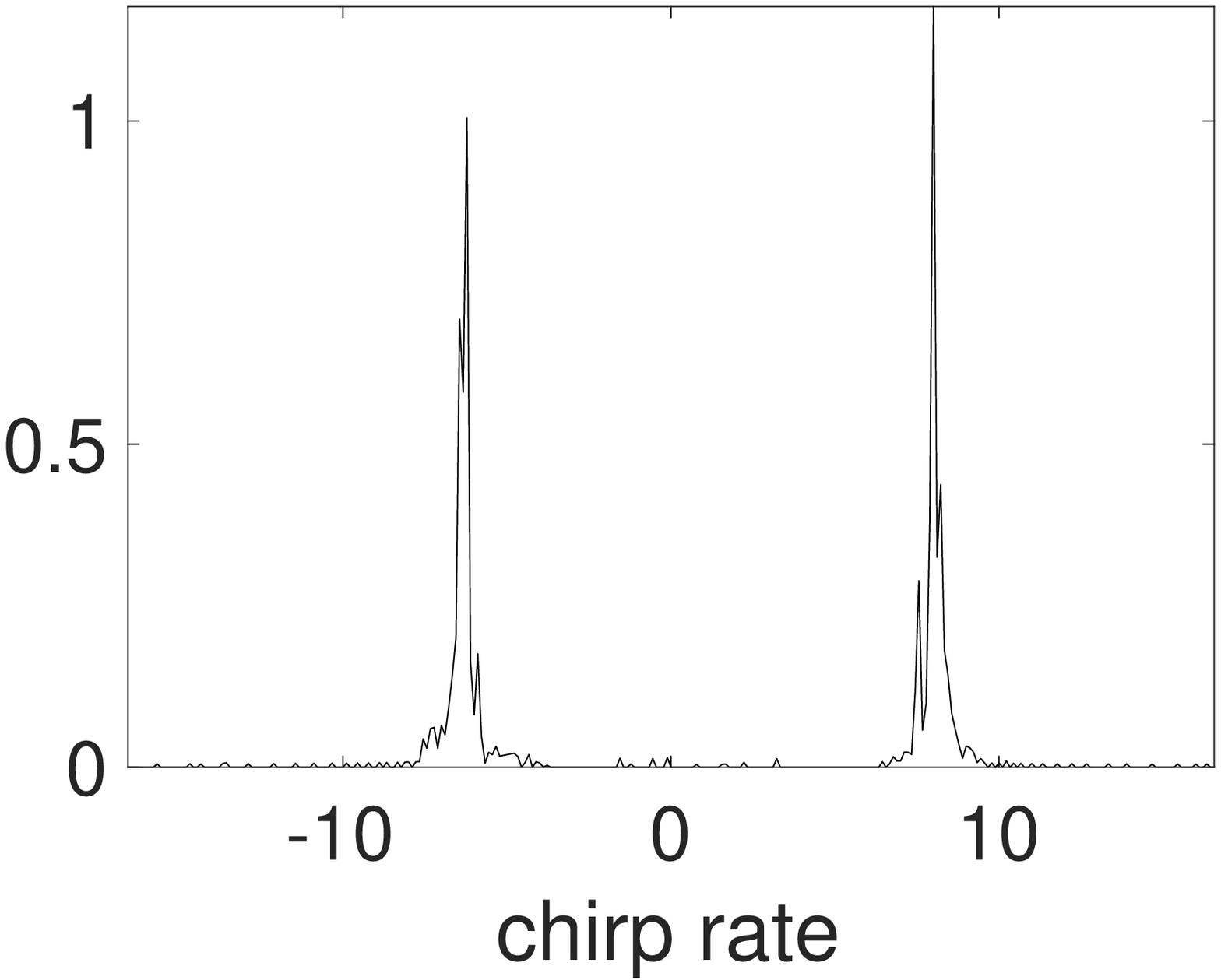}
	\caption{Top left: plot of $\abs{T_f^{(g_3)}(t_0,\xi,\lambda)}$; top middle: plot of $\abs{T_f^{(g_4)}(t_0,\xi,\lambda)}$; top right: plot of $\abs{T_f^{(g_5)}(t_0,\xi,\lambda)}$; second row left: plot of $\abs{T_f^{(g_3)}(t_0,\xi_0,\lambda)}$; second row middle: plot of $\abs{T_f^{(g_4)}(t_0,\xi_0,\lambda)}$; second row right: plot of $\abs{T_f^{(g_5)}(t_0,\xi_0,\lambda)}$; third row left: plot of $\abs{S_f^{(g_3)}(t_0,\xi,\lambda)}$; third row middle: plot of $\abs{S_f^{(g_4)}(t_0,\xi,\lambda)}$; third row right: plot of $\abs{S_f^{(g_5)}(t_0,\xi,\lambda)}$;  bottom left: plot of $\abs{S_f^{(g_3)}(t_0,\xi_0,\lambda)}$; bottom middle: plot of $\abs{S_f^{(g_4)}(t_0,\xi_0,\lambda)}$; bottom right: plot of $\abs{S_f^{(g_5)}(t_0,\xi_0,\lambda)}$.}
	\label{fig:4}
\end{figure}

Third, we evaluate how the squeezing step functions; that is, how does the squeezing step rearrange the TFC representation determined by CT. Consider the following way to {\em invert SCT}. Choose $\epsilon_1,\epsilon_2 >0$, and define the inverse SCT of $f$ with window $g$ in the vicinity of $(t,\xi,\lambda)$ as a set:
\[
Q_{\epsilon_1, \epsilon_2}(t,\xi,\lambda)=\{(t, \eta, \gamma)|\, \abs{\omega_{f}^{(g)}(t,\eta,\gamma)-\xi}<\epsilon_1\mbox{ and }\abs{\mu_f^{(g)}(t,\eta,\gamma)-\lambda}<\epsilon_2  \}\,.
\]
Figure \ref{fig:5} shows the inverted SCT of $f(x)$ in the vicinity of the ridge of $f_1$ (extracted from $S_{f}^{(g_2)}$) with $g_0$ and $g_2$ at $t_0$ and $t_1$. Here we choose $\epsilon_1 = 1, \epsilon_2 = 0.33$. It is clear that there is an interesting ``pattern'' in the nonlinear squeezing step in SCT, which comes from the nonlinearity of the squeezing step.

\begin{figure}[!htbp]
	\centering
	\includegraphics[width=.32\textwidth]{./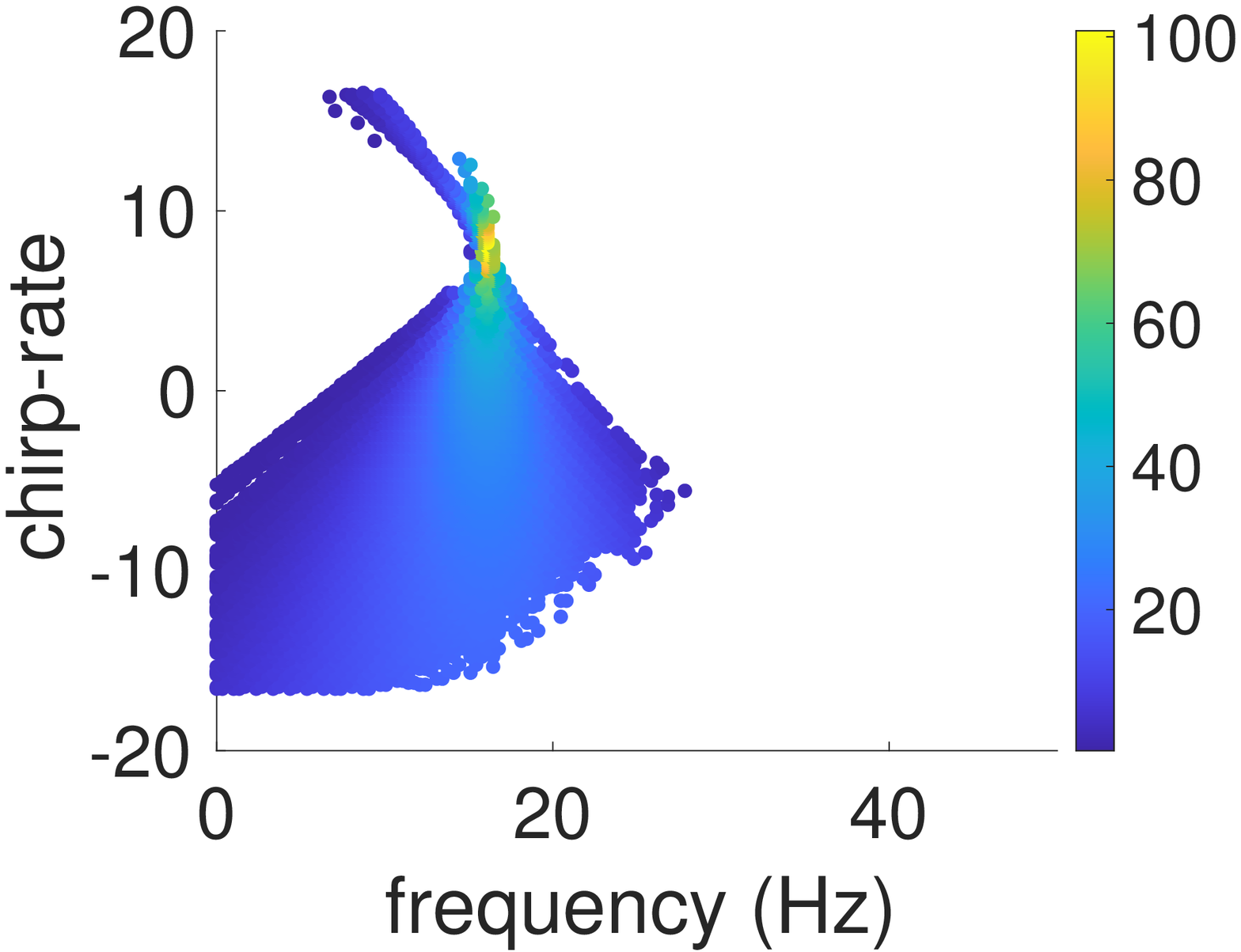}
	\includegraphics[width=.32\textwidth]{./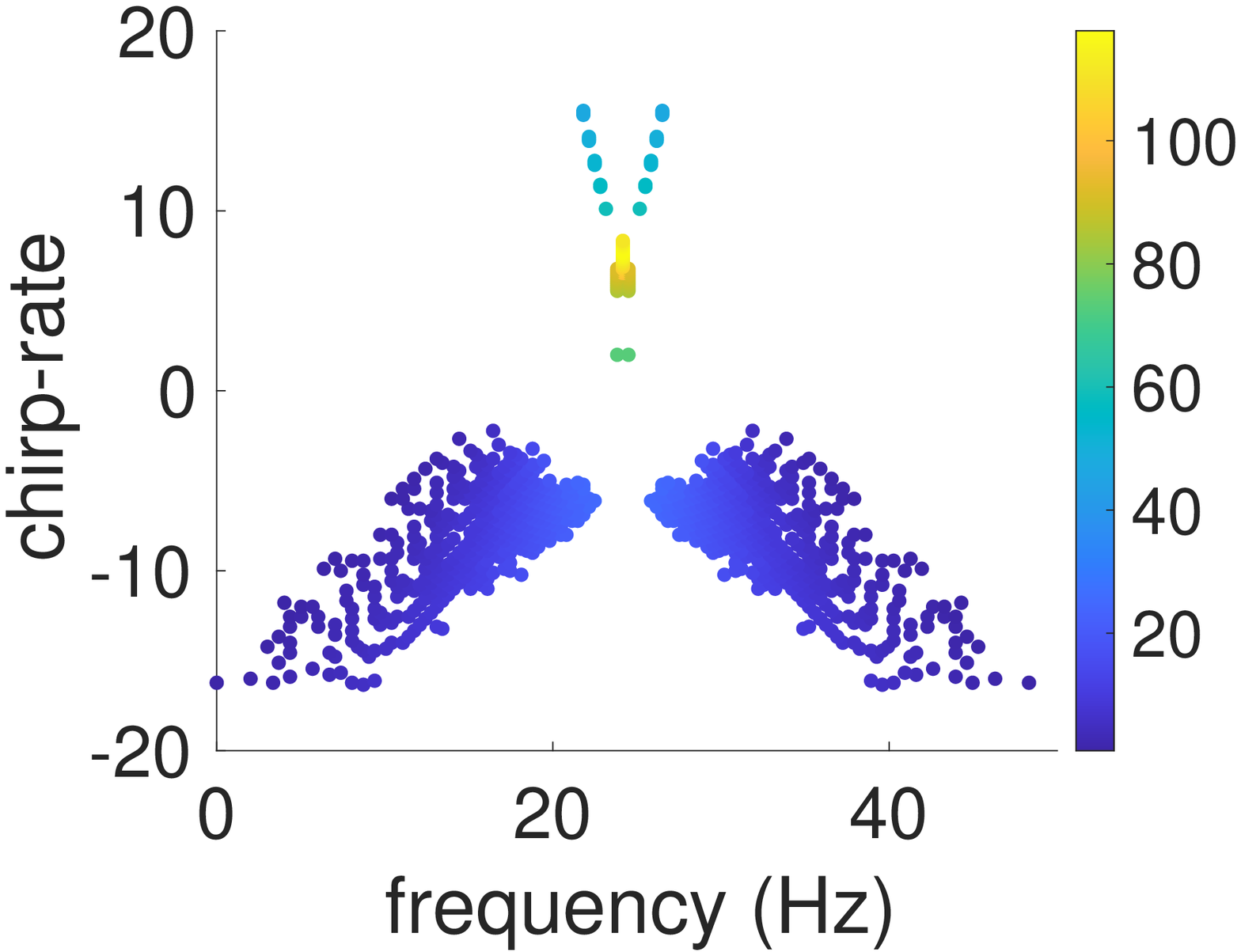}\\
	\includegraphics[width=.32\textwidth]{./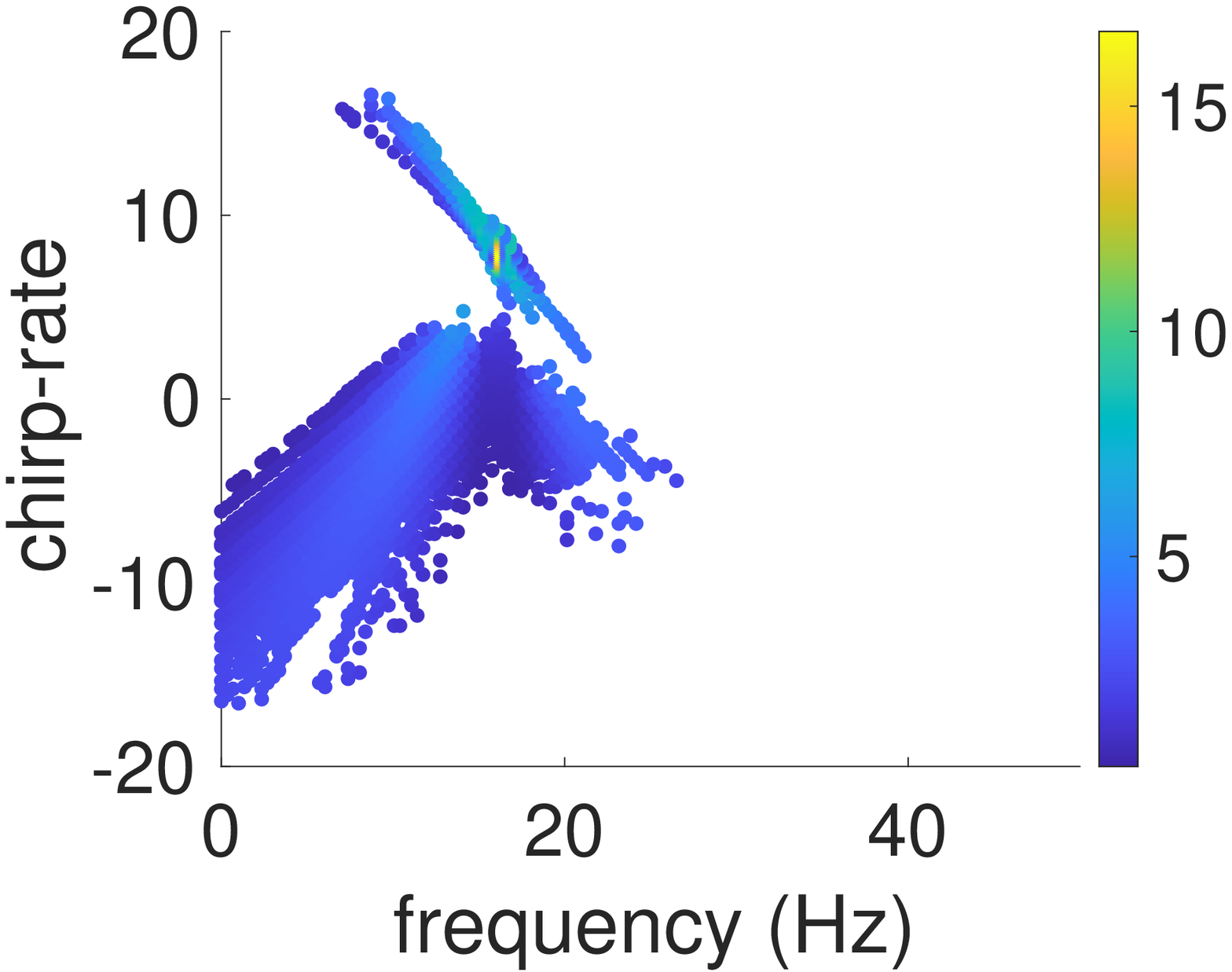}
	\includegraphics[width=.32\textwidth]{./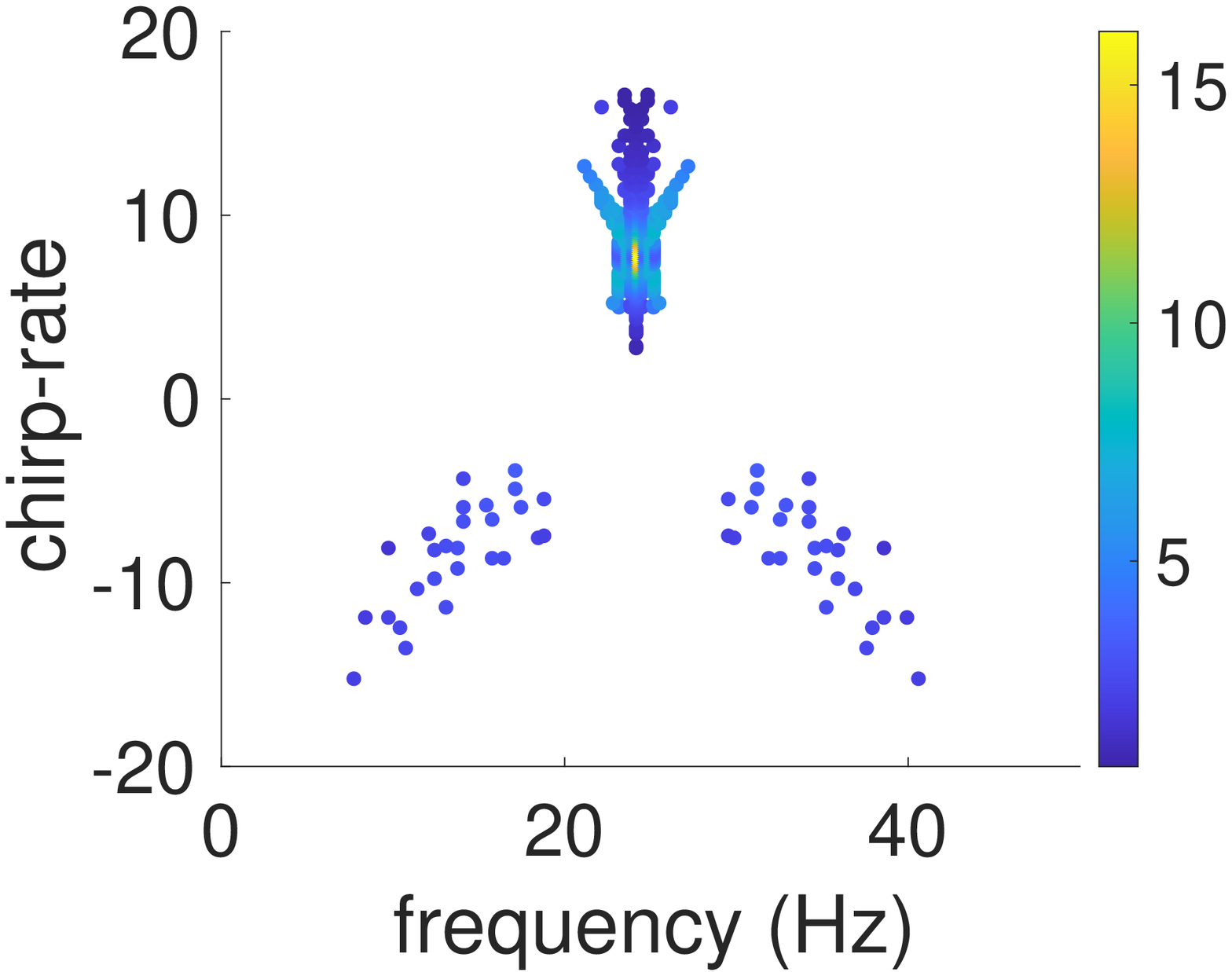}
	\caption{Top left: inverse SCT map of the vicinity of the ridge of $f_1$ with $g_0$ at $t_1 =2$s; top right: inverse SCT map of the vicinity of the ridge of $f_1$ with $g_0$ at $t_0 =3$s; bottom left: inverse SCT map of the vicinity of the ridge of $f_1$ with $g_2$ at $t_1 =2$s; bottom right: inverse SCT map of the vicinity of the ridge of $f_1$ with $g_2$ at $t_0 =3$s. Color reflects the magnitude of the CT.}
	\label{fig:5}
\end{figure}

Finally, we evaluate the proposed reconstruction approach. Figure \ref{fig:6} shows the reconstructed $f_1$ and $f_2$ by the proposed reconstruction scheme with window $g_0$ \cite{li2021chirplet}, denoted by $f^{SCT}_1, f^{SCT}_2$, where the IF and chirp rate information is obtained by the SCT of $f$ with window $g_2$.
For a comparison purpose, the reconstructions of $f_1$ and $f_2$ through the reconstruction of 2nd-order SST around ridges of $f_1$ and $f_2$, denoted by $f^{SST}_1, f^{SST}_2$, are also provided. We see that the reconstruction is good except near $t_0$ of the crossover frequency. 
Quantitatively, we evaluate the reconstruction errors over two disjoint intervals, $\mathcal{I}_1 = [2.5,3.5]$ (around the crossover time) and $\mathcal{I}_2 = [1,2.5)\cup (3.5,5]$ (away from the crossover time), where we have 
$\frac{\norm{(\Re(f^{SCT}_1)-\Re(f_1))\mathbf{1}_{\mathcal{I}_1}}_2}{\norm{\Re(f_1\mathbf{1}_{\mathcal{I}_1})}_2}=0.076$,
$\frac{\norm{(\Re(f^{SCT}_1)-\Re(f_1))\mathbf{1}_{\mathcal{I}_2}}_2}{\norm{\Re(f_1\mathbf{1}_{\mathcal{I}_2})}_2}= 0.064$,  
$\frac{\norm{(\Re(f^{SST}_1)-\Re(f_1))\mathbf{1}_{\mathcal{I}_1}}_2}{\norm{\Re(f_1\mathbf{1}_{\mathcal{I}_1})}_2}=0.458$,
and
$\frac{\norm{(\Re(f^{SST}_1)-\Re(f_1))\mathbf{1}_{\mathcal{I}_2}}_2}{\norm{\Re(f_1\mathbf{1}_{\mathcal{I}_2})}_2}=0.021$.
We see from this example that the reconstruction error by SCT is reduced at the crossing time $t_0$, but it is worse at the non-crossing time if we compare it with the reconstruction by the 2nd-order SST. 

\begin{figure}[!htbp]
	\centering
	\includegraphics[width=0.48\textwidth]{./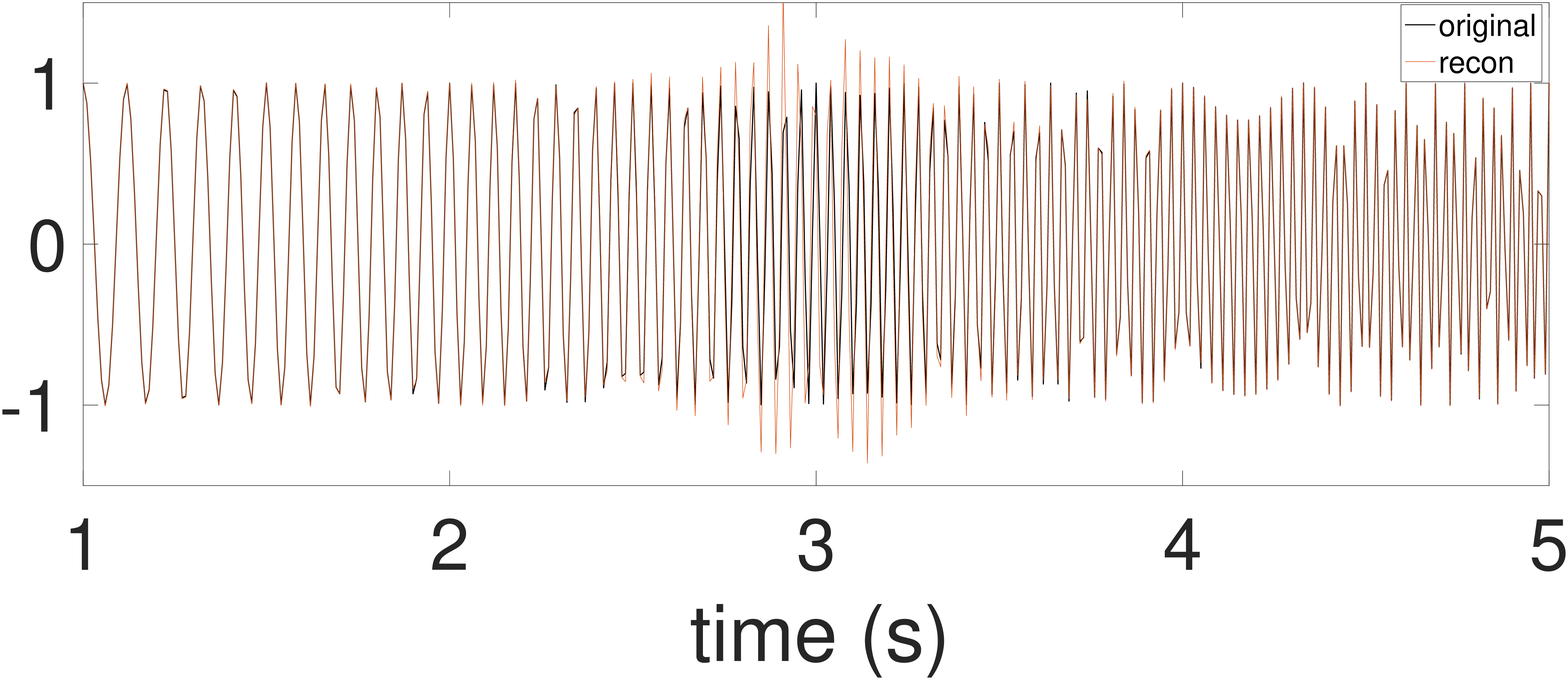}
	\includegraphics[width=0.48\textwidth]{./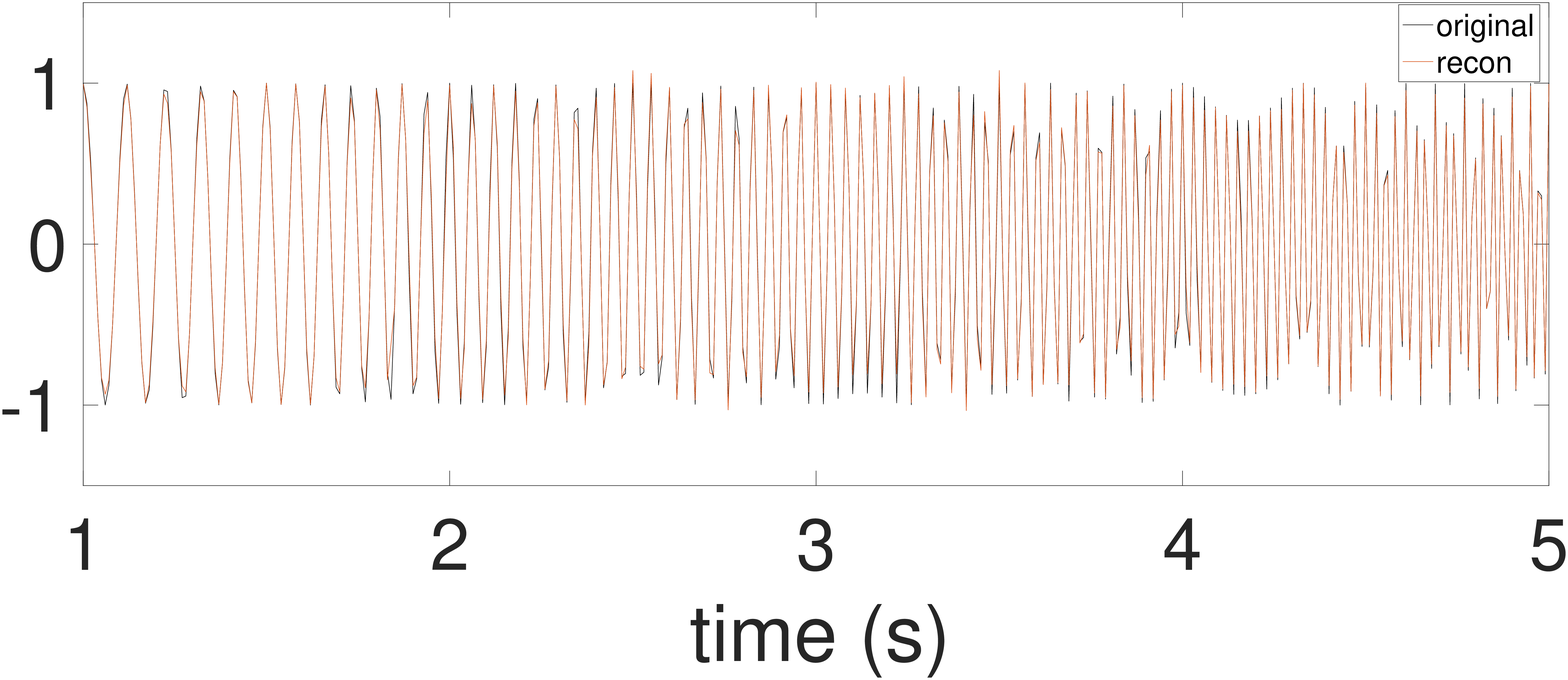}\\
	\includegraphics[width=0.48\textwidth]{./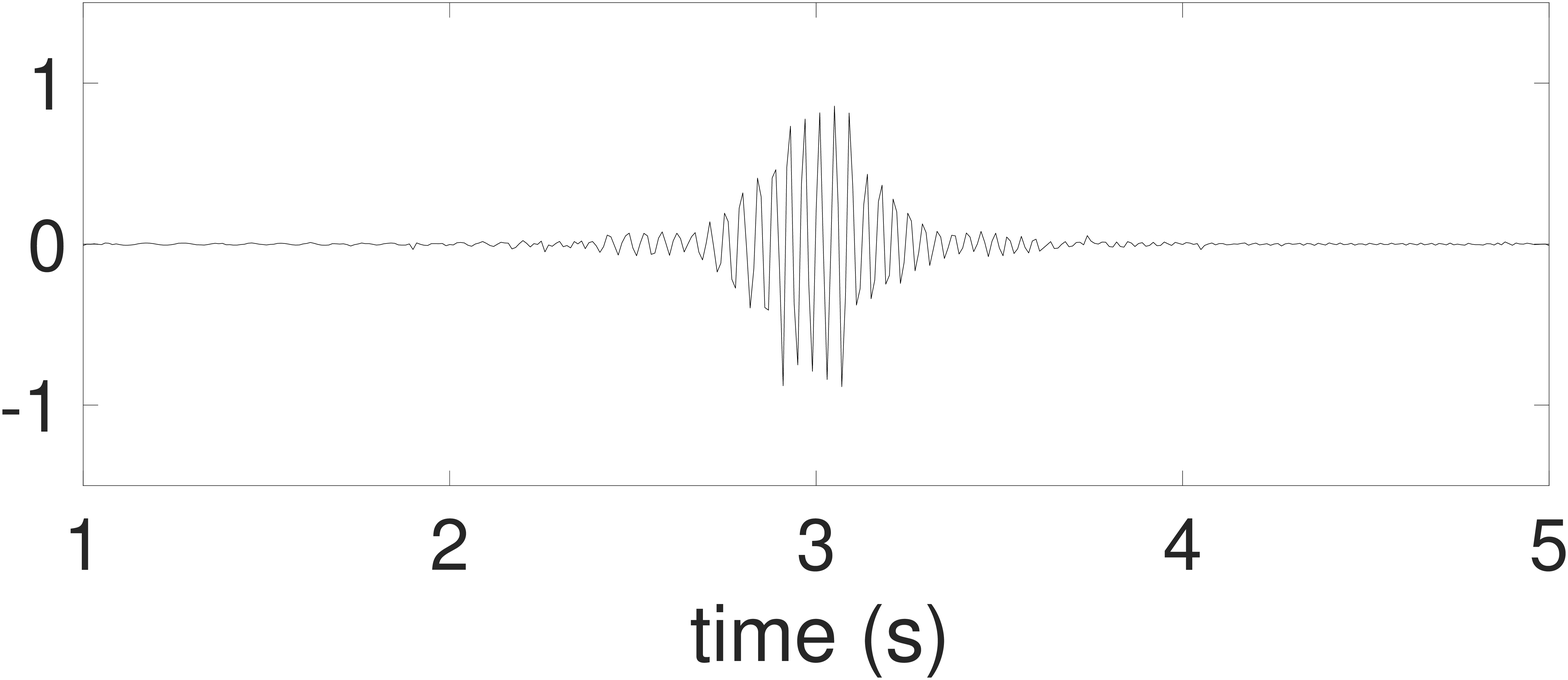}
	\includegraphics[width=0.48\textwidth]{./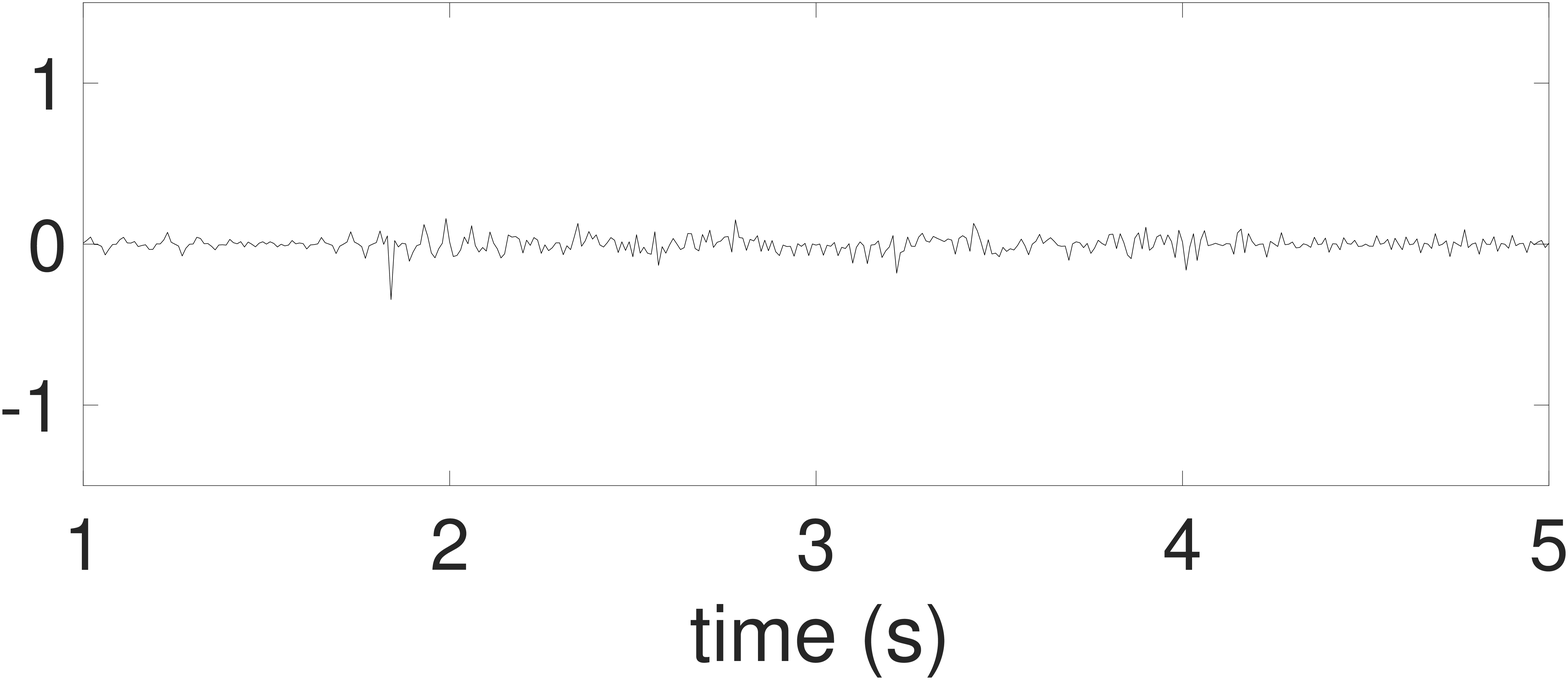}\\
	\includegraphics[width=0.48\textwidth]{./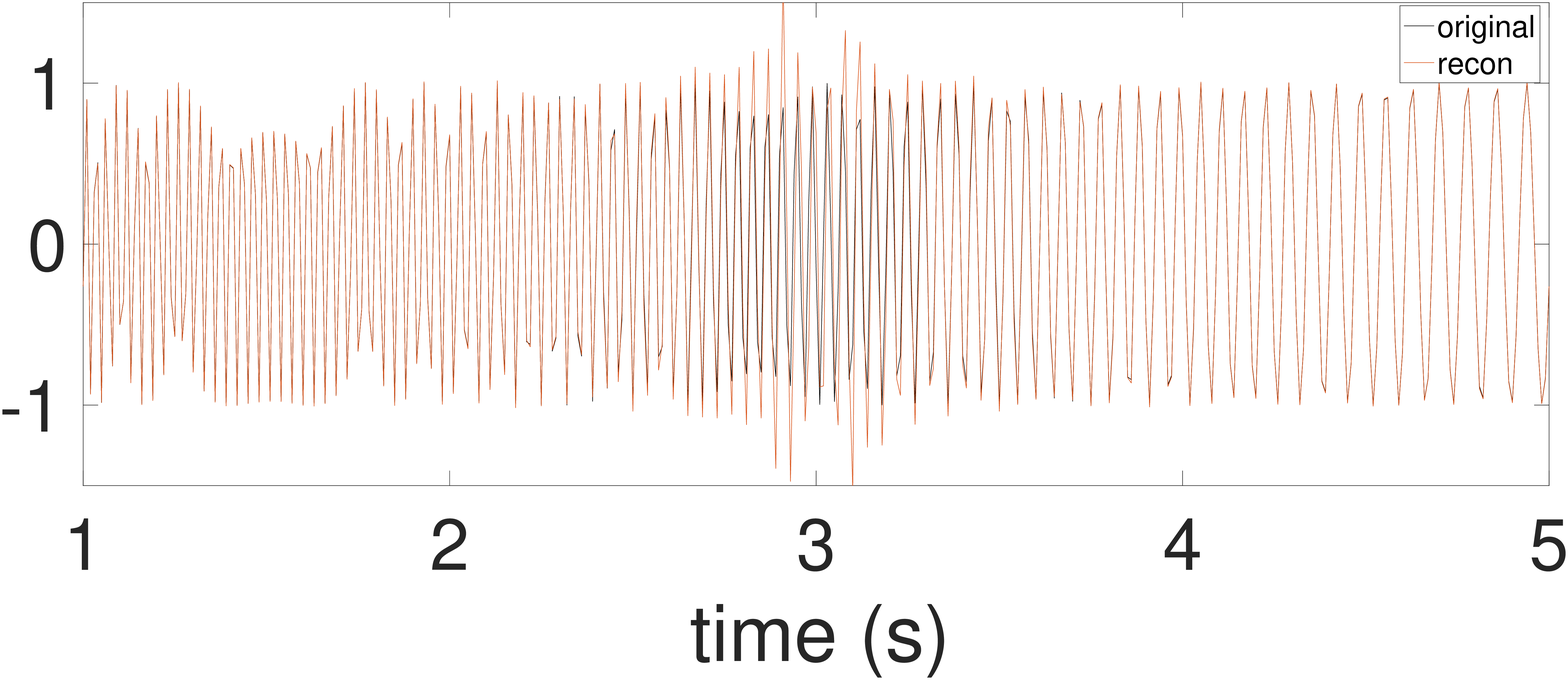}
	\includegraphics[width=0.48\textwidth]{./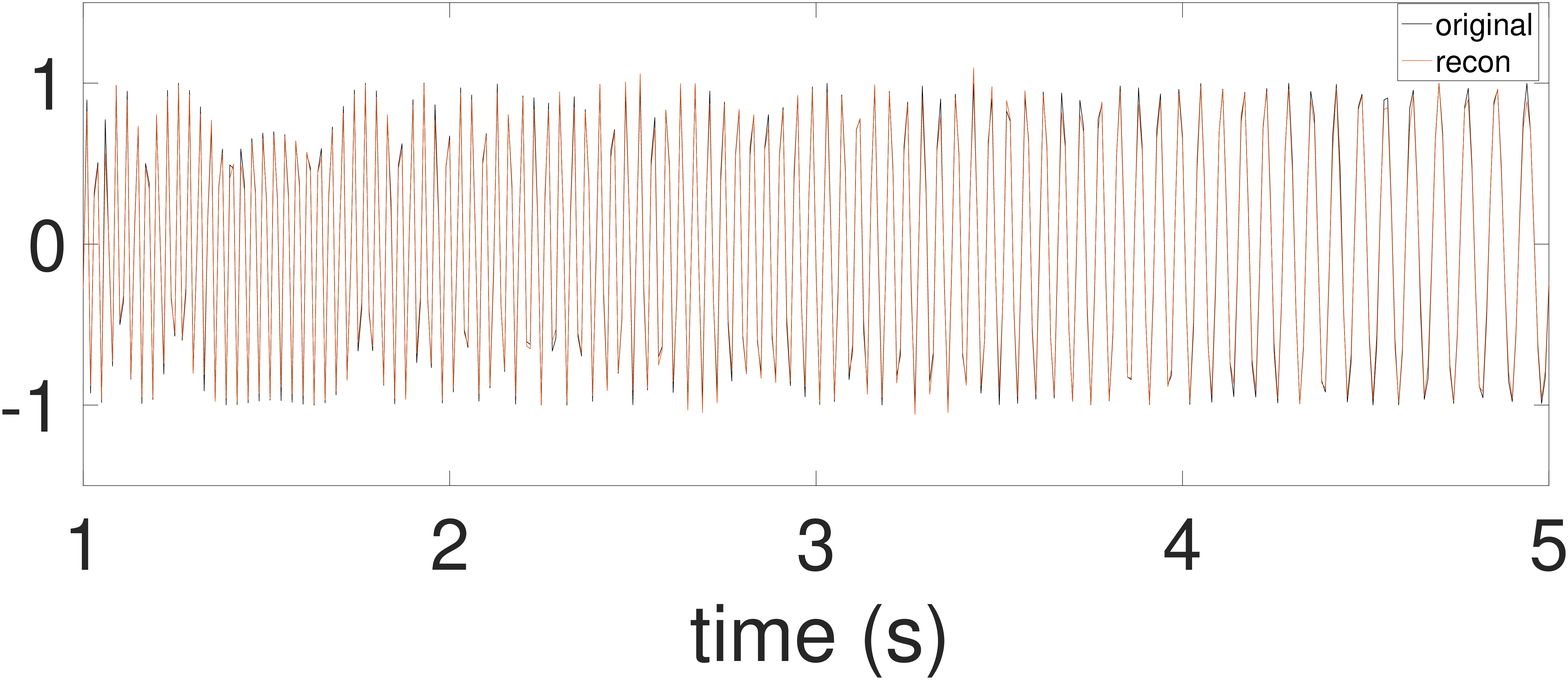}\\
	\includegraphics[width=0.48\textwidth]{./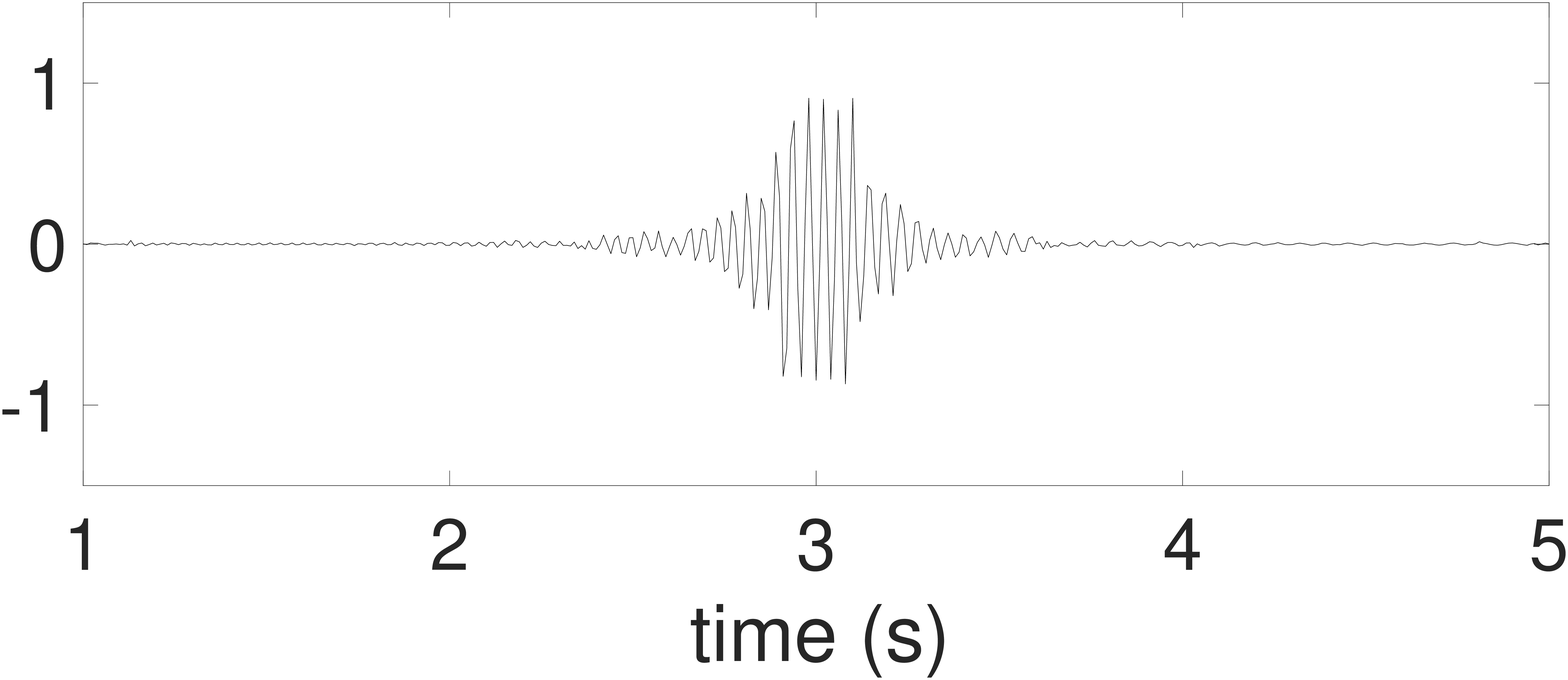}
	\includegraphics[width=0.48\textwidth]{./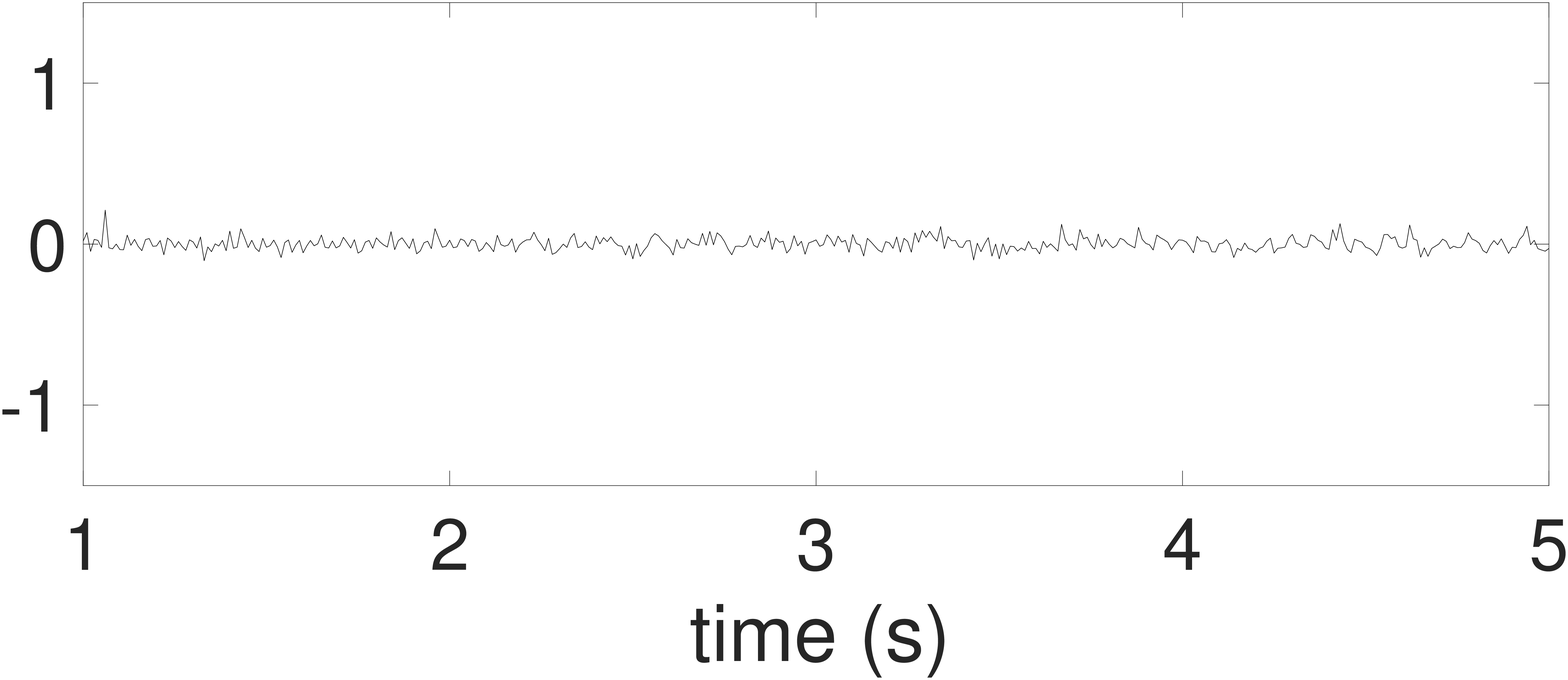}
	\caption{Left column: reconstruction of signals from STFT-based 2nd-order SST with window $g_0$. Top row: reconstruction of $\Re(f_1)$; second row: reconstruction error of $\Re(f_1)$; third row: reconstruction of $\Re(f_2)$; bottom row: reconstruction error of $\Re(f_2)$. Right column: reconstruction of signals using group idea with window $g_0$ with instantaneous frequencies and chirp rates obtained on ridges extracted from \abs{S_f^{(g_2)}(t,\xi,\lambda)} (as shown in Figure \ref{fig:3} bottom). Top row: reconstruction of $\Re(f_1)$; second row: reconstruction error of $\Re(f_1)$; third row: reconstruction of $\Re(f_2)$; bottom row: reconstruction error of $\Re(f_2)$.}
	\label{fig:6}
\end{figure}

\subsection{Simulated time-varying chirp rate signal}
Next, we demonstrate the performance of SCT on signals with time-varying amplitudes, frequencies and chirp rates. We use the smoothed Brownian path realizations to model the AM, IF and the time-varying chirp rates of the constituent components of the signal. This simulation scheme is a generalization of that proposed in \cite{DaWaWu2016} and we summarize it here. If $W$ is the standard Brownian motion defined on $[0,\infty)$, then we define the smoothed Brownian motion with bandwidth $B>0$ as $\Phi_B := W\star K_B$, where $K_B$ is the Gaussian function with standard deviation $B>0$ and $\star$ denotes the convolution operator. Given $T>0$ and parameters $\zeta_1,\dots,\zeta_6>0$, we then define the following family of random processes on $[0,T]$:
\[
\Psi_{[\zeta_1,\dots,\zeta_7]}(x):= \zeta_1 + \zeta_2 x+ \zeta_3 x^2 + \zeta_4\frac{\Phi_{\zeta_5}(x)}{\norm{\Phi_{\zeta_5}}_{L^{\infty}[0,T]}} + \zeta_6\int_0^x\int_0^u \frac{\Phi_{\zeta_7}(s)}{\norm{\Phi_{\zeta_7}}_{L^{\infty}[0,T]}}\diff{s}\diff{u}.
\]
For the amplitude $A(x)$ of each component, we set $\zeta_2 = \zeta_3 = \zeta_6 =0$; every realization then varies smoothly between $\zeta_1$ and $\zeta_1+\zeta_4$. In the example shown below, the signal consists of two components (i.e. $K=2$) on $[0,10]$; their two amplitudes are independent realizations of $\Psi_{[2,0,0,1,200,0,0]}(x)$. To simulate phase functions, we set $\zeta_1 = \zeta_4 = 0$. In this example we consider, we take for $\phi_1(x)$ a realization of $\Psi_{[0,1,4.5,0,0,0.2,400]}(x)$, and for $\phi_2(t)$ a realization of $\Psi_{[0,12,-4,0,0,0.25,300]}(x)$. The signal is then
$$s(x) := f_1(x) + f_2(x) = A_1(x)e^{2\pi i \phi_1(x)} + A_2(x)e^{2\pi i \phi_2(x)},$$
where $x\in [0,10]$, and the sampling rate is 100Hz. 
Finally, we add white noise $\eta(x)$ to $s(x)$, where the noise are identically distributed Student $t_4$ random variables, so the resulting signal is $f(x) = s(x) + \eta(x)$. 
To avoid the boundary effects of our TF method, we only show the result for $x\in [1,9]$.

Figure \ref{fig:8} gives the plot of the signals and their IFs and chirp rates. The signal-to-noise ratio is 5.20, computed as $20\log_{10}(\frac{\text{std}(f(x))}{\text{std}(\xi(x))})$, where std stands for standard deviation. Also, a comparison between TF representations determined by the 2nd-order SST and the SCT with different window functions $g_0(x)$ and $g_2(x)$ is shown. We see that with SCT, the TF representation gives sharper information around the time when IF crossover happens.
Figure \ref{fig:9} shows a comparison of CT and SCT with different window functions $g_0(x)$ and $g_2(x)$ at the IF crossover time. We see that SCT with $g_2$ gives two peaks that are exactly located at the true instantaneous chirp rates of $f_1$ and $f_2$.
Figure \ref{fig:10} shows the reconstruction of $f_1$ and $f_2$ from the proposed reconstruction scheme with window $g_0$, where we consider both the situations that we know the true IFs and instantaneous chirp rates or when the IFs and instantaneous chirp rates are estimated from $S_{f}^{(g_2)}(t,\xi,\lambda)$. 
We see that the estimated IFs and instantaneous chirp rate from SCT give almost identical results.

\begin{figure}[!htbp]
	\centering
	\includegraphics[width=.325\textwidth]{./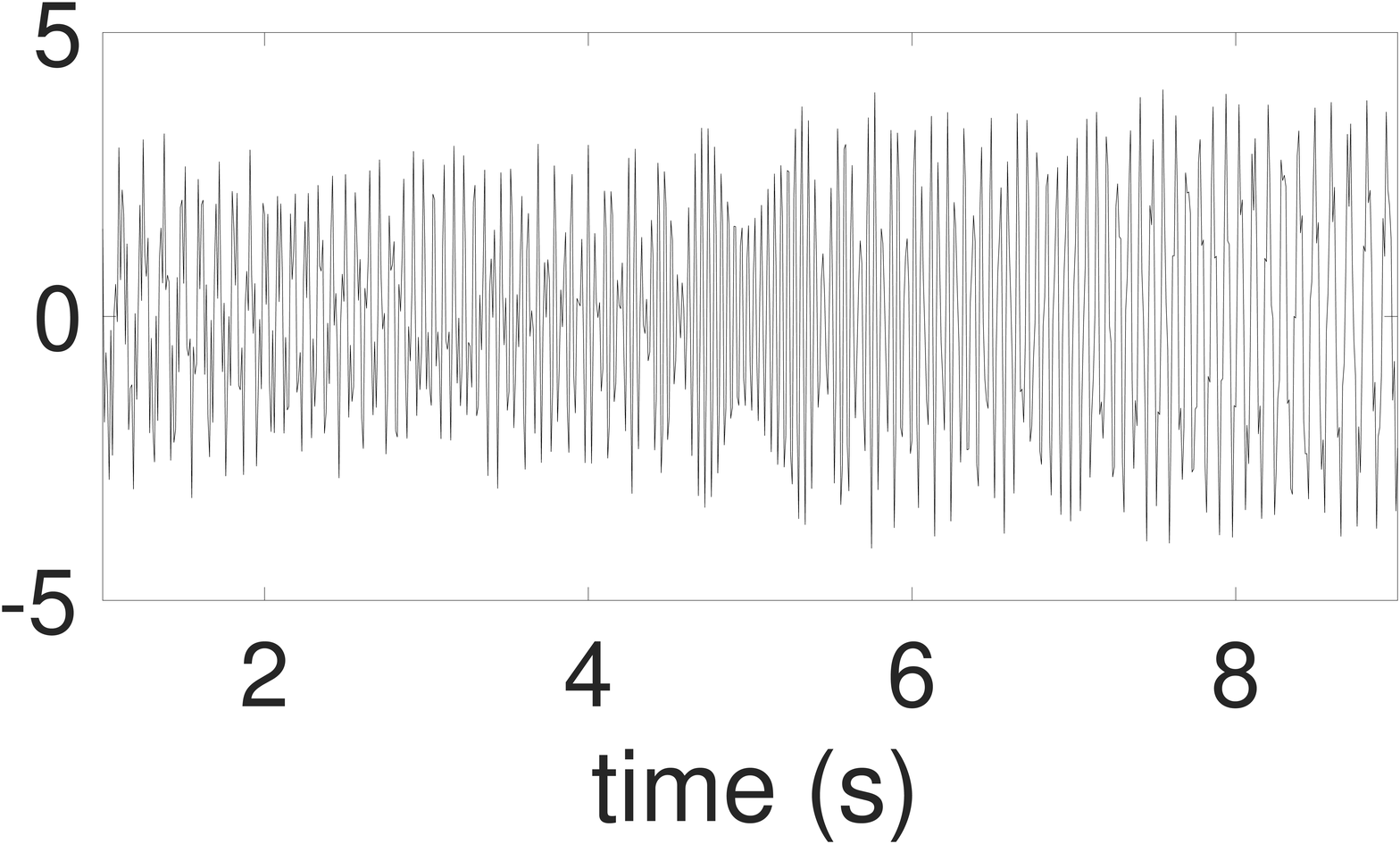}
	\includegraphics[width=.325\textwidth]{./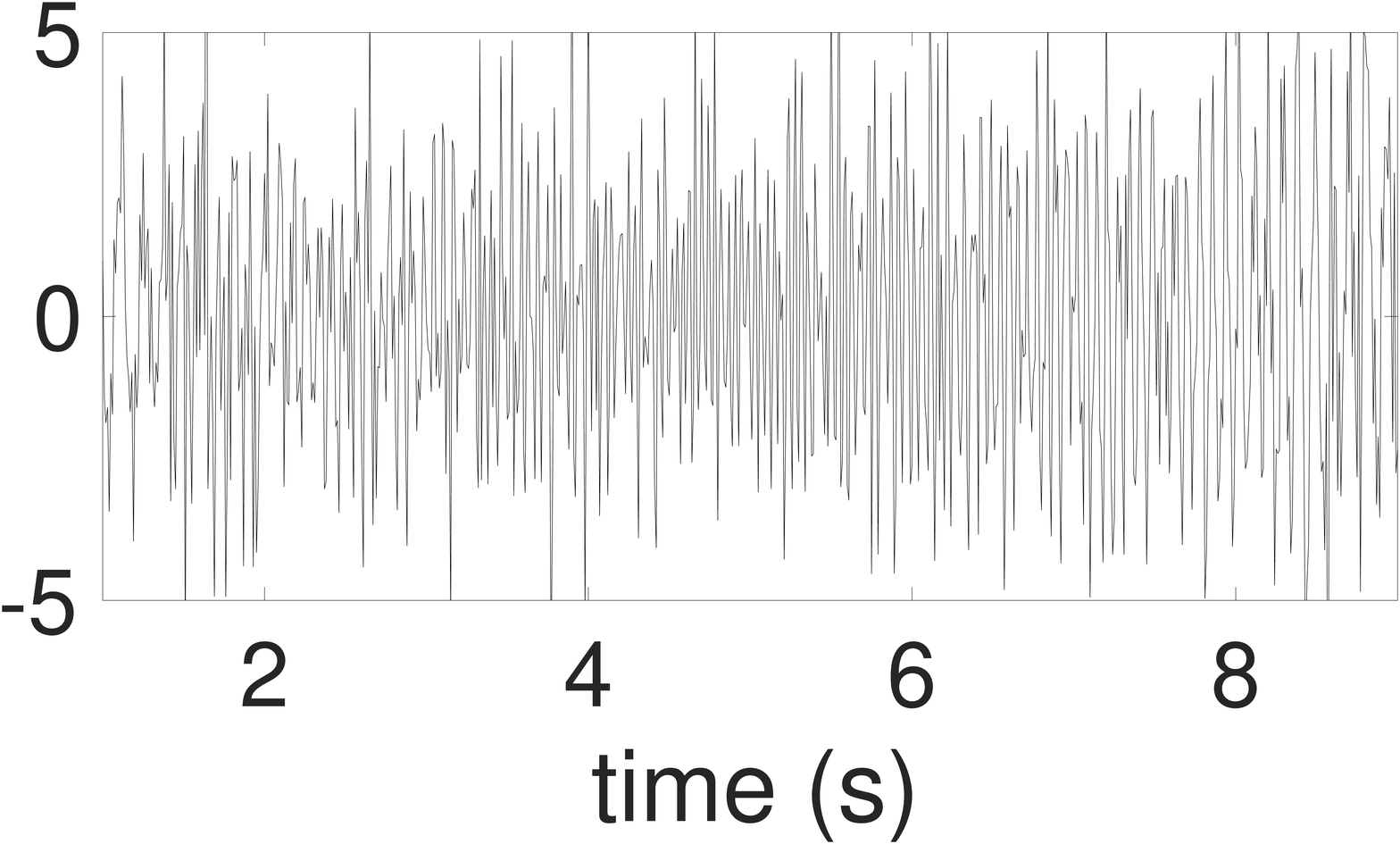}\\
	\includegraphics[width=.325\textwidth]{./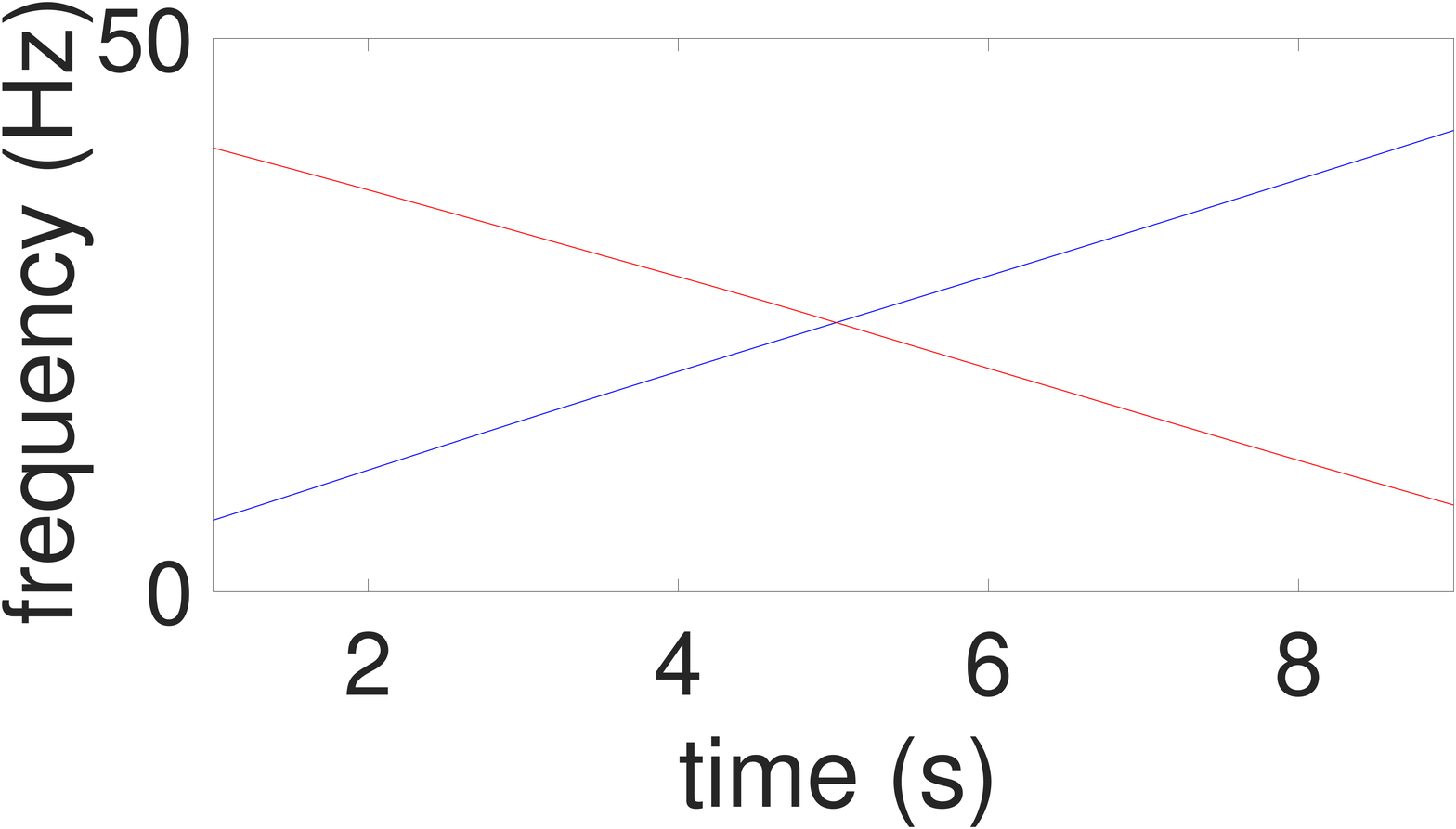}
	\includegraphics[width=.325\textwidth]{./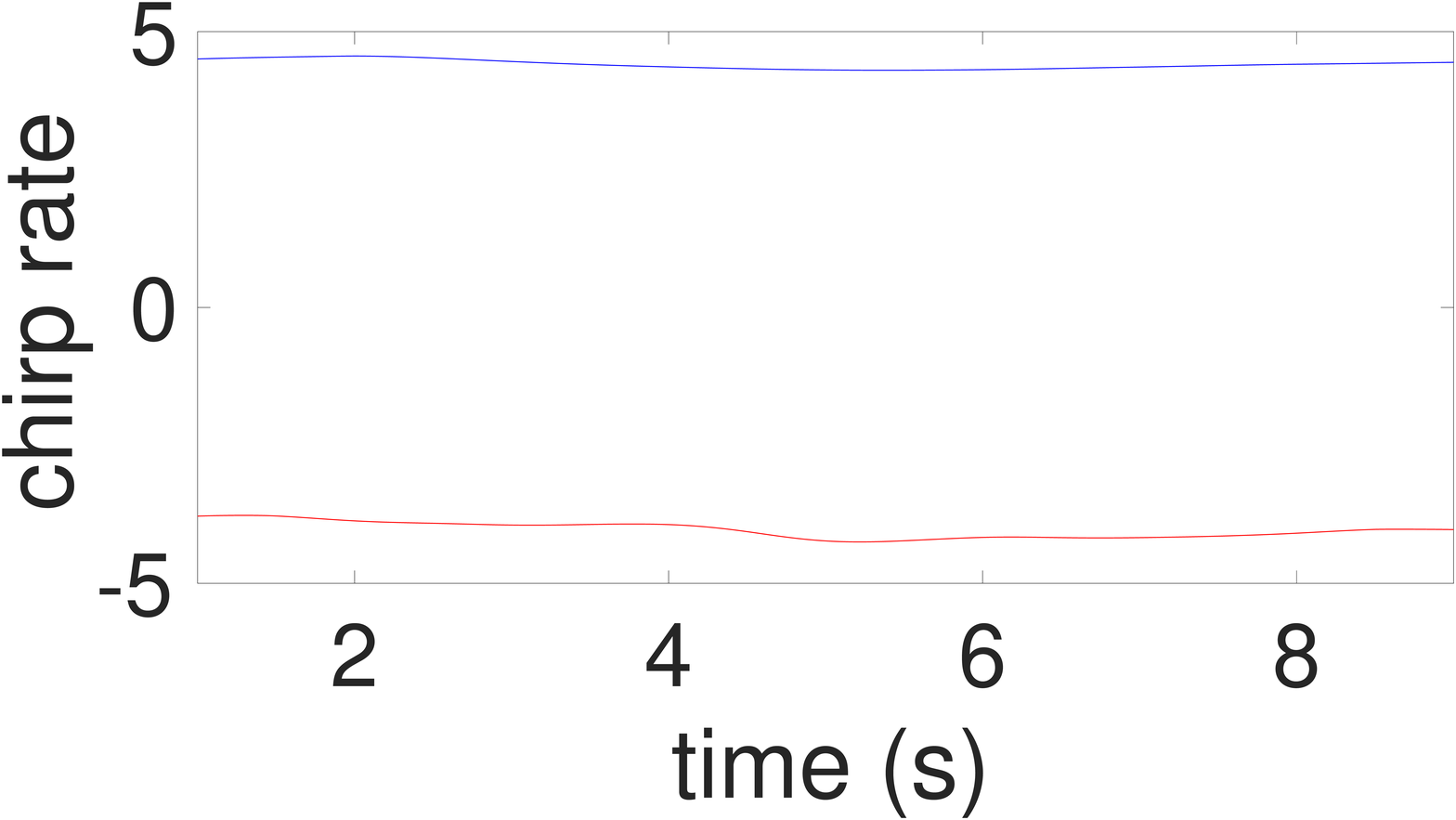}\\	\includegraphics[width=.325\textwidth]{./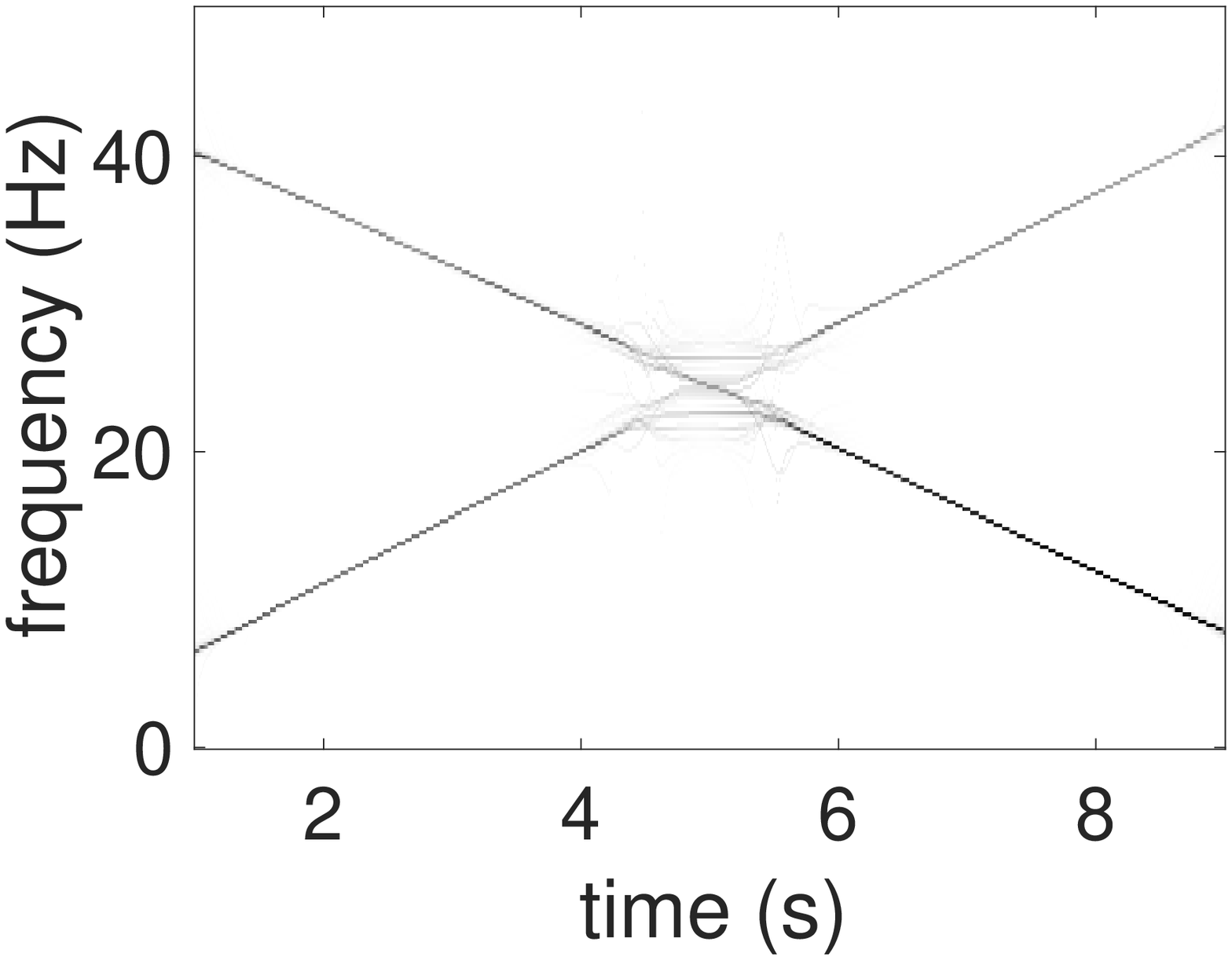}
	\includegraphics[width=.325\textwidth]{./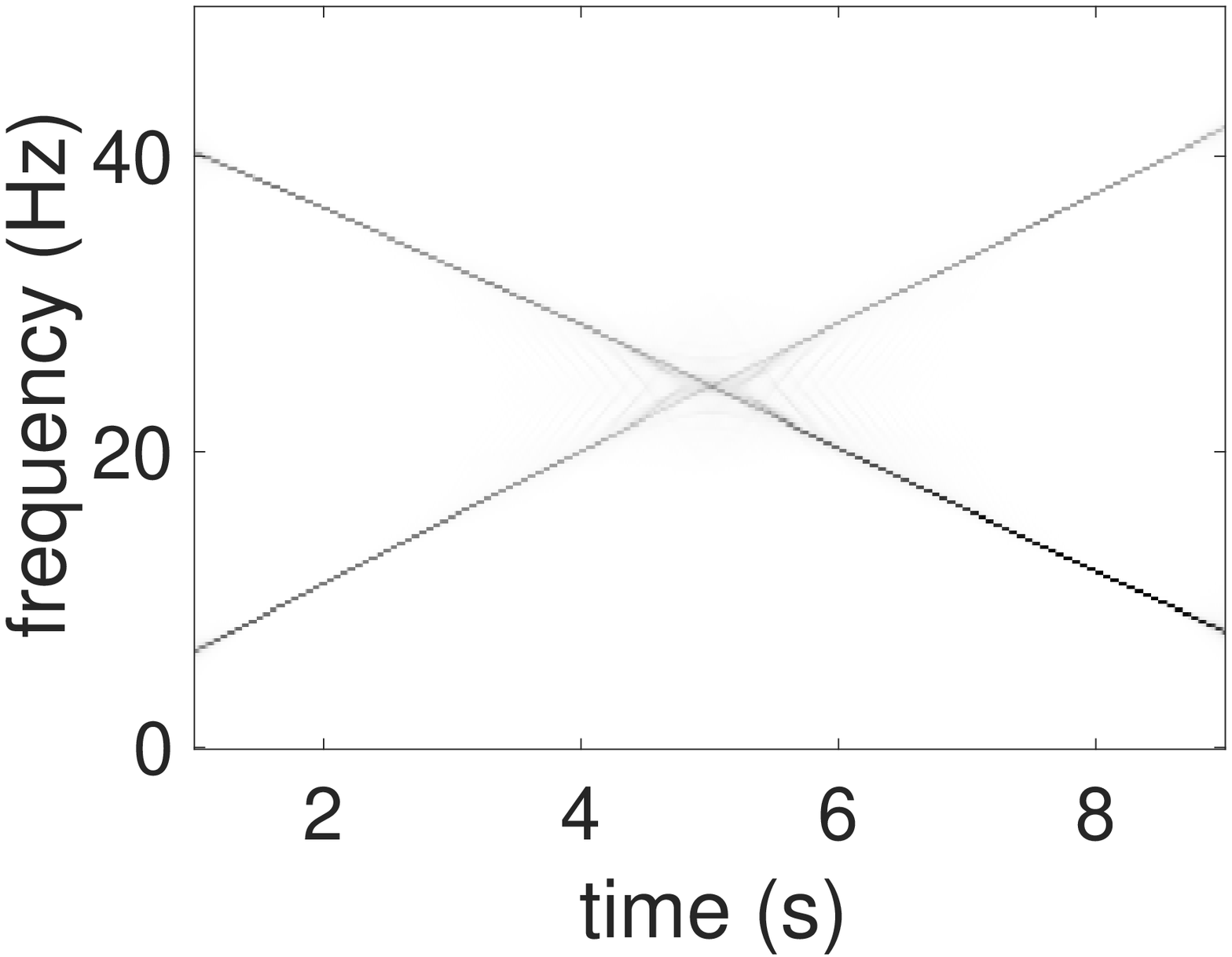}\\
	\includegraphics[width=.325\textwidth]{./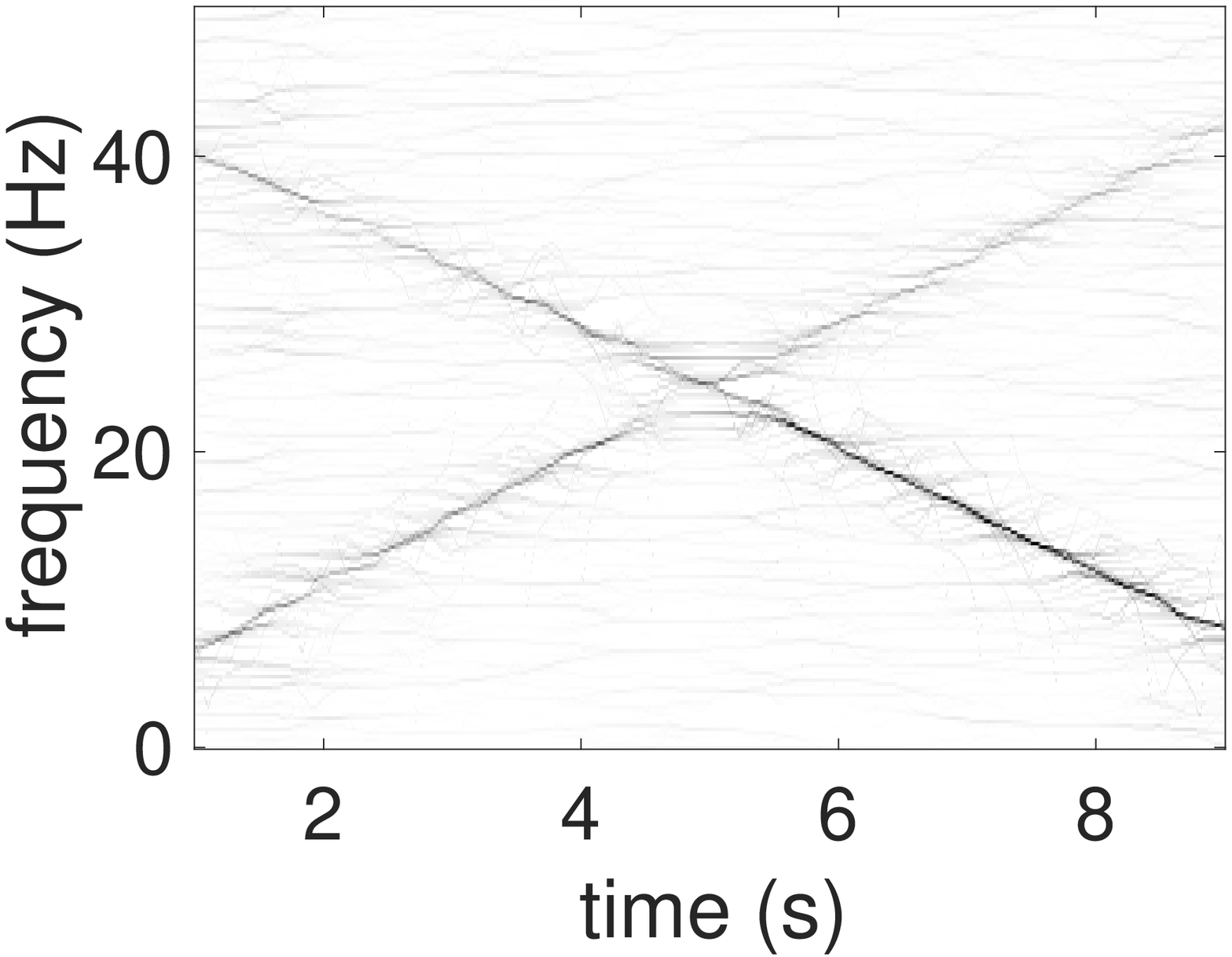}
	\includegraphics[width=.325\textwidth]{./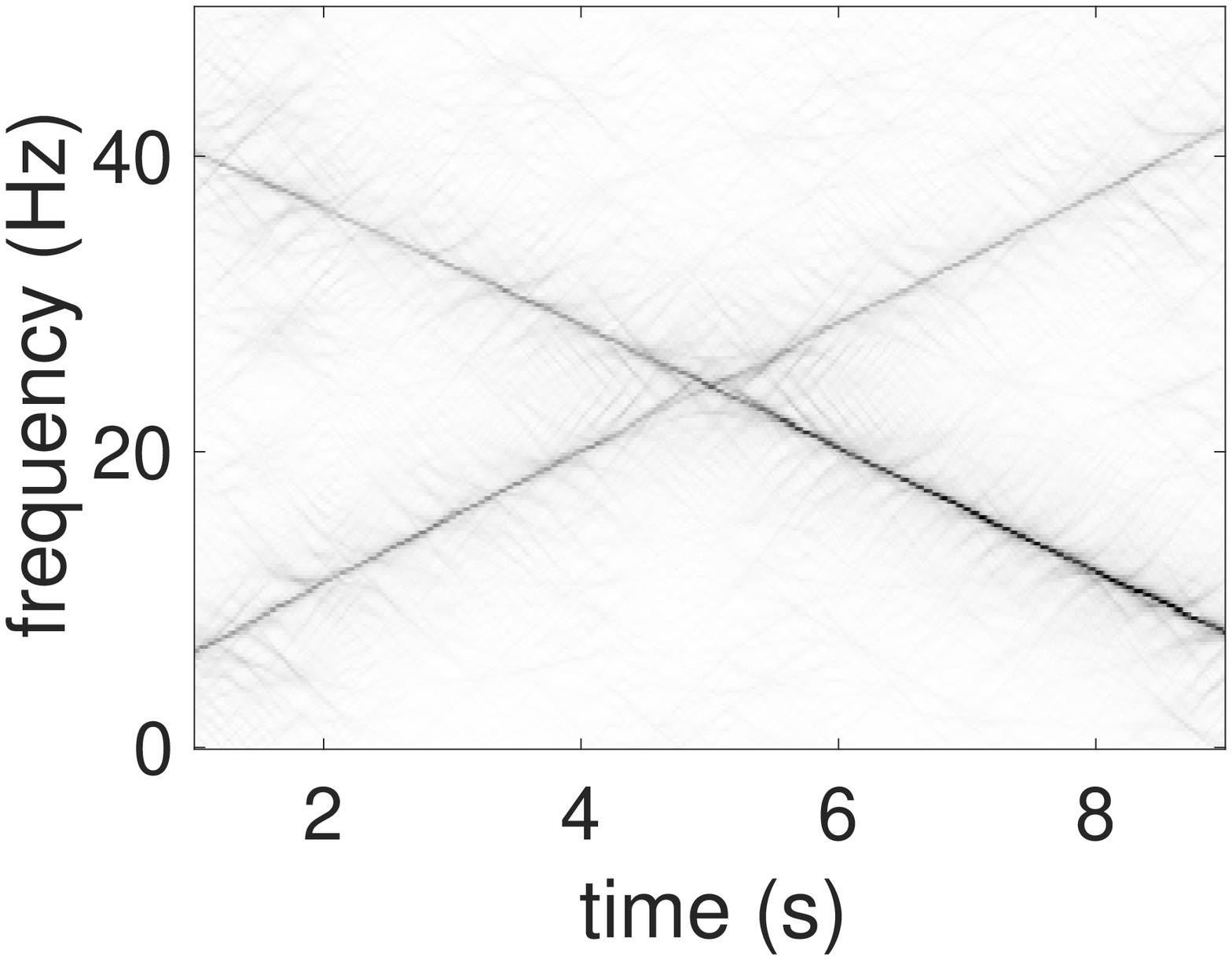}
	\caption{Top row, from left to right: the clean signal $\Re(s)$, the noisy signal $\Re(f)$. Second row, from left to right: plot of $\phi_1'$ and $\phi_2'$, plot of $\phi_1''$ and $\phi_2''$. Note that the IFs have a crossing point at $(t_0,\xi_0) = (4.66, 7.8)$.
		Third row, from left to right: 2nd-order SST of the clean signal $s$ with $g_0$ and the TF representation determined by projecting the SCT with $g_0$. Bottom row, from left to right: 2nd-order SST of the noisy signal $f$ with $g_0$ and the TF representation determined by projecting the SCT with $g_0$.}
	\label{fig:8}
\end{figure}

\begin{figure}[!htbp]
	\centering
	\includegraphics[width=.32\textwidth]{./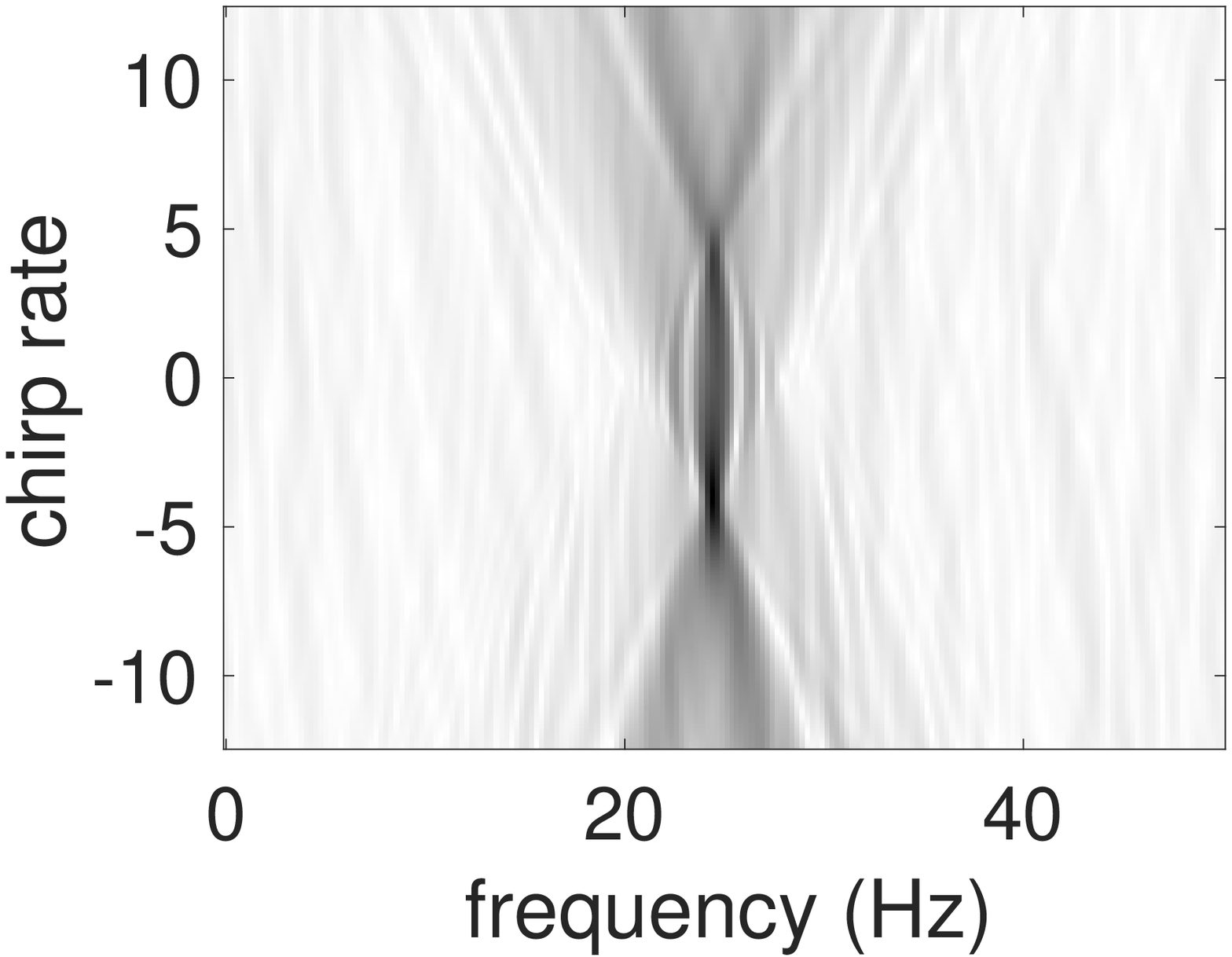}
	\includegraphics[width=.32\textwidth]{./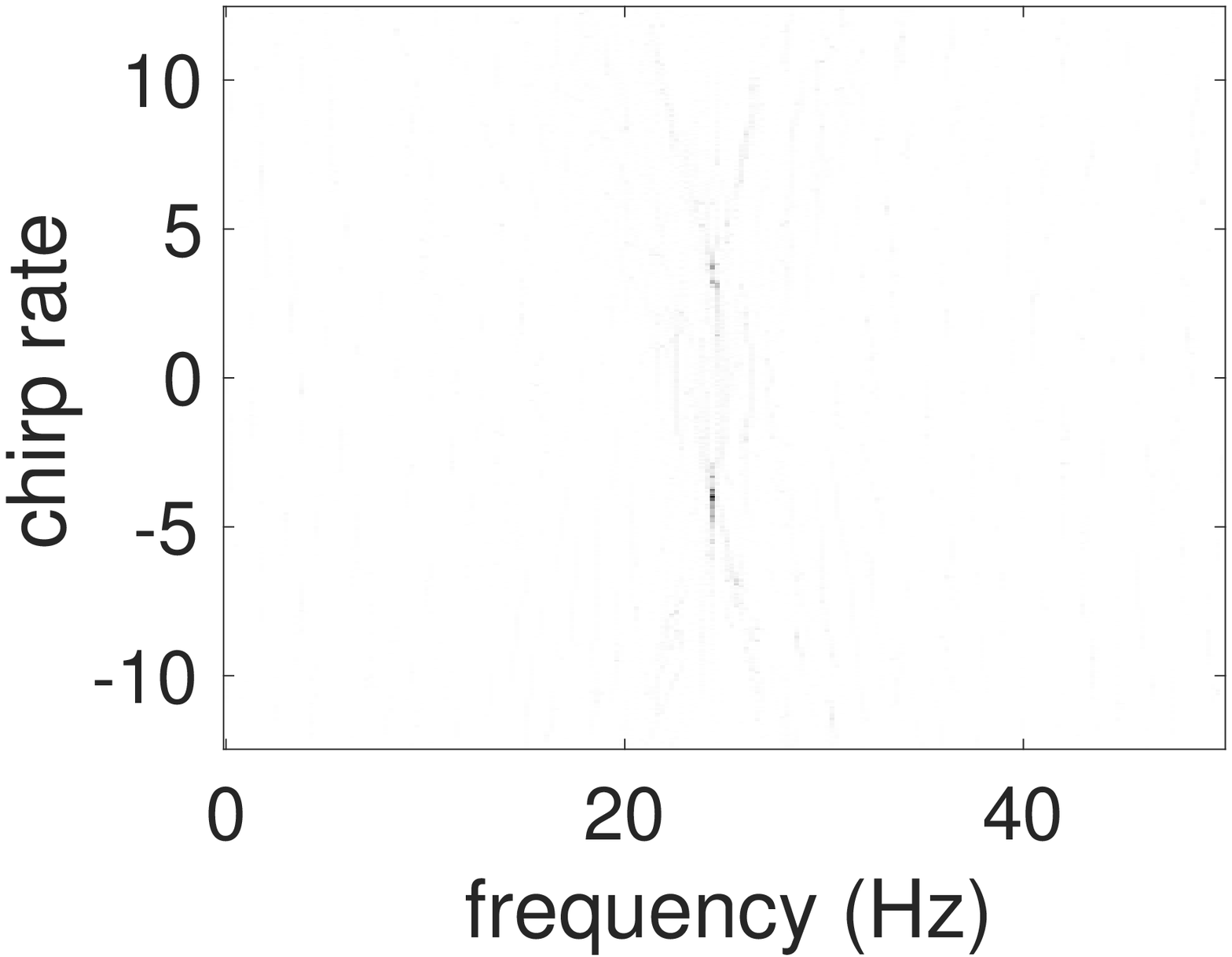}
	\includegraphics[width=.32\textwidth]{./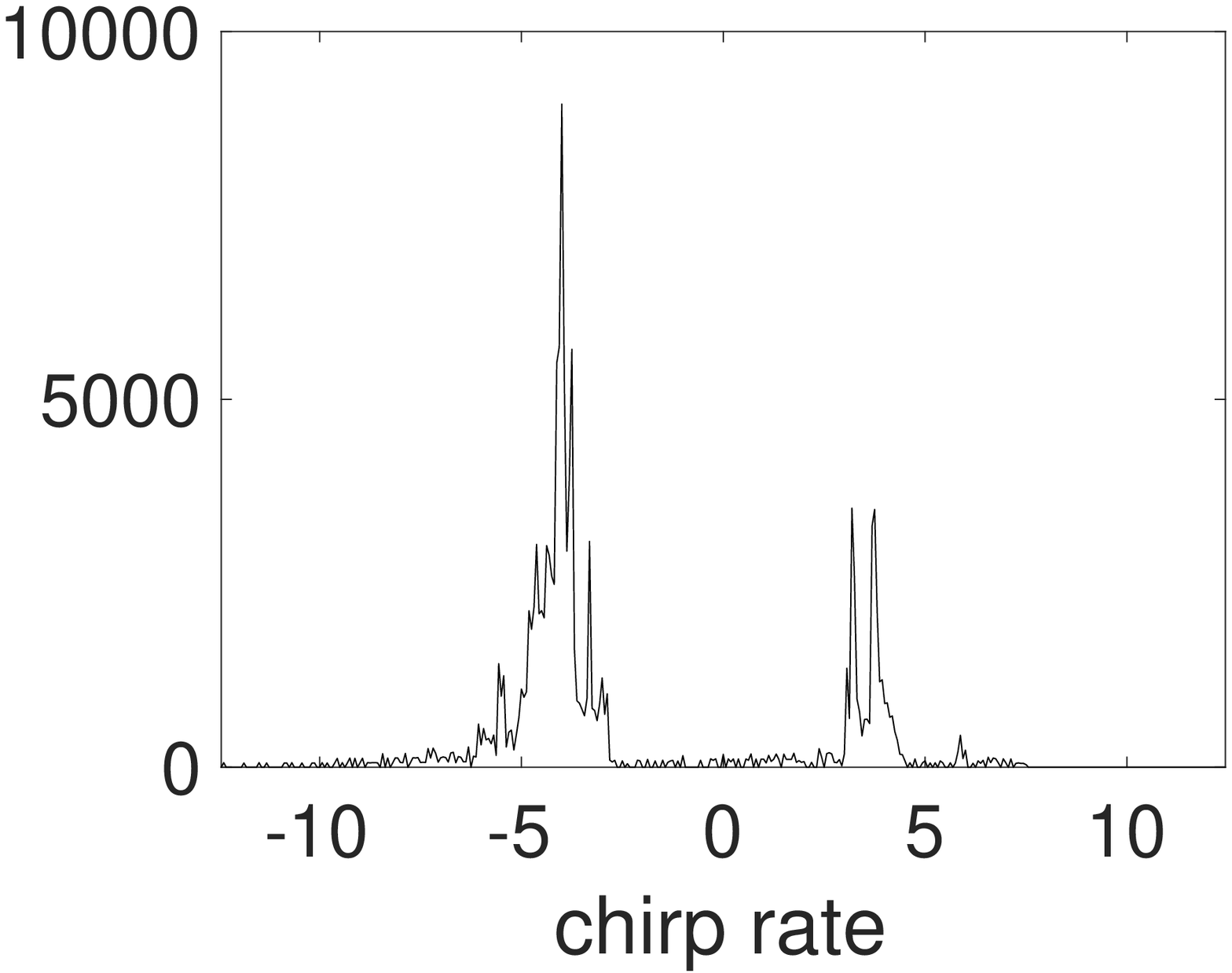}\\
	\includegraphics[width=.32\textwidth]{./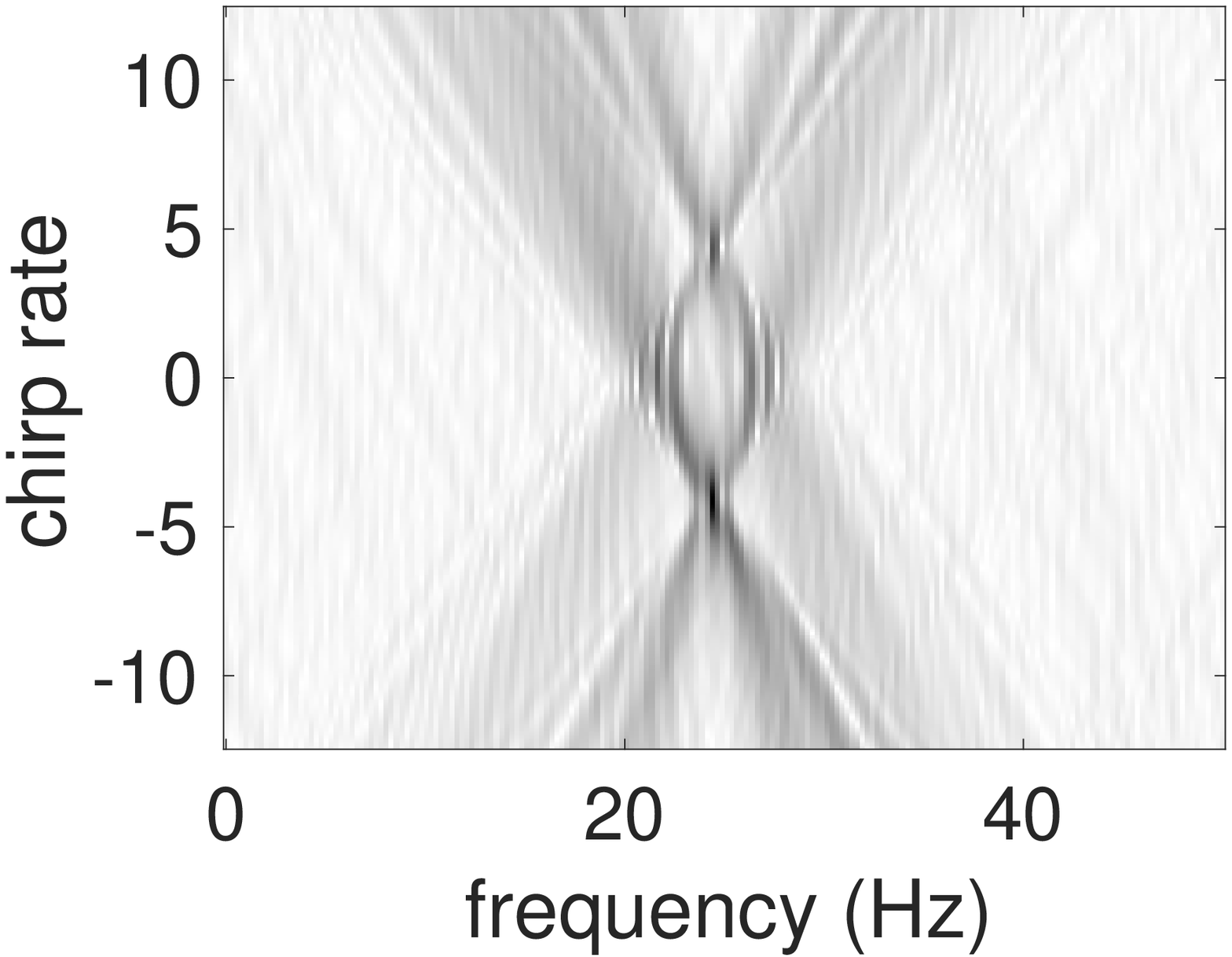}
	\includegraphics[width=.32\textwidth]{./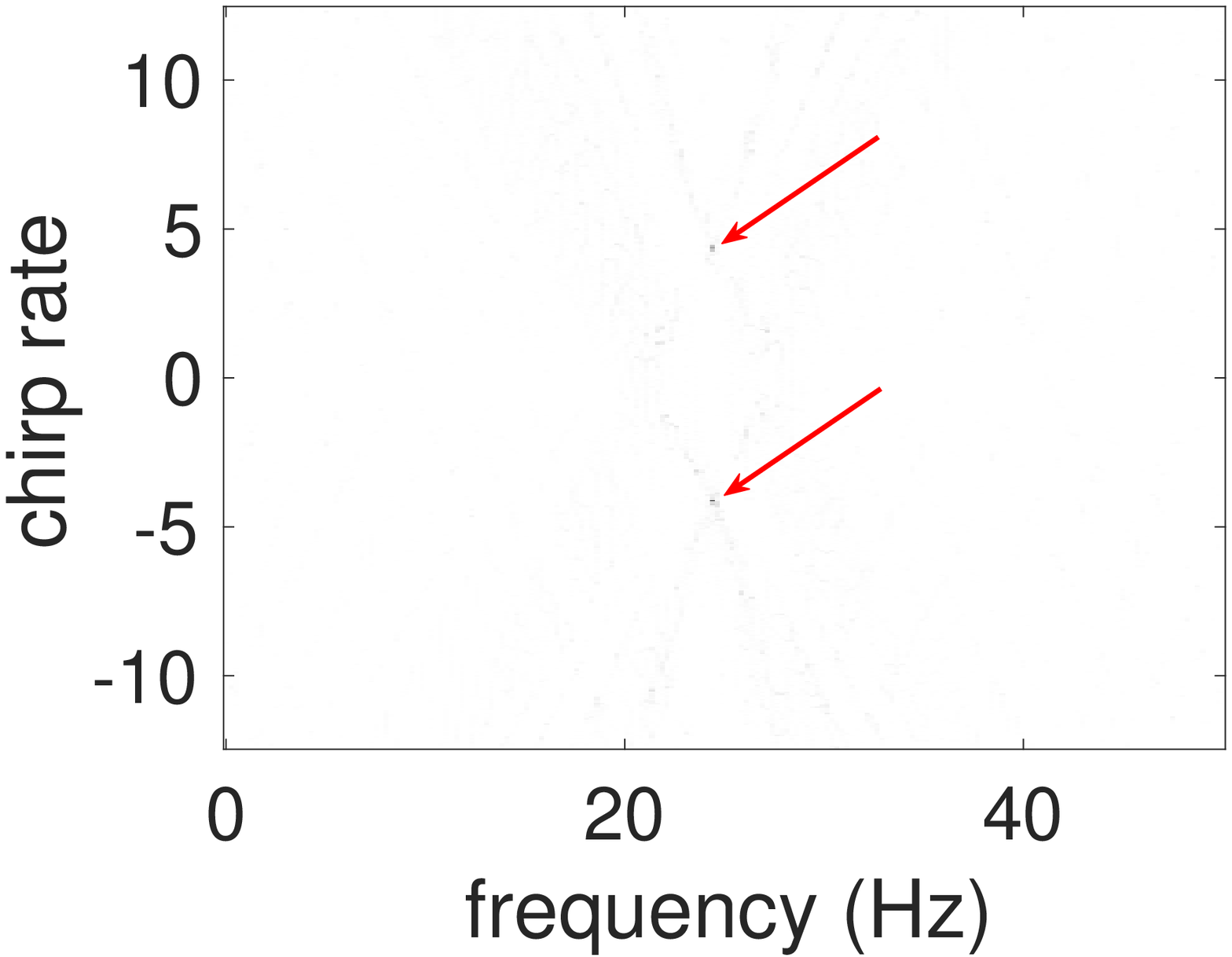}
	\includegraphics[width=.32\textwidth]{./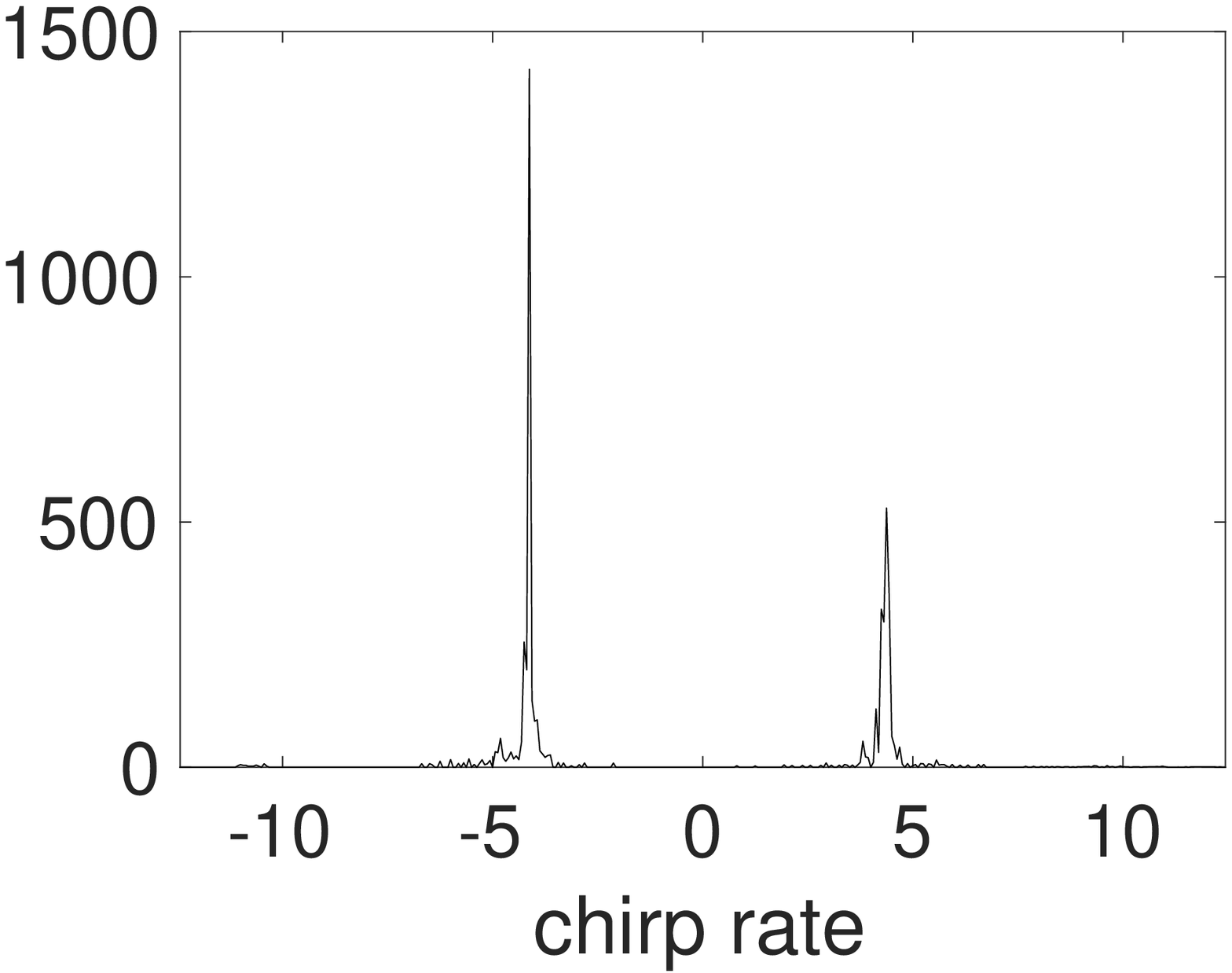}
	\caption{Top left: plot of $\abs{T_f^{(g_0)}(t_0,\xi,\lambda)}$; top middle: plot of $\abs{S_f^{(g_0)}(t_0,\xi,\lambda)}$; top right: plot of $\abs{S_f^{(g_0)}(t_0,\xi_0,\lambda)}$; bottom left: plot of $\abs{T_f^{(g_2)}(t_0,\xi,\lambda)}$; bottom middle: plot of $\abs{S_f^{(g_2)}(t_0,\xi,\lambda)}$ with arrows pointing out concentrations; bottom right: plot of $\abs{S_f^{(g_2)}(t_0,\xi_0,\lambda)}$.}
	\label{fig:9}
\end{figure}

Finally, we provide a quantitative result. Define the relative reconstruction error between the estimated signal $\tilde{f}$ and the original clean signal $f$ as $\frac{\norm{\tilde{f}-f}_2}{\norm{f}_2}$. We report the mean and standard deviation (SD) of the relative reconstruction error of $\Re(f_1)$ and $\Re(f_2)$ over 100 different realizations of the signal and noise, where their IFs and instantaneous chirp rates are extracted from SCT, $S_{f}^{(g_2)}(t,\xi,\lambda)$. The mean $\pm$ SD of $\frac{\norm{{\Re(\tilde f_1)}-\Re(f_1)}_2}{\norm{\Re(f_1)}_2}$ and $\frac{\norm{{\Re(\tilde f_2)}-\Re(f_2)}_2}{\norm{\Re(f_2)}_2}$ are  $0.154 \pm 0.091$
and $0.161\pm 0.084$ respectively.
We also report the mean and SD of the optimal transport (OT) distance between the estimated $\tilde{\phi}_k'$ from $S_{f}^{(g_2)}(t,\xi,\lambda)$ and $\phi_k'$, as well as $\hat{\phi}_k'$ from $T_{f}^{(g_2)}(t,\xi,\lambda)$ and $\phi_k'$, where $k=1,2$. The OT distance is obtained by first computing the 1-dimensional Wasserstein-1 distance between $\tilde{\phi}_k'(x)$ (or $\hat{\phi}_k'(x)$) and $\phi_k'(x)$, where $k=1,2$, at every time $x$ using the Euclidean distance as metric, and then taking average over all $x\in [1,9]$. The resulting mean $\pm$ SD for $\text{OT}(\tilde{\phi}'_1, \phi'_1)$, $\text{OT}(\hat{\phi}'_1, \phi'_1)$, $\text{OT}(\tilde{\phi}'_2, \phi'_2)$ and $\text{OT}(\hat{\phi}'_2, \phi'_2)$ are $0.424 \pm 0.280$, 
$0.854\pm 1.778$, $0.325\pm 0.252$, and $0.453\pm 0.728$ respectively.
We conclude that the IFs estimated from $S_{f}^{(g_2)}(t,\xi,\lambda)$ are more accurate than those estimated from $T_{f}^{(g_2)}(t,\xi,\lambda)$.

\begin{figure}[!htbp]
	\centering
	\includegraphics[width=0.48\textwidth]{./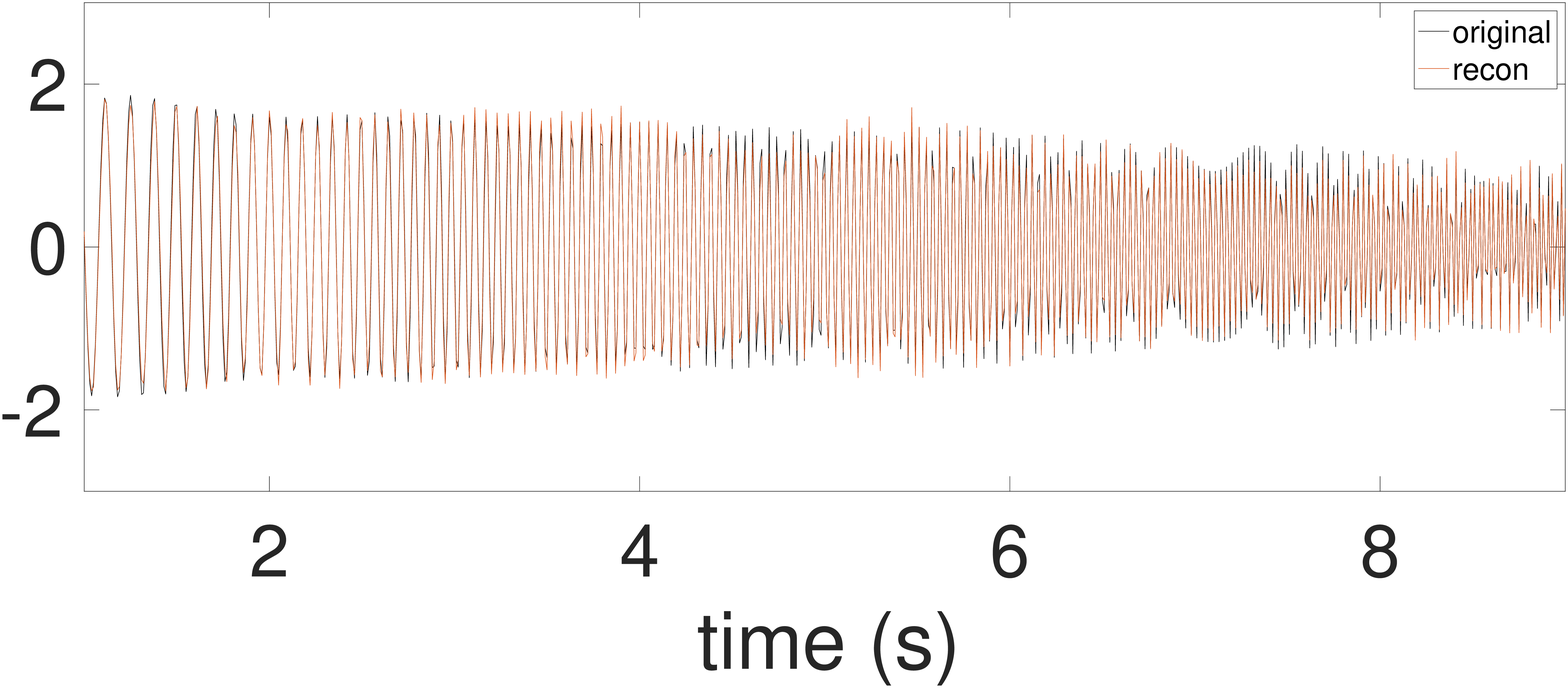}
	\includegraphics[width=0.48\textwidth]{./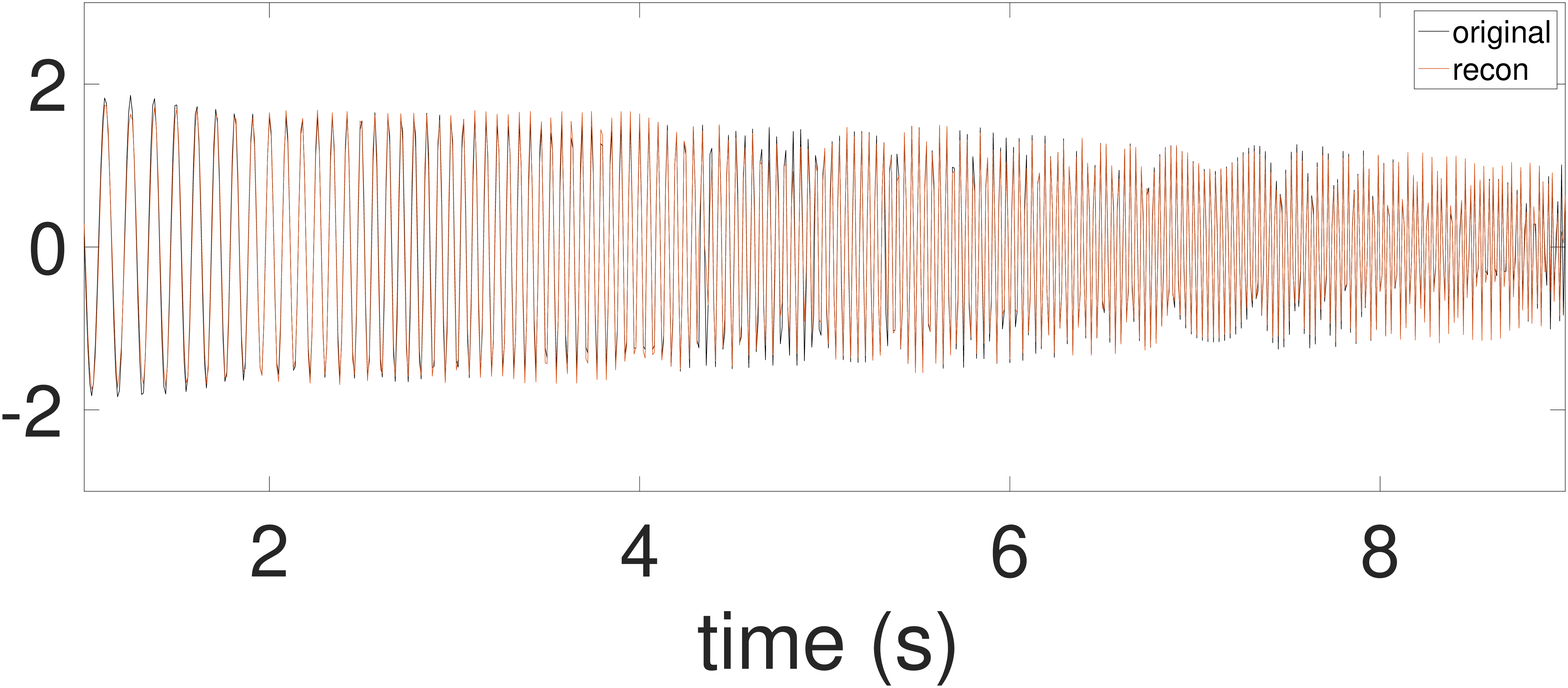}\\
	\includegraphics[width=0.48\textwidth]{./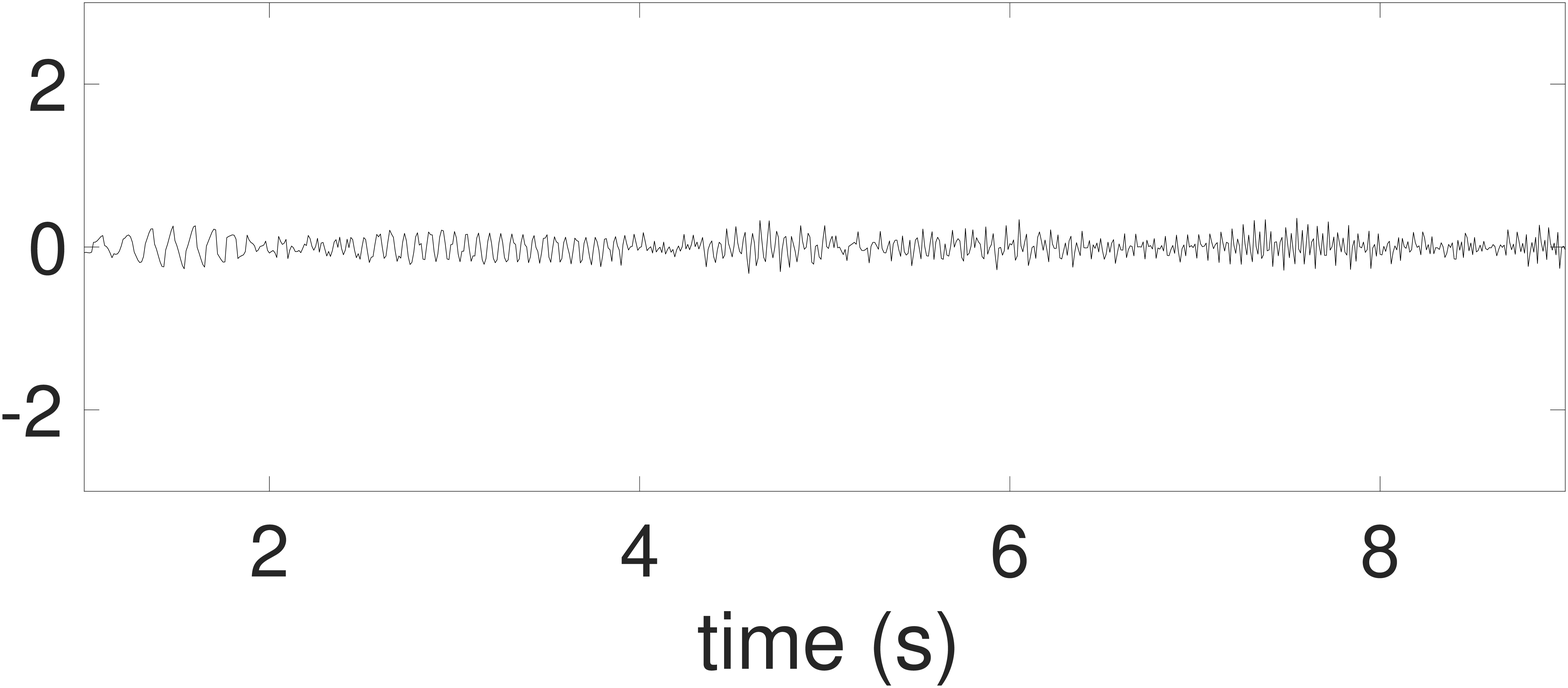}
	\includegraphics[width=0.48\textwidth]{./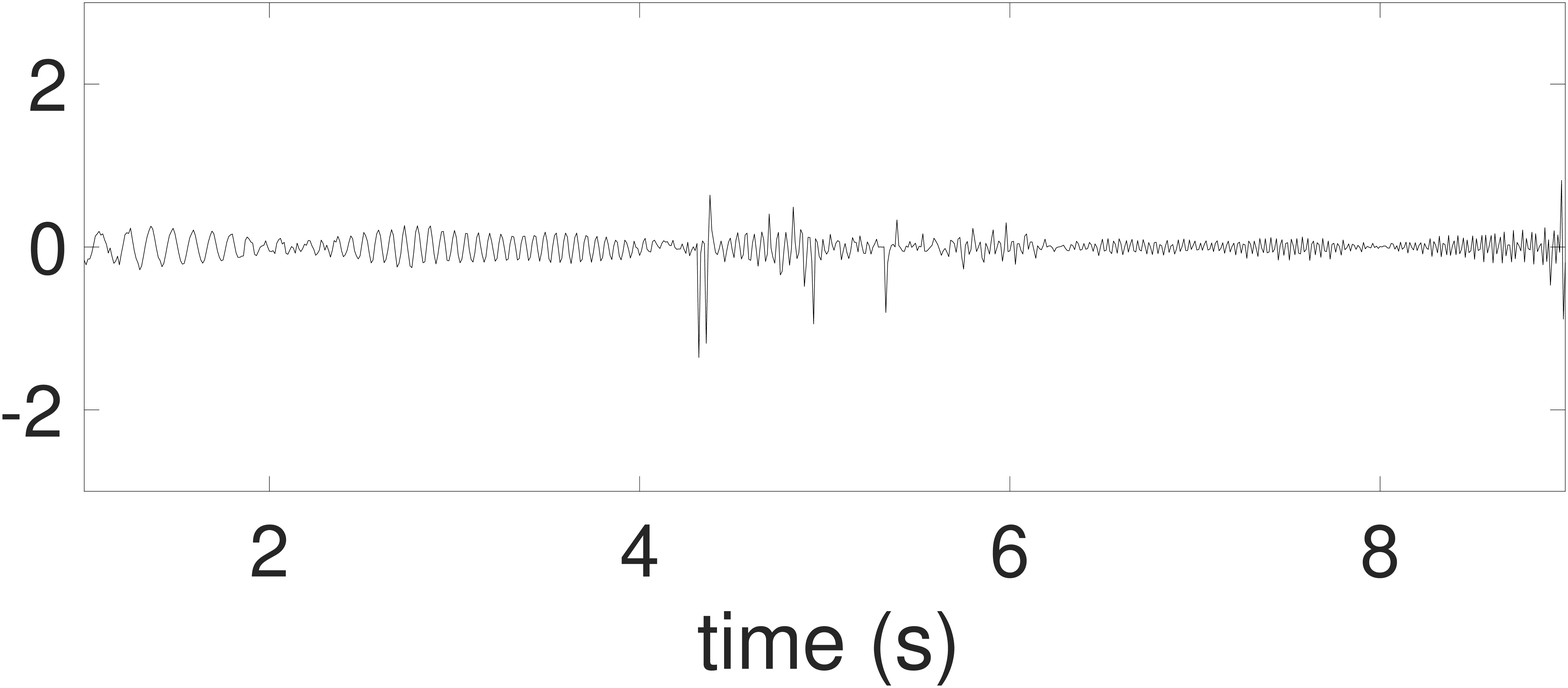}\\
	\includegraphics[width=0.48\textwidth]{./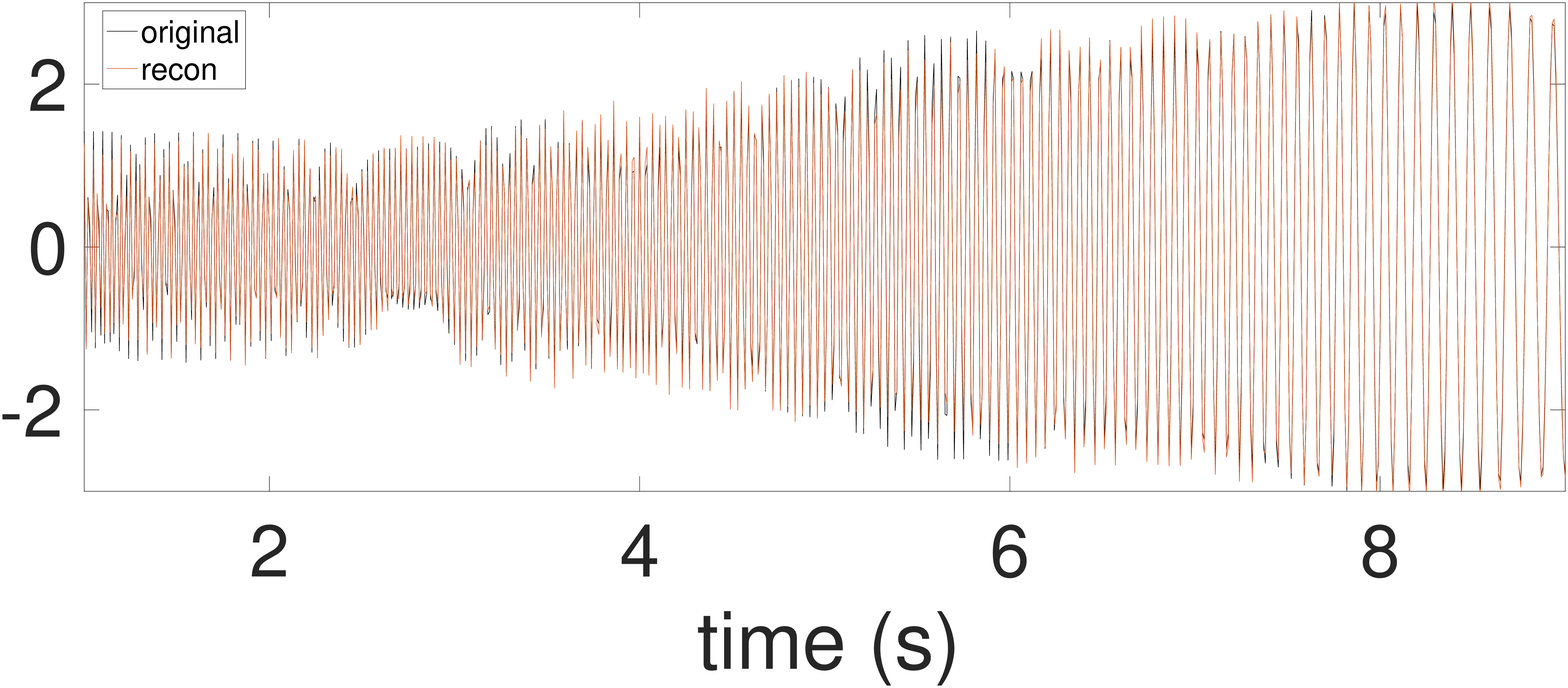}
	\includegraphics[width=0.48\textwidth]{./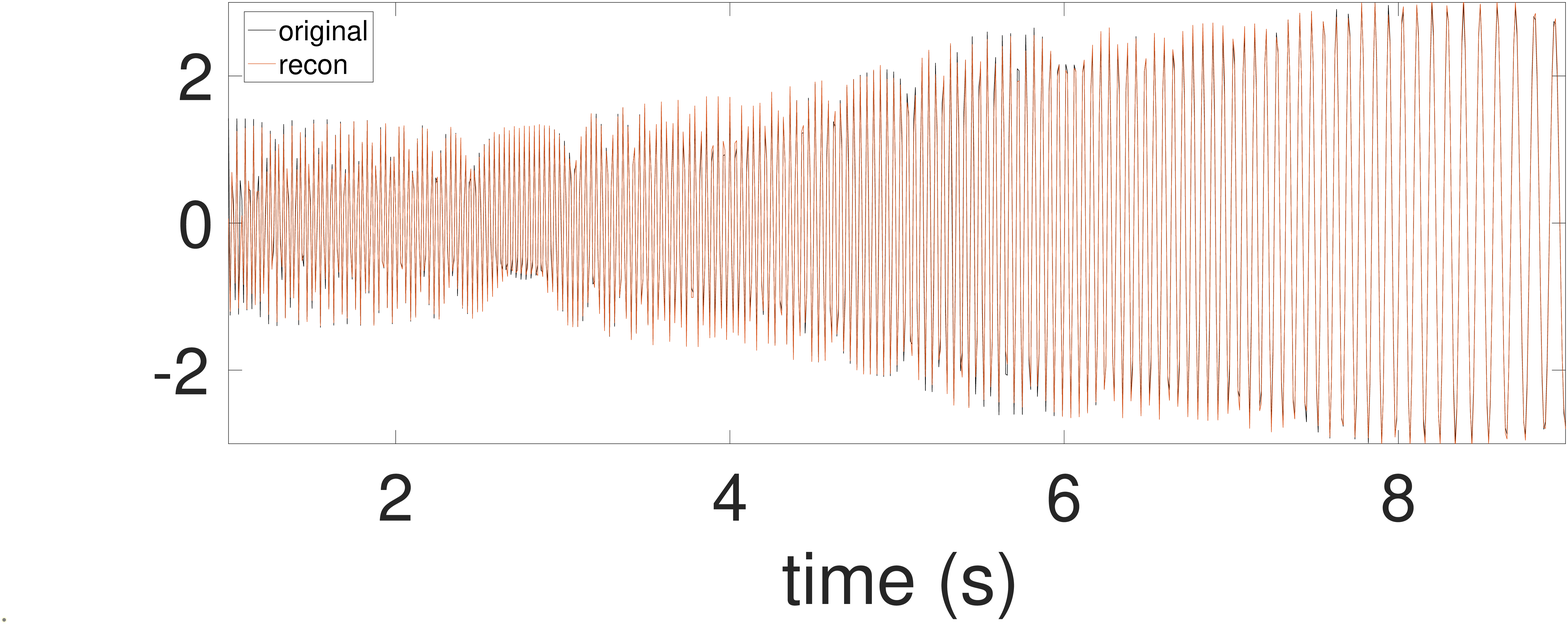}\\
	\includegraphics[width=0.48\textwidth]{./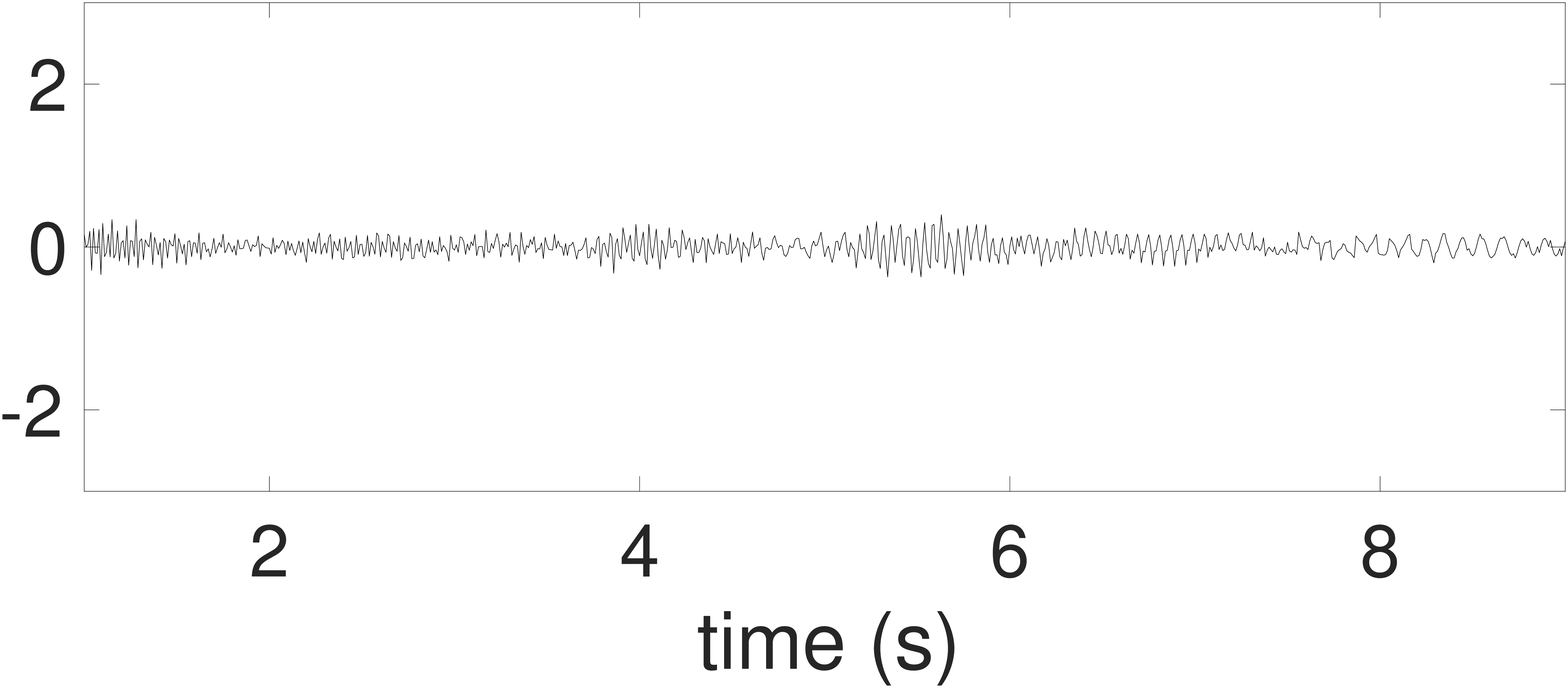}
	\includegraphics[width=0.48\textwidth]{./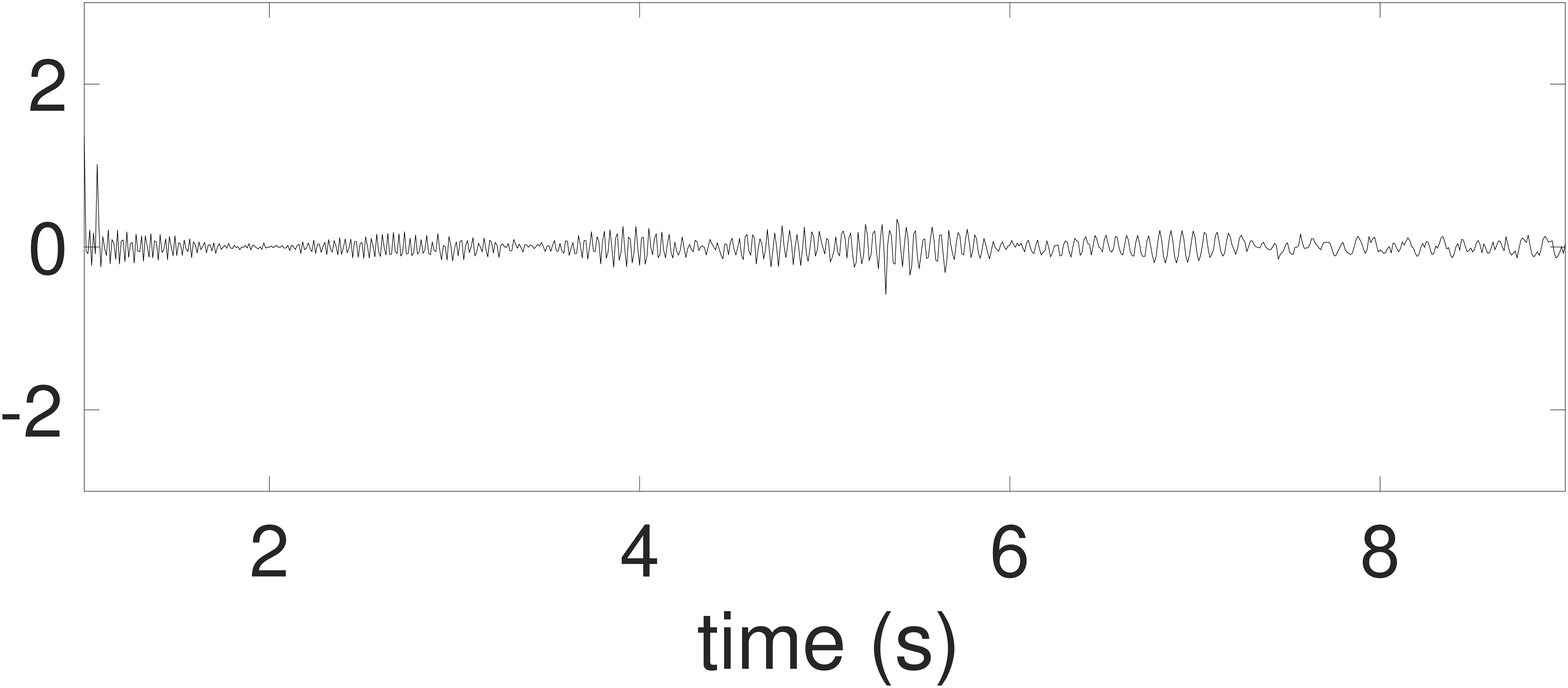}
	\caption{Left column: reconstruction of signals using group idea with window $g_0$, assuming we know the true instantaneous frequency and chirp rate. Top row: reconstruction of $\Re(f_1)$; second row: reconstruction error of $\Re(f_1)$; third row: reconstruction of $\Re(f_2)$; bottom row: reconstruction error of $\Re(f_2)$. Right column: reconstruction of signals using group idea with window $g_0$, the instantaneous frequency and chirp rate are extracted from $S_{f}^{(g_2)}$. Top row: reconstruction of $\Re(f_1)$; second row: reconstruction error of $\Re(f_1)$; third row: reconstruction of $\Re(f_2)$; bottom row: reconstruction error of $\Re(f_2)$. }
	\label{fig:10}
\end{figure}

\subsection{Wolf Howling Signal}
We now show a real world data analysis. An important issue in conservation biology is to estimate the number of wolves in the field \cite{wolf,dugnol2008chirplet}. In this final example we show the numerical result with a wolf howling signal. The sound is downloaded from Wolf Park website.\footnote{https://wolfpark.org/Images/Resources/Howls/} The signal $f$ is sampled at 8 kHz for 55 s. In this example, we downsample the signal $f$ by a factor 8 to look at the crossover IFs, which is below 500Hz. Figure \ref{fig:12} shows the signal and the TFC representation and the associated TF representation determined by CT and SCT with $g_0$. Figure \ref{fig:13} shows the frequency-chirp rate slices at three instants 16.16s, 16.43s and 16.57s with window $g_0$ and $g_2$. We see in Figure \ref{fig:13} that at 16.16s, the IFs of the two components (two wolves) are well-separated. We do not expect any problem and the SCT with $g_0$ gives a clear representation of their IFs and ICs, particularly in the chirp rate axis compared with CT. At about 16.43s, when their IFs are very close and their ICs are reasonably separated, two components are mixed in the chirp rate direction in the SCT with $g_0$; however, we can observe two places of concentration in the frequency-chirp rate slice of the SCT with $g_2$, which is also sharper compared to the CT at that time. At 16.57s near another instance of frequency crossover, their chirp rates are close, and it becomes hard to distinguish those two components even in the SCT with $g_2$.
While solving this wolf counting problem is out of the scope of this paper, we leave this problem to the future work.

\begin{figure}[!htbp]
	\centering
	\includegraphics[width=.8\textwidth]{./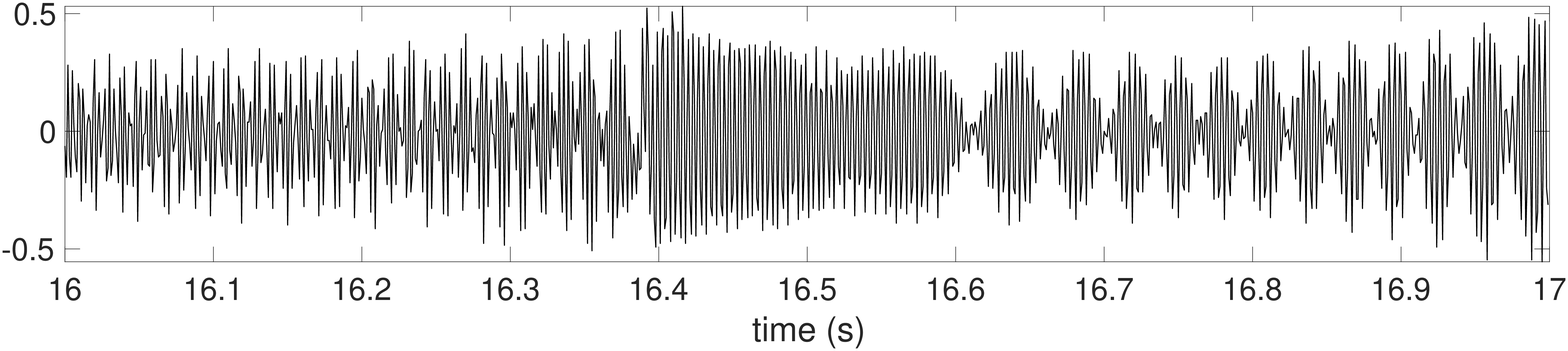}\\
	\includegraphics[width=.325\textwidth]{./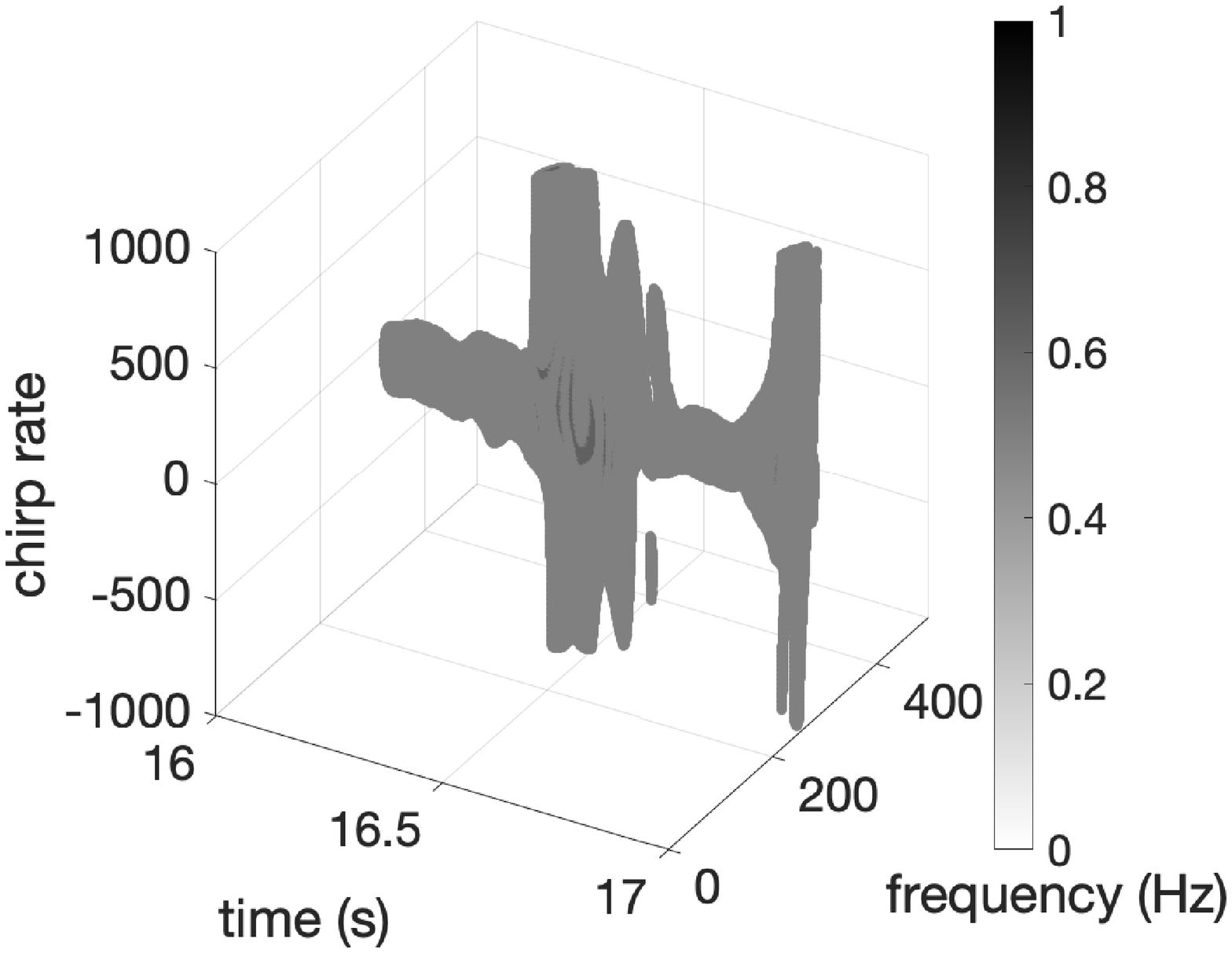}
	\includegraphics[width=.325\textwidth]{./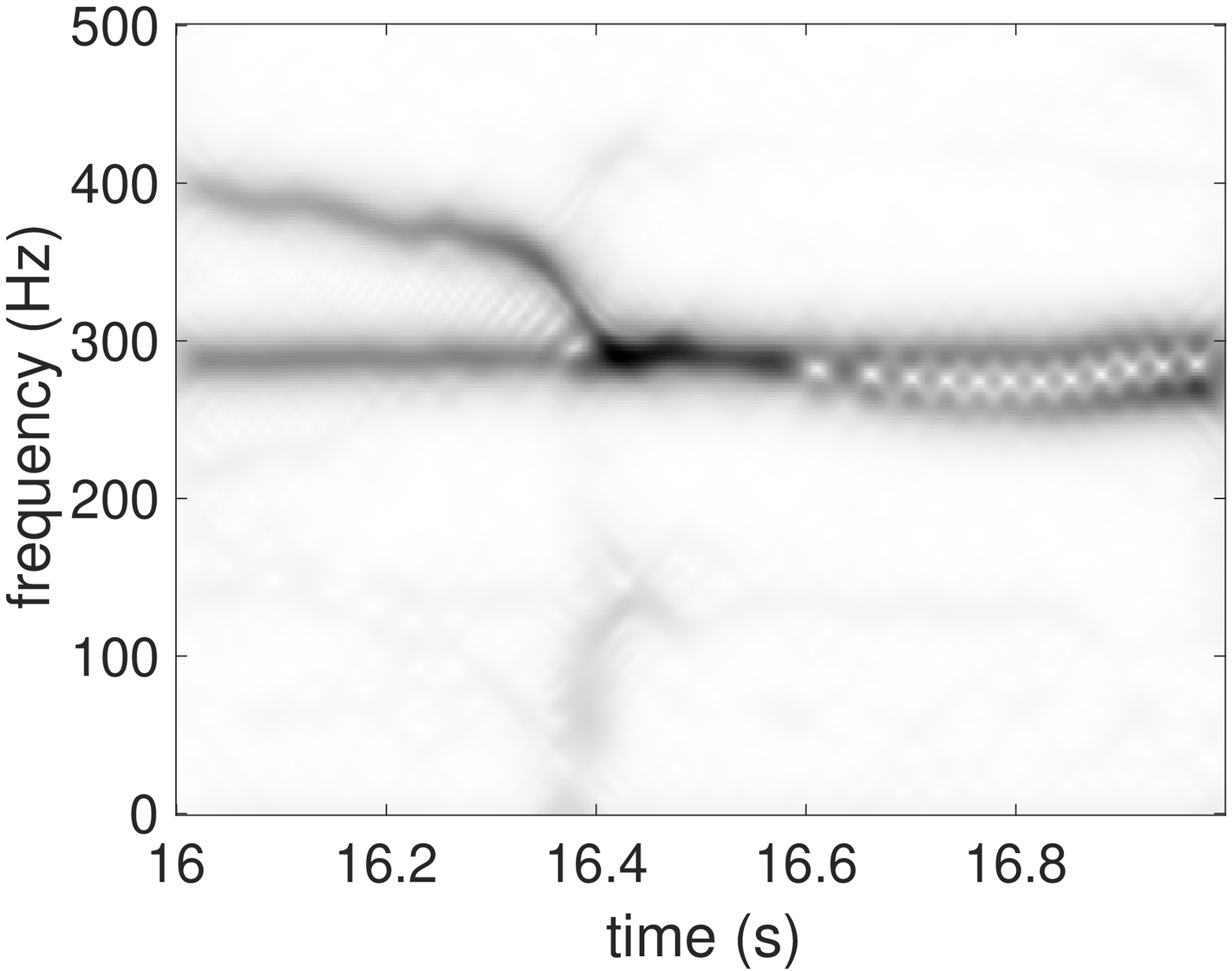}\\
	\includegraphics[width=.325\textwidth]{./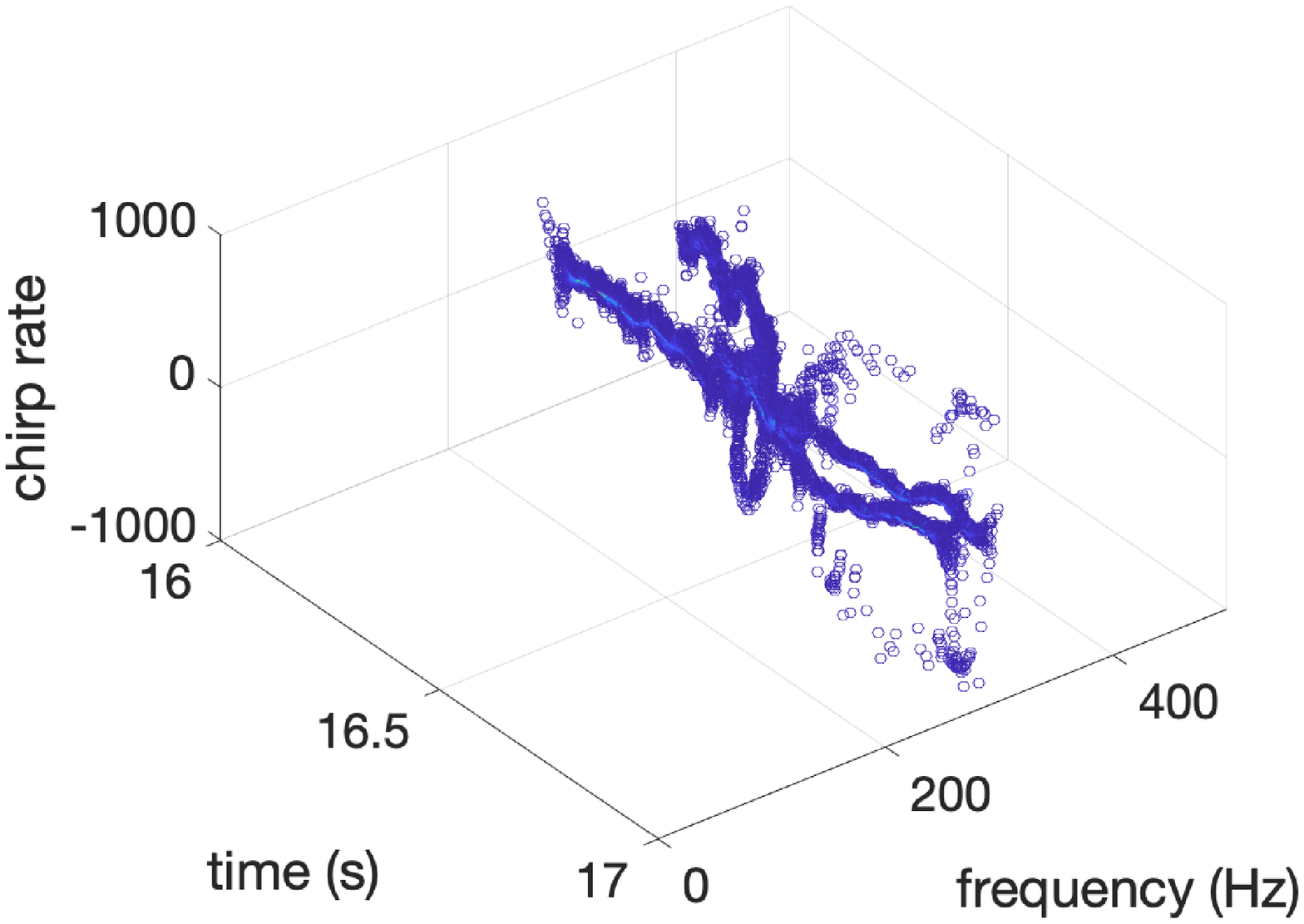}
	\includegraphics[width=.325\textwidth]{./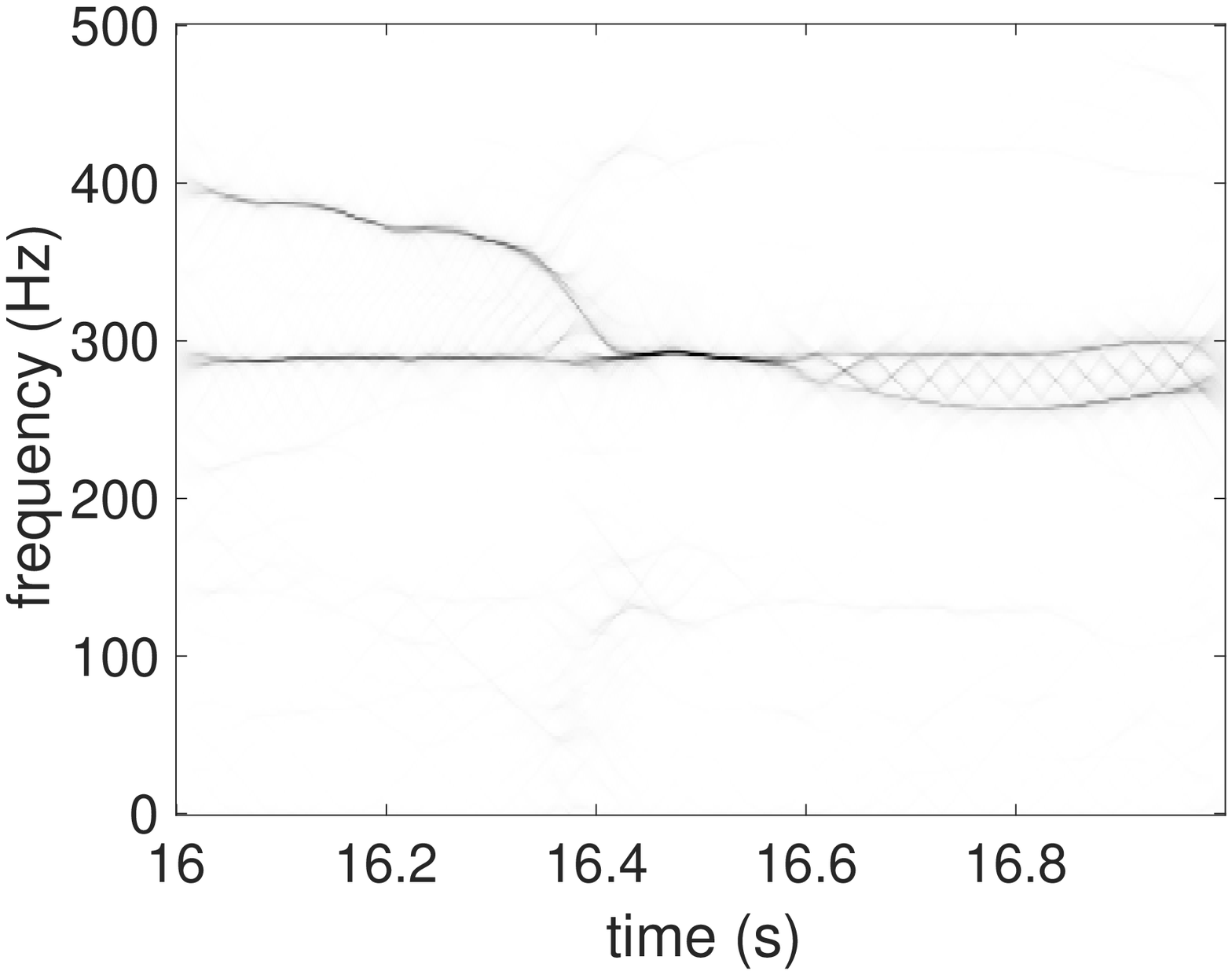}
	\caption{Top: a wolf howling signal between 16 and 17 seconds. Second row: the chirplet transform of the signal with a Gaussian window with the half window width 0.16 second and its associated TF representation. Third row: the synchrosqueezed chirplet transform of the signal with the same window and its associated TF representation.}
	\label{fig:12}
\end{figure}

\begin{figure}[!htbp]
	\includegraphics[width=.325\textwidth]{./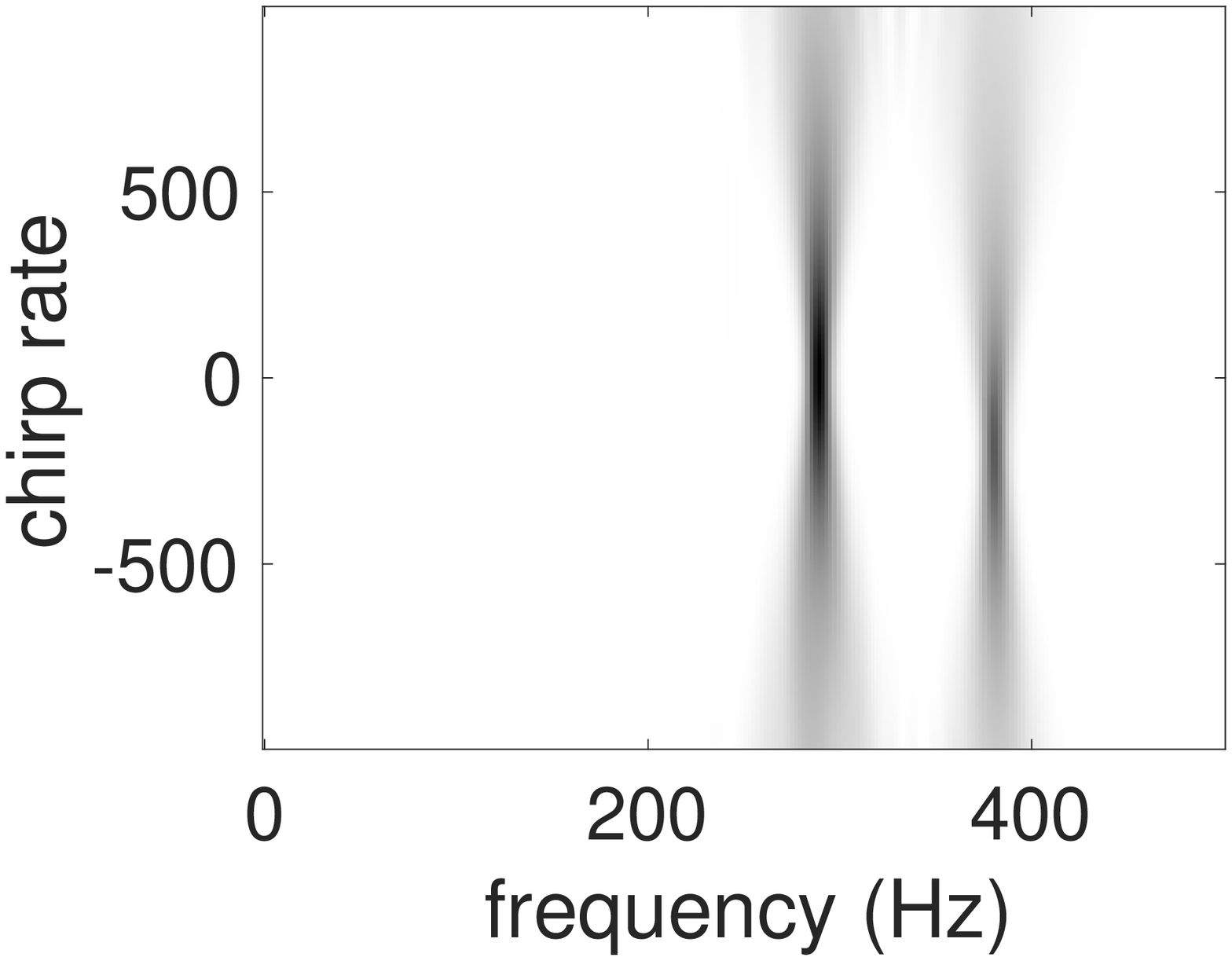}
	\includegraphics[width=.325\textwidth]{./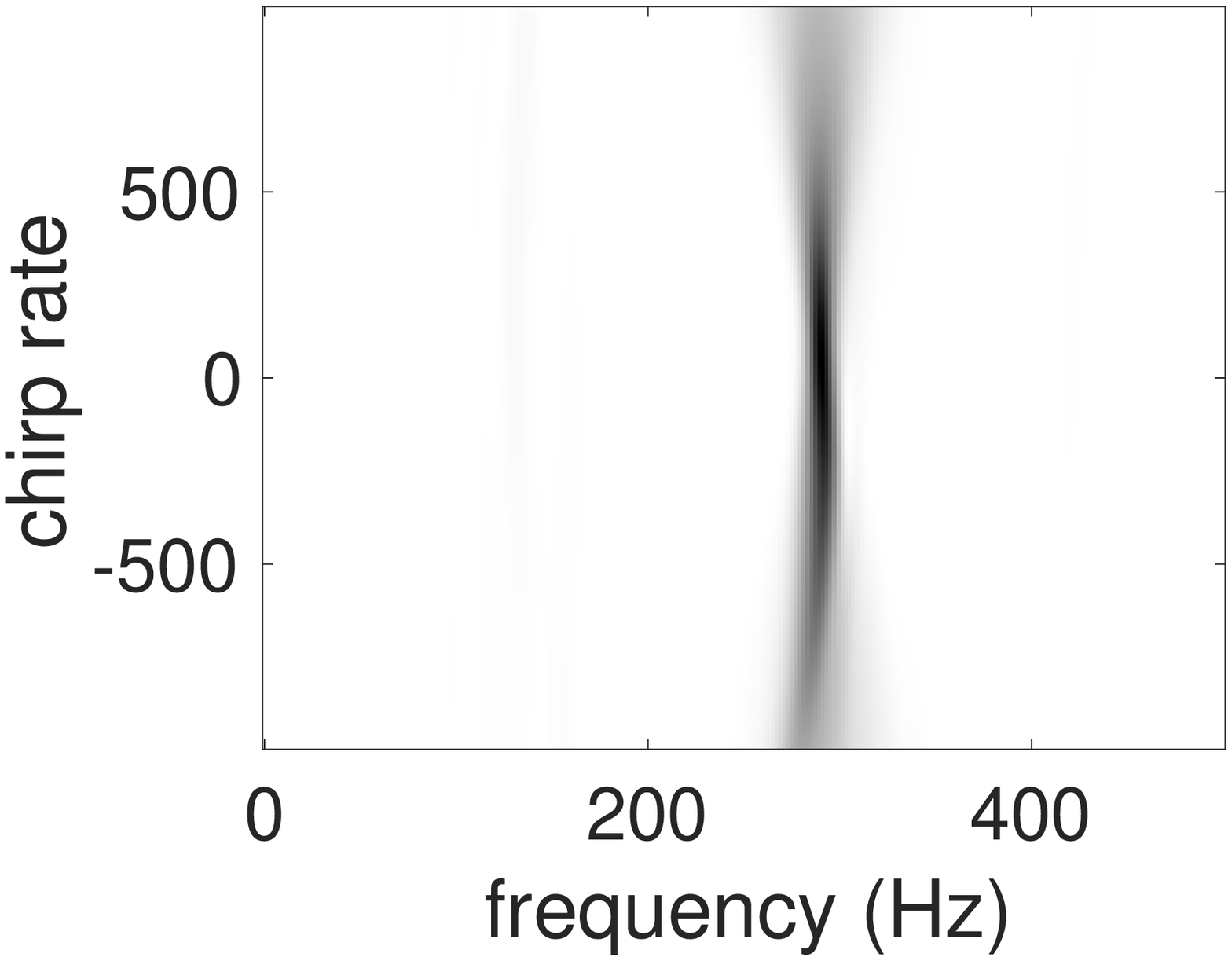}
	\includegraphics[width=.325\textwidth]{./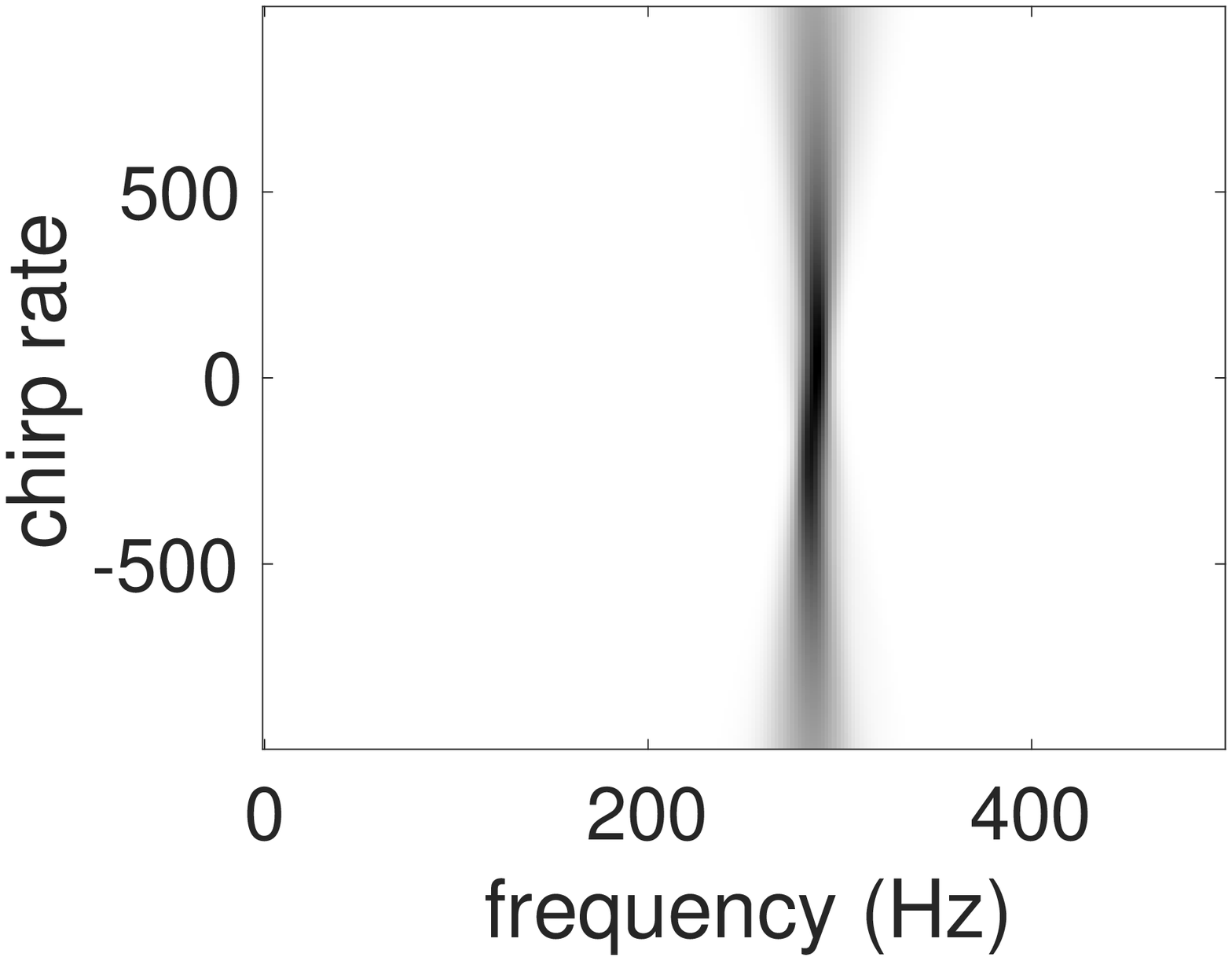}\\
	\includegraphics[width=.325\textwidth]{./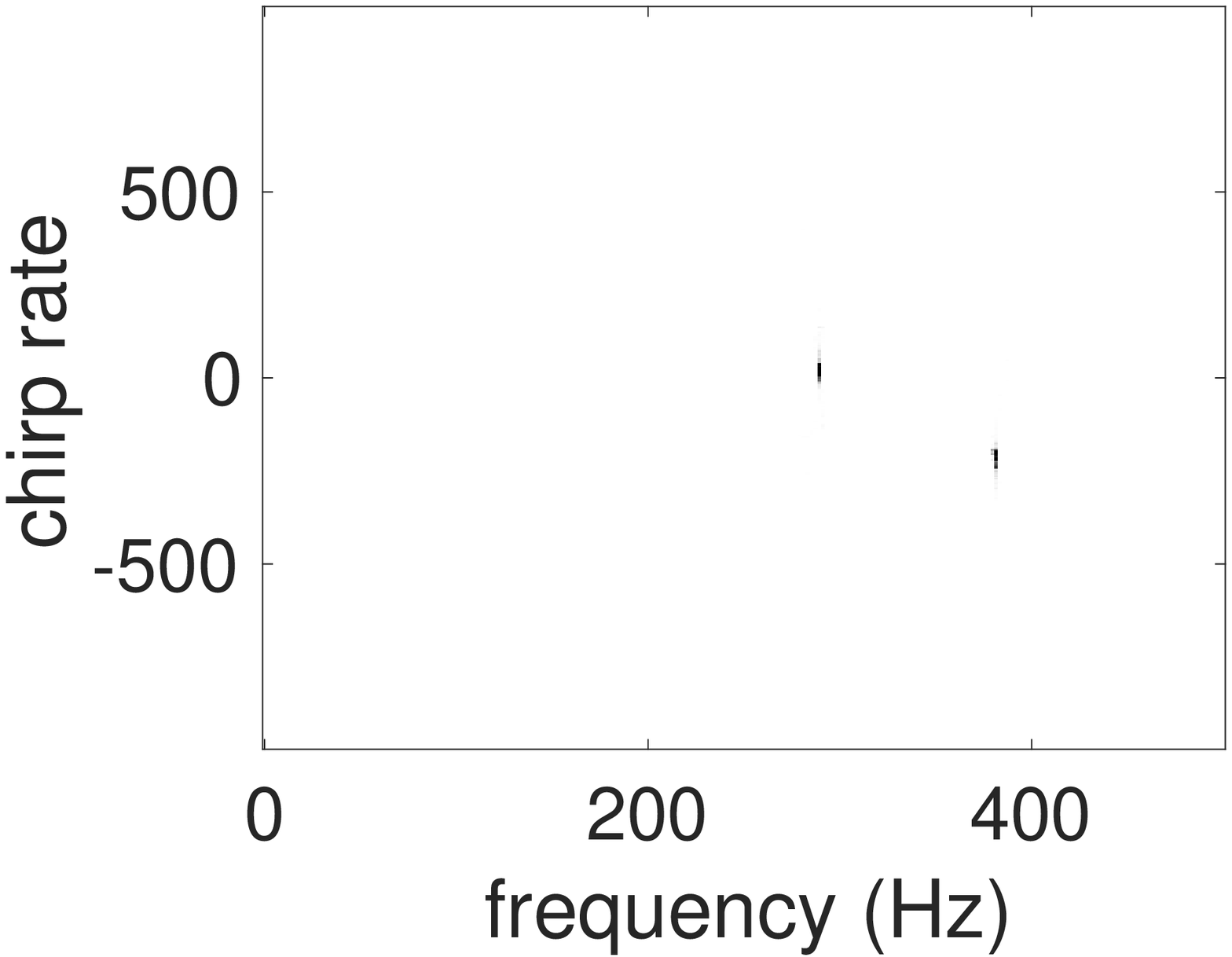}
	\includegraphics[width=.325\textwidth]{./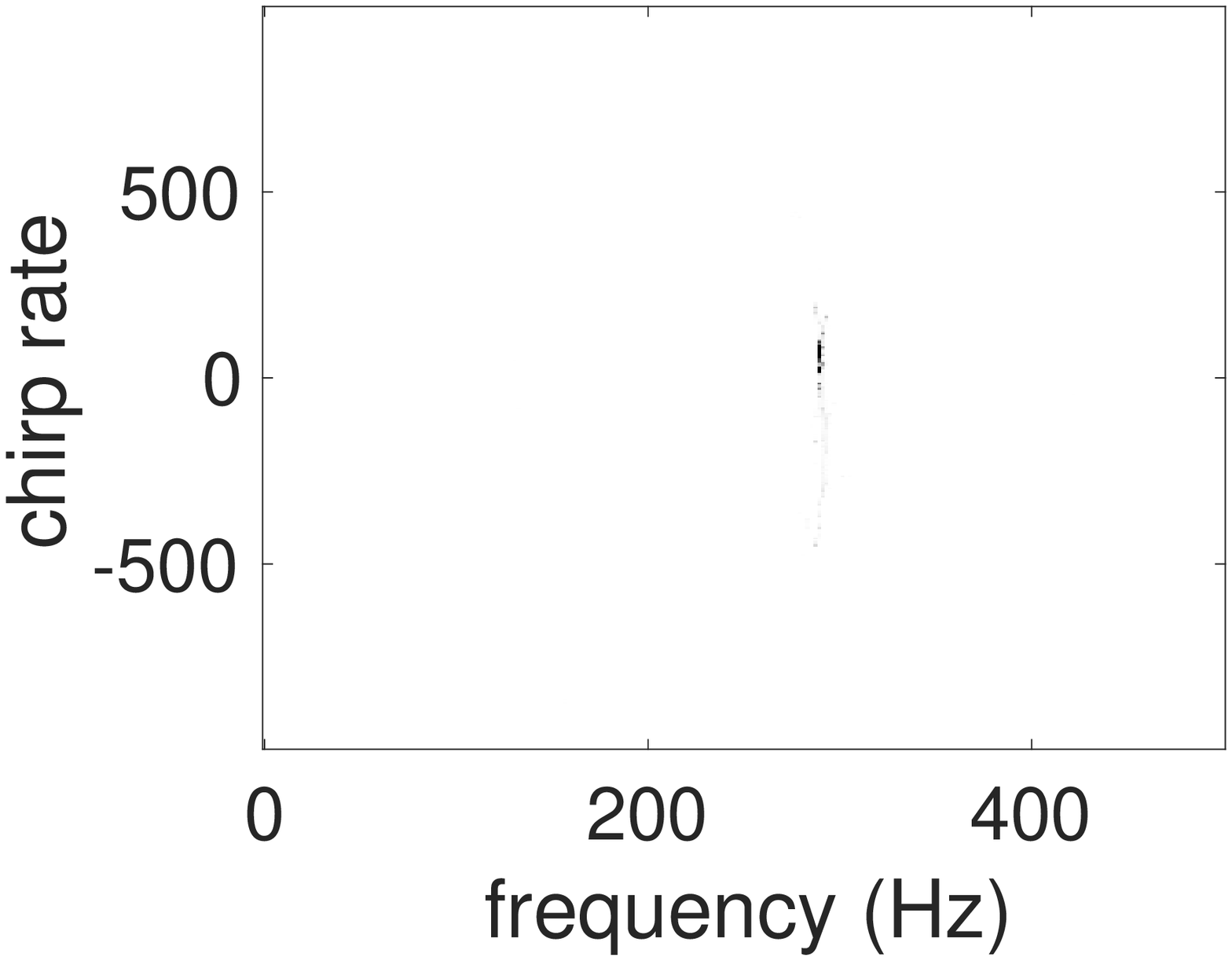}
	\includegraphics[width=.325\textwidth]{./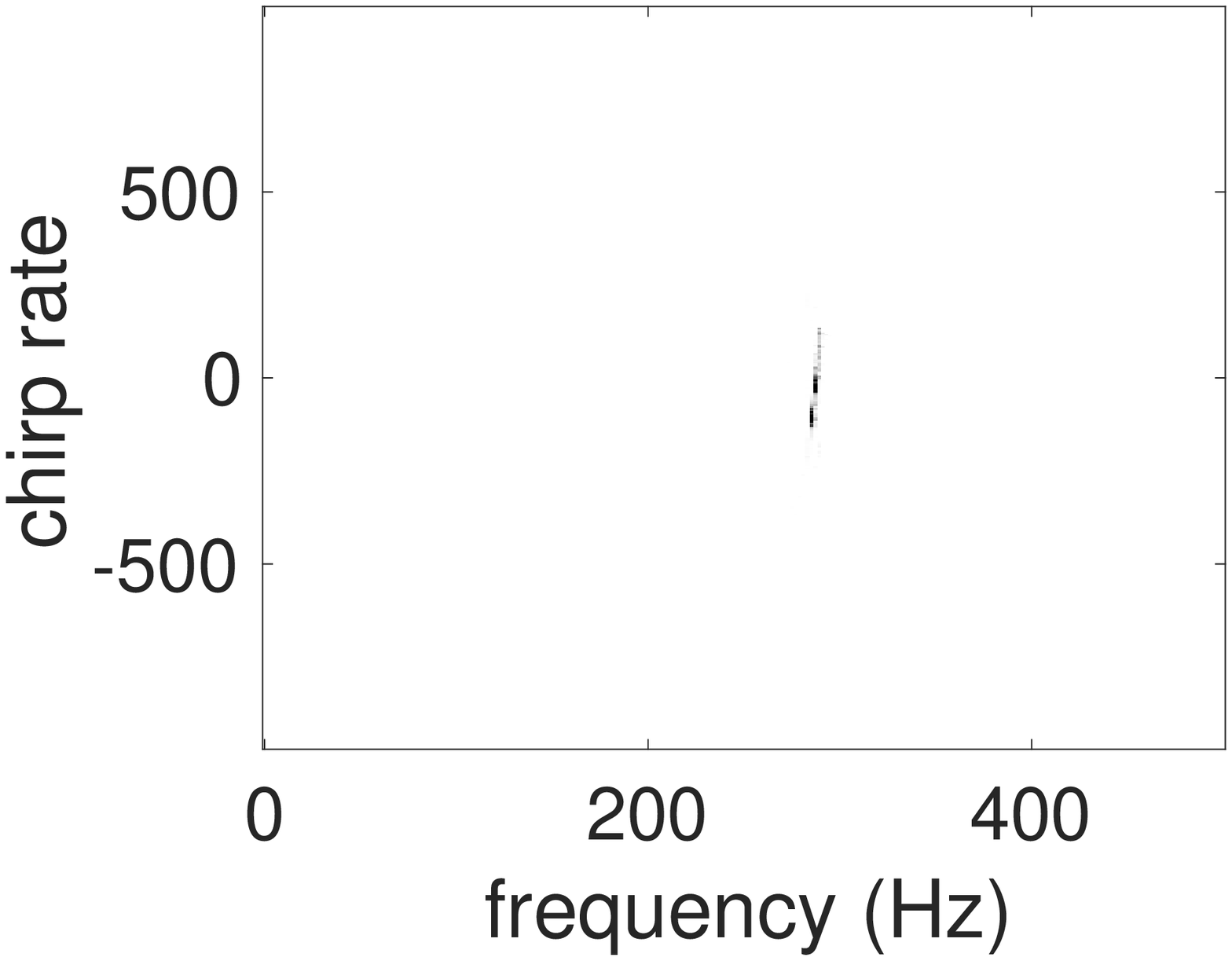}\\
	\includegraphics[width=.325\textwidth]{./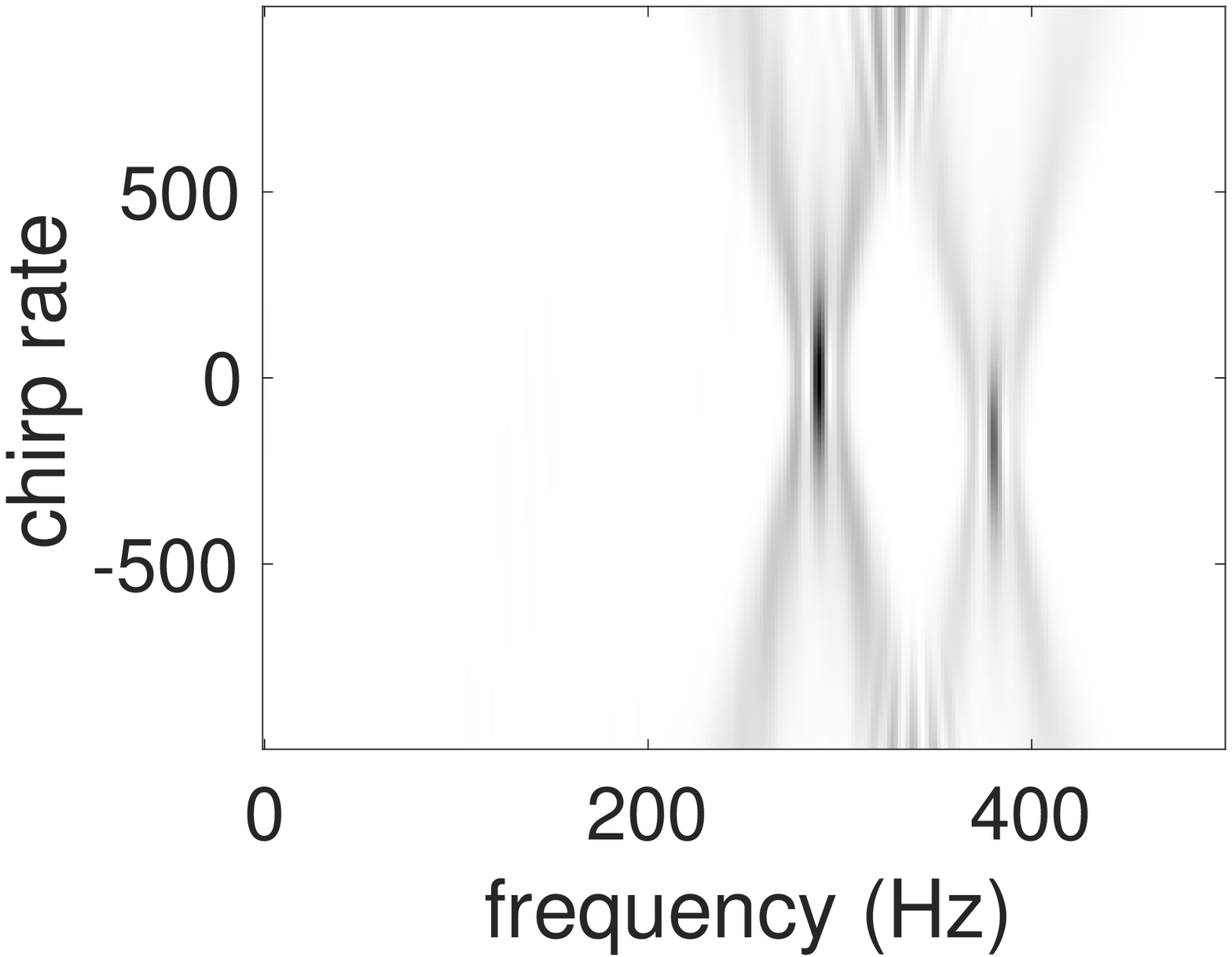}
	\includegraphics[width=.325\textwidth]{./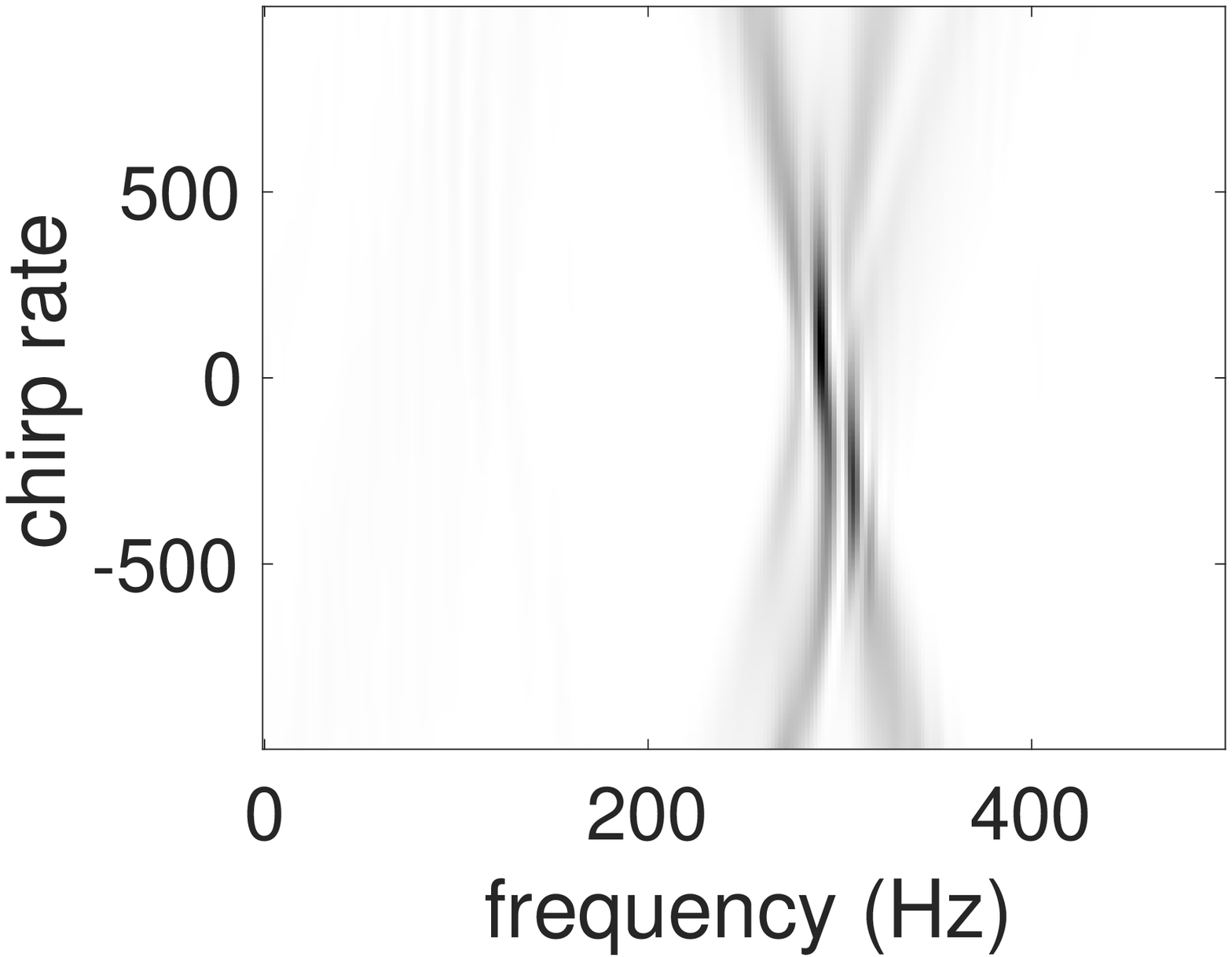}
	\includegraphics[width=.325\textwidth]{./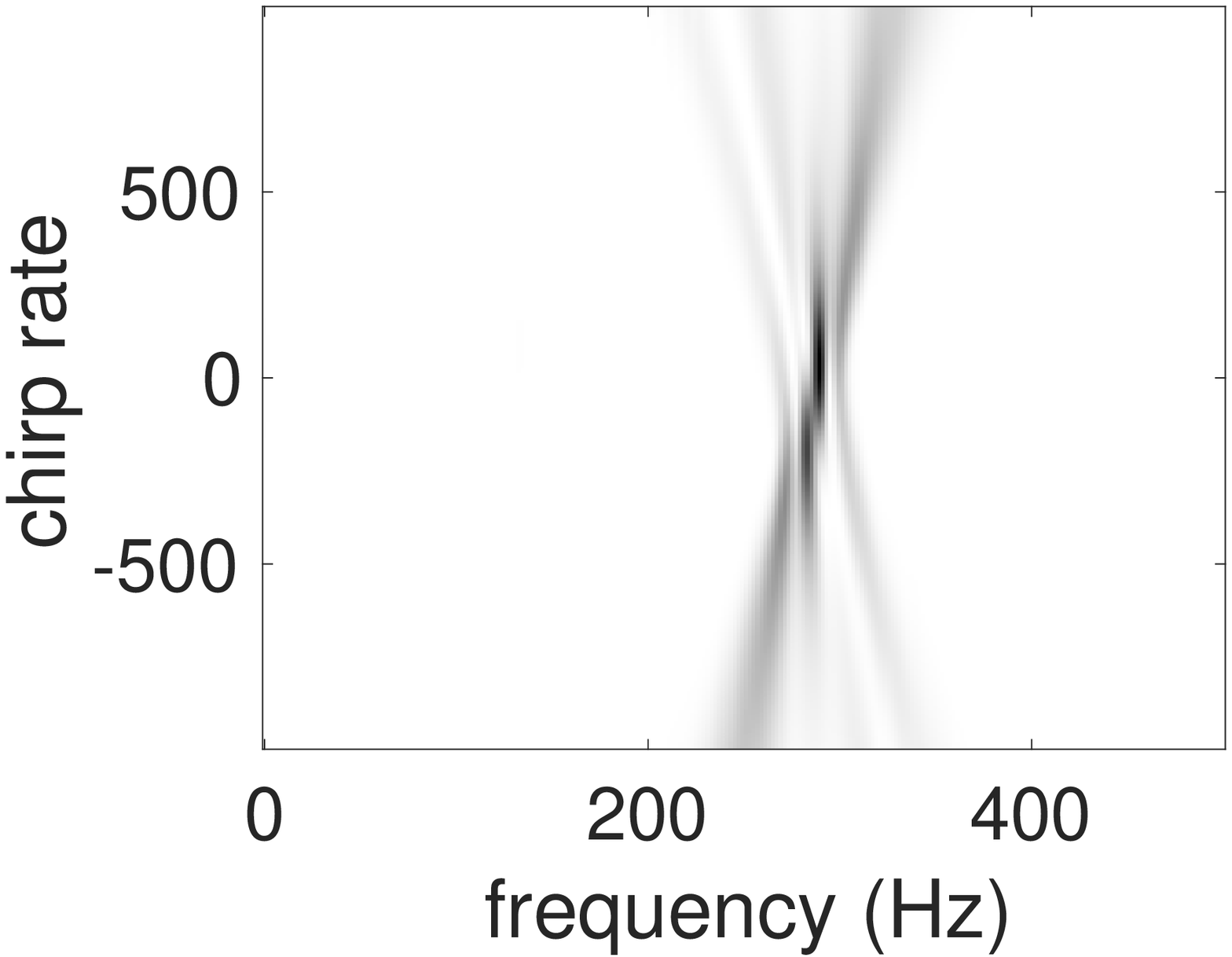}\\
	\includegraphics[width=.325\textwidth]{./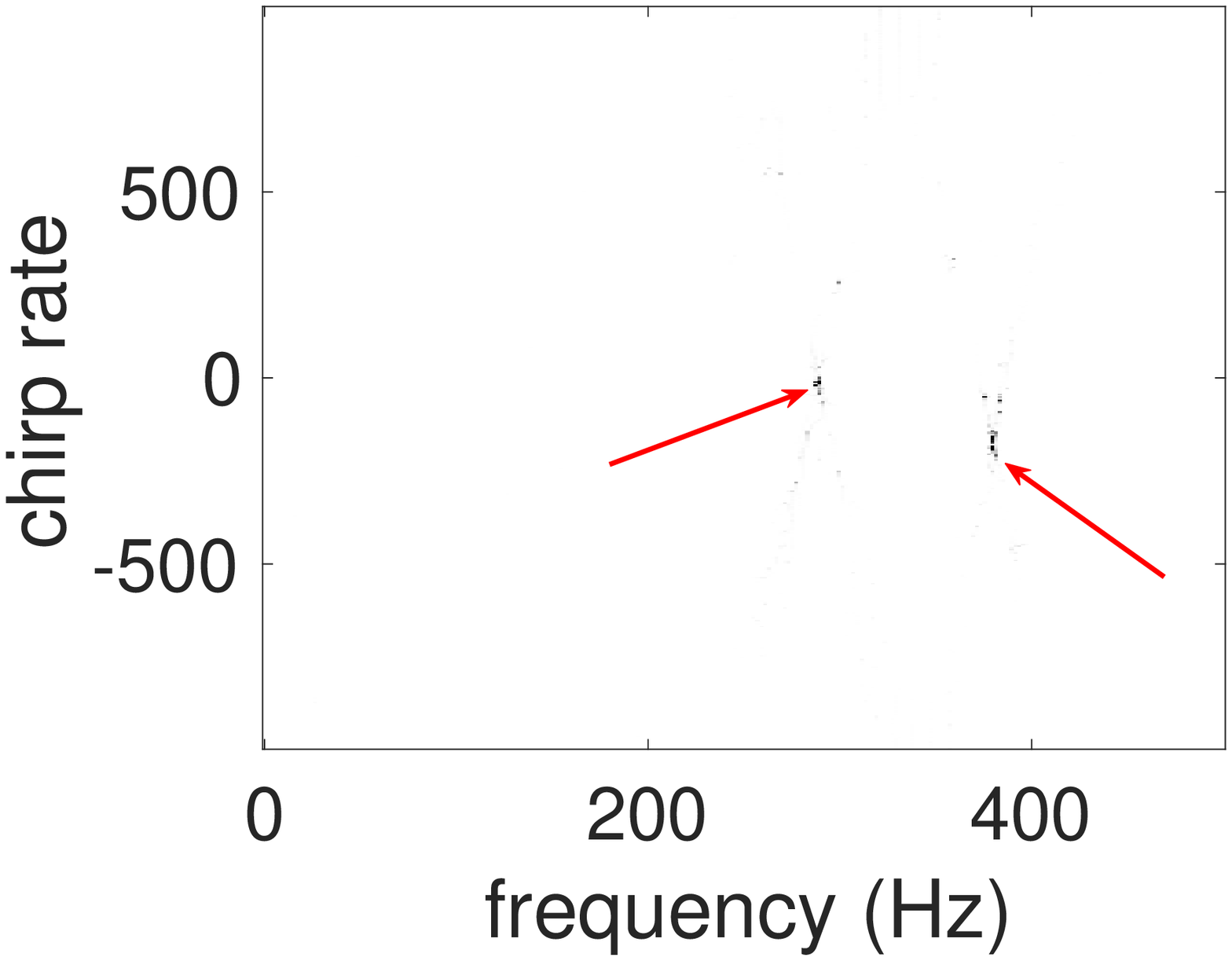}
	\includegraphics[width=.325\textwidth]{./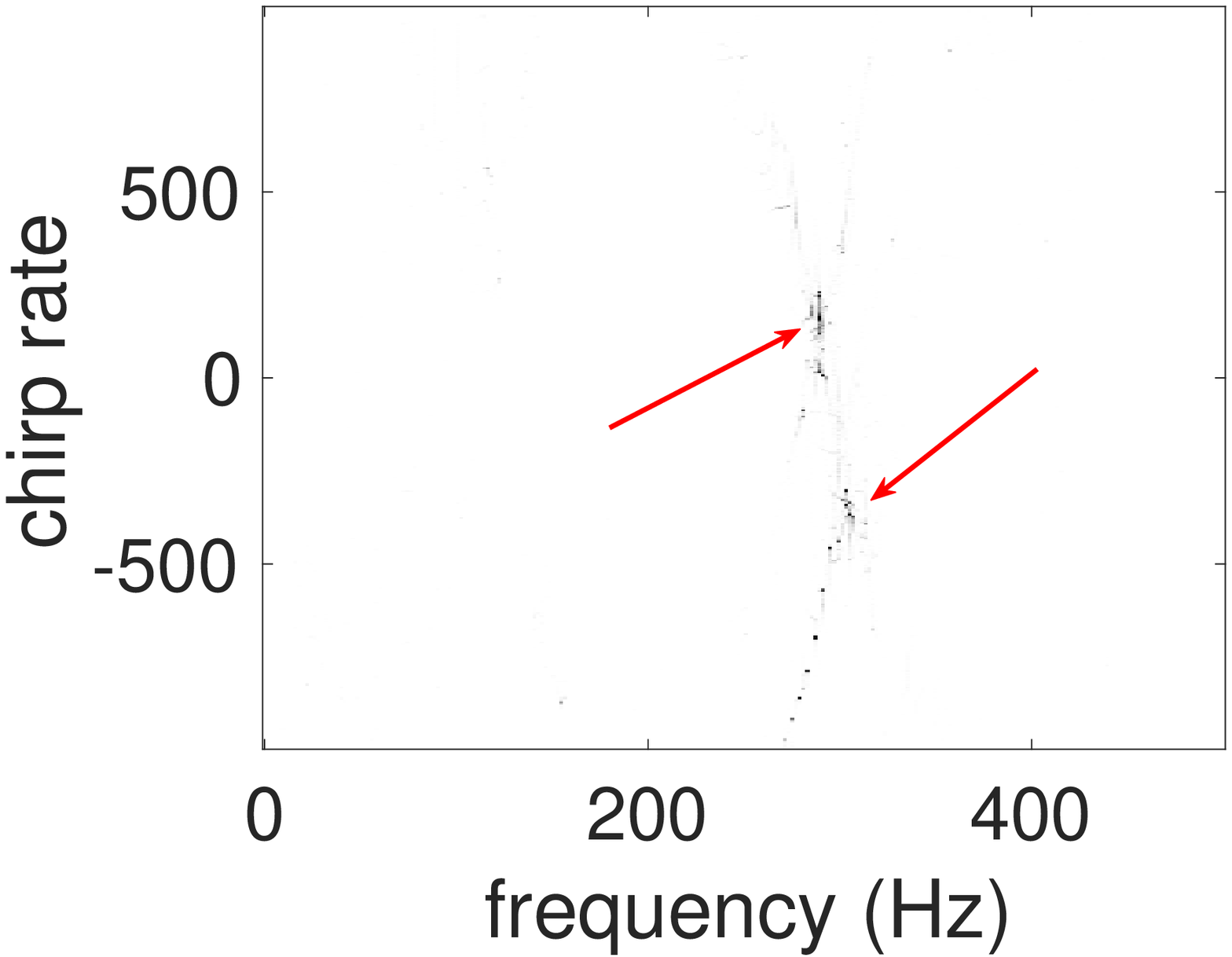}
	\includegraphics[width=.325\textwidth]{./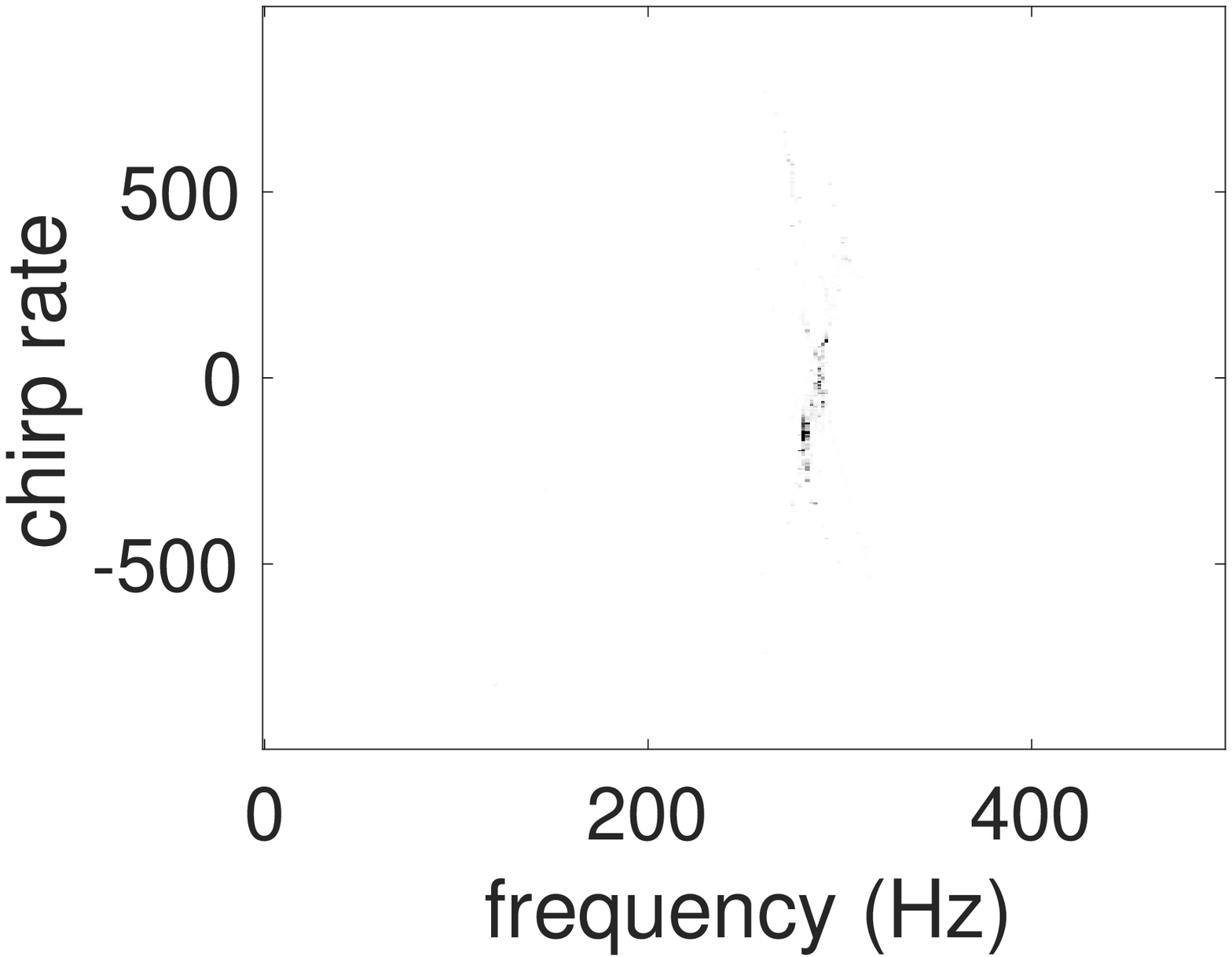}
	\caption{Top row, from left to right: $\abs{T_{f}^{(g_0)}(16.16,\xi,\lambda)}^2$, $\abs{T_{f}^{(g_0)}(16.43,\xi,\lambda)}^2$, and $\abs{T_{f}^{(g_0)}(16.57,\xi,\lambda)}^2$. Second row, from left to right:  $\abs{S_{f}^{(g_0)}(16.16,\xi,\lambda)}^2$, $\abs{S_{f}^{(g_0)}(16.43,\xi,\lambda)}^2$ and $\abs{S_{f}^{(g_0)}(16.57,\xi,\lambda)}^2$. Third row, from left to right: $\abs{T_{f}^{(g_2)}(16.16,\xi,\lambda)}^2$, $\abs{T_{f}^{(g_2)}(16.43,\xi,\lambda)}^2$, and $\abs{T_{f}^{(g_2)}(16.57,\xi,\lambda)}^2$. Bottom row, from left to right:  $\abs{S_{f}^{(g_2)}(16.16,\xi,\lambda)}^2$, $\abs{S_{f}^{(g_2)}(16.43,\xi,\lambda)}^2$ and $\abs{S_{f}^{(g_2)}(16.57,\xi,\lambda)}^2$. Red arrows are imposed to enhance the visualization.}
	\label{fig:13}
\end{figure}

\section{Discussion and future work}\label{secttion:discussion future}

While the proposed SCT algorithm could help disentangle components with crossover IFs, there are still several open problems. First, we need a fast implementation of the SCT algorithm for large databases, probably by including the existing fast implementation for CT \cite{lu2008fast}. Unlike SST and its variations, at each time, the computational time depends on the frequency axis only, in SCT, at each time, the computational time depends on both the frequency axis and chirp rate axis. An intuitive solution is taking a reliable TF representation to determine those frequency entries that are ``informative'', and focus on those frequency entries to evaluate information on the chirp rate. However, this solution might be challenged by the nonlinear relationship between frequency and chirp rate shown in Figure \ref{fig:5}. We will systematically study how to design a fast implementation of SCT (or even CT) in our future work.

The oscillatory components, or modes, considered in this paper are all sinusoidal; that is, it is the cosine function as an $1$-periodic signal that is stretched (by the phase function) and/or scaled (by the AM). In practice, we found many signals oscillate with a non-sinusoidal pattern (or wave-shape function) \cite{Wu:2013}. It would be an interesting topic to extend SCT to the signals with non-sinusoidal wave-shape functions. For example, while the cepstrum is used to handle the impact of non-sinusoidal wave-shape function on the spectrogram \cite{lin2018wave}, what is the proper geometric, or algebraic structure, on the TFC plane that we can count on to handle the non-sinusoidal wave-shape functions?

The $\epsilon$-ICT function is nothing but a generalization of the linear chirp function, where locally it behaves like a linear chirp function; that is, locally, the phase function of an $\epsilon$-ICT function is close to a quadratic function. A natural question to ask is what if the phase function is a polynomial? Such a problem has been considered in \cite{barkat1999instantaneous,tu2017instantaneous}, among many others. Since our theory for SCT is based on the $\epsilon$-ICT assumption, we cannot guarantee what may happen when we apply SCT to a signal with polynomial phase. However, based on our experience in applying SST to estimate IF when the chirp rate is non-trivial (the estimate is biased, and the bias depends on the chirp rate, which leads to the second \cite{Oberlin_Meignen_Perrier:2015} and higher  \cite{pham2017high} order SST and other variations), we would expect biased IF and chirp rate estimates under some relationships between IF and chirp rates. 

Last but not the least, while we empirically see that SCT is robust to noise (e.g. Figure \ref{fig:8}), we do not have a theoretical justification at this moment. Note that while we use differentiation to motivate the squeezing step of SCT, that squeezing step can be evaluated by different windows without any differentiation. We could thus apply the same argument as that in \cite{Chen_Cheng_Wu:2014}. Moreover, we may generalize the technique used in \cite{sourisseau2019inference} to obtain the impact of noise on the TFC representation, or the distribution of the TF representation under the non-null setup. 
We will leave the above interesting problems in our future work.

\section*{Acknowledgement}
We would like to thank the anonymous reviewers for their valuable and constructive comments.

\bibliographystyle{plain}
\bibliography{mybibfile.bib}

\clearpage
\appendix

\section{Proofs of theoretical results}\label{section:proofs}
	\subsection{Proof of Lemma \ref{Proposition slow decay of CT}}\label{proof:lemma1}
	
	Since $t$ and $\alpha>0$ are fixed, we can find $\xi$ close to $\xi_0 + \lambda_0 t$ so that 
	\[
	4\pi\alpha(\xi-\xi_0-\lambda_0 t)^2\leq \alpha^2 + (\lambda-\lambda_0)^2
	\]
	for any $\lambda$. Particularly, when $\lambda = \lambda_0$, $\alpha\geq 4\pi(\xi-\xi_0-\lambda_0 t)^2$.
	Denote $C =C(\xi)=(\xi - \xi_0 - \lambda_0 t)^2$. Clearly, $C\leq \frac{\alpha}{4\pi}$.
	For the fixed $t$ and the chosen $\xi$, we investigate the monotonicity of $M(\lambda):=\abs{T_f^{(g)}(t,\xi,\lambda)}$, which is equivalent to checking the monotonicity of $\ln(M(\lambda))$:
	\[
	\ln(M(\lambda)) = -\frac{\pi\alpha(\xi-\xi_0-\lambda_0 t)^2}{\alpha^2 + (\lambda - \lambda_0)^2}-\frac{1}{4}\log(\alpha^2+(\lambda-\lambda_0)^2),
	\]
	\begin{align*}
		\frac{\diff{\ln(M(\lambda))}}{\diff{\lambda}} &= \frac{2\pi\alpha(\xi-\xi_0-\lambda_0 t)^2(\lambda-\lambda_0)}{(\alpha^2 + (\lambda - \lambda_0)^2)^2} - \frac{\lambda - \lambda_0}{2(\alpha^2 + (\lambda-\lambda_0)^2)}\\
		&= \frac{\lambda - \lambda_0}{2(\alpha^2 + (\lambda-\lambda_0)^2)}\left(\frac{4\pi\alpha(\xi-\xi_0-\lambda_0 t)^2}{\alpha^2 + (\lambda-\lambda_0)^2}-1\right)\,,
	\end{align*}
	which indicates that $\frac{\text{d}\ln(M(\lambda))}{\text{d}\lambda}\leq 0$ when $\lambda\geq\lambda_0$, and $\frac{\text{d}\ln(M(\lambda))}{\text{d}\lambda}\geq 0$ when $\lambda\leq\lambda_0$. Since $\abs{T_f^{(g)}(t,\xi,\lambda_0)} = \frac{1}{
		\sqrt{\alpha}}e^{-\frac{\pi C}{\alpha}}$, when $\xi$ is sufficiently close to $\xi_0+\lambda_0t$, we have $\displaystyle\max_{\lambda} \abs{T_f^{(g)}(t,\xi,\lambda)} = \frac{1}{
		\sqrt{\alpha}}e^{-\frac{\pi C}{\alpha}}$.
	
	Next, we solve for $\lambda$ to see when the magnitude will decrease to $\frac{1}{\rho}$ of its maximum, where $\rho > 1$. The desired equality
	\[
	\frac{1}{\sqrt[4]{\alpha^2 + (\lambda - \lambda_0)^2}} e^{\frac{-\pi \alpha C}{\alpha^2 + (\lambda-\lambda_0)^2}}
	= \frac{1}{\rho}\max_{\lambda} \abs{T_f^{(g)}(t,\xi,\lambda)}=\frac{1}{\rho
		\sqrt{\alpha}}e^{-\frac{\pi C}{\alpha}}
	\]
	gives 
	\[
	\frac{-4\pi\alpha C}{\alpha^2+(\lambda-\lambda_0)^2} = 
	\ln{(\alpha^2+(\lambda-\lambda_0)^2)} - \ln{(\rho^4\alpha^2)}-\frac{4\pi C}{\alpha},
	\]
	which is equivalent to
	\[
	\ln{(\alpha^2 + (\lambda - \lambda_0)^2)} + \frac{4\pi\alpha C}{\alpha^2 + (\lambda- \lambda_0)^2} = \ln{(\rho^4\alpha^2) + \frac{4\pi C}{\alpha}}.
	\]
	Let $\lambda - \lambda_0 = k\alpha$, where $k\in \mathbb{R}$ will be determined. Then the above equality becomes
	\begin{equation}\label{eq CT decay in k}
		\ln{(k^2 + 1)} + \frac{4\pi C}{(k^2+1)\alpha}= \ln{(\rho^4)} + \frac{4\pi C}{\alpha}.
	\end{equation}
	Clearly, when $\xi=\xi_0+\lambda_0t$, $C=0$ and $k=\sqrt{\rho^4-1}$. In general, we need to take $C$ into account.
	Note that the derivative of the left hand side of \eqref{eq CT decay in k} in terms of $k$ satisfies 
	\[
	\partial_k\left(\ln{(k^2 + 1)} + \frac{4\pi C}{(k^2+1)\alpha}\right) = 
	\frac{2k}{k^2 + 1} + \frac{4\pi C}{\alpha}\frac{-2k}{(k^2 + 1)^2} = \left(1-\frac{4\pi C}{\alpha}\frac{1}{k^2+1}\right)\frac{2k}{k^2 + 1}>0
	\]
	since $\alpha\geq 4\pi C$ when $\xi$ is close to $\xi_0+\lambda_0t$. So $\ln{(k^2 + 1)} + \frac{4\pi C}{(k^2+1)\alpha}$ is an increasing function of $\abs{k}$. To solve the problem, we consider the following two cases.
	\begin{itemize}%[leftmargin=*,align=left]
		\item If $k^2 + 1 = \rho^4$, i.e. $\abs{k} = \sqrt{\rho^4-1}$, then 
		\[
		\ln{(k^2 + 1)} + \frac{4\pi C}{(k^2+1)\alpha} = \ln{(\rho^4)} + \frac{4\pi C}{\rho^4\alpha}\leq \ln{(\rho^4)} + \frac{4\pi C}{\alpha},
		\] 
		since $\rho>1$, and the equality holds when $C=0$.  
		\item If $k^2  = e\rho^4$, where $e=\exp(1)$, i.e. $\abs{k} = \sqrt{e}\rho^2$, then 
		\[
		\ln{(k^2 + 1)} + \frac{4\pi C}{(k^2+1)\alpha} - \ln{(\rho^4)} - \frac{4\pi C}{\alpha} = \ln{\left(e+\frac{1}{\rho^4}\right)} - \frac{4\pi C}{\alpha}\left(1-\frac{1}{e\rho^4+1}\right)\geq0,
		\] 
		since $\frac{4\pi C}{\alpha}(1-\frac{1}{e\rho^4+1})<\frac{4\pi C}{\alpha}\leq 1$.
	\end{itemize}
	Therefore, the $\lambda$ for $M(\lambda)$ decreasing to $\frac{1}{\rho}$ of its maximum lies between
	\[
	\sqrt{\rho^4 -1}\alpha\leq\abs{\lambda-\lambda_0}<\sqrt{e}\rho^2\alpha.
	\]
	Note that
	both the upper and the lower bounds scale linearly with $\alpha$, hence if the window is more localized in time ($\alpha$ is larger), it is less localized in the chirp rate domain. Moreover, by the monotonicity of $M(\lambda)$ we showed above, $M(\lambda) \leq \frac{1}{\rho}\max_{\lambda} \abs{T_f^{(g)}(t,\xi,\lambda)}$ if $\abs{\lambda-\lambda_0}=\sqrt{e}\rho^2\alpha$. Solving for $\rho$ produces $M(\lambda)\leq \frac{e^{\frac{1}{4}}\sqrt{\alpha}}{\sqrt{\abs{\lambda-\lambda_0}}}\max_{\lambda} \abs{T_f^{(g)}(t,\xi,\lambda)}$. On the other hand, we also have $M(\lambda) \geq \frac{1}{\rho}\max_{\lambda} \abs{T_f^{(g)}(t,\xi,\lambda)}$ if $\abs{\lambda-\lambda_0}=\sqrt{\rho^4 -1}\alpha$, which implies $M(\lambda)\geq \frac{\sqrt{\alpha}}{(\abs{\lambda-\lambda_0}^2+\alpha^2)^{\frac{1}{4}}}\max_{\lambda} \abs{T_f^{(g)}(t,\xi,\lambda)}$ as is claimed.

	\subsection{Proof of Theorem \ref{maintheorem}}\label{section:supp}
	In this supplementary section, we list the lemmas that make up the proof of Theorem \ref{maintheorem}. We first recall and define some notations that will be used in the proofs:
	\begin{itemize}
		\item The absolute constants $G_n$ are defined as in the condition of window functions in Theorem \ref{maintheorem}, i.e., $\abs{\widecheck{(x^ng)}(\xi,\lambda)}\leq \frac{G_n}{\sqrt{\abs{\xi}+\abs{\lambda}}}$ for some $G_n>0$, $n=0,1,2$.
		
		\item For a given window function $g(x)\in\mathcal{S}(\mathbb{R})$, we define \[I_n:=\int_{\mathbb{R}}\abs{x}^n\abs{g(x)}\diff{x}.\]
	\end{itemize}

\begin{lemma}\label{lemma:1}
	For any tuple $(t,\xi,\lambda)$ under consideration, there can be at most one $k\in\{ 1,\dots, K\}$ such that $\abs{\xi-\phi_k'(t)}+\abs{\lambda-\phi_k''(t)}< \Delta$, i.e. $(t,\xi,\lambda)\in Z_k$.
\end{lemma}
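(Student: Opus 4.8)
The plan is to argue by contradiction using only the separation hypothesis built into the definition of $\mathcal{A}_{\epsilon,\Delta}$, namely that for any two distinct indices $k,l\in\{1,\dots,K\}$ one has $\abs{\phi_k'(t)-\phi_l'(t)}+\abs{\phi_k''(t)-\phi_l''(t)}\geq 2\Delta$ at every time $t$. (We read the displayed condition in the definition with the evident correction $\phi_k''(t)-\phi_l''(t)$ in the second term.)

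First I would suppose, for contradiction, that there exist two distinct indices $k\neq l$ with $(t,\xi,\lambda)\in Z_k$ and $(t,\xi,\lambda)\in Z_l$ simultaneously. By the definition \eqref{definition Zk} of $Z_k$ this means
\[
\abs{\xi-\phi_k'(t)}+\abs{\lambda-\phi_k''(t)}< \Delta
\qquad\text{and}\qquad
\abs{\xi-\phi_l'(t)}+\abs{\lambda-\phi_l''(t)}< \Delta .
\]
Adding these two inequalities and regrouping the four terms as $\bigl(\abs{\xi-\phi_k'(t)}+\abs{\xi-\phi_l'(t)}\bigr)+\bigl(\abs{\lambda-\phi_k''(t)}+\abs{\lambda-\phi_l''(t)}\bigr)$, I would then apply the triangle inequality to each parenthesized pair, namely $\abs{\phi_k'(t)-\phi_l'(t)}\leq \abs{\xi-\phi_k'(t)}+\abs{\xi-\phi_l'(t)}$ and likewise $\abs{\phi_k''(t)-\phi_l''(t)}\leq \abs{\lambda-\phi_k''(t)}+\abs{\lambda-\phi_l''(t)}$. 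This yields $\abs{\phi_k'(t)-\phi_l'(t)}+\abs{\phi_k''(t)-\phi_l''(t)}< 2\Delta$, which contradicts the well-separation condition. Hence at most one such $k$ can exist, which is the claim.

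Since the argument is a two-line triangle-inequality estimate, there is no real obstacle here; the only thing to be careful about is bookkeeping — matching the strict inequalities in the definition of $Z_k$ against the non-strict separation bound $\geq 2\Delta$, so that the contradiction $<2\Delta$ versus $\geq 2\Delta$ is genuine. This lemma is purely a statement about the index sets $Z_k$ and does not involve the chirplet transform at all; it is the combinatorial backbone that lets the subsequent lemmas treat each $Z_k$ as the "territory" of a single component.
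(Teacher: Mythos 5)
Your proof is correct and is essentially the paper's own argument: both proceed by contradiction, applying the triangle inequality to $\abs{\phi_k'(t)-\phi_l'(t)}$ and $\abs{\phi_k''(t)-\phi_l''(t)}$ to derive the strict bound $<2\Delta$ that contradicts the separation condition. Your reading of the (evidently typo-afflicted) separation condition in the definition of $\mathcal{A}_{\epsilon,\Delta}$ is also the intended one.
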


\begin{proof}
	Assume there exists $(t,\xi,\lambda)\in Z_k \cap Z_l$ for $k\neq l$, then by the definition of $Z_k$ we have
	\begin{align*}
		\abs{\phi_k'(t)-\phi_l'(t)}+\abs{\phi_k''(t)-\phi_k''(t)}
		&\leq \abs{\phi_k'(t)-\xi} + \abs{\xi-\phi_l'(t)} + \abs{\phi_k''(t)-\lambda} + \abs{\lambda-\phi_k''(t)}\\
		&< \Delta+\Delta = 2\Delta,
	\end{align*}
	which contradicts the separation condition of $\mathcal{A}_{\epsilon,\Delta}$.
\end{proof}

\begin{lemma}\label{lemma:2}
	If $(t,\xi,\lambda)\not\in Z_k$, then 
	$$\abs{T_{f_k}^{(x^n g)}(t,\xi,\lambda)}\leq \epsilon E_{k,n}(t),$$
	where $E_{k,n}(t):=\norm{\phi_k'}_{L^{\infty}}I_{n+1}+\left(G_n+\frac{\pi}{3}\norm{\phi_k'}_{L^{\infty}}I_{n+3}\right)A_k(t)$. Consequently, for any $(t,\xi,\lambda)\in Z_k$, we have
	$$\abs{T_{f}^{(x^n g)}(t,\xi,\lambda)-T_{f_k}^{(x^n g)}(t,\xi,\lambda)}\leq \epsilon \sum_{l\neq k}E_{l,n}(t).$$
\end{lemma}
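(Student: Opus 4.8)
The plan is to reduce $T_{f_k}^{(x^ng)}(t,\xi,\lambda)$ to a perturbation of the pure linear-chirp computation by Taylor-expanding the amplitude and phase of $f_k$ about the time $t$. After the substitution $u=x-t$ in the integral defining $T_{f_k}^{(x^ng)}(t,\xi,\lambda)$, I would use the first-order expansion $A_k(t+u)=A_k(t)+A_k'(\eta)u$ with Lagrange remainder together with the third-order expansion $\phi_k(t+u)=\phi_k(t)+\phi_k'(t)u+\tfrac12\phi_k''(t)u^2+\tfrac16\phi_k'''(\zeta)u^3$ (both valid since $A_k\in C^1$ and $\phi_k\in C^3$), with $\eta,\zeta$ between $t$ and $t+u$, to obtain
\[
T_{f_k}^{(x^ng)}(t,\xi,\lambda)=e^{2\pi i\phi_k(t)}\int\big(A_k(t)+A_k'(\eta)u\big)\,u^ng(u)\,e^{\frac{\pi i}{3}\phi_k'''(\zeta)u^3}\,e^{-2\pi i\xi_k u}\,e^{-\pi i\lambda_k u^2}\,\diff u,
\]
where $\xi_k:=\xi-\phi_k'(t)$ and $\lambda_k:=\lambda-\phi_k''(t)$, so that the hypothesis $(t,\xi,\lambda)\notin Z_k$ reads exactly $\abs{\xi_k}+\abs{\lambda_k}\ge\Delta$.

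Next I would split this integral into three pieces and bound each with the $\epsilon$-ICT inequalities. The amplitude-remainder piece carrying $A_k'(\eta)u$ has all exponential factors of modulus one and $\abs{A_k'(\eta)}\le\epsilon\phi_k'(\eta)\le\epsilon\norm{\phi_k'}_{L^\infty}$, so it is at most $\epsilon\norm{\phi_k'}_{L^\infty}\int\abs{u}^{n+1}\abs{g(u)}\,\diff u=\epsilon\norm{\phi_k'}_{L^\infty}I_{n+1}$. For the $A_k(t)$-term I would write $e^{\frac{\pi i}{3}\phi_k'''(\zeta)u^3}=1+\big(e^{\frac{\pi i}{3}\phi_k'''(\zeta)u^3}-1\big)$. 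The ``$1$''-part equals $A_k(t)\,\widecheck{(x^ng)}(\xi_k,\lambda_k)$, and the window hypothesis together with $\abs{\xi_k}+\abs{\lambda_k}\ge\Delta$ and $\sqrt{\Delta}=\epsilon^{-1}$ gives $\abs{\widecheck{(x^ng)}(\xi_k,\lambda_k)}\le G_n/\sqrt{\Delta}=\epsilon G_n$, so this piece is at most $\epsilon G_nA_k(t)$. For the last piece, the elementary bound $\abs{e^{iy}-1}\le\abs{y}$ and $\abs{\phi_k'''(\zeta)}\le\epsilon\phi_k'(\zeta)\le\epsilon\norm{\phi_k'}_{L^\infty}$ give $\abs{e^{\frac{\pi i}{3}\phi_k'''(\zeta)u^3}-1}\le\tfrac{\pi}{3}\epsilon\norm{\phi_k'}_{L^\infty}\abs{u}^3$, so it is bounded by $\tfrac{\pi}{3}\epsilon\norm{\phi_k'}_{L^\infty}A_k(t)\int\abs{u}^{n+3}\abs{g(u)}\,\diff u=\tfrac{\pi}{3}\epsilon\norm{\phi_k'}_{L^\infty}I_{n+3}A_k(t)$. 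Summing the three bounds yields exactly $\epsilon E_{k,n}(t)$.

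The ``consequently'' part is then immediate: by linearity $T_f^{(x^ng)}-T_{f_k}^{(x^ng)}=\sum_{l\neq k}T_{f_l}^{(x^ng)}$, and by Lemma \ref{lemma:1} the condition $(t,\xi,\lambda)\in Z_k$ forces $(t,\xi,\lambda)\notin Z_l$ for every $l\neq k$, so applying the first estimate to each $f_l$ and the triangle inequality gives $\abs{T_f^{(x^ng)}-T_{f_k}^{(x^ng)}}\le\epsilon\sum_{l\neq k}E_{l,n}(t)$.

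The main obstacle I anticipate is the cubic phase factor $e^{\frac{\pi i}{3}\phi_k'''(\zeta)u^3}$: since the remainder point $\zeta$ depends on $u$, this factor is not part of a chirp-transform kernel, so the window-decay hypothesis cannot be applied directly to the full $A_k(t)$-term. Peeling off the constant term and estimating the remainder pointwise — at the price of one extra power $\abs{u}^3$, absorbed by the Schwartz decay of $g$ into the finite moment $I_{n+3}$ — is the key device; everything else is routine bookkeeping with the $\epsilon$-ICT bounds and the relation $\sqrt{\Delta}=\epsilon^{-1}$.
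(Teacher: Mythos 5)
Your argument is correct and is essentially the paper's own proof: the same three-way split of $f_k$ into the amplitude-variation piece (bounded by $\epsilon\norm{\phi_k'}_{L^\infty}I_{n+1}$), the frozen linear chirp $A_k(t)e^{2\pi i[\phi_k(t)+\phi_k'(t)(x-t)+\frac12\phi_k''(t)(x-t)^2]}$ handled via the window decay hypothesis at $(\xi-\phi_k'(t),\lambda-\phi_k''(t))$ together with $\sqrt{\Delta}=\epsilon^{-1}$, and the cubic Taylor remainder of the phase absorbed into $I_{n+3}$. The ``consequently'' step via Lemma \ref{lemma:1} and the triangle inequality also matches the paper exactly.
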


\begin{proof}
	Using the second-order Taylor expansion of $\phi_k$, we can write
	\begin{align*}
		f_k(x) &= A_k(x)e^{2\pi i \phi_k (x)}\\
		&= (A_k(x)-A_k(t))e^{2\pi i \phi_k (x)} + A_k(t)e^{2\pi i[ \phi_k (t)+\phi_k'(t)(x-t)+\frac{1}{2}\phi_k''(t)(x-t)^2]}\\
		&\quad+A_k(t)(e^{2\pi i \phi_k(x)}-e^{2\pi i [\phi_k (t)+\phi_k'(t)(x-t)+\frac{1}{2}\phi_k''(t)(x-t)^2]}).
	\end{align*}
	Denote these three terms in the sum by $f_{k,1},f_{k,2},f_{k,3}$ respectively. Then we have
	\begin{align*}
		\abs{T_{f_{k,1}}^{(x^ng)}(t,\xi,\lambda)}&\leq \int \abs{A_k(x)-A_k(t)}\abs{x-t}^n\abs{g(x-t)}\diff{x}\\
		&\leq \epsilon\norm{\phi_k'}_{L^{\infty}}\int \abs{x-t}^{n+1}\abs{g(x-t)}\diff{x}\\
		&\leq \epsilon\norm{\phi_k'}_{L^{\infty}}I_{n+1},
	\end{align*}
	and if $(t,\xi,\lambda)\not \in Z_k$, then
		\begin{align*}
			\abs{T_{f_{k,2}}^{(x^ng)}(t,\xi,\lambda)}&= A_k(t)\abs{\widecheck{(x^ng)} (\xi-\phi_k'(t),\lambda-\phi_k''(t))} \leq A_k(t)\frac{G_n}{\sqrt{\Delta}}= A_k(t)G_n\epsilon.
		\end{align*}
	For $f_{k,3}$, we have
	\begin{align*}
		\abs{T_{f_{k,3}}^{(x^ng)}(t,\xi,\lambda)}&\leq A_k(t)\int\frac{\pi}{3}\norm{\phi_k'''}_{L^{\infty}}\abs{x-t}^{n+3}\abs{g(x-t)}\diff{x}\\
		&\leq \epsilon\frac{\pi}{3}A_k(t)\norm{\phi_k'}_{L^{\infty}}I_{n+3}.
	\end{align*}
	Therefore, for $(t,\xi,\lambda)\not \in Z_k$,
	\[
	\abs{T_{f_k}^{(x^ng)}(t,\xi,\lambda)}\leq\epsilon\left[\norm{\phi_k'}_{L^{\infty}}I_{n+1}+(G_n+\frac{\pi}{3}\norm{\phi_k'}_{L^{\infty}}I_{n+3})A_k(t)\right].
	\]
	Consequently,
	\[
	\abs{T_{f}^{(x^ng)}(t,\xi,\lambda)-T_{f_k}^{(x^ng)}(t,\xi,\lambda)}\leq\sum_{l\neq k}E_{l,n}(t).
	\]
\end{proof}

Since for each $k$, $E_{k,0}(t)$ is uniformly bounded from above and from below for all $(t,\xi,\lambda)\not\in Z_k$, it follows that if $\epsilon$ is sufficiently small, such that for all $(t,\xi,\lambda)\not\in \bigcup_{k=1}^K Z_k$,
\[
\epsilon \leq \left(\sum_{k=1}^K E_{k,0}(t)\right)^{-\frac{6}{5}},
\]
then for any $(t,\xi,\lambda)\not\in \bigcup_{k=1}^K Z_k$, $\abs{T_{f}^{(g)}(t,\xi,\lambda)}\leq\tilde{\epsilon}$.

\begin{lemma}\label{lemma:3}
	If $(t,\xi,\lambda)\in Z_k$ for some $k$, $1\leq k\leq K$, then
	\begin{equation}
		\abs{\partial_t T_{f}^{(g)}(t,\xi,\lambda) - 2\pi i (\phi_k'(t)T_{f}^{(g)}(t,\xi,\lambda) +\phi_k''(t)T_{f}^{(xg)}(t,\xi,\lambda))}\leq\epsilon B_{k,1}(t),
	\end{equation}
	where
	\begin{align*}
		B_{k,1}(t) &:= \sum_{k=1}^K\norm{\phi_k'}_{L^{\infty}}\left(I_0+\pi\norm{A_k}_{L^{\infty}}I_2\right)+2\pi\sum_{l\neq k }\left(\phi_l'(t)E_{l,0}(t)+\abs{\phi_l''(t)}E_{l,1}(t)\right)\\
		&\quad + 2\pi\left(\phi_k'(t)\sum_{l\neq k}E_{l,0}(t) + \abs{\phi_k''(t)}\sum_{l\neq k}E_{l,1}(t)\right)\,.
	\end{align*}
\end{lemma}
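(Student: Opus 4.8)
The plan is to prove the estimate component-by-component, reducing it to a single ``deviation from a linear chirp'' bound for each $f_m$, $m=1,\dots,K$. After the change of variables $u=x-t$ we have, for the real-valued Schwartz window $g$ fixed throughout, $T_f^{(g)}(t,\xi,\lambda)=\int_{\mathbb R}f(t+u)g(u)e^{-2\pi i\xi u}e^{-\pi i\lambda u^2}\diff u$; since each $f_m=A_me^{2\pi i\phi_m}$ has $A_m$, $A_m'$ and $\phi_m'$ bounded, $f_m'$ is bounded and one may differentiate under the integral, so $\partial_t T_{f_m}^{(g)}(t,\xi,\lambda)=\int_{\mathbb R}f_m'(t+u)g(u)e^{-2\pi i\xi u}e^{-\pi i\lambda u^2}\diff u$. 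Hence
\[
\partial_t T_{f_m}^{(g)} - 2\pi i\phi_m'(t)T_{f_m}^{(g)} - 2\pi i\phi_m''(t)T_{f_m}^{(xg)} = \int_{\mathbb R}\Big[f_m'(t+u)-2\pi i\big(\phi_m'(t)+\phi_m''(t)u\big)f_m(t+u)\Big]g(u)e^{-2\pi i\xi u}e^{-\pi i\lambda u^2}\diff u ,
\]
and the bracket simplifies to $\big[A_m'(t+u)+2\pi i\big(\phi_m'(t+u)-\phi_m'(t)-\phi_m''(t)u\big)A_m(t+u)\big]e^{2\pi i\phi_m(t+u)}$, which vanishes identically whenever $f_m$ is a true linear chirp. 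I would bound it using $\abs{A_m'}\le\epsilon\phi_m'\le\epsilon\norm{\phi_m'}_{L^\infty}$ and the first-order Taylor remainder $\abs{\phi_m'(t+u)-\phi_m'(t)-\phi_m''(t)u}\le\tfrac12\norm{\phi_m'''}_{L^\infty}u^2\le\tfrac{\epsilon}{2}\norm{\phi_m'}_{L^\infty}u^2$, and then integrate against $\abs{g(u)}$ to obtain
\[
\abs{\partial_t T_{f_m}^{(g)}(t,\xi,\lambda)-2\pi i\phi_m'(t)T_{f_m}^{(g)}(t,\xi,\lambda)-2\pi i\phi_m''(t)T_{f_m}^{(xg)}(t,\xi,\lambda)}\le\epsilon\norm{\phi_m'}_{L^\infty}\big(I_0+\pi\norm{A_m}_{L^\infty}I_2\big).
\]

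Next I would sum over $m$ and use linearity of the chirplet transform to write, at $(t,\xi,\lambda)$,
\[
\partial_t T_f^{(g)}-2\pi i\phi_k'(t)T_f^{(g)}-2\pi i\phi_k''(t)T_f^{(xg)}=\sum_{m=1}^{K}\big(\partial_t T_{f_m}^{(g)}-2\pi i\phi_m'(t)T_{f_m}^{(g)}-2\pi i\phi_m''(t)T_{f_m}^{(xg)}\big)+\sum_{m=1}^{K}\Big(2\pi i(\phi_m'(t)-\phi_k'(t))T_{f_m}^{(g)}+2\pi i(\phi_m''(t)-\phi_k''(t))T_{f_m}^{(xg)}\Big).
\]
By the previous step the first sum is at most $\epsilon\sum_{m=1}^K\norm{\phi_m'}_{L^\infty}(I_0+\pi\norm{A_m}_{L^\infty}I_2)$, which is the first sum in the definition of $B_{k,1}(t)$. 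In the second sum the $m=k$ term is zero; for $m\ne k$, since $(t,\xi,\lambda)\in Z_k$, Lemma \ref{lemma:1} forces $(t,\xi,\lambda)\notin Z_m$, so Lemma \ref{lemma:2} gives $\abs{T_{f_m}^{(g)}(t,\xi,\lambda)}\le\epsilon E_{m,0}(t)$ and $\abs{T_{f_m}^{(xg)}(t,\xi,\lambda)}\le\epsilon E_{m,1}(t)$. Using $\abs{\phi_m'(t)-\phi_k'(t)}\le\phi_m'(t)+\phi_k'(t)$ (both positive, by the $\epsilon$-ICT condition) and $\abs{\phi_m''(t)-\phi_k''(t)}\le\abs{\phi_m''(t)}+\abs{\phi_k''(t)}$, the second sum is then at most $2\pi\epsilon\sum_{l\ne k}\big[(\phi_l'(t)+\phi_k'(t))E_{l,0}(t)+(\abs{\phi_l''(t)}+\abs{\phi_k''(t)})E_{l,1}(t)\big]$, and regrouping the two bounds reproduces exactly $\epsilon B_{k,1}(t)$.

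The only genuinely delicate point is that the cross-differences $\phi_m'(t)-\phi_k'(t)$ need not be small, so the $m\ne k$ contributions cannot be made $O(\epsilon)$ through smallness of those factors; instead one relies on the mismatched transforms $T_{f_m}^{(x^ng)}(t,\xi,\lambda)$, $n=0,1$, being $O(\epsilon)$ on $Z_k$, which is precisely what Lemmas \ref{lemma:1}--\ref{lemma:2} deliver (and where the separation hypothesis $\sqrt{\Delta}=\epsilon^{-1}$ of $\mathcal{A}_{\epsilon,\Delta}$ is used), and one simply carries the possibly large weights $\phi_m'(t)+\phi_k'(t)$ and $\abs{\phi_m''(t)}+\abs{\phi_k''(t)}$ into $B_{k,1}(t)$. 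A minor technical point is justifying the interchange of $\partial_t$ and the integral, which follows from uniform boundedness of $f_m'$ together with the Schwartz decay of $g$; the remainder of the argument is just bookkeeping of the constants $I_0$, $I_2$, $E_{l,0}$, $E_{l,1}$ against the $\phi$-dependent weights.
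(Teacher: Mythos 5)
Your proposal is correct and follows essentially the same route as the paper: a per-component bound showing each $f_m$ deviates from a linear chirp by $\epsilon\norm{\phi_m'}_{L^\infty}(I_0+\pi\norm{A_m}_{L^\infty}I_2)$ (the paper uses the integral remainder $\int_0^{x-t}(\phi_k''(t+u)-\phi_k''(t))\diff u$ where you use the Lagrange form, to the same effect), followed by control of the cross terms via Lemmas \ref{lemma:1} and \ref{lemma:2}. Your single algebraic identity plus the triangle inequality $\abs{\phi_m'(t)-\phi_k'(t)}\leq\phi_m'(t)+\phi_k'(t)$ reproduces exactly the paper's two-step peeling (first dropping the $l\neq k$ components, then replacing $T_{f_k}^{(\cdot)}$ by $T_{f}^{(\cdot)}$) and yields the identical constant $B_{k,1}(t)$.
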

\begin{proof}
	If $(t,\xi,\lambda)\not\in Z_k$, then 
	\begin{align*}
		\partial_t T_{f}^{(g)}(t,\xi,\lambda)
		=&\, \partial_t\left(\sum_{k=1}^K \int A_k(x)e^{2\pi i \phi_k(x)}g(x-t)e^{-2\pi i \xi(x-t)-\pi i \lambda (x-t)^2}\diff{x}\right)\\
		=&\, \sum_{k=1}^K \bigg[\int A_k'(x)e^{2\pi i \phi_k(x)}g(x-t)e^{-2\pi i \xi(x-t)-\pi i \lambda (x-t)^2}\diff{x}\\
		&\quad + \int A_k(x)2\pi i \phi_k'(x)e^{2\pi i \phi_k(x)}g(x-t)e^{-2\pi i \xi(x-t)-\pi i \lambda (x-t)^2}\diff{x}\bigg]\\
		=&\, \sum_{k=1}^K \bigg[\int A_k'(x)e^{2\pi i \phi_k(x)}g(x-t)e^{-2\pi i \xi(x-t)-\pi i \lambda (x-t)^2}\diff{x}\\
		&\quad + \int A_k(x)2\pi i \bigg(\phi_k'(t)+\phi_k''(t)(x-t)+\int_{0}^{x-t}\big(\phi_k''(t+u)-\phi_k''(t)\big)\diff{u} \bigg)\\
		&\,\quad\quad\quad\times e^{2\pi i \phi_k(x)}g(x-t)e^{-2\pi i \xi(x-t)-\pi i \lambda(x-t)^2}\diff{x}\bigg]\,.\\
	\end{align*}
	It follows that
	\begin{align*}
		&\quad\abs{\partial_t T_{f}^{(g)}(t,\xi,\lambda) - 2\pi i\sum_{k=1}^K (\phi_k'(t)T_{f_k}^{(g)}(t,\xi,\lambda) +\phi_k''(t)T_{f_k}^{(xg)}(t,\xi,\lambda))}\\
		&\leq \sum_{k=1}^K\bigg[ \int \abs{A_k'(x)}\abs{g(x-t)}\diff{x}
		+ 2\pi\int A_k(x)\abs{\int_{0}^{x-t}\big[\phi_k''(t+u)-\phi_k''(t)\big]\diff{u}}\abs{g(x-t)}\diff{x}\bigg]\\
		&\leq \sum_{k=1}^K(\epsilon\norm{\phi_k'}_{L^{\infty}}I_0+\pi\norm{\phi_k'''}_{L^{\infty}}\norm{A_k}_{L^{\infty}}I_2)\\
		&\leq \epsilon\sum_{k=1}^K\norm{\phi_k'}_{L^{\infty}}(I_0+\pi\norm{A_k}_{L^{\infty}}I_2).
	\end{align*}
	By Lemma \ref{lemma:2}, when $(t,\xi,\lambda)\in Z_k$, we have
	\begin{align*}
		&\abs{\partial_t T_{f}^{(g)}(t,\xi,\lambda) - 2\pi i \left(\phi_k'(t)T_{f_k}^{(g)}(t,\xi,\lambda) +\phi_k''(t)T_{f_k}^{(xg)}(t,\xi,\lambda)\right)}\\
		\leq &\,\epsilon\left(\sum_{k=1}^K\norm{\phi_k'}_{L^{\infty}}(I_0+\pi\norm{A_k}_{L^{\infty}}I_2)+2\pi\sum_{l\neq k }(\phi_l'(t)E_{l,0}(t)+\abs{\phi_l''(t)}E_{l,1}(t))\right),
	\end{align*}
	so finally we have
	\begin{align*}
		&\abs{\partial_t T_{f}^{(g)}(t,\xi,\lambda) - 2\pi i (\phi_k'(t)T_{f}^{(g)}(t,\xi,\lambda) +\phi_k''(t)T_{f}^{(xg)}(t,\xi,\lambda))}\\
		\leq&\, \epsilon\left[\sum_{k=1}^K\norm{\phi_k'}_{L^{\infty}}(I_0+\pi\norm{A_k}_{L^{\infty}}I_2)+2\pi\sum_{l\neq k }(\phi_l'(t)E_{l,0}(t)+\abs{\phi_l''(t)}E_{l,1}(t))\right.\\
		&\quad \left.+ 2\pi\left(\phi_k'(t)\sum_{l\neq k}E_{l,0}(t) + \abs{\phi_k''(t)}\sum_{l\neq k}E_{l,1}(t)\right)\right].
	\end{align*}
\end{proof}

To simply the notation, we omit $(t,\xi,\lambda)$ in $T_{f}^{(g)}(t,\xi,\lambda)$ in following lemmas and proofs.
\begin{lemma}\label{lemma:4}
	If $(t,\xi,\lambda)\in Z_k$, then at $(t,\xi,\lambda)$
	\[
	\abs{\partial_{tt}^2 T_{f}^{(g)} - 2\pi i \left(\phi_k''(t)T_f^{(g)}+\phi_k'(t)\partial_t T_f^{(g)}+\phi_k''(t)\partial_t T_f^{(xg)}\right)}\leq\epsilon B_{k,2}(t),
	\]
	where
	\begin{align*}
		B_{k,2}(t)&:= \Omega_2+ 2\pi\sum_{l\neq k}\left(\abs{\phi_l''(t)}E_{l,0}(t)+\phi_l'(t)F_{l,0}(t)+\phi_l''(t)F_{l,1}(t)\right)\\
		&\qquad+2\pi\left(\abs{\phi_k''(t)}\sum_{l\neq k}E_{l,0}(t)+\phi_k'(t)\sum_{l\neq k}F_{l,0}(t)+\phi_k''(t)\sum_{l\neq k}F_{l,1}(t)\right)
	\end{align*}
	with 
	\[\Omega_2 := \sum_{k=1}^K \left[\norm{\phi_k'}_{L^{\infty}}I_0+2\pi\norm{A_k}_{L^{\infty}}\norm{\phi_k'}_{L^{\infty}}I_1 + 2\pi\norm{\phi_k'}_{L^{\infty}}^2 I_0 + \pi(\norm{A_k'}_{L^{\infty}}+2\pi\norm{\phi_k'}_{L^{\infty}}\norm{A_k}_{L^{\infty}})I_2\right],\]
	and
	\[
	F_{l,n}(t):=\norm{\phi_l'}_{L^{\infty}}(I_n+\pi \norm{A_l}_{L^{\infty}}I_{n+2}) + 2\pi(\phi_l'(t)E_{l,n}(t)+\abs{\phi_l''(t)}E_{l,n+1}(t)).
	\]
\end{lemma}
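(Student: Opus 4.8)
The plan is to carry out the argument of Lemma~\ref{lemma:3} one order higher. Write the chirplet kernel $K_t(x):=g(x-t)e^{-2\pi i\xi(x-t)-\pi i\lambda(x-t)^2}$, which is a pure function of $x-t$, so that $\partial_tK_t=-\partial_xK_t$. Since $g\in\mathcal S(\mathbb R)$ while $A_k,A_k',A_k'',\phi_k'$ are bounded and $\phi_k''$ grows at most linearly (from $\abs{\phi_k'''}\le\epsilon\phi_k'\le\epsilon\norm{\phi_k'}_{L^\infty}$), all boundary terms vanish and two integrations by parts give, for each $k$,
\[
\partial_{tt}^2 T_{f_k}^{(g)}=\int_{\mathbb R}f_k''(x)K_t(x)\,\diff x,\qquad \partial_t T_{f_k}^{(g)}=\int_{\mathbb R}f_k'(x)K_t(x)\,\diff x,\qquad \partial_t T_{f_k}^{(xg)}=\int_{\mathbb R}(x-t)f_k'(x)K_t(x)\,\diff x.
\]
Hence the natural single-component main term $M_k:=2\pi i\big(\phi_k''(t)T_{f_k}^{(g)}+\phi_k'(t)\partial_t T_{f_k}^{(g)}+\phi_k''(t)\partial_t T_{f_k}^{(xg)}\big)$ is again an integral of the form $\int[\,\cdot\,]K_t\,\diff x$. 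This is the right candidate because differentiating the exact identity \eqref{partial_t} for a pure linear chirp in $t$ produces $\partial_{tt}^2 T_f^{(g)}=2\pi i\lambda_0 T_f^{(g)}+2\pi i(\xi_0+\lambda_0 t)\partial_t T_f^{(g)}+2\pi i\lambda_0\partial_t T_f^{(xg)}$, which is $M_k$ after replacing $\xi_0+\lambda_0 t$ and $\lambda_0$ by $\phi_k'(t)$ and $\phi_k''(t)$.

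Next I would compute $f_k''=\big(A_k''+4\pi i\phi_k'A_k'+2\pi i\phi_k''A_k-4\pi^2(\phi_k')^2A_k\big)e^{2\pi i\phi_k}$, expand $f_k$ and $f_k'$ inside $M_k$, and Taylor-expand the coefficients at $t$: $\phi_k'(x)=\phi_k'(t)+\phi_k''(t)(x-t)+R^{(1)}(x-t)$ and $\phi_k''(x)=\phi_k''(t)+R^{(2)}(x-t)$, with $\abs{R^{(1)}(u)}\le\tfrac12\norm{\phi_k'''}_{L^\infty}u^2$ and $\abs{R^{(2)}(u)}\le\norm{\phi_k'''}_{L^\infty}\abs{u}$. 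In the difference $\partial_{tt}^2 T_{f_k}^{(g)}-M_k=\int(\text{coeff})e^{2\pi i\phi_k}K_t\,\diff x$ every contribution of order $1$ in $\epsilon$ cancels: the $-4\pi^2(\phi_k'(x))^2A_k$ piece of $f_k''$ matches the $-4\pi^2\phi_k'(x)A_k[\phi_k'(t)+\phi_k''(t)(x-t)]$ coming from $M_k$ up to $-4\pi^2\phi_k'(x)A_kR^{(1)}(x-t)$; the $2\pi i\phi_k''(x)A_k$ piece matches $2\pi i\phi_k''(t)A_k$ up to $2\pi iA_kR^{(2)}(x-t)$; and the remaining pieces ($A_k''$, and $4\pi i\phi_k'A_k'$ against $2\pi i\phi_k'(t)A_k'+2\pi i\phi_k''(t)(x-t)A_k'$) are already $O(\epsilon)$ since $\abs{A_k'},\abs{A_k''}\le\epsilon\phi_k'$. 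Bounding each surviving term by $\epsilon$ times $\abs{x-t}^n$ times a product of $\norm{\phi_k'}_{L^\infty},\norm{A_k}_{L^\infty},\norm{A_k'}_{L^\infty}$ and integrating against $\abs{g(x-t)}$ (which yields $I_0,I_1,I_2$) gives exactly the $k$-th summand of $\epsilon\,\Omega_2$; summing over $k$ yields $\abs{\partial_{tt}^2 T_f^{(g)}-\sum_{l=1}^K M_l}\le\epsilon\,\Omega_2$.

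It remains to pass from $\sum_l M_l$ to the stated main term $M:=2\pi i\big(\phi_k''(t)T_f^{(g)}+\phi_k'(t)\partial_t T_f^{(g)}+\phi_k''(t)\partial_t T_f^{(xg)}\big)$ for the unique $k$ with $(t,\xi,\lambda)\in Z_k$ (unique by Lemma~\ref{lemma:1}). For every $l\neq k$ we have $(t,\xi,\lambda)\notin Z_l$, so Lemma~\ref{lemma:2} gives $\abs{T_{f_l}^{(x^ng)}}\le\epsilon E_{l,n}(t)$, and rerunning the estimate of Lemma~\ref{lemma:3} — and the identical argument with the window $xg$ in place of $g$ — gives $\abs{\partial_t T_{f_l}^{(g)}}\le\epsilon F_{l,0}(t)$ and $\abs{\partial_t T_{f_l}^{(xg)}}\le\epsilon F_{l,1}(t)$, which is precisely why the $F_{l,n}$ were introduced. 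Thus $\sum_{l\neq k}\abs{M_l}\le2\pi\epsilon\sum_{l\neq k}\big(\abs{\phi_l''(t)}E_{l,0}(t)+\phi_l'(t)F_{l,0}(t)+\abs{\phi_l''(t)}F_{l,1}(t)\big)$ and $\abs{M-M_k}\le2\pi\epsilon\big(\abs{\phi_k''(t)}\sum_{l\neq k}E_{l,0}(t)+\phi_k'(t)\sum_{l\neq k}F_{l,0}(t)+\abs{\phi_k''(t)}\sum_{l\neq k}F_{l,1}(t)\big)$; adding these three bounds assembles $\epsilon\,B_{k,2}(t)$.

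The hard part will be the bookkeeping, not the idea: one must keep straight which pointwise estimate ($\abs{A_k'},\abs{A_k''},\abs{\phi_k'''}\le\epsilon\phi_k'$, boundedness of $\phi_k'$, at-most-linear growth of $\phi_k''$) controls each of the roughly ten terms of $f_k''-M_k$ and of $M-\sum_l M_l$ so that the constants collate exactly into $\Omega_2$, the $F_{l,n}$, and finally $B_{k,2}$. A minor additional point is that the proof tacitly uses the window-$xg$ version of Lemma~\ref{lemma:3} (not stated separately but obtained verbatim), together with the mild decay facts that legitimize the two integrations by parts.
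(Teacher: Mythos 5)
Your proposal is correct and follows essentially the same route as the paper's proof: write $\partial_{tt}^2 T_f^{(g)}=\sum_k\int f_k''\,K_t$, compare it with the sum of single-component main terms via Taylor expansion of $\phi_k'$ and $\phi_k''$ at $t$ together with the $\epsilon$-ICT bounds to get $\epsilon\,\Omega_2$, and then discard the $l\neq k$ contributions using Lemma~\ref{lemma:2} and the window-$g$ and window-$xg$ versions of the Lemma~\ref{lemma:3} estimate (which is exactly how the paper derives $\abs{\partial_t T_{f_l}^{(g)}}\leq\epsilon F_{l,0}$ and $\abs{\partial_t T_{f_l}^{(xg)}}\leq\epsilon F_{l,1}$). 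The only difference is cosmetic — you pair the terms of $f_k''$ against the expanded integrand of $M_k$ directly, whereas the paper splits off $2\pi i\phi_k' f_k'$ as a block before Taylor-expanding — and both groupings yield the same constants.
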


\begin{proof}
	\begin{align*}
		\partial_{tt}^2 T_{f}^{(g)}(t,\xi,\lambda)
		&= \sum_{k=1}^K \int(A_k''(x)+4\pi i \phi_k'(x)A_k'(x)+2\pi i \phi_k''(x)A_k(x)-4\pi^2\phi_k'(x)^2 A_k(x))\\
		&\qquad \qquad e^{2\pi i \phi_k(x)}g(x-t)e^{-2\pi i \xi(x-t)-\pi i \lambda(x-t)^2}\diff{x}\\
		&= \sum_{k=1}^K \big(\int A_k''(x)e^{2\pi i \phi_k(x)}g(x-t)e^{-2\pi i \xi(x-t)-\pi i \lambda (x-t)^2}\diff{x}\\
		&\qquad\quad + 2\pi i \int \phi_k''(x)A_k(x) e^{2\pi i \phi_k(x)}g(x-t)e^{-2\pi i \xi(x-t)-\pi i \lambda (x-t)^2}\diff{x}\\
		&\qquad\quad + 2\pi i \int \phi_k'(x)A_k'(x) e^{2\pi i \phi_k(x)}g(x-t)e^{-2\pi i \xi(x-t)-\pi i \lambda (x-t)^2}\diff{x}\\
		&\qquad\quad + 2\pi i \int \phi_k'(x)f_k'(x) e^{2\pi i \phi_k(x)}g(x-t)e^{-2\pi i \xi(x-t)-\pi i \lambda (x-t)^2}\diff{x}\big).
	\end{align*}
	The first term gives 
	\begin{align*}
		\abs{\sum_{k=1}^K \int A_k''(x)e^{2\pi i \phi_k(x)}g(x-t)e^{-2\pi i \xi(x-t)-\pi i \lambda (x-t)^2}\diff{x}}
		&\leq \epsilon\sum_{k=1}^K \norm{\phi_k'}_{L^{\infty}}I_0\,,
	\end{align*}
	the second term gives
	\begin{align*}
		&\abs{2\pi i \sum_{k=1}^K \int \phi_k''(x)A_k(x) e^{2\pi i \phi_k(x)}g(x-t)e^{-2\pi i \xi(x-t)-\pi i \lambda (x-t)^2}\diff{x} - 2\pi i \sum_{k=1}^K \phi_k''(t)T_{f_k}^{(g)}}\\
		\leq&\, 2\pi\sum_{k=1}^K \int \epsilon\norm{\phi_k'}_{L^{\infty}}\abs{x-t}A_k(x)\abs{g(x-t)}\diff{x}\\
		\leq&\, 2\pi\epsilon I_1 \sum_{k=1}^K \norm{A_k}_{L^{\infty}}\norm{\phi_k'}_{L^{\infty}}\,,
	\end{align*}
	the third term gives
	\begin{align*}
		\abs{2\pi i \sum_{k=1}^K \int \phi_k'(x)A_k'(x) e^{2\pi i \phi_k(x)}g(x-t)e^{-2\pi i \xi(x-t)-\pi i \lambda (x-t)^2}\diff{x}}\leq 2\pi\epsilon\sum_{k=1}^K \norm{\phi_k'}_{L^{\infty}}^2 I_0\,,
	\end{align*}
	and for the last term, note that $\partial_t T_{f_k}^{(g)} = T_{f_k'}^{(g)}$ and $\partial_t T_{f_k}^{(xg)} = T_{f_k'}^{(xg)}$, by the second order Taylor expansion of $\phi_k'(x)$, we have
	\begin{align*}
		&\abs{2\pi i \sum_{k=1}^K \int \phi_k'(x)f_k'(x) e^{2\pi i \phi_k(x)}g(x-t)e^{-2\pi i \xi(x-t)-\pi i \lambda (x-t)^2}\diff{x}-2\pi i \sum_{k=1}^K(\phi_k'(t)\partial_t T_{f_k}^{(g)} + \phi_k''(t)\partial_t T_{f_k}^{(xg)})}\\
		\leq&\, 2\pi\abs{\sum_{k=1}^K \int \frac{1}{2}\norm{\phi_k'''}_{L^{\infty}}(x-t)^2(A_k'(x)+2\pi i \phi_k'(x)A_k(x))e^{2\pi i \phi_k(x)}g(x-t)e^{-2\pi i \xi(x-t)-\pi i \lambda(x-t)^2}\diff{x}}\\
		\leq&\, \pi\epsilon \sum_{k=1}^K\left(\norm{A_k'}_{L^{\infty}}+2\pi\norm{\phi_k'}_{L^{\infty}}\norm{A_k}_{L^{\infty}}\right)I_2.
	\end{align*}
	Combine those estimates above, and we have at $(t,\xi,\lambda)$
	\[
	\abs{\partial_{tt}^2 T_{f}^{(g)} - 2\pi i\sum_{k=1}^K \left(\phi_k''(t)T_{f_k}^{(g)}+\phi_k'(t)\partial_t T_{f_k}^{(g)}+\phi_k''(t)\partial_t T_{f_k}^{(xg)}\right)}\leq\epsilon \Omega_2,
	\]
	From the proof of Lemma \ref{lemma:3}, we know that
	\[
	\abs{\partial_t T_{f_l}^{(g)}-2\pi i\left(\phi_l'(t)T_{f_l}^{(g)}+\phi_l''(t)T_{f_l}^{(xg)}\right)}\leq \epsilon \norm{\phi_l'}_{L^{\infty}}(I_0+\pi \norm{A_l}_{L^{\infty}}I_2),
	\]
	\[
	\abs{\partial_t T_{f_l}^{(xg)}-2\pi i\left(\phi_l'(t)T_{f_l}^{(xg)}+\phi_l''(t)T_{f_l}^{(x^2g)}\right)}\leq \epsilon \norm{\phi_l'}_{L^{\infty}}(I_1+\pi \norm{A_l}_{L^{\infty}}I_3).
	\]
	So using Lemma \ref{lemma:2}, if $(t,\xi,\lambda)\in Z_k$, for $l\neq k$, we have
	\begin{align*}
		\abs{\partial_t T_{f_l}^{(g)}(t,\xi,\lambda)}&\leq \epsilon\left[ \norm{\phi_l'}_{L^{\infty}}(I_0+\pi \norm{A_l}_{L^{\infty}}I_2) + 2\pi(\phi_l'(t)E_{l,0}(t)+\abs{\phi_l''(t)}E_{l,1}(t))\right]\\
		&= \epsilon F_{l,0}(t),
	\end{align*}
	and
	\begin{align*}
		\abs{\partial_t T_{f_l}^{(xg)}(t,\xi,\lambda)}&\leq \epsilon\left[ \norm{\phi_l'}_{L^{\infty}}(I_1+\pi \norm{A_l}_{L^{\infty}}I_3) + 2\pi(\phi_l'(t)E_{l,1}(t)+\abs{\phi_l''(t)}E_{l,2}(t))\right]\\
		&= \epsilon F_{l,1}(t).
	\end{align*}
	Therefore, for $(t,\xi,\lambda)\in Z_k$, we have
	\[
	\abs{\partial_t T_{f}^{(g)}(t,\xi,\lambda)-\partial_t T_{f_k}^{(g)}(t,\xi,\lambda)}\leq \epsilon\sum_{l\neq k} F_{l,0}(t),
	\]
	and
	\[
	\abs{\partial_t T_{f}^{(xg)}(t,\xi,\lambda)-\partial_t T_{f_k}^{(xg)}(t,\xi,\lambda)}\leq \epsilon\sum_{l\neq k} F_{l,1}(t).
	\]
	We then get, at $(t,\xi,\lambda)\in Z_k$
	\begin{align*}
		&\quad\abs{\partial_{tt}^2 T_{f}^{(g)} - 2\pi i \left(\phi_k''(t)T_{f}^{(g)}+\phi_k'(t)\partial_t T_{f}^{(g)}+\phi_k''(t)\partial_t T_{f}^{(xg)}\right)}\\
		&\leq\abs{\partial_{tt}^2 T_{f}^{(g)} - 2\pi i\sum_{k=1}^K \left(\phi_k''(t)T_{f_k}^{(g)}+\phi_k'(t)\partial_t T_{f_k}^{(g)}+\phi_k''(t)\partial_t T_{f_k}^{(xg)}\right)}\\
		&\qquad + \abs{2\pi i\sum_{l\neq k} \left(\phi_l''(t)T_{f_l}^{(g)}+\phi_l'(t)\partial_t T_{f_l}^{(g)}+\phi_l''(t)\partial_t T_{f_l}^{(xg)}\right)}\\
		&\qquad +\abs{2\pi i\left[\phi_k''(t)\left(T_{f}^{(g)}-T_{f_k}^{(g)}\right)+\phi_k'(t)\left(\partial_t T_{f}^{(g)}-\partial_t T_{f_k}^{(g)}\right)+\phi_k''(t)\left(\partial_t T_{f}^{(xg)}-\partial_t T_{f_k}^{(xg)}\right)\right]}\\
		&\leq\epsilon\Bigg[ \Omega_2+ 2\pi\sum_{l\neq k}\left(\abs{\phi_l''(t)}E_{l,0}(t)+\phi_l'(t)F_{l,0}(t)+\phi_l''(t)F_{l,1}(t)\right)\\
		&\qquad+2\pi\left(\abs{\phi_k''(t)}\sum_{l\neq k}E_{l,0}(t)+\phi_k'(t)\sum_{l\neq k}F_{l,0}(t)+\phi_k''(t)\sum_{l\neq k}F_{l,1}(t)\right)\Bigg].
	\end{align*}
\end{proof}

\begin{lemma}\label{lemma:5}
	For any $(t,\xi,\lambda)\in Z_k$, such that $\abs{T_f^{(g)}(t,\xi,\lambda)}>\tilde{\epsilon}$ and $2\pi\abs{1+\partial_t\left(\frac{T_f^{(xg)}(t,\xi,\lambda)}{T_f^{(g)}(t,\xi,\lambda)}\right) }>\tilde{\epsilon}$, we have
	\[
	\abs{\mu_{f}^{(g)}(t,\xi,\lambda)-\phi_k''(t)}\leq\tilde{\epsilon}\,,
	\]
	provided $\epsilon$ is sufficiently small.
\end{lemma}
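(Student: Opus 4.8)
The plan is to work directly from the closed-form expression
\[
\mu_{f}^{(g)} = \Re\left( \frac{T_{f}^{(g)}\partial_{tt}^2T_{f}^{(g)}-(\partial_t T_f^{(g)})^2}{2\pi i (T_{f}^{(g)})^2 +2\pi i (T_{f}^{(g)}\partial_t T_{f}^{(xg)}- T_{f}^{(xg)}\partial_t T_{f}^{(g)})}\right) =: \Re\left(\frac{N}{D}\right),
\]
combined with the asymptotic identities proved in Lemmas \ref{lemma:3} and \ref{lemma:4}. Abbreviate, at the point $(t,\xi,\lambda)\in Z_k$ in question, $a := T_f^{(g)}$, $b := T_f^{(xg)}$, $a_t := \partial_t T_f^{(g)}$, $b_t := \partial_t T_f^{(xg)}$, $a_{tt} := \partial_{tt}^2 T_f^{(g)}$. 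Lemma \ref{lemma:3} gives $a_t = 2\pi i(\phi_k'(t)a + \phi_k''(t)b) + R_1$ with $\abs{R_1}\le \epsilon B_{k,1}(t)$, and Lemma \ref{lemma:4} gives $a_{tt} = 2\pi i(\phi_k''(t)a + \phi_k'(t)a_t + \phi_k''(t)b_t) + R_2$ with $\abs{R_2}\le \epsilon B_{k,2}(t)$. Note $D = 2\pi i(a^2 + a b_t - b a_t)$.

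The key step is a purely algebraic cancellation. Substituting the Lemma \ref{lemma:4} relation into $N = a a_{tt} - a_t^2$ (keeping $a_t$ and $b_t$ intact), and then substituting the Lemma \ref{lemma:3} relation into just one of the two factors of $a_t^2$, all the $\phi_k'(t)$-terms cancel and one is left with
\[
N = 2\pi i\,\phi_k''(t)\,(a^2 + a b_t - b a_t) + a R_2 - a_t R_1 = \phi_k''(t)\,D + a R_2 - a_t R_1.
\]
Hence $\frac{N}{D} - \phi_k''(t) = \frac{a R_2 - a_t R_1}{D}$, so that $\abs{\mu_f^{(g)} - \phi_k''(t)} \le \frac{\abs{a}\,\epsilon B_{k,2}(t) + \abs{a_t}\,\epsilon B_{k,1}(t)}{\abs{D}}$.

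It then remains to estimate the right-hand side. For the denominator, factoring gives $D = 2\pi i\,a^2\bigl(1 + \partial_t(T_f^{(xg)}/T_f^{(g)})\bigr)$, so the two standing hypotheses $\abs{T_f^{(g)}} > \tilde{\epsilon}$ and $2\pi\abs{1 + \partial_t(T_f^{(xg)}/T_f^{(g)})} > \tilde{\epsilon}$ yield $\abs{D} > \tilde{\epsilon}^{3}$. For the numerator, $\abs{a} = \abs{T_f^{(g)}}$ is bounded above by $\sum_k \norm{A_k}_{L^\infty} I_0$, and $\abs{a_t}$ is bounded above by $2\pi(\phi_k'(t)\abs{a} + \abs{\phi_k''(t)}\abs{b}) + \epsilon B_{k,1}(t)$ via Lemma \ref{lemma:3}; both are finite constants depending only on $t$, on the $\epsilon$-ICT parameters of $f$, and on the window (the gap $\Delta = \epsilon^{-2}$ never enters here — it only helped earlier, through Lemma \ref{lemma:2}, to kill the off-diagonal terms). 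Calling the aggregate constant $C(t)$, we get $\abs{\mu_f^{(g)} - \phi_k''(t)} \le C(t)\epsilon/\tilde{\epsilon}^{3} = C(t)\epsilon^{1/2}$ by $\tilde{\epsilon} = \epsilon^{1/6}$, and since $C(t)\epsilon^{1/2} \le \epsilon^{1/6} = \tilde{\epsilon}$ once $\epsilon$ is small enough (as $C(t)\epsilon^{1/3}\to 0$), the claim follows.

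The main obstacle I anticipate is bookkeeping rather than conceptual. One must substitute the two approximation lemmas in exactly the right order so that all $\phi_k'(t)$-contributions cancel and the error collapses to the compact form $aR_2 - a_t R_1$; substituting both copies of $a_t$ in $a_t^2$, or expanding $a_t$ and $b_t$ inside the $a_{tt}$ term as well, would leave cross terms such as $R_1 R_2$ or $\phi_k'(t) a_t R_1$ that demand separate (still routine) estimates. A secondary point is confirming that $B_{k,1}(t)$, $B_{k,2}(t)$ and the upper bounds for $\abs{a}$, $\abs{a_t}$ are genuinely finite at a fixed $t$, which is immediate from $A_k\in L^\infty$, the boundedness of $\phi_k'$, and the window being Schwartz.
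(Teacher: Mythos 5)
Your proposal is correct and follows essentially the same route as the paper: the identity $N-\phi_k''(t)D = T_f^{(g)}R_2 - \partial_t T_f^{(g)}R_1$ (with $R_1,R_2$ the remainders from Lemmas \ref{lemma:3} and \ref{lemma:4}) is exactly the cancellation the paper performs when it splits $\phi_k''(t)-N/D$ into the two bracketed terms, the lower bound $\abs{D}>\tilde\epsilon^3$ from the two standing hypotheses is identical, and the final smallness condition $C(t)\epsilon^{1/3}\le 1$ matches the paper's requirement $\epsilon\le(B_{k,1}(t)\abs{\partial_t T_f^{(g)}}+B_{k,2}(t)\abs{T_f^{(g)}})^{-3}$. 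No gaps.
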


\begin{proof}
	First, for any $(t,\xi,\lambda)$ satisfies the conditions of the lemma, we have at $(t,\xi,\lambda)$,
	\begin{align*}
		\abs{2\pi i \left(T_{f}^{(g)}\right)^2 +2\pi i \left(T_{f}^{(g)}\partial_t T_{f}^{(xg)}- T_{f}^{(xg)}\partial_t T_{f}^{(g)}\right)}
		= 2\pi\abs{1+\partial_t\left(\frac{T_f^{(xg)}}{T_f^{(g)}}\right)}\abs{T_f^{(g)}}^2>\tilde{\epsilon}^3.
	\end{align*}
	
	By Lemma \ref{lemma:3} and \ref{lemma:4}, we have at $(t,\xi,\lambda)\in Z_k$
	\begin{align*}
		&\quad\abs{\mu_{f}^{(g)}-\phi_k''(t)}\\
		&\leq \abs{\phi_k''(t)- \frac{T_{f}^{(g)}\partial_{tt}^2T_{f}^{(g)}-\left(\partial_t T_f^{(g)}\right)^2}{2\pi i \left(T_{f}^{(g)}\right)^2 +2\pi i \left(T_{f}^{(g)}\partial_t T_{f}^{(xg)}- T_{f}^{(xg)}\partial_t T_{f}^{(g)}\right)}}\\
		&= \left|\frac{\partial_t T_f^{(g)}\left[\partial_t T_f^{(g)}-2\pi i \phi_k''(t)T_f^{(xg)}-2\pi i\phi_k'(t)T_f^{(g)}\right]}{2\pi i \left(T_{f}^{(g)}\right)^2 +2\pi i \left(T_{f}^{(g)}\partial_t T_{f}^{(xg)}- T_{f}^{(xg)}\partial_t T_{f}^{(g)}\right)}\right.\\
		&\left.\qquad - \frac{T_{f}^{(g)}\left[\partial_{tt}^2T_{f}^{(g)}-2\pi i \phi_k''(t)T_f^{(g)}-2\pi i \phi_k'(t)\partial_t T_f^{(g)}-2\pi i \phi_k''(t)\partial_t T_f^{(xg)}\right]}{2\pi i \left(T_{f}^{(g)}\right)^2 +2\pi i \left(T_{f}^{(g)}\partial_t T_{f}^{(xg)}- T_{f}^{(xg)}\partial_t T_{f}^{(g)}\right)}\right|\\
		&\leq \epsilon B_{k,1}(t)\abs{\frac{\partial_t T_{f}^{(g)}}{2\pi i \left(T_{f}^{(g)}\right)^2 +2\pi i \left(T_{f}^{(g)}\partial_t T_{f}^{(xg)}- T_{f}^{(xg)}\partial_t T_{f}^{(g)}\right)}}\\
		&\qquad +\epsilon B_{k,2}(t)\abs{\frac{T_{f}^{(g)}}{2\pi i \left(T_{f}^{(g)}\right)^2 +2\pi i \left(T_{f}^{(g)}\partial_t T_{f}^{(xg)}- T_{f}^{(xg)}\partial_t T_{f}^{(g)}\right)}}\\
		&\leq \tilde{\epsilon}^3\left(B_{k,1}(t)\abs{\partial_t T_f^{(g)}}+B_{k,2}(t)\abs{T_f^{(g)}}\right)\,.
	\end{align*}
	By the uniform boundedness of $B_{k,n}(t)$, $\abs{\partial_t T_f^{(g)}}$ and $\abs{T_f^{(g)}}$, if we impose an extra restriction on $\epsilon$, namely that, for any $k\in\{1,\dots,K\}$ and at any $(t,\xi,\lambda) \in Z_k$, \[
	\epsilon\leq\left(B_{k,1}(t)\abs{\partial_t T_f^{(g)}}+B_{k,2}(t)\abs{T_f^{(g)}}\right)^{-3},
	\]
	then $
	\abs{\mu_{f}^{(g)}-\phi_k''(t)}\leq\tilde{\epsilon}.
	$
\end{proof}

\begin{lemma}
	For any $(t,\xi,\lambda)\in Z_k$, such that $\abs{T_f^{(g)}(t,\xi,\lambda)}>\tilde{\epsilon}$ and $2\pi\abs{1+\partial_t\left(\frac{T_f^{(xg)}(t,\xi,\lambda)}{T_f^{(g)}(t,\xi,\lambda)}\right) }>\tilde{\epsilon}$, we have
	\[
	\abs{\omega_{f}^{(g)}(t,\xi,\lambda)-\phi_k'(t)}\leq\tilde{\epsilon},
	\]
	provided $\epsilon$ is sufficiently small.
\end{lemma}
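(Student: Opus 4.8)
The plan is to follow the same template as the proof of Lemma~\ref{lemma:5}, now applied to the frequency reassignment operator $\omega_f^{(g)}$. Since $\phi_k'(t)$ is real and $T_f^{(g)}\neq 0$ by the hypothesis $\abs{T_f^{(g)}}>\tilde{\epsilon}$, we may write $\phi_k'(t)=\Re\bigl(\tfrac{2\pi i\phi_k'(t)T_f^{(g)}}{2\pi i T_f^{(g)}}\bigr)$, so that
\[
\omega_f^{(g)}(t,\xi,\lambda)-\phi_k'(t)=\Re\left(\frac{\partial_t T_f^{(g)}-2\pi i\phi_k'(t)T_f^{(g)}-2\pi i\mu_f^{(g)}T_f^{(xg)}}{2\pi i T_f^{(g)}}\right).
\]
First I would apply Lemma~\ref{lemma:3} to replace $\partial_t T_f^{(g)}-2\pi i\phi_k'(t)T_f^{(g)}$ by $2\pi i\phi_k''(t)T_f^{(xg)}$ up to an error $R$ with $\abs{R}\leq\epsilon B_{k,1}(t)$; the numerator above then becomes $2\pi i(\phi_k''(t)-\mu_f^{(g)})T_f^{(xg)}+R$. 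Taking absolute values and dividing by $2\pi\abs{T_f^{(g)}}$ yields
\[
\abs{\omega_f^{(g)}-\phi_k'(t)}\leq\abs{\phi_k''(t)-\mu_f^{(g)}}\,\frac{\abs{T_f^{(xg)}}}{\abs{T_f^{(g)}}}+\frac{\epsilon B_{k,1}(t)}{2\pi\abs{T_f^{(g)}}}.
\]

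The only subtlety is that $1/\abs{T_f^{(g)}}$ is merely controlled by $\tilde{\epsilon}^{-1}$, so the crude bound $\abs{\phi_k''(t)-\mu_f^{(g)}}\leq\tilde{\epsilon}$ coming from the \emph{statement} of Lemma~\ref{lemma:5} is not sharp enough: it would only give an $O(1)$ right-hand side. Instead I would invoke the sharper intermediate inequality established \emph{inside} the proof of Lemma~\ref{lemma:5}, namely $\abs{\mu_f^{(g)}-\phi_k''(t)}\leq\tilde{\epsilon}^{3}\bigl(B_{k,1}(t)\abs{\partial_t T_f^{(g)}}+B_{k,2}(t)\abs{T_f^{(g)}}\bigr)$. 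Combining this with the uniform bound $\abs{T_f^{(xg)}}\leq\sum_l\norm{A_l}_{L^\infty}I_1$, with the (locally uniform) boundedness of $B_{k,1},B_{k,2},\abs{T_f^{(g)}},\abs{\partial_t T_f^{(g)}}$ on $Z_k$, and with the relations $\tilde{\epsilon}=\epsilon^{1/6}$ (so $\epsilon/\tilde{\epsilon}=\tilde{\epsilon}^5$ and $\tilde{\epsilon}^3/\tilde{\epsilon}=\tilde{\epsilon}^2$), I find that both summands on the right-hand side are of order $\tilde{\epsilon}^2$. Hence, after imposing one further smallness requirement on $\epsilon$ — of exactly the same flavor as the one at the end of the proof of Lemma~\ref{lemma:5} — each summand is at most $\tilde{\epsilon}/2$, and therefore $\abs{\omega_f^{(g)}(t,\xi,\lambda)-\phi_k'(t)}\leq\tilde{\epsilon}$.

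I expect the main obstacle to be bookkeeping rather than analysis: one must track precisely which power of $\tilde{\epsilon}$ survives the division by $\abs{T_f^{(g)}}\gtrsim\tilde{\epsilon}$, which is what forces us to reach into the proof of Lemma~\ref{lemma:5} rather than cite its conclusion as a black box, and to keep the $t$-dependent constants ($\norm{A_l}_{L^\infty}I_1$, $B_{k,1}(t)$, $B_{k,2}(t)$) explicit in the final smallness condition. No new estimate on the chirplet transform beyond Lemmas~\ref{lemma:3} and~\ref{lemma:5} is needed.
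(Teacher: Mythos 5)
Your proof is correct and follows essentially the same route as the paper's: the identical decomposition of the numerator via Lemma \ref{lemma:3}, the same division by $\abs{T_f^{(g)}}>\tilde{\epsilon}$, and the same crucial use of the sharper intermediate bound $\abs{\mu_f^{(g)}-\phi_k''(t)}\leq\tilde{\epsilon}^3\bigl(B_{k,1}(t)\abs{\partial_t T_f^{(g)}}+B_{k,2}(t)\abs{T_f^{(g)}}\bigr)$ from inside the proof of Lemma \ref{lemma:5}, yielding the same $\tilde{\epsilon}^5$ and $\tilde{\epsilon}^2$ terms. You correctly identified the one subtle point — that citing the conclusion of Lemma \ref{lemma:5} as a black box would only give an $O(1)$ bound — which is exactly what the paper's displayed estimate implicitly relies on.
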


\begin{proof}
	By Lemma \ref{lemma:3} and \ref{lemma:5}, we have at $(t,\xi,\lambda)\in Z_k$
	\begin{align*}
		\abs{\omega_{f}^{(g)}-\phi_k'(t)}
		&\leq\abs{\frac{\partial_t T_f^{(g)} - 2\pi i \mu_{f}^{(g)} T_f^{(xg)}}{2\pi i T_f^{(g)}}-\phi_k'(t)}\\
		&= \abs{\frac{\partial_t T_f^{(g)}-2\pi i\phi_k'(t)T_f^{(g)}-2\pi i\phi_k''(t)T_f^{(xg)}}{2\pi i T_f^{(g)}}+ \frac{(\phi_k''(t)-\mu_{f}^{(g)})T_f^{(xg)}}{T_f^{(g)}}}\\
		&\leq \abs{\frac{\partial_t T_f^{(g)}-2\pi i\phi_k'(t)T_f^{(g)}-2\pi i\phi_k''(t)T_f^{(xg)}}{2\pi i T_f^{(g)}}}+\abs{\frac{(\phi_k''(t)-\mu_{f}^{(g)})T_f^{(xg)}}{T_f^{(g)}}}\\
		&\leq \frac{\tilde{\epsilon}^5 B_{k,1}(t)}{2\pi}+\tilde{\epsilon}^2\left(B_{k,1}(t)\abs{\partial_t T_f^{(g)}}+B_{k,2}(t)\abs{T_f^{(g)}}\right)\abs{T_f^{(xg)}}.
	\end{align*}
	By a similar argument as in the proof of Lemma \ref{lemma:5}, $\abs{\omega_{f}^{(g)}(t,\xi,\lambda)-\phi_k'(t)}\leq\tilde{\epsilon}$, if $\epsilon$ is sufficiently small.
\end{proof}

% \nocite{*}

\end{document}